\documentclass[iccmp]{ipbook}

\RequirePackage{amsmath,amssymb,amsthm}
\RequirePackage{mathrsfs}
\RequirePackage{amscd}
\RequirePackage{xcolor}
\RequirePackage{hyperref}
\hypersetup{
  colorlinks,
  citecolor=red,
  filecolor=black,
  linktoc=all,
  linkcolor=blue,
  urlcolor=blue
}
\startlocaldefs
\theoremstyle{plain}
\newtheorem{theorem}{Theorem}[section]
\newtheorem{lemma}[theorem]{Lemma}
\newtheorem{proposition}[theorem]{Proposition}
\newtheorem{corollary}[theorem]{Corollary}

\newtheorem{problem}[theorem]{Problem}
\theoremstyle{definition}
\newtheorem{definition}[theorem]{Definition}

\theoremstyle{remark}
\newtheorem*{remark}{Remark}

\newcommand{\Aut}{\operatorname{Aut}}
\newcommand{\id}{\operatorname{id}}
\newcommand{\im}{\operatorname{Im}}
\newcommand{\Reg}{\operatorname{Reg}}
\newcommand{\hra}{\hookrightarrow}

\endlocaldefs

\firstpage{1}
\lastpage{54}

\title[Rigidity of $\Gamma$-equivariant holomorphic maps]{Rigidity of bounded $\Gamma$-equivariant holomorphic maps for $\pi_1$ of irreducible Shimura varieties of rank $\ge 2$ via K\"ahler geometry, harmonic analysis and ergodic theory}

\author{Ngaiming Mok}
\address{Department of Mathematics, The University of Hong Kong, Pokfulam, Hong Kong}
\email{nmok@hku.hk}
\thanks{The research of the first author is partially supported by a General Research Grant 17304321 of the HKRGC.}

\author{Kwok-Kin Wong}
\address{Shenzhen University, Guangdong 518060, P.~R.~China}
\email{kkwong@szu.edu.cn}

\begin{document}

\begin{abstract}
Let $\Omega$ be a bounded symmetric domain of rank $\ge 2$
and $\Gamma \subset \Aut(\Omega)$ be a torsion-free irreducible lattice,
so that $X_\Gamma :=\Omega/\Gamma$ carries a canonical structure
as a quasi-projective manifold.
Let $D \Subset \mathbb C^N$ be a bounded domain,
$\Gamma' \subset \Aut(D)$ be a torsion-free discrete subgroup such that
$Y_{\Gamma'} := D/\Gamma'$ is biholomorphic to a quasi-projective manifold.
Let $f: X_{\Gamma} \to Y_{\Gamma'}$ be a holomorphic map
which induces an isomorphism
$\Phi: \Gamma \overset{\cong}\longrightarrow \Gamma' \subset \Aut(D)$,
i.e., the lifting $F: \Omega \to D$ of $f: X_\Gamma \to Y_{\Gamma'}$
is a $\Phi$-equivariant holomorphic map,
where $\Phi = f_*: \pi_1(X_\Gamma)
= \Gamma \overset{\cong}\longrightarrow\Gamma' \subset \Aut(D)$ is the induced map, assumed to be an isomorphism, $\Phi$ satisfying $F(\gamma z) = \Phi(\gamma)(F(z))$ for any $\gamma \in \Gamma$.  In a recent article of the authors, we proved a result called the Isomorphism Theorem, which says that $F: \Omega \to D$ is necessarily a biholomorphism. The proof of the Isomorphism Theorem uses in essential ways K\"ahler geometry, function theory of several complex variables, harmonic analysis and ergodic theory.  Here we will focus on a slight variation of the Isomorphism Theorem in which $D$ is replaced by a simply connected complete K\"ahler-Einstein manifold $(M,h_M)$ which is moreover assumed to be Carath\'eodory hyperbolic (i.e., the infinitesimal complex Finsler pseudometric $\kappa_M$ induced from the space of bounded holomorphic maps into the Poincar\'e disk is a complex Finsler metric) and $\Gamma' \subset \Aut(M)$ is a torsion-free discrete subgroup such that the quotient manifold $Y_{\Gamma'} := M/\Gamma'$ is
of finite volume with respect to the quotient K\"ahler-Einstein metric.
In this setting, we will explain the essential roles played by K\"ahler geometry, harmonic analysis and ergodic theory in the proof of the Isomorphism Theorem.

\end{abstract}

\maketitle

\vskip 0.2cm
\section{Introduction}
\label{intro}

\vskip 0.2cm
\subsection{The Borel embedding and the Harish-Chandra realization}
\label{borel-hc}
\mbox{}

\vskip 0.5cm
\subsubsection{Bounded symmetric domains as Hermitian symmetric spaces of the noncompact type}
\label{herm-sym-noncompact}
Let $\Omega$ be an irreducible bounded symmetric domain. Writing $G_0 =\Aut_0(\Omega)$ for the identity component of the automorphism group $\Aut(\Omega)$, and $K \subset G_0$ for the isotropy subgroup at a reference point $o \in \Omega$, we have $\Omega = G_0/K=: X_0$ as a homogeneous manifold.  Here $K \subset G_0$ is a maximal compact subgroup and the irreducibility of $\Omega$ is equivalent to the statement that $G_0$ is (centerless and) simple.   With further details to be described below,
$X_0$ is an irreducible Riemannian symmetric space of the noncompact and semisimple type defined by a group automorphism $s$ of $G_0$ of order $2$ (given by $s(g) = s_1gs_1^{-1}$ for some $s_1 \in G_0$ of order 2) whose fixed point subgroup is $K$.  Denote by Gothic letters the Lie algebras corresponding to given Lie groups, so that $\mathfrak {g}_0$ is the Lie algebra of $G_0$, etc. (and writing also $\mathfrak g_0 = {\rm Lie}(G_0)$ to signify this, etc.). Since $s$ is a group isomorphism on $G_0$, its differential $ds =: \theta$ is a Lie algebra isomorphism of order 2 on the Lie algebra $\mathfrak g_0$ of left invariant smooth vector fields on $G_0$.
We have a Cartan decomposition $\mathfrak{g}_0 = \mathfrak{k} \oplus \mathfrak{m}$, where $\mathfrak{k} = {\rm Lie}(K)$ and at the same time the eigenspace of the Cartan involution $\theta$ corresponding to the eigenvalue $1$, while $\mathfrak m \subset \mathfrak{g}_0$ is the eigenspace of $\theta$ corresponding to the eigenvalue $-1$. By considering the action of the Cartan involution $\theta$ on $\mathfrak g_0$ and interpreting $\mathfrak k$ resp.\,$\mathfrak m$ as eigenspaces of $\theta$ corresponding to eigenvalues $1$ resp.\,$-1$, we have immediately the inclusions $[\mathfrak k, \mathfrak k] \subset \mathfrak k$, $[\mathfrak k,\mathfrak m] \subset \mathfrak m$ and $[\mathfrak m,\mathfrak m] \subset \mathfrak k$. The involution $s$ on $G_0$ with $K \subset G_0$ as fixed point subgroup induces a diffeomorphism $\sigma$ of $X_0 = G_0/K$ onto itself with an isolated fixed point at $o = eK \in G_0/K = X_0$.

\vskip 0.2cm
The homogeneous manifold $X_0$ is equipped with a $G_0$-invariant Riemannian metric
$g_0$, which is unique up to a multiplicative scalar
since $K$ acts irreducibly on the real tangent space $T^{\mathbb R}_o(X_0)$ of $X_0$
at $o = eK$ with the identification
$T^{\mathbb R}_o(X_0) = \mathfrak{g}_0/\mathfrak{k} \cong \mathfrak{m}$.
The involution $\sigma$ is a holomorphic isometry on the Riemannian manifold
$(X_0,g_0)$ fixing $o \in X_0$ as an isolated fixed point.
Since $G_0$ acts transitively on $X_0$, $\sigma$ defines on $(X_0,g_0)$
the structure of an irreducible Riemannian symmetric manifold of
the noncompact and semisimple type, hence of
nonpositive Riemannian sectional curvature and negative Ricci curvature.
The subgroup $K \subset G_0$ is reductive with a 1-dimensional center
$Z \cong \mathbb S^1$.
There is an element $z \in \mathfrak z$ which defines on $X_0$
an almost complex structure $J$. More precisely, at $o = eK$,
given $v \in  T^{\mathbb R}_o(X_0)$ and
any smooth vector field $\widetilde{v}$ on $X_0$
satisfying $\widetilde{v}(o) = v$, we check that
$[z, \widetilde{v}](o) \in T^{\mathbb R}_o(X_0)$ is independent of
the choice of $\widetilde{v}$ and define $J_ov := [z,\widetilde{v}](o)$.
From $z \in \mathfrak z$ it follows readily that $J_o$ is $K$-invariant,
and, for a suitable choice of $z\in\mathfrak z$ (so that ${\rm exp}(z)$
corresponds to multiplication by $\pm\sqrt{-1}$
on $\mathbb S^1 \subset \mathbb C^*$), we have
$J_o^2 = -\id_{T_o^{\mathbb R}(X_0)}$ and it defines the $G_0$-invariant almost complex structure $J$ on $X_0 = G_0/K$ via left translation by arbitrary elements $g\in G_0$.  The integrability of $J$ is checked from the invariance of the vanishing of the Nijenhuis tensor $N_J$ of $J$ given explicitly by
$$
N_{J}(\xi,\eta) = [J\xi, J\eta] - J[J\xi,\eta] - J[\xi,J\eta] - [\xi,\eta]
$$
for smooth vector fields $\xi$ and $\eta$ on $X_0$.
The vanishing of $N_J$ then follows from $d\sigma(o) = -\id_{T^{\mathbb R}_o(X_0)}$, yielding $N_J(\xi,\eta)(o) = -N_J(-\xi,-\eta)(o) = -N_J(\xi,\eta)(o)$, thus $N_J(\xi,\eta)(o) = 0$, and from the $G_0$-invariance of $J$ and hence of $N_J$.

\vskip 0.2cm
We note that at $o = eK \in X_0$, the Riemannian inner product on $T_o(X_0)$ given by $g_0(o)$ is $J_o$-invariant.  To see that, since $g_0(o)$ is $K$-invariant and $Z\subset K$, for $u, v \in T_o^{\mathbb R}(X_0)$, we have $g_0({\rm exp}(tz)u,{\rm exp}(tz)v) = g_0(u,v)$ for all $t\in\mathbb R$.  Differentiating against $t$ at $t = 0$, from $J_o(v) = [z,\widetilde{v}](o)$ we deduce that $g_0(Ju,v)+g_0(u,Jv) = 0$ (so that in particular $g_0(u,Ju) = 0$). It follows that $g_0(Ju,Jv) = -g_0(u,J^2v) = -g_0(u,-v) = g_0(u,v)$, showing that $g_0(Ju,Jv) = g_0(u,v)$, i.e., $g_0(o)$ is $J_o$-invariant. By homogeneity of $(X_0,g_0)$ under $G_0$, the Riemannian metric $g_0$ on $X_0$ is $J$-invariant, hence $(X_0,g_0)$ is a Hermitian manifold.  Since the linear action of $K$ on $T_o^{\mathbb R}(X_0)$ commutes with the action of the circle group $Z \subset K$, $K$ preserves $J_o$ and acts as a group of complex linear maps on $(T_o^{\mathbb R}(X_0),J_o)$, when $T_o^{\mathbb R}(X_0)$ is regarded as a complex vector space with the linear complex structure given by $J_o$.  If we define the Hermitian bilinear pairing $h_o$ on $(T_o^{\mathbb R}(X_0),J_o)$ by $h_o(u,v) = g_0(u,\overline{v})$, where complex conjugation $\overline{\cdot}$ on $T_o^{\mathbb R}(X_0)$ is defined by $J_o$, then $K$ acts as a group of complex linear isometries on $(T_o^{\mathbb R}(X_0),J_o;h_o)$. From the $G_0$-invariance of both $(X_0,J)$ and $(X_0,g_0)$, $G_0$ acts as a transitive group of holomorphic isometries on $(X_0,g_0)$. Hence, $(X_0,g_0)$ is an irreducible Hermitian symmetric manifold, which is in fact K\"ahler, since any odd-degree $G_0$-invariant differential form on $X_0$ vanishes as can be seen from the action of $d\sigma(o)$ on $T^{\mathbb R}_o(X_0)$.

\vskip 0.2cm
\subsubsection{A dual pair of Hermitian symmetric manifolds \texorpdfstring{$(X_0, X_c)$}{(X0, Xc)}}
\label{dual-pair}
Denote by $\mathfrak g = \mathfrak{g}_0\otimes_{\mathbb R}\mathbb C$ the complexification of $\mathfrak{g}_0$, and by $G$ the centerless simple complex Lie group such that $\mathfrak g = {\rm Lie}(G)$ and $G$ contains $G_0$ as a noncompact real form.  (Given a centerless real algebraic group $H_0$, the natural homomorphism from $H_0$ to its complexification $H$ as a complex algebraic group $H$ is necessarily injective, here and henceforth we will identify $H_0$ as a real algebraic subgroup of $H$.)
We have a compact real form $G_c \subset G$, where $G_c$ is a compact Lie subgroup with Lie subalgebra $\mathfrak{g}_c = \mathfrak{k} \oplus \sqrt{-1}\mathfrak{m} \subset \mathfrak{g}$. Writing $X_c := G_c/K$, there is a $G_c$-invariant Riemannian metric $g_c$ on $X_c$ unique up to a positive multiplicative scalar as $K$ acts irreducibly on $T^{\mathbb R}_o(X_c) = \sqrt{-1}\mathfrak{m}$.
Equivalently, $g_c$ is the same as the Riemannian metric on $X_c$ induced by $-\lambda B$, where $B(\cdot,\cdot)$ stands for the (negative definite) Killing form on the compact Lie algebra $\mathfrak{g}_c$, and $\lambda$ is some positive constant.
The pair $(X_0,g_0)$ and $(X_c,g_c)$ of Riemannian manifolds form a dual pair of irreducible Riemannian symmetric spaces of the semisimple type when we normalize the choices of $g_0$ and $g_c$ so that the mapping $T^{\mathbb R}_o(X_0) \cong \mathfrak{m} \mapsto \sqrt{-1}\mathfrak{m} \cong T^{\mathbb R}_o(X_c)$ defined by $v \mapsto \sqrt{-1}v$ is an isometry when the inner products on $\mathfrak{m}$ resp.~on $\sqrt{-1}\mathfrak{m}$ are given by $g_0$ resp.\,$g_c$. (Here and in the above we have an abuse of notation in that $o$ is used to denote the base point $eK$ of both $X_0 = G_0/K$ and $X_c = G_c/K$ which are {\it a priori} two distinct manifolds.)
Again, the 1-dimensional center $Z \cong \mathbb S^1$ of $K$ induces on $X_c$ the structure of an integrable almost complex structure, and the pair $(X_0,g_0)$ and $(X_c,g_c)$ of Hermitian manifolds is in fact a dual pair of irreducible Hermitian symmetric spaces of the noncompact resp.\,the compact type, so that $(X_0,g_0)$ and $(X_c,g_c)$ are K\"ahler manifolds.  They are in fact K\"ahler-Einstein by the irreducibility of the action of $K$ on $\mathfrak{m}$ resp.\,$\sqrt{-1}\mathfrak{m}$.

\vskip 0.2cm
\subsubsection{Embedding \texorpdfstring{$X_0 = G_0/K$}{X0 = G0/K} holomorphically as a domain on its compact dual \texorpdfstring{$X_c$}{Xc}}
\label{borel-embed}
Let $(X_0,g_0)$ be an irreducible Hermitian symmetric space of the noncompact type and write $(X_c,g_c)$ for its dual Hermitian symmetric space of the compact type.  $G_c$ acts as a transitive group of holomorphic isometries on $(X_c,g_c)$.  The simple complex Lie group $G$, which is also a complexification of $G_c$, acts as a group of biholomorphisms on $X_c$.
Denote by $P \subset G$ the (maximal) parabolic subgroup consisting of biholomorphisms fixing $o = eK$, so that $X_c = G/P$ as a complex homogeneous manifold. In what follows we will write $X := G/P$ instead of $X_c$ when we forget about the underlying K\"ahler-Einstein metric $g_c$.  Since $(X_c,g_c)$ is a compact K\"ahler manifold of positive Ricci curvature, the anti-canonical line bundle $K_X^{-1}$ is ample, and $X$ is a projective manifold by the Kodaira embedding theorem.  In fact, we have ${\rm Pic}(X) \cong \mathbb Z$, and,
denoting by $\mathcal O(1)$ its positive generator,
it defines the minimal projective embedding $\nu: X \hra \mathbb P(\Gamma(X,\mathcal O(1))^*)$.  For instance, for the Grassmann manifold $X := {\rm Gr}(k,\mathbb C^{\ell})$ of $k$-planes on an $\ell$-dimensional complex vector space, $0< k <\ell$, this gives the Pl\"ucker embedding of $X$ into some projective space $\mathbb P^N$.

\vskip 0.2cm
Since $G$ acts as a complex Lie group acting transitively on $X$, we may interpret $\mathfrak{g} =  {\rm Lie}(G)$ as the simple complex Lie algebra consisting of global holomorphic vector fields on the projective manifold $X$.  Hence $z \in \mathfrak{z} \subset \mathfrak{k} \subset \mathfrak{g}$ as a holomorphic vector field on $X$.  As such $z$ vanishes at $o = eP$.  For example, in the case where $X = \mathbb P^n$, $z$ can be chosen as a multiple of the Euler vector field $\sum_{i=1}^n z_i\frac{\partial}{\partial z_i}$ in the inhomogeneous coordinates $(z_1,\cdots,z_n)$ on the open Schubert cell $\mathbb C^n \subset \mathbb P^n$ when $o = eP$ is identified as the origin of $\mathbb C^n$.

\vskip 0.2cm
We have $\mathfrak{g_0} = \mathfrak{k}\oplus\mathfrak{m}$.  Writing the complexification of a real vector space $V$ as $V^\mathbb C := V\otimes_{\mathbb R}\mathbb C$, and noting that $\mathfrak{g} = \mathfrak{g}_0^{\mathbb C}$, we have
$\mathfrak{g} = \mathfrak k^{\mathbb C}\oplus\mathfrak{m}^{\mathbb C}$.
From $J_o^2 = -\id_{T_o^{\mathbb R}(X_0)}$ and identifying $T_o^{\mathbb R}(X_0)$ as $\mathfrak{m}$ and hence $T_o^{\mathbb C}(X_0)$ as $\mathfrak{m}^{\mathbb C}$, we have $\left({\rm ad}(z)|_\mathfrak{m}\right)^2 \equiv -\id_{\mathfrak{m}^{\mathbb C}}$ and thus the eigenspace decomposition $\mathfrak{m}^{\mathbb C} = \mathfrak{m}^{+} \oplus\mathfrak{m}^{-}$ corresponding to the eigenvalues $\sqrt{-1}$ resp.\,$-\sqrt{-1}$. Here $\mathfrak m^{\mathbb C}\subset \mathfrak g$ is the complexification of $\mathfrak m \subset \mathfrak g_0$, and with respect to conjugation $\tau$ on $\mathfrak g$ treating $\mathfrak g_0$ as the real part, $\mathfrak m^+$ and $\mathfrak m^{-}$ are conjugate to each other, i.e., $\mathfrak m^{-} = \tau(\mathfrak m^{+})$, so that $\dim_{\mathbb C}\mathfrak m^{+} = \dim_{\mathbb C}\mathfrak m^{-} = \frac{1}{2}\dim_{\mathbb R}\mathfrak m$.  Regarding ${\rm ad}(z)$ as a linear operator on $\mathfrak{g}$, we have an eigenspace decomposition
$$
\mathfrak{g} = \mathfrak m^{+} \oplus \mathfrak k^{\mathbb C} \oplus \mathfrak m^{-}
$$
of $\mathfrak{g}$ into a direct sum of eigenspaces of ${\rm ad}(z)$ corresponding to the eigenvalues $\sqrt{-1}$ resp.\,0 resp.\,$-\sqrt{-1}$.
Here $\mathfrak{m}^{+},\mathfrak{m}^{-} \subset \mathfrak{g}$  are abelian subalgebras. For $\mathfrak{p} = {\rm Lie}(P)$, we have  $\mathfrak{p} = \mathfrak k^{\mathbb C}\oplus\mathfrak m^{-}$, where $\mathfrak m^{-} \subset \mathfrak{p}$ is the nilpotent radical.  Correspondingly we have the Levi decomposition $P = K^{\mathbb C}M^{-}$, where $K^{\mathbb C} \subset G$ is a complexification of $K \subset G_0\subset G$ in $G$ and $M^{-} \subset P$ is the unipotent radical. Interpreting $\mathfrak g$ as the complex Lie algebra of global holomorphic vector fields on $X$, $\mathfrak{p} \subset \mathfrak g$ consists of all holomorphic vector fields vanishing at $o \in X$, and $\mathfrak{m}^{-} \subset \mathfrak p$ is the subspace of holomorphic vector fields vanishing at $o$ to the order $\ge 2$. (Actually for $m^{-} \in \mathfrak m^{-}$, either $m^{-}$ vanishes at $o$ precisely to the order $2$ or $m^- = 0$.)

\vskip 0.2cm
Relating $X_0$ and $X$ we have the Borel embedding theorem, as follows, cf. \cite{Wo1972}.

\begin{theorem} \
Consider the smooth mapping $\xi: X_0 = G_0/K \longrightarrow G/P = X$, which is defined because $K \subset P$. Then, $\xi$ embeds $X_0$ biholomorphically onto an open subset of $X$.
\end{theorem}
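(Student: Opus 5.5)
The plan has three steps: show $\xi$ is well defined and an open immersion, show it is injective by proving $G_0\cap P = K$, and show it is holomorphic by comparing complex structures at $o$ and transporting by $G_0$.

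\emph{Well-definedness and open immersion.} The map $\xi$ is $g_0K \mapsto g_0P$ for $g_0 \in G_0 \subset G$, and it is well defined because $K \subset P$. It is $G_0$-equivariant, since $\xi(g\cdot x) = g\cdot \xi(x)$ for $g \in G_0$. Hence its differential at any point is conjugate to its differential at $o$, and it suffices to work at $o$. There $d\xi(o)$ is the map $\mathfrak g_0/\mathfrak k \cong \mathfrak m \to \mathfrak g/\mathfrak p \cong \mathfrak m^+$ induced by inclusion. Using $\mathfrak g = \mathfrak m^+ \oplus \mathfrak k^{\mathbb C} \oplus \mathfrak m^-$ and $\mathfrak p = \mathfrak k^{\mathbb C}\oplus \mathfrak m^-$, this map sends $v \in \mathfrak m$ to its $\mathfrak m^+$-component $v^+$. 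Since $\mathfrak m^- = \tau(\mathfrak m^+)$ and $v$ is $\tau$-real, we have $v = v^+ + \tau(v^+)$. So $v^+ = 0$ forces $v = 0$, and $d\xi(o)$ is a real-linear injection. Comparing dimensions, $\dim_{\mathbb R}\mathfrak m = 2\dim_{\mathbb C}\mathfrak m^+$, so $d\xi(o)$ is an isomorphism. By equivariance $\xi$ is a local diffeomorphism everywhere, and therefore an open map.

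\emph{Holomorphy.} The $G_0$-invariant complex structure $J$ on $X_0$ is given at $o$ by $J_o v = [z,\widetilde v](o)$, and under $\mathfrak m \cong \mathfrak m^+$ this is ${\rm ad}(z)$ acting on $\mathfrak m^+$ with eigenvalue $\sqrt{-1}$. The complex structure of $X = G/P$ at $o$ is multiplication by $\sqrt{-1}$ on $\mathfrak g/\mathfrak p \cong \mathfrak m^+$. Now ${\rm ad}(z)$ preserves $\mathfrak p$ and acts on $\mathfrak m^+$ by $\sqrt{-1}$, and $v\mapsto v^+$ intertwines ${\rm ad}(z)$ on $\mathfrak m$ with ${\rm ad}(z)$ on $\mathfrak m^+$. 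Hence $d\xi(o)\circ J_o = \sqrt{-1}\, d\xi(o)$, so $d\xi(o)$ is complex linear. Both complex structures are invariant under $G_0$ (on $X$, because $G_0\subset G$ acts holomorphically), so $d\xi$ is complex linear everywhere and $\xi$ is holomorphic. This step needs care with the sign convention, that is, with which of $\mathfrak m^{\pm}$ is identified with the holomorphic tangent space. The convention in the text, $\mathfrak p = \mathfrak k^{\mathbb C}\oplus\mathfrak m^-$, fixes it, and replacing $z$ by $-z$ would only swap roles.

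\emph{Injectivity (main obstacle).} Injectivity of $\xi$ is equivalent to $G_0 \cap P = K$. The inclusion $\supset$ is clear. For $\subset$, I would use the compact real form $G_c$, with Cartan involution $\theta$ extended to $G$. First I would show $G_c\cap P = K$: the group $G_c\cap P$ is compact, and its Lie algebra is $\mathfrak g_c \cap \mathfrak p = \mathfrak k$, because $\mathfrak g_c\cap(\mathfrak m^+\oplus\mathfrak m^-) = \sqrt{-1}\mathfrak m$ meets $\mathfrak m^-$ trivially. Since $X = G_c/(G_c\cap P)$ is simply connected, $G_c\cap P$ is connected, so it equals $K$.

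Then, given $g \in G_0 \cap P$, I would take its Cartan decomposition $g = k\exp(v)$ with $k \in K$ and $v \in \mathfrak m$. It then suffices to show $\exp(v)\in P$ forces $v=0$. Since $P \cap \tau(P) = K^{\mathbb C}$ and $\exp(v)$ is $\tau$-fixed, we get $\exp(v)\in K^{\mathbb C}$. The remaining task is to show $\exp(\mathfrak m)\cap K^{\mathbb C} = \{e\}$, and this is where I expect the real work to lie. I would attack it through the polar decomposition $G = G_c\exp(\sqrt{-1}\mathfrak g_c)$. One has $\mathfrak m \subset \sqrt{-1}\mathfrak g_c$, and $K^{\mathbb C} = K\exp(\sqrt{-1}\mathfrak k)$ with $\mathfrak k\subset\mathfrak g_c$. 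Uniqueness of the polar decomposition then gives $v \in \sqrt{-1}\mathfrak k\cap\mathfrak m = 0$. This yields an injective holomorphic local biholomorphism, hence a biholomorphism of $X_0$ onto the open set $G_0\cdot o\subset X$.
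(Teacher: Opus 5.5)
Your proof is correct. The first two steps match the paper's. Your identity $v=v^++\tau(v^+)$ is a tidier version of the paper's real/imaginary-part computation of $\ker d\xi(o)$, and your holomorphy check via ${\rm ad}(z)$ is the paper's computation $d\xi(Jv)=\sqrt{-1}\,d\xi(v)$.

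The injectivity step takes a genuinely different route.

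\emph{The paper's argument.} It computes $\mathfrak g_0\cap\mathfrak p=\mathfrak k$, so $K':=G_0\cap P$ has the same dimension as $K$. It then argues geometrically. If $g\in K'\setminus K$, then $gKg^{-1}\subset K'$ is the isotropy group of $g(o)\neq o$, and its $K$-conjugates would form a positive-dimensional family of distinct isotropy groups inside $K'$. Made precise: $K$ is connected with Lie algebra $\mathfrak k$, so it is the identity component of $K'$. Conjugation by $g\in K'$ preserves that component, so $gKg^{-1}=K$, and $g(o)$ would be a second fixed point of $K$. This contradicts $o$ being the unique fixed point.

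\emph{Your argument.} You use the Cartan decomposition $G_0=K\exp(\mathfrak m)$. Then $\tau$-invariance pushes $\exp(v)$ into $P\cap\tau(P)=K^{\mathbb C}$. Finally, uniqueness in the polar decomposition $G=G_c\exp(\sqrt{-1}\,\mathfrak g_c)$, together with $K^{\mathbb C}=K\exp(\sqrt{-1}\,\mathfrak k)$, gives $v\in\sqrt{-1}\,\mathfrak k\cap\mathfrak m=0$.

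\emph{Trade-offs.} The paper's argument needs only the Lie algebra computation and the unique-fixed-point property of $K$ on $X_0$. Its wording (``a positive-dimensional family $\ldots$ hence $\dim K'>\dim K$'') is somewhat informal and really rests on the identity-component remark above. Your argument is purely algebraic and avoids that counting. However, it relies on more structure theory. In particular, $P\cap\tau(P)=K^{\mathbb C}$ amounts to the Levi decomposition $P=K^{\mathbb C}M^-$ together with $M^+\cap P=\{e\}$. The paper only proves the latter afterwards, in the proof of the Harish-Chandra embedding theorem, so you should treat it as an imported standard fact. You also need the polar decompositions of $G$ and of $K^{\mathbb C}$.

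Finally, your preliminary step $G_c\cap P=K$, including the appeal to simple connectivity of $X$, is never used in the chain that follows. You can drop it.
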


\begin{proof}
At $o = eK$ we have $T_o^{\mathbb R}(X_0) \cong \mathfrak g_0/\mathfrak k\cong \mathfrak m$, while $T^{\mathbb R}_o(G/P) \cong \mathfrak g/\mathfrak p \cong \mathfrak m^{+}$ as real vector spaces. We have $\dim_{\mathbb R}\mathfrak m^+ = 2\dim_{\mathbb C}\mathfrak m^{+} = \dim_{\mathbb R}\mathfrak m$.  On the other hand, at the infinitesimal level the given identification $d\xi(o): \mathfrak m \to \mathfrak m^{+}$ is induced from the composition of linear maps $\mathfrak m \hra \mathfrak g_0 \hra \mathfrak g = \mathfrak m^{+}\oplus\mathfrak k^{\mathbb C}\oplus\mathfrak m^{-} \to \mathfrak m^+$.  Thus, the kernel of $d\xi(o)$ is given by $\mathfrak m\cap (\mathfrak k^{\mathbb C}\oplus\mathfrak m^{-})$. Taking imaginary parts with respect to the real structure given by $\tau$, for $v \in \mathfrak m\cap (\mathfrak k^{\mathbb C}\oplus\mathfrak m^{-}$) we have $v = (k_1 + ik_2) + m^{-}$, where $k_1, k_2 \in \mathfrak k$ and $m^{-} \in \mathfrak m^{-}$.
Since $v \in \mathfrak m$ we have $0 =\im(v) = k_2 + \im(m^{-})$ and $v = k_1 + {\rm Re}(m^{-})$.
From $\mathfrak k \cap \mathfrak m = 0$ it follows that $k_2 = \im(m^{-}) = 0$, so that $m^{-}$ is a real point of $\mathfrak m$.
Since $\overline{\mathfrak{m}^{-}}=\mathfrak{m}^+$ and $\mathfrak{m}^-\cap \mathfrak{m}^+=0$,
the only real point on $\mathfrak{m}^-$ is $0$.
Hence $m^{-} = 0$ and $v = k_1 \in\mathfrak m\cap\mathfrak k = 0$, proving that ${\rm Ker}(d\xi(o)) = 0$, implying that $\xi: X_0 = G_0/K \to G/P$ is a local diffeomorphism at $o = eK$ and hence a local diffeomorphism everywhere on $X_0$ by the $G_0$-equivariance of $\xi$.

\vskip 0.2cm
We assert that $\xi: X_0 = G_0/K \to G/P = X$ is a holomorphic map.  It suffices to check this at $o = eK$, and in what follows $J$ will stand for $J_o$.  For $v \in T_o^{\mathbb R}(X_0) \equiv \mathfrak m$, $d\xi(v) = v^{+} \in \mathfrak m^{+}$
is determined by the decomposition $v = v^+ + v^-$ in accordance with $v \in \mathfrak m \subset \mathfrak m^{\mathbb C} = \mathfrak m^+ \oplus\mathfrak m^-$. Now, we have $v = \frac{1}{\phantom{,}2\phantom{,}}(v-\sqrt{-1}Jv)+\frac{1}{\phantom{,}2\phantom{,}}(v+\sqrt{-1}Jv)$. Then, $d\xi(v) = v^+ = \frac{1}{\phantom{,}2\phantom{,}}(v-\sqrt{-1}Jv)$, while
$d\xi(Jv) = \frac{1}{\phantom{,}2\phantom{,}}(Jv-\sqrt{-1}J^2v) = \frac{1}{\phantom{,}2\phantom{,}}(Jv+\sqrt{-1}v)$, since $J^2v = -v$. Hence, $d\xi(Jv) = \frac{\sqrt{-1}}{2}(v-\sqrt{-1}Jv) = \sqrt{-1}v^+ = \sqrt{-1}d\xi(v)$. Consider the underlying smooth real manifold $G/P$, we have $T^{\mathbb R}_o(G/P) \cong\mathfrak m^+$, where $\mathfrak m^+$ is regarded as a real $2n$-dimensional vector space, $n = \dim_{\mathbb C}(G/P)$. Since the linear almost complex structure $J'$ on $\mathfrak m^+$ as a real vector space agrees with multiplication by $\sqrt{-1}$ on $\mathfrak m^+$ regarded as a complex vector space, $d\xi(Jv) = J'(d\xi(v))$, hence $\xi: X_0 \to X$ is locally biholomorphic at $o$ and hence everywhere on $X_0 = G_0/K$. (We note that the identification of $T_o^{\mathbb R}(X)$ with $T^{1,0}_o(X) \cong \mathfrak m^+$ is given by $2{\rm Re}(\eta) \leftrightarrow \eta$ for $\eta \in \mathfrak m^+$, in the same way that we have the identification $\frac{\partial}{\partial x} \leftrightarrow \frac{\partial}{\partial z}$ on the complex plane.)

\vskip 0.2cm
To complete the proof of the Borel embedding theorem, it remains to show that $\xi: X_0 = G_0/K \to G/P = X$ is injective, i.e., to prove that $G_0\cap P = K$. We have $\mathfrak g_0 \cap \mathfrak p = (\mathfrak k + \mathfrak m)\cap (\mathfrak k^{\mathbb C}+\mathfrak m^-) = \mathfrak k + (\mathfrak m\cap\mathfrak m^-) = \mathfrak k$.  Hence, writing $K' :=G_0\cap P$, we have $K'\supset K$ and $\dim_{\mathbb R}K' = \dim_{\mathbb R}K$. We proceed to prove $K' = K$ by contradiction.  Suppose there exists $g \in K'-K$.  Then, $gKg^{-1} \subset K'$ is the isotropy subgroup at $g(o) \in X_0$, $g(o) \neq o$.  For each $k \in K$, $kgKg^{-1}k^{-1} \subset K'$ is the stabilizer of $k(g(o)) \in X_0$.  Since $o \in X_0$ is the unique point fixed by $K$, $K'$ contains a distinct positive-dimensional family of maximal compacts $K_{k(g(o))}$ fixing $k(g(o))$ as $k(g(o))$ varies over the $K$-orbit of $g(o)$, and it follows that $\dim_{\mathbb R}K' > \dim_{\mathbb R}K$, a plain contradiction.  We conclude that in fact $K' = G_0\cap P = K$, proving that $\xi: X_0 = G_0/K \longrightarrow G/P = X$ is injective, completing the proof of the Borel embedding theorem.
\end{proof}

\vskip 0.2cm
\subsubsection{The Harish-Chandra realization of \texorpdfstring{$X_0 = G_0/K$}{X0 = G0/K} as a bounded domain}
An irreducible Hermitian symmetric space of the noncompact type $X_0 = G_0/K$ in the notation in previous paragraphs admits a holomorphic embedding in a canonical way into its dual compact Hermitian symmetric space of the compact type $X = G/P$ by the Borel embedding.  By a theorem of Harish-Chandra, writing $n := \dim_{\mathbb C}X_0$, we have in fact $X_0 \cong \Omega \Subset \mathbb C^n \subset \widehat{\Omega} := X_c = G/P$, and we call $\Omega$ the Harish-Chandra realization of $X_0$.

\vskip 0.3cm
Essential for the understanding of $(X_0,g_0)$, $X_0 = G_0/K$ is the Polydisk theorem.
A crucial method for the study of $X_0 = G_0/K$ is the root space decomposition of the simple Lie algebra $\mathfrak g$ with respect to a
Cartan subalgebra $\mathfrak{h}$. Write $n$ for the complex dimension of $X_0$ and let $r = r(X_0)$ be the rank of $(X_0,g_0)$ as a Riemannian symmetric space of the noncompact type.  $(X_0,g_0)$ is a simply connected complete Riemannian manifold of nonpositive sectional curvature and hence a Cartan-Hadamard manifold.  Hence, writing ${\rm exp}: T^{\mathbb R}_o(X_0) \to X_0$ for the exponential map in the sense of Riemannian geometry (i.e., endowing $T^{\mathbb R}_o(X_0) \cong \mathbb R^{2n}$ with
``{\it polar coordinates\/}" so that lines passing through
the origin $T^{\mathbb R}_o(X_0)$ correspond to geodesics emanating from $o$),
${\rm exp}: T^{\mathbb R}_o(X_0) \to X_0$ is a diffeomorphism.
Identifying $T^{\mathbb R}_o(X_0) = T^{\mathbb R}(G_0/K) \cong \mathfrak m \subset \mathfrak g_0$, there exists an $r$-dimensional vector subspace $\mathfrak a \subset \mathfrak m$ which is a maximal abelian subalgebra, and ${\rm exp}|_{\mathfrak a}:\mathfrak a \to X_0$ is an isometry mapping $\mathfrak a$ onto a totally geodesic flat subspace $A \subset X_0$ of maximal dimension.

\vskip 0.2cm
So far we have not made use of the complex structure of $(X_0,g_0)$.  Using root space decomposition of $\mathfrak g$ with respect to $\mathfrak h$, one can prove that $A$ can be complexified to a totally geodesic $r$-dimensional complex submanifold $\Pi \subset X_0$ so that $(\Pi,g_0|_{\Pi})$ is biholomorphically isometric to a Cartesian product of $r$ identical copies of the Poincar\'e disk $(\Delta,ds_{\Delta}^2)$, i.e., the Gaussian curvature of each Cartesian factor is the same negative constant. Thus, the automorphism group of $\Pi$ preserves $g_0|_{\Pi}$ even when factors are permuted, and it is a nontrivial fact that all such automorphisms of $\Pi$ extend to automorphisms of $X_0$ (which are holomorphic isometries with respect to $g_0$).
The Polydisk theorem says that for any $r$-dimensional totally geodesic complex submanifold $\Pi \subset X_0$ passing through $o \in X_0$, $X_0$ is the union of translates $k\Pi \subset X_0$ as $k$ ranges over the isotropy subgroup $K \subset G_0$ of $(X_0,g_0)$ at $o \in X_0$.

\vskip 0.2cm
We proceed now to formulate and prove the Harish-Chandra embedding theorem.
Recall that $X = G/P$ and $X$ embeds as a projective submanifold of
$\mathbb P(\Gamma(X,\mathcal O(1))^*) =: \mathbb P^N$.
Then, $G$ acts as a group of projective linear transformations
on $\mathbb P^N$ preserving $X$.
Hence, $G \subset \mathbb P{\rm GL}(N+1,\mathbb C)$ is a complex algebraic group.
$P \subset G$ is the isotropy subgroup at $o$, so that $g\in P$ if and only if
$g\cdot o=o$. Hence $P$ is an algebraic subgroup.

\vskip 0.2cm
Recall the Harish-Chandra decomposition
$\mathfrak g = \mathfrak m^+ \oplus \mathfrak k^{\mathbb C} \oplus \mathfrak m^-$,
$\mathfrak p = \mathfrak k^{\mathbb C} \oplus \mathfrak m^- \subset \mathfrak g$
being a maximal parabolic subalgebra and
$\mathfrak m^- \subset\mathfrak p$ being the nilpotent radical.
Correspondingly we have the Levi decomposition $P=K^\mathbb{C}M^-$,
where $M^-=\exp(\mathfrak{m}^-)\subset P$ is the unipotent radical.
To see this, we note that, choosing a basis of $\mathfrak{g}$
consisting of any basis of the Cartan subalgebra $\mathfrak{h}$
and root vectors of $\mathfrak{g}$ with respect to $\mathfrak{h}$,
for any $x\in \mathfrak{m}^-$, $\exp({\rm ad}(x))$ is expressed as
a unipotent matrix in terms of the chosen basis,
and $\exp\vert_{\mathfrak{m}^-}: \mathfrak{m}^-\rightarrow G$
is a biregular homomorphism onto an algebraic subgroup $M^-=\exp(\mathfrak{m}^-)$.
Regarding $\mathfrak g = \mathfrak g_0\otimes_{\mathbb R}\mathbb C$
as the complexification of $\mathfrak g_0$,
we had the conjugation $\tau = \overline{\cdot}$ on $\mathfrak g$
with respect to which $\mathfrak g_0$ is the real part.
Taking conjugation with respect to $\tau$,
the Harish-Chandra decomposition transforms into
$\mathfrak g = \mathfrak m^- \oplus \mathfrak k^{\mathbb C} \oplus \mathfrak m^+$.
Thus $\mathfrak p':= \mathfrak k^{\mathbb C} \oplus \mathfrak m^+ \subset \mathfrak g$
is also a maximal parabolic subalgebra with $\mathfrak m^+ \subset \mathfrak p'$
being the nilpotent radical, and we have analogously the algebraic subgroup $M^+ = {\rm exp}(\mathfrak m^+)$.  We have now the Harish-Chandra embedding theorem, as follows.

\begin{theorem} \
Consider the mapping $\mu: M^+\times K^{\mathbb C}\times M^- \to G$ defined by $\mu(m^+,k,m^-) = m^+km^-$.  Then, $\mu$ maps $M^+\times K^{\mathbb C}\times M^-$ biregularly onto a Zariski open subset of the complex algebraic group $G$. Moreover, defining $\nu: M^+ \to G/P = X$ by $\nu(m^+) = \mu(m^+,e,e)\ {\rm mod}(P) \in X$, there is a relatively compact connected open subset $\Omega \subset M^+ \cong \mathbb C^n$ such that $\nu: \Omega \overset{\cong}\longrightarrow X_0 \subset G/P$ is a biholomorphism onto $X_0$.
\label{harish}
\end{theorem}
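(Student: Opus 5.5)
The plan has three stages: first show that $\mu$ is biregular onto a Zariski open set, then show $G_0\subset M^+K^{\mathbb C}M^-$, and finally identify $\Omega$ and prove that it is bounded.

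\textbf{Biregularity of $\mu$.} The differential of $\mu$ at $(e,e,e)$ is the map $(x,y,z)\mapsto x+y+z$ from $\mathfrak m^+\oplus\mathfrak k^{\mathbb C}\oplus\mathfrak m^-$ to $\mathfrak g$. By the Harish-Chandra decomposition this is an isomorphism. Left translation by $M^+$ and right translation by $K^{\mathbb C}M^- = P$ are equivariant for $\mu$, so $\mu$ is a local biholomorphism everywhere.

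Injectivity reduces to the trivial intersections $M^+\cap P = \{e\}$ and $K^{\mathbb C}\cap M^- = \{e\}$. The second is immediate from the Levi decomposition $P = K^{\mathbb C}M^-$. For the first, $M^+\cap P$ is an algebraic unipotent subgroup of $M^+$ whose Lie algebra is $\mathfrak m^+\cap\mathfrak p = 0$. It is therefore finite, and since unipotent groups are torsion-free, it is trivial.

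The image of $\mu$ is a constructible set containing an open neighbourhood of $e$, so it contains a Zariski dense open subset. To see that the image is itself Zariski open, I will use that it equals the preimage in $G$ of the open cell $M^+\cdot o\subset X$. This cell is the $M^+$-orbit of $o$; it is open and is the complement of a hyperplane section, namely the locus where the coordinate function of the highest weight vector dual to $o$ vanishes. Over this cell, $G\to X$ is trivialized as $M^+\times P$. An injective morphism of smooth varieties that is étale and has normal, open image is an isomorphism onto its image, which gives biregularity. Consequently $\nu$ identifies $M^+\cong\mathfrak m^+\cong\mathbb C^n$, via $\exp$, with the Zariski open cell of $X$.

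\textbf{$G_0\subset M^+K^{\mathbb C}M^-$.} This is where I expect the main obstacle. The route is through the Polydisk theorem. By the Polydisk theorem, $G_0 = K A K$ with $A = \exp(\mathfrak a)$. It therefore suffices to treat elements of $K$, which lie in $K^{\mathbb C}$, and elements of $A$.

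For $A$, choose $\mathfrak a$ spanned by $x_j+\overline{x_j}$, where the $x_j\in\mathfrak m^+$ are root vectors for strongly orthogonal noncompact roots $\gamma_1,\dots,\gamma_r$. The corresponding $\mathfrak{sl}_2$-triples commute with one another. The problem therefore reduces to $SL(2)$ or $PSL(2)$, where the explicit identity
$$\begin{pmatrix}\cosh t&\sinh t\\ \sinh t&\cosh t\end{pmatrix}=\begin{pmatrix}1&\tanh t\\0&1\end{pmatrix}\begin{pmatrix}(\cosh t)^{-1}&0\\0&\cosh t\end{pmatrix}\begin{pmatrix}1&0\\ \tanh t&1\end{pmatrix}$$
holds. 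Since $K^{\mathbb C}$ normalizes both $M^\pm$ and $K\subset K^{\mathbb C}$, this gives $KAK\subset M^+K^{\mathbb C}M^-$.

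\textbf{Identifying $\Omega$ and proving boundedness.} Define $\Omega := \nu^{-1}(\xi(X_0))$, where $\xi$ is the Borel embedding constructed earlier. The previous step shows that $\xi(X_0)$ lies in the open cell, so $\Omega$ is a connected open subset of $M^+$, and $\nu$ restricts to a biholomorphism $\Omega\cong X_0$.

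The same computation describes $\Omega$ explicitly as $\Omega = \mathrm{Ad}(K)\{\sum_j \tanh(t_j)x_j\}$. Since $\lvert\tanh t_j\rvert<1$ and $K$ is compact, $\Omega$ is relatively compact in $\mathfrak m^+\cong\mathbb C^n$.
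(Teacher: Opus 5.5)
Your proof is correct and follows essentially the same route as the paper. Injectivity comes from $M^+\cap P=\{e\}$, nondegeneracy of $d\mu$ is propagated everywhere by translation, and relative compactness of $\Omega$ comes from reducing via polydisks (your $G_0=KAK$ with strongly orthogonal roots) to the rank-one case and then using compactness of $K$. Where you differ, you only supply details the paper asserts or cites from Wolf: Zariski openness of the image as the preimage $M^+P$ of the open cell $M^+\cdot o$, and the explicit $\mathrm{SL}_2$ factorization giving $G_0\subset M^+K^{\mathbb C}M^-$, which is what ensures $X_0$ lies in the image of $\nu$ at all.
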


\begin{proof}
We check first of all that $\mu$ is injective.  Suppose for $i = 1,2$ and for
$(m_i^+,k_i,m_i^-) \in M^+\times K^{\mathbb C}\times M^-$
we have $m_1^+ k_1 m_1^- = m_2^+ k_2 m_2^-$.  Then $(k_1 m_1^-)(k_2 m_2^-)^{-1} = (m_1^+)^{-1}m_2^+$.  Thus $(m_1^{+})^{-1}m_2^{+} \in M^+ \cap P$. Since $M^+, P \subset G$ are algebraic subgroups, $M^+\cap P \subset G$ must be an algebraic subgroup of $M^+ \cong \mathbb C^n$. Now $T_e(M^+)\cap T_e(P)$ can be identified with $\mathfrak m^+\cap\mathfrak p = \mathfrak m^+\cap(\mathfrak k^{\mathbb C}\oplus\mathfrak m^{-}) = 0$, so $M^+\cap P\subset M^+$ is an algebraic subgroup of dimension 0, thus a finite subgroup, hence trivial as $M^+\cong \mathbb C^n$ has no non-trivial finite subgroups.

\vskip 0.2cm
At the identity $(e,e,e)$ of $M^+\times K^{\mathbb C}\times M^-$, identifying $T_e(M^+)$ with $\mathfrak m^+$, etc., $d\mu(e,e,e)$ is the identity map, hence $\mu$ is a local biholomorphism at $(e,e,e)$.  In general, viewing Lie subalgebras as left-invariant holomorphic vector fields, at the point $(m^+,k,m^-)$ we have
$$
\begin{gathered}
d\mu(\mathfrak m^+ + \mathfrak k^{\mathbb C} + \mathfrak m^-) = {\rm Ad}\left((m^-)^{-1}k^{-1}\right)(\mathfrak m^+) + {\rm Ad}\left((m^-)^{-1}\right)(\mathfrak k^{\mathbb C}) + \mathfrak m^- \\
= {\rm Ad}\left((m^-)^{-1}\right)(\mathfrak m^+) + \mathfrak k^{\mathbb C} + \mathfrak m^- = \mathfrak g\, .
\end{gathered}
$$
Here we made use of ${\rm Ad}\big((m^-)^{-1}\big)(\mathfrak m^-) = \mathfrak m^-$ and ${\rm Ad}\big((m^-)^{-1}\big)(\mathfrak p) = \mathfrak p$ (since $M^-, P \subset G$ are Lie subgroups) to deduce ${\rm Ad}\left((m^-)^{-1}\right)(\mathfrak k^{\mathbb C})  \ {\rm mod} \ \mathfrak m^- = \mathfrak k^{\mathbb C}$.
Likewise we made use of ${\rm Ad}\big((m^-)^{-1}\big)(\mathfrak p) = \mathfrak p$ and ${\rm Ad}\big((m^-)^{-1}\big)(\mathfrak g) = \mathfrak g$ to deduce ${\rm Ad}\big((m^-)^{-1}\big)(\mathfrak m^+) \ {\rm mod} \ \mathfrak p = \mathfrak m^+$.

\vskip 0.3cm
For the argument in the above in the case where $(X_0,g_0)$ is the Poincar\'e disk one checks in Wolf (cf.\,\cite{Wo1972}) that identifying $M^+$ with $\mathbb C$, with respect to the Borel embedding $X_0 \subset X = \mathbb P^1$, $X_0$ is the image of $\nu(\Delta)$ for the holomorphic map $\nu: M^+ \to G/P = X$ given by $\nu(m^+) = \mu(x,e,e) \ {\rm mod}\, P$, for the unit disk $\Delta \subset \mathbb C \cong M^+$.
The preceding arguments apply also for reducible Hermitian symmetric spaces of the noncompact type,
and, in the case of a Cartesian product of $r$ identical copies of the Poincar\'e disk, the Harish-Chandra realization then gives $\Delta^r \Subset \mathbb C^r \subset (\mathbb P^1)^r$.
Finally, from the Polydisk theorem it follows that, with respect to the embedding $\nu: M^+ \to X = G/P$ for a dual pair of irreducible Hermitian symmetric spaces $(X_0, X_c)$ of the noncompact resp.~compact type, $X_0 \subset X$
is the union of $k$-translates of a maximal polydisk $\Pi \subset X_0$.  In terms of the Euclidean coordinates on $M^+ \cong \mathbb C^n$, $X_0 = \nu(\Omega)$ where $\Omega$ is the union of $k(\Delta^r)$, where $k$ runs over the isotropy subgroup $K$ acting on $M^+$. Since $K$ is compact, it follows that $\overline{\Omega} = \bigcup_{k\in K}\big (k \overline{\Delta^r}\big) \subset \mathbb C^n$ is compact, hence $\Omega$ is relatively compact in $\mathbb C^n$, written $\Omega\Subset\mathbb C^n$ for the Harish-Chandra realization, and the proof of the theorem is complete.
\end{proof}

\begin{remark} \
\rm
\begin{enumerate}
\item{}
In terms of the Harish-Chandra coordinates, $K$  acts on $\mathbb C^n \cong M^+$ as a group of unitary transformations, hence its complexification $K^{\mathbb C}$ acts on $\mathbb C^n$ as a group of complex linear transformations.  The center $Z \cong \mathbb S^1$ acts as multiplication by $\{e^{i\theta}: \theta\in\mathbb R\}$, and as such $\Omega \Subset \mathbb C^n$ is a complete circular domain.
\item{}
By the Hermann convexity theorem (cf.\,Wolf \cite{Wo1972}), the Harish-Chandra realization $\Omega\Subset\mathbb C^n$ is a convex domain.  By Mok-Tsai \cite{MT1992}, any convex realization is up to a complex affine-linear transformation equivalent to the Harish-Chandra realization.
\item{}
As remarked, the unipotent radical $M^-$ of $P$ acts on $X$ as a group of biholomorphisms $g$ fixing $o \in X$ (which corresponds to $0 \in \mathbb C^n$) such that $dg(0) = \id_{\mathbb C^n}$.
\end{enumerate}
\end{remark}

\vskip 0.2cm
\subsubsection{Classification of bounded symmetric domains}
\label{classify-bsd}
Any bounded symmetric domain $\Omega \Subset \mathbb C^n$ is equipped with the Bergman metric $ds_{\Omega}^2$, which is complete, and $(\Omega,ds_{\Omega}^2)$ is a Hermitian symmetric space of the noncompact and semisimple type.  As such it decomposes into the Cartesian product of irreducible Hermitian symmetric spaces of the noncompact and semisimple type.
Thus any bounded symmetric domain $\Omega$ can be represented as $\Omega = \Omega_1 \times \cdots \times \Omega_s$, where for $1\le i \le s$, $\dim_{\mathbb C}\Omega_{i} = n_i$, and $\Omega_i \Subset \mathbb C^{n_i}$ is an irreducible bounded symmetric domain in its Harish-Chandra realization.
The classification of bounded symmetric domains up to biholomorphism amounts to classifying irreducible bounded symmetric domains.  We have the following listing of irreducible bounded symmetric domains in their Harish-Chandra realizations broken up into 4 classical series and 2 exceptional domains. For the classical series we also present the Harish-Chandra realizations of the bounded symmetric domains in the standard forms. We write an irreducible bounded symmetric domain as $\Omega \cong X_0 = G_0/K$, where $G_0$ is a simple centerless real Lie group which is the identity component $\Aut_0(\Omega)$ of the automorphism group $\Aut(\Omega)$.  In what follows, we present in the classical case $G_0$ as $G_1/\Psi$, where $G_1$ is a simple classical real Lie group in its standard form, and $\Psi \subset G_1$ is the finite center of $G_1$.  For instance in the case of the $n$-dimensional complex unit ball $\mathbb B^n$ we have
$\mathbb B^n = {\rm SU}(1,n)/{\mu_{n+1}\cdot I_{n+1}}$, where ${\rm SU}(p,q)$ stands for the special unitary group of $(p+q)$-by-$(p+q)$ matrices with complex coefficients which preserve the Hermitian bilinear form $H_{p,q}(\cdot,\overline{\cdot})$ of signature $(p,q)$
defined by $H_{p,q}\left(w;\overline{w'}\right) = \left(w_1\overline{w'_1} + \cdots + w_p\overline{w'_p}\right) - \left(w_{p+1}\overline{w'_{p+1}} + \cdots + w_{p+q}\overline{w'_{p+q}}\right)$, and for $k\ge 2$, $\mu_k$ stands for the multiplicative group of $k$-th roots of unity. In what follows we write $M(p,q;\mathbb C)$ for the complex vector space of $p$-by-$q$ matrices with complex coefficients, and $Z^T \in M(q,p;\mathbb C)$ for the transpose matrix of the matrix $Z \in M(p,q;\mathbb C)$.

\vskip 0.2cm
\noindent
\underline{List of irreducible bounded symmetric domains $\Omega \Subset\mathbb C^n$}
\vskip 0.2cm
\noindent We have the following complete list of irreducible bounded symmetric domains $\Omega$ in their Harish-Chandra realizations. In the list we write $r(\Omega)$ for
the rank of $\Omega$ as a Riemannian symmetric space.

\begin{enumerate}
\item{}
$D^I(p,q)= \{Z\in M(p,q;\mathbb C): I-\overline Z^T Z > 0\}, \ p,q\ge 1$, $G_1 = {\rm SU}(p,q)$, $r(\Omega) = {\rm min}(p,q)$;
\item{}
$D^{II}(n,n) = \{Z\in D^{I}(n,n): Z^T = -Z\}, \ n\ge 2$, $G_1 = {\rm SO}^{*}(2n)$, $r(\Omega) = \left[ \frac{n}{\phantom{.}2\phantom{.}}\right]$;
\item{}
$D^{III}(n,n) = \{Z\in D^{I}(n,n): Z^T = Z\}, \ n\ge 2$, $G_1 = {\rm Sp}(2n;\mathbb R)$, $r(\Omega) = n$;
\item{}
$D^{IV}_n:=\bigl\{z=(z_1,\dots,z_n)\in\mathbb{C}^n: \|z\|^2 < 2;$\\
$\|z\|^2 < 1 + \bigl|\tfrac12(z_1^2+\cdots+z_n^2)\bigr|^2\bigr\}$,
$n\ge 3$, $G_1 = {\rm SO}(n,2)$, $r(\Omega) = 2$;
\item{}
$D^{V}$ is an exceptional domain of complex dimension 16, where $G_1$ is an exceptional noncompact real Lie group of type $E_6$, $r(\Omega) = 2$;
\item{}
$D^{VI}$ is an exceptional domain of complex dimension 27, where $G_1$ is an exceptional noncompact real Lie group of type $E_7$, $r(\Omega) = 3$.
\end{enumerate}
\noindent Note that we also write $D^I_{p,q}:=D^I(p,q), D^{II}_n:=D^{II}(n,n)$ and $D^{III}_n:=D^{III}(n,n)$.

\begin{remark} \
\rm
\begin{enumerate}
\rm \item{} Here, in (2) for type-II domains, ${\rm SO}^{*}(2n)$ is a noncompact real form of ${\rm SO}(2n;\mathbb C)$ accompanying the compact real form ${\rm SO}(2n)$.

\item{}
In (3) for type-III domains, $D^{III}_n$ is biholomorphic to the Siegel upper half-plane $\mathcal H_n = \{\tau\in M(n,n;\mathbb C): \tau^T = \tau, \im(\tau) > 0\}$ via the inverse Cayley transformation given by
$Z = (\tau -iI_n)(\tau+iI_n)^{-1}$. The Siegel upper half-plane is the natural parameter space for principally polarized abelian varieties of dimension $n$ in view of the Riemann bilinear relations.
\item{}
In the case where $\Omega = D^{I}_{n,n}$, a maximal polydisk $\Pi \subset \Omega$ is given by all diagonal matrices
 ${\rm diag}(z_{11},\cdots,z_{nn})$ belonging to $\Omega$, i.e., those satisfying $|z_{11}| < 1$, $\cdots$, $|z_{nn}| < 1$. It is straightforward that for any
$n$-by-$n$ matrix $Z$ belonging to $\Omega$, there exist unitary matrices $U, V \in {\rm U}(n)$ such that $UZV^T = {\rm diag}(z_{11},\cdots, z_{nn}) \in \Pi$, which serves as an example illustrating the Polydisk theorem, viz., that $\Omega$ is the union of translates of $\Pi$ under $K$, which is the image of ${\rm U}(n)\times {\rm U}(n)$ in ${\rm GL}(M(n,n;\mathbb C))$ under the homomorphism $\Phi(U,V)(X) = UXV^T$ for $X \in M(n,n;\mathbb C) \cong T_0(\Omega)$. Note that here $Z$ is not assumed to be symmetric (nor Hermitian): the normal form for type-I domains with $p=q=n$ uses two unitary matrices $U,V$ via $UZV^T$, rather than a single unitary matrix as in the complex-symmetric type-III case.
\end{enumerate}
\end{remark}

\vskip 0.2cm
\subsubsection{Decomposition of \texorpdfstring{$\partial\Omega$}{partial Omega} into \texorpdfstring{$G_0$}{G0}-orbits}
\label{bdy-orbits}
For an irreducible bounded symmetric domain $\Omega \Subset \mathbb C^n$ in its Harish-Chandra realization of rank $r = r(\Omega)$,
there is a decomposition of its boundary $\partial\Omega$ in $\mathbb{C}^n$, which is semi-algebraic, into the disjoint union of $r$ $G_0$-orbits $E_0, E_1, \ldots, E_{r-1} = \Reg(\partial\Omega)$, where $E_k \subset \overline{E}_{k+1}$ for $0\le k < r-1$, so that for $0\le k\le r-1$, $\overline{E_k}$ is the disjoint union $E_0\cup E_1 \cup \cdots\cup E_k$. The deepest stratum $E_0$ of $\partial\Omega$ is the Shilov boundary ${\rm Sh}(\Omega)$ of $\Omega$,
i.e., the smallest closed subset of $\overline{\Omega}$ where all complex continuous functions $f$ on $\overline{\Omega}$ holomorphic on $\Omega$ attain the maxima of their absolute values (i.e., ${\rm sup}\{|f(z)|: z \in\overline{\Omega}\}$).  For $0\le k\le r-1$, $E_k$ decomposes into the disjoint union of complex submanifolds $\Phi \subset E_k$ which are biholomorphic to an irreducible bounded symmetric domain of rank $k$.
Furthermore $\Phi$ is a connected open subset of a complex affine subspace $A$ whose topological closure $\overline{A}$ is biholomorphic to the Hermitian symmetric space of the compact type dual to $\Phi$, and we will write $\widehat{\Phi} :=\overline{A}$.
Moreover, $\Phi\Subset A\subset\widehat{\Phi}$ is isomorphic to $\Omega_k\Subset \mathbb C^{n_k}\subset\widehat{\Omega}_k$ in the sense that there is a biholomorphism $\nu: \widehat{\Phi} \overset{\cong}\longrightarrow \widehat{\Omega}_k$ such that $\nu(\Phi) = \Omega_k$ and $\nu(A) = \mathbb C^{n_k}$, where $\Omega_k \Subset \mathbb C^{n_k}$ is an irreducible bounded symmetric domain of rank $k$ and of dimension $n_k\ge 0$ in its Harish-Chandra realization. In this article we will also refer to boundary components on $\partial\Omega$ as faces. Thus, $\Reg(\partial\Omega)$ decomposes into the disjoint union of maximal faces.

\vskip 0.2cm
\subsection{Quotients of a bounded symmetric domain by an irreducible lattice of automorphisms}
\label{quotients-lattice}
Let now $\Omega$ be a bounded symmetric domain of rank $\ge 2$ and $\Gamma \subset \Aut(\Omega)$ be a torsion-free irreducible lattice, so that $X_\Gamma :=\Omega/\Gamma$ carries a canonical structure as a quasi-projective manifold.
Let $D \Subset \mathbb C^N$ be a bounded domain, $\Gamma' \subset \Aut(D)$ be a torsion-free discrete subgroup such that $Y_{\Gamma'} = D/\Gamma'$ is biholomorphic to a quasi-projective variety.  Let $f: X_{\Gamma} \to Y_{\Gamma'}$ be a holomorphic map which induces an isomorphism $\Phi:= f_*: \Gamma = \pi_1(X_\Gamma) \overset{\cong}\longrightarrow \Gamma' \subset \Aut(D)$, i.e., the lifting $F: \Omega \to D$ is a $\Phi$-equivariant holomorphic map for $\Phi$ defined by $F(\gamma z) = \Phi(\gamma)(F(z))$ for any $\gamma \in \Gamma$, and $\Phi: \Gamma \to \Aut(D)$ maps $\Gamma$ bijectively onto $\Gamma'$.
In a recent article of the authors, we proved a result called the Isomorphism Theorem, which implies that $F: \Omega \to D$ is necessarily a biholomorphism. The study of rigidity results on lattices of a Riemannian symmetric space of negative Ricci curvature started with Mostow's rigidity theorem \cite{Mos1973} and, in the case of irreducible lattices of rank $\ge 2$, Margulis' arithmeticity theorem says that the lattice must be arithmetic \cite{Mar1984}, while Margulis' superrigidity theorem says that, denoting by $G_0 = \Aut_0(X_0)$ resp.\,$G_0' = \Aut_0(X_0')$ the identity components of the automorphism groups of Riemannian symmetric spaces of the semisimple and noncompact type $(X_0,g_0)$ resp.\,$(X_0',g_0')$, any homomorphism $\Phi: \Gamma \to G_0'$ of the discrete subgroup $\Gamma \subset G_0$ with infinite image in a real semisimple Lie group $G_0'$ of the noncompact type must lift to a group homomorphism $\widetilde{\Phi} :G_0 \to G_0'$ (cf. Margulis \cite{Mar1991}).

\vskip 0.2cm
In the study of higher-dimensional analogues of the uniformization theorem generalizing the hyperbolic case of  algebraic curves of genus $\geq 2$, the first result concerning compact K\"ahler manifolds by imposing topological assumptions was the following strong rigidity theorem of Siu \cite{Siu1980} (1980).

\begin{theorem} {\rm (Siu \cite{Siu1980, Siu1981})} \
Let $\Omega$ be an irreducible bounded symmetric domain of complex dimension $\ge 2$ and $\Gamma \subset \Aut(\Omega)$ be a torsion-free cocompact lattice.  Writing $X_\Gamma := \Omega/\Gamma$, let $Z$ be a compact K\"ahler manifold homeomorphic to $X_\Gamma$.  Then, $Z$ is either biholomorphic or conjugate biholomorphic to $X_{\Gamma}$.
\label{strong-rigid}
\end{theorem}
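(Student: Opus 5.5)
Since $\Omega$ is contractible, $X_\Gamma$ is a $K(\Gamma,1)$. Hence any homeomorphism $h: Z \to X_\Gamma$ is a homotopy equivalence, and it induces an isomorphism $\pi_1(Z)\cong\Gamma$. The plan is Siu's harmonic map method. First, by Eells--Sampson (the target $X_\Gamma$ is compact with nonpositive sectional curvature), $h$ is homotopic to a harmonic map $f: Z \to X_\Gamma$; its lifting $\widetilde f:\widetilde Z\to\Omega$ is equivariant with respect to the induced isomorphism on $\pi_1$. Second, I would show that $f$ is either holomorphic or antiholomorphic. Third, I would show that $f$ is then a biholomorphism (or conjugate biholomorphism).

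For the second step, apply Siu's $\partial\overline\partial$-Bochner formula to the $(1,0)$ part $\overline\partial f$ of the differential. With $\omega$ the Kähler form of $Z$ and $m=\dim_{\mathbb C}Z = n$, integrate
\[
\partial\overline\partial\Big( R_{\alpha\bar\beta\gamma\bar\delta}\,\overline\partial f^{\alpha}\wedge\overline{\overline\partial f^{\beta}}\wedge\ldots\Big)\wedge\omega^{n-2}
\]
over the compact manifold $Z$, using harmonicity $\overline\partial^{*}$-type equations. One obtains
\[
0=\int_Z \big|\nabla''\overline\partial f\big|^2\wedge\omega^{n-2} \;+\; \int_Z R\big(\overline\partial f\wedge\overline{\partial f}\big)\wedge\omega^{n-2},
\]
where both terms have a sign because $\Omega$ has nonpositive curvature in the strong sense. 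Hence the curvature term vanishes pointwise: at each point, the image $d\widetilde f(T^{1,0})$ satisfies $R(X,\overline Y,\ldots)=0$ on the complexified image.

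The key input, and the main obstacle, is Siu's \emph{strong negativity} of the curvature of irreducible bounded symmetric domains of dimension $\ge 2$. Strong nonpositivity holds, and its zero locus is described by the Lie-algebraic condition $R(X\wedge \overline{Y})=0$. In the irreducible case, one must show that the vanishing above forces $\overline\partial f$ or $\partial f$ to vanish identically wherever the real rank of $df$ is large, specifically when $\operatorname{rank}_{\mathbb R} df > 2\cdot(\text{maximal dimension of a "flat" subspace})$. I would verify this with the Harish-Chandra decomposition $\mathfrak g=\mathfrak m^+\oplus\mathfrak k^{\mathbb C}\oplus\mathfrak m^-$ and the Polydisk theorem. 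Concretely, one bounds the dimension of complex subspaces $V\subset\mathfrak m^+$ with $[V,\overline V']=0$ in $\mathfrak k^{\mathbb C}$. In the ball case $n\ge2$ this is the simple strong-negativity computation; in general it needs case analysis over the classification list. Since $f$ has degree one, $df$ has maximal rank $2n$ on a dense open set. Using that $\partial f$ and $\overline\partial f$ are holomorphic-type sections satisfying unique continuation, one concludes that $\overline\partial f\equiv0$ or $\partial f\equiv 0$ globally.

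Third, say $f$ is holomorphic (the other case is handled by conjugating the complex structure of $Z$). Then $f$ is a holomorphic map of degree $\pm1$ between compact complex manifolds of the same dimension. Its branch locus is the zero divisor of the Jacobian. If that divisor were nonempty, $f$ would have local degree $>1$ there, since holomorphic maps are orientation-preserving with positive local degrees; this contradicts degree one combined with properness. Therefore $f$ is unramified, hence a covering of degree $1$, hence a biholomorphism.
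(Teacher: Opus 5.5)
Your first two steps follow the same route as the paper's sketch: Eells--Sampson, Siu's $\partial\overline\partial$-Bochner--Kodaira formula, and then the Siu--Zhong rank criterion, which both you and the paper cite as a black box. Two small imprecisions there. First, what the Bochner formula yields pointwise is that $df(T^{1,0}_zZ)$ is an abelian subspace of $\mathfrak m^{\mathbb C}$. That is an antisymmetrized condition mixing $\partial f$ and $\overline\partial f$, not $[V,\overline{V'}]=0$ termwise. Second, degree one only gives full rank of $df$ on a nonempty open set, not a dense one, but that is all your unique-continuation argument needs.

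The genuine gap is in your third step. You use the principle that a proper holomorphic map of degree one between compact complex manifolds of the same dimension is unramified, because it would have local degree $>1$ along the Jacobian divisor. This principle is false. The blow-down $\mathrm{Bl}_p(W)\to W$ of a compact complex manifold $W$ of dimension $\ge 2$ at a point has degree one and a nonempty Jacobian divisor (the exceptional divisor), yet it is not a biholomorphism. Local degree is defined, and is $\ge 2$ on the Jacobian divisor, only at points that are isolated in their fibres. So your argument tacitly assumes that $f$ is finite, and nothing you have said rules out $f$ contracting positive-dimensional subvarieties.

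This is exactly the step the paper supplies: before counting degrees, show that $f$ has no positive-dimensional fibres. Suppose $f$ contracted an irreducible subvariety $V\subset Z$ of dimension $k\ge 1$ to a point. Then $[V]\neq 0$ in $H_{2k}(Z,\mathbb R)$, because $Z$ is K\"ahler and $\int_V\omega^k>0$, whereas $f_*[V]=0$. This contradicts the fact that $f$, being homotopic to a homeomorphism, induces an isomorphism on homology.

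Once finiteness is established, your local-degree argument does go through. A finite holomorphic germ with vanishing Jacobian at $p$ has multiplicity $\ge 2$ at $p$, which is incompatible with degree one. Hence $f$ is an unramified covering of degree one, i.e.\ a biholomorphism, or a conjugate biholomorphism in the antiholomorphic case. Note that the K\"ahler hypothesis on $Z$ enters here a second time, not only through the Bochner argument.
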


For irreducible bounded symmetric domains $\Omega$ of rank $\ge 2$ and torsion-free lattices $\Gamma$ of automorphisms on $\Omega$ we have the following Hermitian metric rigidity theorem of Mok \cite{Mok1987} (1987) and To \cite{To1989} (1989).

\begin{theorem} {\rm (Mok \cite{Mok1987}, To \cite{To1989})} \
Let $\Omega$ be an irreducible bounded symmetric domain of ${\rm rank}\ge 2$ and $\Gamma \subset \Aut(\Omega)$ be a torsion-free lattice.
Write $n=\dim_\mathbb{C}\Omega$ and denote by $g_{\Omega}$ the canonical complete K\"ahler-Einstein metric on $\Omega$ of
constant Ricci curvature $-(n+1)$ and by $g$ its quotient K\"ahler-Einstein metric on $X_\Gamma$. Let $h$ be a Hermitian metric on $X_\Gamma$ of nonpositive curvature in the sense of Griffiths. Then, there exists a constant $\lambda > 0$ such that $h \equiv\lambda g$ on $X_\Gamma$.
\label{hermite-rigid}
\end{theorem}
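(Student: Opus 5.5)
We follow the strategy of Mok's metric rigidity argument. Work on $\Omega$ with the lifted metrics $g_\Omega$ and $h$ (both $\Gamma$-invariant). For each unit $(1,0)$-vector $\alpha$ at $x$, let $\mathcal N_\alpha = \{\zeta : R^{g}_{\alpha\bar\alpha\zeta\bar\zeta}=0\}$ be the null space of the Hermitian bisectional curvature of $g_\Omega$. Since $\operatorname{rank}\Omega\ge 2$, $\mathcal N_\alpha\neq 0$ for every $\alpha$. The key object is the endomorphism of $T_\Omega$ relating $h$ to $g$; equivalently, consider the section $H:=h\cdot g^{-1}$, i.e. the Hermitian form $h$ viewed relative to $g$.

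\textbf{Step 1 (integral formula).} Consider the Chern curvature $\Theta_h$ of $h$. Compute on $\mathbb P(T_{X_\Gamma})$ with the tautological line bundle, or directly, the integral
\[\int_{\mathbb P T_{X_\Gamma}} \big(\text{Griffiths curvature of } h \text{ paired against a } (1,1)\text{-form built from } g\big)\wedge \omega^{N},\]
and compare it with the same expression for $g$. Mok's approach uses the identity that $\sqrt{-1}\partial\bar\partial\log\|\cdot\|^2_h$ on $\mathbb P T$ differs from the $g$-version by $\sqrt{-1}\partial\bar\partial \log(\|\cdot\|_h^2/\|\cdot\|_g^2)$. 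The latter is exact, so it integrates to zero against closed forms built from $g$ (which are parallel, hence closed). This yields an integral of $R^h_{\alpha\bar\alpha\zeta\bar\zeta}$, with $\zeta$ ranging over characteristic directions, equal to the corresponding $g$-integral, which vanishes.

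\textbf{Step 2 (vanishing and consequences).} Nonpositivity in the Griffiths sense gives $R^h_{\alpha\bar\alpha\zeta\bar\zeta}=0$ whenever $\zeta\in\mathcal N_\alpha$ (for characteristic $\alpha$). The second fundamental form of the characteristic subbundle $\mathcal S\subset\mathbb PT$ (the variety of minimal rational tangents) with respect to $h$ therefore vanishes along null directions. From this we deduce, via a Cauchy–Schwarz/second-variation argument, that the $h$-norm restricted to $\mathcal S$ is determined by a function that is parallel along the distributions $\mathcal N_\alpha$.

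\textbf{Step 3 (rigidity).} Use the ergodicity of the $\Gamma$-action on the bundle of characteristic vectors (Moore's ergodicity theorem, or, in Mok's original treatment, irreducibility plus holonomy) to conclude that $\|\alpha\|_h^2/\|\alpha\|_g^2$ is constant on $\mathcal S$. Since $\mathcal S$ spans $T_x$ and $K$ acts irreducibly, the two Hermitian forms are proportional at each point with a constant factor, giving $h\equiv\lambda g$.

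\textbf{Main obstacle.} The main difficulty is the integration by parts in Step 1 when $\Gamma$ is non-uniform. Here we must control $\log(\|\cdot\|_h/\|\cdot\|_g)$ near the cusps; this is To's contribution. I would attempt it using cut-off functions adapted to the Siegel-domain description of cusps, together with a Schwarz-lemma estimate $h\le Cg$ valid for nonpositively curved $h$ (via Ahlfors–Yau). The aim is to show that the boundary terms vanish in the limit, using the finite volume of $X_\Gamma$.
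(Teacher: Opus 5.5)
\paragraph{How your route compares with the paper's.}
For compact $X_\Gamma$ your outline is not the argument the paper gives for this theorem. It is the one the paper sketches for Finsler metric rigidity (Theorem~\ref{finsler}): show that $u:=\log(\widehat h/\widehat g)$ is constant on the leaves of the foliation of $\mathscr C(X_\Gamma)$ whose tangent space at $[\alpha]$ is the lift $\mathbf N_{[\alpha]}$ of $\mathcal N_\alpha$, get a dense leaf from Moore's theorem, and polarize. The paper instead does the following:
\begin{itemize}
\item it passes to $g+h$;
\item from the vanishing integrand of $(\dagger\dagger)$ and the Gauss equation for the diagonal subbundle $(T,g+h)\hookrightarrow(T,g)\oplus(T,h)$ it reads off $\nabla_\zeta h_{\alpha\overline\eta}=0$ for $\zeta\in\mathcal N_\alpha$;
\item it polarizes this to $\nabla h\equiv 0$ and concludes by irreducibility.
\end{itemize}
After the integral formula everything there is pointwise, and no ergodic theory is used. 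Your route uses that $h$ is Hermitian only at the last step, which is why it extends to Finsler metrics.

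\paragraph{Gaps in the compact case.}
The route is viable, but as written its load-bearing steps are missing.
\begin{itemize}
\item \emph{The integral formula.} The integral must be over $\mathscr C(X_\Gamma)$, not $\mathbb PT_{X_\Gamma}$, against $\Xi:=(-c_1(L,\widehat g))^{2n-2q-1}\wedge(\mu_\Gamma^*\omega)^{q-1}$. The $g$-side vanishes because $-c_1(L,\widehat g)$ restricted to $\mathscr C(X_\Gamma)$ has the $q$-dimensional kernel $\mathbf N_{[\alpha]}$. This rests on the $g$-horizontal lifts of $\mathcal N_\alpha$ being tangent to $\mathscr C(X_\Gamma)$ (constant sections over the totally geodesic $\Delta\times\Omega^\flat$). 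On all of $\mathbb PT_{X_\Gamma}$ there is no such degeneracy.
\item \emph{From vanishing to constancy on leaves.} The vanishing integrand gives $c_1(L,\widehat h)|_{\mathbf N}=0$. This is more than $R^h_{\alpha\overline\alpha\zeta\overline\zeta}=0$, because a $g$-horizontal lift has an $h$-vertical component contributing a further nonnegative term. It amounts to $\sqrt{-1}\partial\overline\partial u|_{\mathbf N}=0$, so $u$ is only pluriharmonic on the leaves, which are copies of the noncompact $\Omega^\flat$. Constancy on leaves needs a second integration by parts:
\[
\int_{\mathscr C(X_\Gamma)}\sqrt{-1}\,\partial u\wedge\overline{\partial}u\wedge\Xi
=-\int_{\mathscr C(X_\Gamma)}u\,\sqrt{-1}\,\partial\overline{\partial}u\wedge\Xi=0 .
\]
Your ``second fundamental form of $\mathcal S$''/Cauchy--Schwarz step does not supply this.
\item \emph{Where ergodicity applies.} What you need is ergodicity of $\Gamma$ on the leaf space, equivalently a dense leaf of $\mathscr F_\Gamma$. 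It is not ergodicity on $\mathscr C(\Omega)$, where $\Gamma$ acts properly discontinuously.
\item \emph{The final step.} Two Hermitian forms agreeing on a spanning set need not coincide. You need polarization on the irreducible cone over $\mathscr C_x(\Omega)$; irreducibility of $K$ does not help, since $K$ does not preserve $h$.
\end{itemize}

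\paragraph{The non-uniform case.}
Your proposed fix fails. The Ahlfors--Schwarz--Yau lemma for $\id:(X_\Gamma,g)\to(X_\Gamma,h)$ gives $h\le Cg$ only if the curvature of the target metric $h$ is bounded above by a \emph{negative} constant. Griffiths seminegativity gives only $\le 0$, so no a priori bound $h\le Cg$ follows. This is exactly why Mok's noncompact result assumes $h\le Cg$ (or $h$ K\"ahler), and why To's general case needs the Satake--Baily--Borel compactification.

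Even granting $h\le Cg$, the Stokes arguments above need $u$ bounded below and $du$ controlled near the cusps; compare the extra Lipschitz hypothesis in the non-uniform case of Theorem~\ref{finsler}. Working with $g+h$ instead of $h$, as the paper does, at least makes $u\ge 0$ automatic. The paper itself proves only the compact case and refers the general case to To.
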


Theorem \ref{hermite-rigid} was proven by Mok \cite{Mok1987, Mok1989} for $X_\Gamma$ compact and for $X_\Gamma$ noncompact (and of finite volume) under the assumption that either $h \le Cg$ for some constant $C > 0$ or $h$ is K\"ahler. To \cite{To1989} proved the general case for $X_\Gamma$ noncompact by making use of the Satake-Baily-Borel compactification of Satake \cite{Sa1960} (1960) and Baily-Borel \cite{BB1966} (1966).

\vskip 0.2cm
\section{A uniformization theorem under a topological and analytic hypothesis for complete K\"ahler-Einstein manifolds of finite-volume}
\vskip 0.2cm
\subsection{Statement of a rigidity result concerning irreducible lattices \texorpdfstring{$\Gamma \subset\Aut(\Omega)$}{Gamma subset Aut(Omega)} of rank \texorpdfstring{$\ge 2$}{>= 2}, K\"ahler-Einstein metrics and bounded holomorphic functions}
\label{rigidity-statement}
We start with the statement of a uniformization theorem characterizing $X_\Gamma$ biholomorphically among complete K\"ahler-Einstein manifolds of finite volume. The following result is not stated explicitly in Mok-Wong \cite{MW2025} but it follows from the proof of the {\it Isomorphism Theorem\/} of the article.

\begin{theorem} \
Let $\Omega$ be a bounded symmetric domain of rank $\ge 2$,
$\Gamma \subset \Aut(\Omega)$ be a torsion-free irreducible lattice,
 and write $X_\Gamma := \Omega/\Gamma$.
 Let $(M,h_M)$ be a simply connected complete K\"ahler-Einstein manifold of
 negative Ricci curvature, and $\Gamma' \subset \Aut(M)$ be a torsion-free
 discrete subgroup such that, writing $Y_{\Gamma'} := M/\Gamma'$ and denoting
 by $h_{Y_{\Gamma'}}$ the K\"ahler-Einstein metric inherited from $h_M$,
 we have ${\rm Volume}\left(Y_{\Gamma'},h_{Y_{\Gamma'}}\right) < \infty$.
 Let $f: X_\Gamma \to Y_{\Gamma'}$ be a holomorphic map such that $f_*: \Gamma = \pi_1(X_\Gamma) \overset{\cong}\longrightarrow \pi_1(Y_{\Gamma'}) =: \Gamma'$ is a group isomorphism.  Assume furthermore that $M$ is Carath\'eodory hyperbolic.  Then, $f: X_\Gamma \overset{\cong}\longrightarrow Y_{\Gamma'}$ is a biholomorphism, hence its lifting $F: \Omega  \overset{\cong}\longrightarrow M$ is also a biholomorphism.
\label{isom}
\end{theorem}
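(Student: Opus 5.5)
We argue in three stages. First we show that $F$ is nondegenerate, i.e. of maximal rank somewhere. Next we show that it is a local isometry up to a constant, via metric rigidity. Finally we conclude that it is a covering map, hence a biholomorphism.

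\textbf{Step 1: $dF$ is injective at some point.} Since $M$ is Carath\'eodory hyperbolic, the Carath\'eodory pseudometric $\kappa_M$ is a continuous complex Finsler metric invariant under $\Aut(M)$, hence under $\Gamma'$. Pulling back by $F$ gives $F^*\kappa_M$, a $\Gamma$-invariant complex Finsler pseudometric on $\Omega$. It is dominated by $\kappa_\Omega$, hence by $c\,g_\Omega^{1/2}$, and so it descends to $X_\Gamma$ as a bounded object.

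We would first average $F^*\kappa_M$ over the compact isotropy (or use its plurisubharmonic log-norm structure) to produce a Hermitian pseudometric of nonpositive curvature. The Hermitian metric rigidity theorem (Theorem \ref{hermite-rigid}, in the bounded form $h\le Cg$ due to Mok) then forces it to be either identically zero or a positive multiple of $g$. It cannot vanish identically: otherwise every bounded holomorphic function on $M$ composed with $F$ would be constant. Combined with Carath\'eodory hyperbolicity, this would make $F$ constant, contradicting the fact that $\Phi$ is an isomorphism onto the infinite group $\Gamma'$ acting freely.

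A cleaner route for the rank-$\ge2$ case is the one suggested in the abstract. Take bounded holomorphic functions $u=\phi\circ F$ for $\phi\in H^\infty(M)$, with $|\phi|<1$. Then study the $\Gamma$-equivariant family of such functions through harmonic analysis on $\Omega$ (Poisson/Furstenberg boundary values on the Shilov boundary $E_0$). Use Moore ergodicity of the action of $\Gamma$ on $G_0/P_{\min}$, and on the boundary components $E_k$, to show that the boundary values of $u$ are constant along maximal faces in $\Reg(\partial\Omega)$. This forces a rigid structure: $F$ is a local biholomorphism, or $F^*\kappa_M$ is a multiple of $\kappa_\Omega$.

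\textbf{Step 2: $F$ is a local biholomorphic isometry up to scaling.} Once $F$ is generically of maximal rank, compare volume forms. Let $\omega_M$ be the K\"ahler form of $h_M$, normalized to Ricci $-(N+1)$ where needed. By the Ahlfors--Schwarz--Yau lemma for maps into a K\"ahler--Einstein manifold (with Carath\'eodory hyperbolicity giving the needed negativity of holomorphic sectional curvature bounds, or using the Bergman/Carath\'eodory comparison), $F^*dV_M\le C\,dV_\Omega$. In particular $\dim M\ge n$ at nondegenerate points.

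Next use finite volume on both sides with $f_*$ an isomorphism. The degree of the induced map on top $L^2$-cohomology/volume gives $\int_{X_\Gamma}f^*dV_{Y}=\deg(f)\,\mathrm{Vol}(Y_{\Gamma'})$. When $\dim_{\mathbb C} M=n$, applying Yau's Schwarz lemma in the equality case shows that $f$ is a local isometry. When $\dim M> n$, we instead take $h:=F^*h_M$, which is Griffiths seminegative on the image in the appropriate sense, being a restriction of a Kähler--Einstein metric dominated via $\kappa_M$. Theorem \ref{hermite-rigid} then gives $F^*h_M=\lambda g_\Omega$, so $F$ is a totally geodesic-type isometric immersion.

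\textbf{Step 3: bijectivity.} An isometric immersion $F$ from a complete manifold is proper onto its image. Because $\Phi$ is an isomorphism, and by a finite-volume comparison (the Einstein constants match, so the volume of the image is at least $\lambda^n\mathrm{Vol}(X_\Gamma)$), the image of $f$ is all of $Y_{\Gamma'}$ and $\dim M=n$. Then $f$ is a covering map inducing an isomorphism on $\pi_1$, hence of degree one, i.e. a biholomorphism, and so is its lift $F$.

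The main obstacle is Step 1 together with the passage from the Finsler pseudometric $F^*\kappa_M$ to a genuine Hermitian metric to which Theorem \ref{hermite-rigid} applies, particularly near the cusps. This is where the ergodic-theoretic argument on boundary faces is needed, and I would attempt it first via the boundary-value/Moore-ergodicity approach.
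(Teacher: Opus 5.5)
There is a genuine gap, and it is in Steps 2 and 3. In Step 2 you apply Theorem \ref{hermite-rigid} to $F^*h_M$ because it is supposedly Griffiths seminegative, but nothing gives this. $h_M$ is only K\"ahler--Einstein of negative Ricci curvature, which puts no sign on bisectional curvature. The bound $\kappa_M\le\frac{m+1}{2}h_M$ of Lemma \ref{comp} compares sizes, not curvatures.

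The only nonpositively curved object available is the Finsler metric $g+f^*\kappa_{Y_{\Gamma'}}$. Averaging $F^*\kappa_M$ over $K$, as you suggest, destroys $\Gamma$-invariance, since $K$ does not normalize $\Gamma$. There is also no general way to turn a nonpositively curved Finsler metric into a nonpositively curved Hermitian one. Finsler metric rigidity controls only minimal rational tangent vectors, so it gives at most the Embedding Theorem (Theorem \ref{cara-embed}): $F$ is an injective immersion.

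Step 3 then fails outright, for three reasons:
\begin{enumerate}
\item An isometric immersion of a complete manifold need not be proper onto its image.
\item The formula $\int_{X_\Gamma}f^*dV_Y=\deg(f)\,{\rm Volume}(Y_{\Gamma'})$ presupposes a proper equidimensional map.
\item A lower bound on the volume of the $n$-dimensional image $f(X_\Gamma)$ says nothing about the volume of the $m$-dimensional $Y_{\Gamma'}$ when $m=\dim_{\mathbb C}M>n$.
\end{enumerate}
So your outline never excludes $F$ being a holomorphic embedding of $\Omega$ into a higher-dimensional $M$. That is exactly what the finite-volume hypothesis must rule out, and you never use that hypothesis effectively.

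The missing idea is a holomorphic retraction. The paper first proves the Extension Theorem (Theorem \ref{extension}): there is a bounded holomorphic $R:M\to\mathbb C^n$ with $R\circ F=\id_\Omega$. Harmonic analysis and ergodicity enter here, but not as in your Step 1.
\begin{enumerate}
\item Generalized Fatou on the balls $V_{b'}\cong\mathbb B^{p+1}$ gives Cayley limits $\rho_{\Phi,\Psi}^*s_\Phi$ whose face functions $s_\Phi$ are \emph{nonconstant} (Lemma \ref{positive-measure}, Proposition \ref{nontrivial-pullback}). Boundary values constant along faces, which is what you aim for, would be useless.
\item Density of $\Gamma gH$ for the hyperbolic flow $H$ (Moore) puts these $H$-invariant functions, and then all their $G_0$-translates, into $\mathscr F=F^*H^\infty(M)$ (Propositions \ref{pulled-back} and \ref{group-invariance}).
\item Averaging $k\Lambda(k^{-1}z)$ over $K$ and using Cartan's theorem and Schur's lemma gives $\id_\Omega\in\mathscr F^n$.
\end{enumerate}

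Next, Lemma \ref{lipz} says a nonnegative, bounded, uniformly Lipschitz plurisubharmonic function on a complete finite-volume K\"ahler manifold is constant. Applied to suprema over $\Gamma'$-orbits, it shows $R(M)\subset\Omega$ and $R(\gamma w)=\gamma R(w)$. So $R$ descends to $\rho:Y_{\Gamma'}\to X_\Gamma$ with $\rho\circ f=\id_{X_\Gamma}$.

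Finally, suppose a general fiber of $\rho$ is positive-dimensional. Then $\rho^*\omega_{X_\Gamma}\le C\omega_{Y_{\Gamma'}}$, the finite volume of $Y_{\Gamma'}$ and the coarea formula make that fiber of finite volume. Since $\rho_*$ is an isomorphism on $\pi_1$, its components lift biholomorphically to $M$. Carath\'eodory hyperbolicity then gives a bounded holomorphic function nonconstant on such a component, contradicting Lemma \ref{lipz}.

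Hence $\rho$ has discrete general fibers, $f$ is an open embedding, and $f\circ\rho=\id$ on $f(X_\Gamma)$ extends to all of $Y_{\Gamma'}$ by the identity theorem. This retraction is what converts finite volume into surjectivity and $\dim_{\mathbb C}M=n$. Without it, or some substitute, your Steps 2 and 3 cannot get there.
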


\begin{remark} \
\begin{enumerate}
\rm
\item{} Without requiring that $M$ is simply connected, Theorem \ref{isom} can be slightly rephrased as follows.
Let $\Gamma' \subset \Aut(M)$ be a torsion-free discrete subgroup of automorphisms and write $Y_{\Gamma'} := M/\Gamma'$.
Let $f: X_\Gamma \to Y_{\Gamma'}$ be a holomorphic map which lifts to a $\Phi$-equivariant holomorphic map $F: \Omega \to M$,
where the group isomorphism $\Phi: \Gamma \overset{\cong}\longrightarrow \Gamma'$ is defined by $F(\gamma z) = \Phi(\gamma)F(z)$,
and where $F$ may {\it a priori\/} be a lifting of $f$ to an intermediate covering of $Y_{\Gamma'}$.
In this more general setting the analogue of Theorem \ref{isom} remains valid: $F:\Omega \overset{\cong}\longrightarrow M$ is a biholomorphism,
so that $M$ is {\it a posteriori\/} simply connected.

\item{}
In \cite{MW2025} the target is a quotient $Y_{\Gamma'} = D/\Gamma'$ of a bounded domain $D$
(more generally, a bounded domain on a Stein manifold),
and the finite-volume hypothesis is imposed with respect to
the Kobayashi-Eisenmann volume form $d\mu$ of $Y_{\Gamma'}$,
which is valid whenever $Y_{\Gamma'}$ is biholomorphic to a quasi-projective manifold
by the Schwarz Lemma for volume forms.
The formulation of Theorem \ref{isom} in terms of complete K\"ahler-Einstein manifolds of finite volume
is a variant following from the same arguments.

\item{}
When $M$ is biholomorphic to a bounded domain of holomorphy $D$,
by Cheng-Yau \cite{CY1980} and Mok-Yau \cite{MY1983} $D \Subset \mathbb C^N$
admits a unique complete K\"ahler-Einstein metric of
constant Ricci curvature $-(N+1), N=\dim_\mathbb{C}D$.
 The Isomorphism Theorem in Mok-Wong \cite{MW2025} is
 formulated for such bounded domains $D \Subset \mathbb C^N$
 (and more generally for bounded domains on a Stein manifold)
 without requiring that $D$ is a domain of holomorphy.
 Without the condition that $D$ is a domain of holomorphy,
 we embed $D$ into its hull of holomorphy $\pi: \widehat D \to \mathbb C^N$,
 which is {\it a priori\/} a Riemann domain of holomorphy.
 Using the hypothesis that $\left(Y_{\Gamma'},d\mu\right)$ is of finite volume, by Cauchy estimates we proved nonetheless that $\widehat{D} - D$ is of zero Lebesgue measure, so that in fact $\widehat{D}$ is a schlicht (univalent) bounded domain on $\mathbb C^N$.  The conclusion of the Isomorphism Theorem of Mok-Wong \cite{MW2025} says that $F:\Omega \overset{\cong}\longrightarrow \widehat{D}$ is already a biholomorphism, so that $D = \widehat D$ is itself a bounded domain of holomorphy.

\end{enumerate}
\end{remark}

\vskip 0.2cm
\subsection{Previous methods in complex geometry on rigidity concerning irreducible lattices \texorpdfstring{$\Gamma \subset \Aut(\Omega)$}{Gamma subset Aut(Omega)}}
\label{premethod}
\mbox{}

\vskip 0.5cm
\subsubsection{The method of harmonic maps in K\"ahler geometry for the proof of strong rigidity (Theorem \ref{strong-rigid})}
\label{harm-maps-method}
The starting point of the proof of Theorem \ref{strong-rigid} is
Siu's $\partial\overline{\partial}$-Bochner-Kodaira formula in \cite{Siu1980}.
Let $(Z,h)$ be a K\"ahler manifold and denote by $\omega$ its K\"ahler form.
For a smooth map $f: Z \to X$ from $Z$ to a complex manifold $X$,
$df$ may be regarded as an $f^*T_X^{\mathbb R}$-valued smooth 1-form on $Z$.
Decomposing $T_Z^{\mathbb C}
= T_Z^{\mathbb R}\otimes_{\mathbb R}\mathbb C = T^{1,0}_Z \oplus T^{0,1}_Z$
and similarly for $T_X^{\mathbb C}$ we have a decomposition
$df = \partial f + \partial \overline{f}
+ \overline{\partial}f + \overline{\partial}\overline{f}$,
where $\partial f$ is an $f^*T_X^{1,0}$-valued $(1,0)$-form on $Z$,
$\overline{\partial}f$ is an $f^*T_X^{1,0}$-valued $(0,1)$-form on $Z$, etc.
Siu's $\partial\overline{\partial}$-Bochner-Kodaira formula gives
\footnote{For the curvature tensor $R = R^N$ of
a K\"ahler manifold $(N,s)$ we adopt the sign convention
such that for the Poincar\'e disk $\left(\Delta,ds_{\Delta}^2\right)$
of constant Gaussian curvature $-2$ we have
$R_{1\overline{1}1\overline{1}} = -2$,
as opposed to $+2$ in
\cite{Siu1980}.}

$$
(\sharp) \quad \partial\overline{\partial}\langle g,\partial f\wedge\overline{\partial}f\rangle =
\left\langle -R, \partial f\wedge\overline{\partial}f\wedge\partial\overline{f}\wedge\overline{\partial f}\right\rangle - \left\langle g, D\overline{\partial}f\wedge \overline{D}\partial\overline f\right\rangle
$$
Let $f_0: (Z,h) \to (X,g)$ be an arbitrary continuous map between compact K\"ahler manifolds and assume that $(X,g)$ is of nonpositive Riemannian sectional curvature.  Then, replacing $f_0$ by a homotopic smooth map if necessary, the method of the heat flow deforms $f_0$ to a harmonic map $f: (Z,h) \to (X,g)$. Let $\omega$ be a K\"ahler form on $Z$, and write $n :=\dim_{\mathbb C}Z$.  Then, starting with the formula $(\sharp)$, taking wedge of both sides of the equation with $\omega^{n-2}$ and integrating over $Z$, by Stokes' Theorem and the harmonicity of $f$ we get the identity
$$
(\sharp\sharp) \quad
0 = \int_Z \left\langle -R,
\partial f\wedge\overline{\partial}f\wedge\partial\overline{f}\wedge\overline{\partial f}\right\rangle \wedge \omega^{n-2}
+ \int_Z  \left\langle g,D\overline{\partial}f\wedge \overline{D}\partial\overline{f}\wedge \omega^{n-2}\right\rangle
$$
Here the integrand of the last term  is a gradient term on $\overline{\partial}f$ regarded as an $f^*T_X^{1,0}-$valued $(1,0)$-form on $Z$.
From linear algebra we obtain the following identity
$$
-\left\langle g, D\overline{\partial}f\wedge \overline{D}\partial\overline f\right\rangle \wedge \omega^{n-2} = \|D\overline{\partial}f\|^2\frac{\omega^n}{n(n-1)}
$$
under the assumption that $f$ is harmonic, i.e., the trace of $D\overline{\partial}f$ with respect to the K\"ahler form $\omega$ is 0. When the curvature tensor $R$ of $(X,g)$ is nonpositive in the dual Nakano sense, i.e.,
$\sum_{i,j,k,\ell} R_{i\overline{j}k\overline{\ell}} A^{i\overline{j}}\overline{A^{\ell\overline{k}}} \le 0$ for any matrix $A^{i\overline{j}}$ at every point (which in particular implies that $(X,g)$ is of nonpositive Riemannian sectional curvature), it was deduced that the curvature term in the integrand of the right hand-side of $(\sharp\sharp)$ is nonnegative, and one deduces readily that both integrands on the right-hand side vanish identically.
Under the assumptions $\dim_{\mathbb C}\Omega\ge 2$ and $df: T_Z^{\mathbb R} \to T_X^{\mathbb R}$ is of rank $> s(\Omega)$ for an integer $s(\Omega)$ determined explicitly,
Siu \cite{Siu1980} and Zhong \cite{Zh1984} proved that $f$ must be either holomorphic or anti-holomorphic.  In particular this is always the case when $f_0: X \to Z$ is a homeomorphism (Siu \cite{Siu1980, Siu1981}).
In this case, assuming without loss of generality that $f$ is holomorphic, the map is necessarily finite since no irreducible subvariety of dimension $k\ge 1$, whose homology class in $H_{2k}(X,\mathbb R)$ is nontrivial, can be blown down to a point, and it must be of degree 1 since $f_0$ is a homeomorphism, proving Theorem \ref{strong-rigid}.

\vskip 0.2cm
\subsubsection{Varieties of minimal rational tangents and a Gauss-Bonnet formula for the proof of Hermitian metric rigidity (Theorem \ref{hermite-rigid})}
\label{vmrt-gauss}
Let $\Omega$ be an irreducible bounded symmetric domain of rank $r\ge 1$ and let $\Omega \subset \widehat{\Omega}$ be the Borel embedding.
For each point $x \in \widehat{\Omega}$ denote by $\mathscr C_x(\widehat{\Omega})$ the set of all $[\alpha] \in \mathbb PT_x(\widehat{\Omega})$ such that $\alpha$ is tangent at $x$ to a minimal rational curve $\ell$, i.e., a projective line lying on $\widehat{\Omega}$ when $\widehat{\Omega}$ is identified with its image under the minimal projective embedding (such as the Pl\"ucker embedding in the case of a Grassmannian) into a projective space. Here $\mathscr C_x(\widehat{\Omega})$ is called the variety of minimal rational tangents (VMRT) at $x$ and the collection of all VMRTs as $x$ varies over $\widehat{\Omega}$ defines the VMRT structure $\pi: \mathscr C(\widehat{\Omega}) \to \widehat{\Omega}$, where $\pi$ is the natural projection.
For $x\in \Omega$ and $[\alpha] \in \mathscr C_x(\Omega)$ we define $\mathcal N_\alpha = \big\{\zeta \in T_x(\Omega): R_{\alpha\overline{\alpha}\zeta\overline{\zeta}} = 0\big\}$.
When $r \ge 2$ we have $\dim_\mathbb{C}\mathcal N_\alpha = q(\Omega) \ge 1$ for some $q := q(\Omega)$ independent of $[\alpha]\in \mathscr C_x(\Omega)$. We define
$\mathscr C(\Omega) := \pi^{-1}(\Omega)$.  $\mathscr C(\Omega)$ is invariant under the action of $\Aut(\Omega)$ and in particular under the torsion-free discrete subgroup $\Gamma \subset \Aut(\Omega)$.  We write $\mathscr C(X_\Gamma) := \mathscr C(\Omega)/\Gamma$, which has naturally the structure of a quasi-projective manifold equipped with the canonical projection $\mu_\Gamma: \mathscr C(X_\Gamma) \to X_\Gamma$.

\vskip 0.2cm
Recall that for any Hermitian holomorphic vector bundle $(V,s)$ over a complex manifold $M$ we have correspondingly the tautological Hermitian holomorphic line bundle $\left(L,\widehat{s}\right)$ over $\mathbb PV$ and that $(V,s)$ is of (semi-)negative curvature in the sense of Griffiths if and only if the Hermitian holomorphic line bundle $(L,\widehat{s})$ is of (semi-)negative curvature. The Gauss-Bonnet type formula arises first of all from the integral formula over $\mathscr C(X_\Gamma)$ given by
$$
(\dagger) \ \int_{\mathscr C(X_\Gamma)}  \left(-c_1(L,\widehat{g})\right)^{2n-2q}\wedge (\mu_{\Gamma}^*\omega)^{q-1} = 0\, ,
$$
where $g$ denotes the quotient K\"ahler metric on $X_\Gamma$ descending from the K\"ahler-Einstein metric $g_\Omega$ on $\Omega$, $\omega$ denotes the K\"ahler form of $(X_\Gamma,g)$. The formula ($\dagger$) is checked by verifying that the kernel ${\bf N}_{[\alpha]}$ of the $(1,1)$-form $c_1(X_\Gamma,g)([\alpha])$ is tangent to $\mathscr C(X_\Gamma)$. One way to check that is to observe that, given $[\alpha] \in \mathscr C_x(\Omega)$ and $\zeta \in \mathcal N_\alpha$, there is a holomorphic {\it totally geodesic\/} isometric embedding of $\Delta\times \Omega'$ into $\Omega$, $\Omega'$ being an irreducible bounded symmetric domain of rank $r - 1\ge 1$ (and of dimension $q = q(\Omega)$) such that, identifying $\Delta\times \Omega'$ with its image $S\subset\Omega$, we have $x\in S$ while $\alpha$ is tangent to the disk factor $\Delta$ and $\zeta$ is tangent to the $\Omega'$ factor. If now $X_\Gamma$ is compact and $h$ is a Hermitian metric on $X_\Gamma$ of nonpositive curvature in the sense of Griffiths, then $(X_\Gamma,g+h)$ has the same curvature property. The first Chern form $c_1(L,\widehat{g}+\widehat{h})$ is nonpositive, i.e., $c_1(L,\widehat{g}+\widehat{h}) \le 0$, and it is cohomologous to $c_1(L,\widehat{g})$, leading by Stokes' theorem to the Gauss-Bonnet type formula
$$
\begin{gathered}
(\dagger\dagger) \ \int_{\mathscr C(X_\Gamma)} \left(-c_1(L,\widehat{g}+\widehat{h})\right)
\wedge\left(-c_1(L,\widehat{g})\right)^{2n-2q-1}\wedge (\mu_{\Gamma}^*\omega)^{q-1} \\
= \int_{\mathscr C(X_\Gamma)} \left(-c_1(L,\widehat{g})\right)^{2n-2q}\wedge (\mu_{\Gamma}^*\omega)^{q-1} = 0\, ,
\end{gathered}
$$
from which one deduces the vanishing of the integrand on the left-hand side of $(\dagger\dagger)$.

\vskip 0.2cm
\noindent
Proof of Theorem \ref{hermite-rigid} in the case $X_\Gamma$ is compact. \ Theorem \ref{hermite-rigid} for $X_\Gamma$ compact follows from the Gauss equation for Hermitian holomorphic vector subbundles. More precisely, from the Gauss-Bonnet formula $(\dagger\dagger)$ we deduce the vanishing of $R_{\alpha\overline{\alpha}\zeta\overline{\zeta}}(g+h) = 0$ whenever $[\alpha] \in \mathscr C(X_\Gamma)$ and $\zeta \in \mathcal N_{\alpha}$. Write $x$ for the base point of $\alpha$. Comparing the curvature term $R_{\alpha\overline{\alpha}\zeta\overline{\zeta}}(g+h) = 0$ with respect to $g+h$ to the same curvature term with respect to $g$ resp.~$h$ yields by
the Gauss equation the partial vanishing $(\sharp) \nabla_{\zeta} h_{\alpha\overline{\eta}}$ for all $\eta \in T_x(X_\Gamma)$ and for $\nabla$ denoting covariant differentiation on $(X_\Gamma,g)$, and expanding $(\sharp)$,
where $\zeta \in \mathcal N_\alpha$ varies anti-holomorphically as $\alpha$ varies holomorphically, yields the identity $\nabla h \equiv 0$ on $X_\Gamma$, implying Theorem \ref{hermite-rigid} for cocompact lattices $\Gamma \subset \Aut(\Omega)$ by the irreducibility of $\Omega$.

\vskip 0.2cm
From Theorem \ref{hermite-rigid} and its proof one can deduce the following rigidity result on holomorphic mappings.

\begin{corollary} {\rm (Mok \cite{Mok1987})} \
Let $(Y,h)$ be a K\"ahler manifold of nonpositive holomorphic bisectional curvature.  In the notation as in Theorem \ref{hermite-rigid}, let $f:X_\Gamma\to Y$ be a nonconstant holomorphic map.  Then, $f: (X_\Gamma,g)\to (Y,h)$ must necessarily be a totally geodesic immersion and a holomorphic isometry up to a scaling constant.
\label{rigid-map}
\end{corollary}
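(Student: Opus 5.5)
The plan is to pull back $h$ by $f$ and apply Hermitian metric rigidity (Theorem \ref{hermite-rigid}), in the form used in its proof. Put $\widetilde h := f^*h$ on $X_\Gamma$. Since $f$ is nonconstant and holomorphic, $\widetilde h$ is a Hermitian pseudometric that is positive definite on the open dense set where $df$ is injective. Its curvature on $T_{X_\Gamma}$ is controlled by the Gauss equation: curvature decreases on holomorphic subbundles, and $df(T_{X_\Gamma}) \subset f^*T_Y$ generically. Because $(Y,h)$ has nonpositive holomorphic bisectional curvature, we get $R^{\widetilde h}_{\alpha\overline\alpha\zeta\overline\zeta}\le R^{h}_{df\alpha\,\overline{df\alpha}\,df\zeta\,\overline{df\zeta}}\le 0$. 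This is the Griffiths-type seminegativity actually used in the Gauss-Bonnet argument, namely the bisectional terms $R_{\alpha\overline\alpha\zeta\overline\zeta}$.

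Next I would run the argument of the proof of Theorem \ref{hermite-rigid} with $g+\widetilde h$ in place of $g+h$. I would work with the tautological line bundle over $\mathscr C(X_\Gamma)$ and use the Gauss-Bonnet formula $(\dagger\dagger)$. This forces $R_{\alpha\overline\alpha\zeta\overline\zeta}(g+\widetilde h)=0$ for $[\alpha]\in\mathscr C(X_\Gamma)$ and $\zeta\in\mathcal N_\alpha$. Then the partial vanishing $\nabla_\zeta \widetilde h_{\alpha\overline\eta}=0$ and the expansion argument give $\nabla\widetilde h\equiv0$ with respect to the Levi-Civita connection of $g$. By irreducibility of the holonomy, $\widetilde h=\lambda g$ for some constant $\lambda\ge0$, and $\lambda>0$ since $f$ is nonconstant. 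So $f$ is an immersion and a holomorphic isometry up to the scaling constant $\lambda$.

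For total geodesy I would return to the chain of inequalities. Equality $\widetilde h=\lambda g$ means the curvature of $f^*h$ restricted to $T_{X_\Gamma}$ equals $\lambda^{-1}$ times that of $g$. The Gauss equation then says the second fundamental form $\sigma$ of $df(T_{X_\Gamma})\subset f^*T_Y$ contributes $-\|\sigma(\alpha,\zeta)\|^2$-type terms. Integrating the identity obtained from the Gauss-Bonnet argument shows these terms vanish, at least in directions where the curvature of $g$ degenerates. I would then use holomorphicity of $\sigma$ in its arguments and the fact that characteristic vectors span, expanding as in the proof of $\nabla h\equiv0$, to conclude $\sigma\equiv0$. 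That is, $f$ is totally geodesic.

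The main obstacle is the noncompact finite-volume case. There Stokes' theorem in $(\dagger\dagger)$ needs boundary terms to vanish. The safe route is the one noted after Theorem \ref{hermite-rigid}: in Mok's setting, $g+\widetilde h$ is Kähler because $\widetilde h=f^*h$ with $h$ Kähler, and that case is covered. Alternatively, I would invoke To's argument via the Satake-Baily-Borel compactification. A secondary technical point is the degeneracy locus of $df$. It is handled by working with $g+\widetilde h$, which is a genuine Kähler metric everywhere.
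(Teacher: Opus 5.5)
Your route is the paper's. You apply Hermitian metric rigidity to the K\"ahler metric $g+f^*h$, so Mok's finite-volume K\"ahler case applies, and obtain $f^*h=\lambda g$ with $\lambda>0$. You then use the Gauss equation together with $R_{\alpha\overline{\alpha}\zeta\overline{\zeta}}(g)=0$ for $[\alpha]\in\mathscr C(X_\Gamma)$, $\zeta\in\mathcal N_\alpha$, to get $\sigma(\alpha,\zeta)=0$, and polarize to $\sigma\equiv 0$.

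One step fails as written. A nonconstant $f$ need not have $df$ injective on a dense open set: a priori its generic rank could be $<\dim_{\mathbb C}X_\Gamma$. In that case $f^*h$ is degenerate everywhere and your subbundle-curvature comparison is never available. Nothing else in the proposal then shows that $g+f^*h$ is Griffiths seminegative; being a genuine K\"ahler metric says nothing about the sign of its curvature.

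The fix is one line. $g+f^*h$ is the metric induced on the graph of $f$ in $(X_\Gamma\times Y,\,g\oplus h)$, and this product has nonpositive bisectional curvature. By the Gauss equation, $g+f^*h$ therefore has nonpositive bisectional curvature too, whatever the rank of $df$.

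Also, no integration is needed for total geodesy. Once $f^*h=\lambda g$, at every point and for $\zeta\in\mathcal N_\alpha$ one has $0=R^S_{\alpha\overline{\alpha}\zeta\overline{\zeta}}=R^Y_{\alpha\overline{\alpha}\zeta\overline{\zeta}}-\|\sigma(\alpha,\zeta)\|^2\le-\|\sigma(\alpha,\zeta)\|^2$. This is exactly the paper's pointwise argument.
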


\begin{proof} Theorem \ref{hermite-rigid} (Hermitian metric rigidity theorem) implies $g+f^*h = \lambda g$ for some $\lambda > 1$, implying that $f:X_\Gamma \to Y$ is a holomorphic isometry up to a scaling constant, in particular a holomorphic immersion.  Identifying $X_\Gamma$ locally as a complex submanifold $S$ of $Y$, the equalities $R^S_{\alpha\overline{\alpha}\zeta\overline{\zeta}}(h) = 0 = R_{\alpha\overline{\alpha}\zeta\overline{\zeta}}(h) = 0$ whenever $[\alpha] \in \mathscr C(X_\Gamma)$ and $\zeta \in \mathcal N_{\alpha}$ with base point $x$ lying on $S$ yield by the Gauss equation for K\"ahler submanifolds the partial vanishing $(\sharp) \ \sigma_{\alpha\zeta} = 0$ of the second fundamental form of $\left(S, h|_S\right) \hra (Y,h)$ whenever $[\alpha]\in \mathscr C(X_\Gamma)$ and $\zeta \in \mathcal N_\alpha$.  Expanding $(\sharp)$ as $\alpha$ varies holomorphically and $\zeta$ varies anti-holomorphically, and polarizing, we obtain $\sigma \equiv 0$.
\end{proof}

Let $\Omega'$ be any bounded symmetric domain and $\Gamma' \subset \Aut(\Omega')$ be a torsion-free lattice.
Taking $Y :=\Omega'/\Gamma'$ and letting $h$ be the canonical K\"ahler-Einstein metric on $Y$ of constant negative Ricci curvature, then the Margulis superrigidity theorem (cf. Margulis \cite{Mar1991}) applies together with uniqueness results on harmonic maps at least in the case of cocompact lattices to show that the holomorphic map $f: (X_\Gamma,g)\to (Y,h)$ must necessarily be a totally geodesic immersion and a holomorphic isometry up to a scaling constant. Corollary \ref{rigid-map} yields in particular the same result when $\Gamma' \subset \Aut(\Omega')$ is only assumed to be an arbitrary torsion-free subgroup.

\vskip 0.2cm
\subsubsection{Finsler metric rigidity for the proof of an embedding theorem}
\label{finsler-method}
We will be studying a new rigidity phenomenon for torsion-free irreducible lattices $\Gamma$ of rank $\ge 2$ on a bounded symmetric domain $\Omega$.  The rigidity will concern holomorphic maps from $\Omega$ to complex manifolds equivariant with respect to some discrete representations of $\Gamma$, and will be formulated in terms of bounded holomorphic functions on the target complex manifold.  As a precursor to such a phenomenon, by an extension of Hermitian metric rigidity to Finsler metric rigidity, Mok \cite{Mok2004} gave an embedding theorem for such maps.

\vskip 0.2cm
For a possibly reducible bounded symmetric domain $\Omega$ in its Harish-Chandra realization,
 we write $\Omega = \Omega_1\times\cdots\times\Omega_s$
 for its decomposition into irreducible factors.
By a factor submanifold we mean a complex submanifold
$S_k := ({\bf a}_1,\cdots,{\bf a}_{k-1})\times \Omega_k \times ({\bf a}_{k+1},\cdots,{\bf a}_s)$
for some point ${\bf a} = ({\bf a}_1,\cdots,{\bf a}_{k-1}; \\{\bf a}_{k+1},\cdots,{\bf a}_{s}) \in (\Omega_1\times\cdots\times\Omega_{k-1})\times (\Omega_{k+1}\times\cdots\times\Omega_{s})$.

\vskip 0.2cm
We state now a version of the Finsler metric rigidity theorem sufficient for applications to rigidity problems. In what follows let $\Omega \Subset \mathbb C^n$ be a bounded symmetric domain of rank $\ge 2$, $\Omega = \Omega_1\times\cdots\times \Omega_s$ be its decomposition into irreducible factors, $T_\Omega = T_1\oplus\cdots\oplus T_s$ be the corresponding direct sum decomposition of its holomorphic tangent bundle. We have

\begin{theorem} \
Let $\Gamma \subset \Aut(\Omega)$ be a torsion-free irreducible cocompact lattice, and write $X_\Gamma := \Omega/\Gamma$. Let $g$ be the canonical K\"ahler-Einstein metric on $X_\Gamma$, and $h$ be a continuous complex Finsler metric on $X_\Gamma$ of nonpositive curvature. Denote by $\|\cdot\|_g$ resp.\,$\|\cdot\|_h$ lengths of vectors measured with respect to $g$ resp.\,$h$. Then, there exist positive constants $c_1,\cdots, c_s$ such that for any $\eta \in T_{X_\Gamma}$ that can be lifted to a minimal rational tangent vector belonging to $T_k$; $1 \le k \le s$; we have $\|\eta\|_h = c_k\|\eta\|_g$. Moreover, the same holds true in the case of nonuniform torsion-free irreducible lattices $\Gamma\subset\Aut(\Omega)$, provided that there exists a positive constant $C$ such that $h \le Cg$ and $h$ is uniformly Lipschitz with respect to $g$.
\label{finsler}
\end{theorem}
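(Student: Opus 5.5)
We will adapt the Gauss-Bonnet argument of \S\ref{vmrt-gauss} to the Finsler setting, working on the projectivized tangent bundle where a complex Finsler metric is the same as a Hermitian metric on the tautological line bundle. Let $\pi: \mathbb PT_{X_\Gamma}\to X_\Gamma$ and let $L$ be the tautological line bundle. The Finsler metric $h$ induces a continuous Hermitian metric $\widehat h$ on $L$. Nonpositive curvature of $h$ means that $-c_1(L,\widehat h)\ge 0$ in the sense of currents, with $\log\|\cdot\|_{\widehat h}$ plurisubharmonic along the fibres of $L$ minus its zero section. The key geometric object will be the minimal rational tangent subvariety $\mathscr C_k(X_\Gamma)\subset\mathbb PT_{X_\Gamma}$ of the $k$-th factor, namely the image of the VMRT bundle of $\Omega_k$ lying in $\mathbb P T_k$. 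This subvariety is well defined on $X_\Gamma$ because the decomposition $T_\Omega=\oplus T_k$ is $\Gamma$-invariant up to permutation; we will pass to a finite-index subgroup so that no factors are permuted.

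First we establish the analogue of $(\dagger)$ on each $\mathscr C_k(X_\Gamma)$. The kernel of $c_1(L,\widehat g)$ restricted to $\mathscr C_k$ contains the directions $\mathcal N_\alpha$ coming from the totally geodesic $\Delta\times\Omega_k'\times\prod_{j\ne k}\Omega_j$, so $(-c_1(L,\widehat g))^{m}=0$ on $\mathscr C_k$ for $m$ exceeding the rank of that form. Here the rank $\ge2$ hypothesis enters: if $\Omega_k$ has rank $1$, the other factors supply the null directions. Next, for any $0<t$, we set $\widehat h_t:=\widehat g^{1-t}\widehat h^{t}$, or more simply the metric $\widehat g\cdot(\widehat h/\widehat g)$. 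The function $u:=\log(\|\cdot\|_h/\|\cdot\|_g)$ is a continuous function on $\mathbb PT_{X_\Gamma}$ with $\sqrt{-1}\partial\overline\partial u\ge c_1(L,\widehat g)$ as currents. Integrating $\sqrt{-1}\partial\overline\partial u\wedge(-c_1(L,\widehat g))^{d-1}\wedge(\pi^*\omega)^{e}$ over the compact $\mathscr C_k(X_\Gamma)$, with exponents chosen as in $(\dagger\dagger)$, gives $0$ by Stokes. Combined with the vanishing of the top power of $-c_1(L,\widehat g)$, this forces the positive current $\sqrt{-1}\partial\overline\partial u-c_1(L,\widehat g)$, wedged with the semipositive forms, to vanish. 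Since $u$ is only continuous, this must be justified by Bedford--Taylor theory for wedge products of currents with continuous potentials, and that justification is a technical but standard step.

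From this vanishing we then extract that $u|_{\mathscr C_k}$ is pluriharmonic, indeed constant, along the leaves of the characteristic foliation given by the kernel of $c_1(L,\widehat g)|_{\mathscr C_k}$. The main obstacle is to upgrade this leafwise information to global constancy of $u$ on $\mathscr C_k(X_\Gamma)$. Here we will invoke ergodicity: the leaves lift to $\mathscr C_k(\Omega)$ as orbits of a noncompact subgroup of $G_0$ (products of tangential and normal directions of the characteristic subdomains). By irreducibility of $\Gamma$ and the Moore ergodicity theorem, this subgroup acts ergodically on $G_0/\Gamma$, and hence on $\mathscr C_k(X_\Gamma)$, which is a homogeneous quotient $G_0/H\Gamma$-fibration. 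A continuous function invariant under an ergodic flow is constant, so $u\equiv\log c_k$ on $\mathscr C_k$, which is exactly $\|\eta\|_h=c_k\|\eta\|_g$.

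For nonuniform $\Gamma$, Stokes' theorem requires control at infinity. Using $h\le Cg$ together with $h\ge$ (a positive multiple of) $g$ on $\mathscr C_k$, obtained from Schwarz-lemma-type lower bounds, $u$ is bounded. The uniform Lipschitz condition then bounds $\partial u$ in $g$-norm. We cut off with functions $\rho_\epsilon$ whose gradients have $L^2$ norm tending to $0$ on the finite-volume, complete $(X_\Gamma,g)$, as in Mok's treatment of Hermitian metric rigidity, so the boundary terms vanish and the argument proceeds as before.
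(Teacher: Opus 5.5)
Your overall strategy is the paper's: the Gauss--Bonnet integral $(\dagger\dagger)$ on the VMRT bundle, the foliation by kernels of $c_1(L,\widehat g)$, and Moore ergodicity. Two steps, however, do not go through as written.

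\emph{Leafwise constancy is asserted, not proved.} The vanishing you obtain is $\bigl(\sqrt{-1}\partial\overline\partial u-c_1(L,\widehat g)\bigr)\wedge\Xi=0$, with $\Xi:=(-c_1(L,\widehat g))^{2p+1}\wedge(\mu^*\omega)^{q-1}$ in the irreducible notation. Since $c_1(L,\widehat g)\wedge\Xi=0$, this only says that $\sqrt{-1}\partial\overline\partial u\wedge\Xi=0$, i.e.\ $u$ is pluriharmonic along each leaf. The leaves are noncompact copies of $\Omega^\flat$ (times the other factors in the reducible case), and these carry many bounded nonconstant pluriharmonic functions. So ``indeed constant'' does not follow. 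Your ergodicity step really needs leafwise constancy, because ergodicity only forces \emph{invariant} functions to be constant. The missing step is a second global integration by parts. Since $\Xi$ is closed,
\[
0=\int_{\mathscr C_k(X_\Gamma)}\sqrt{-1}\,\partial\overline\partial(u^2)\wedge\Xi
=2\int_{\mathscr C_k(X_\Gamma)}u\,\sqrt{-1}\,\partial\overline\partial u\wedge\Xi
+2\int_{\mathscr C_k(X_\Gamma)}\sqrt{-1}\,\partial u\wedge\overline\partial u\wedge\Xi .
\]
The first term vanishes by what you proved. The second integrand is nonnegative, so the leafwise gradient of $u$ vanishes, and the continuous function $u$ is constant on leaves; only then can ergodicity (or the paper's single dense leaf) finish the argument. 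In the nonuniform case this computation must be run with cut-offs exactly as in Lemma \ref{lipz}. That is where the uniform Lipschitz bound is genuinely needed.

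\emph{The lower bound in the nonuniform case is unjustified.} There is no Schwarz-lemma lower bound $h\ge c\,g$. Schwarz-type lemmas bound a negatively curved target metric from above. A merely nonpositively curved $h$ may a priori decay toward the cusps, so your $u=\log(\|\cdot\|_h/\|\cdot\|_g)$ need be neither bounded nor Lipschitz. The paper avoids this by working with $\widehat g+\widehat h=:e^{u}\widehat g$ instead of $\widehat h$:
\begin{itemize}
\item its log-norm is still plurisubharmonic, because $\log(e^a+e^b)$ is plurisubharmonic whenever $a,b$ are;
\item $u$ is bounded between $0$ and a constant depending on $C$, and it is uniformly Lipschitz because $h$ is;
\item the resulting identity $\widehat g+\widehat h=\lambda_k\widehat g$ on $\mathscr C_k$ gives $\|\eta\|_h=\sqrt{\lambda_k-1}\,\|\eta\|_g$ with $\lambda_k>1$.
\end{itemize}
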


For the explanation of the proof of Finsler metric rigidity, we will restrict to the case where $\Omega$ is irreducible and of rank $r \ge 2$, which is the essential case, and assume that $X_\Gamma$ is compact. In this case, we can interpret the Gauss-Bonnet formula $(\dagger\dagger)$ in the following way.
The closed nonnegative $(1,1)$-form $-c_1(L,\widehat{g})$ has a complex $q$-dimensional kernel at every point, the distribution $\mathscr D$ defined by $[\alpha] \mapsto {\rm Ker}(c_1(L,\widehat{g}))$ is necessarily integrable by the closedness of the first Chern form, and its leaves on $\mathscr C(\Omega)$ at every point $[\alpha]$ are given as follows.
Let $\alpha \neq 0$ be a nonzero minimal rational tangent vector at $x\in X_\Gamma$. Then, there exists a $q$-dimensional irreducible bounded symmetric domain $\Omega^{\flat}$ of rank $r - 1$ (biholomorphic to some, hence any, maximal face on $\Reg(\partial\Omega)$).
Equipping $\Delta$ with the Poincar\'e metric, there is a totally geodesic holomorphic isometric embedding of $\nu: \Delta\times\Omega^\flat \to \Omega$ such that $\alpha$ is $d\nu(\alpha_0)$ for a nonzero vector $\alpha_0$ tangent to the disk $\Delta$ factor at $(0;0) \in \Omega$.  Then, there is a constant section $\sigma$ of $\mu: \mathscr C_{\Gamma}(\Omega) \to \Omega$ over $\mathfrak N_{[\alpha]}:= \nu(\{0\}\times\Omega^{\flat})$ in terms of Harish-Chandra coordinates, whose image $\sigma(\mathfrak N_\alpha)$ is an integral manifold of ${\rm Ker}(c_1(L,\widehat{g}))$. The decomposition of $\mathscr C(\Omega)$ into such maximal integral complex manifolds $\mathfrak N_{[\alpha]}$ of the integrable distribution $\mathscr D$ gives a real-analytic foliation $\mathscr F$ with closed leaves which are $q$-dimensional complex manifolds.  The leaf space $\mathcal M$ is a $(4p+2)$-dimensional real-analytic manifold equipped with a symplectic form $\xi$, such that, writing $\chi: \mathscr C(\Omega) \to \mathcal M$ for the universal family for the foliation $\mathscr F$, we have $-c_1(L,\widehat{g}) = \chi^*\xi$.

\vskip 0.2cm
Theorem \ref{finsler} in the case where $X_\Gamma$ is compact and locally irreducible
can be understood as an application of the integral formula $(\dagger\dagger)$
interpreted using the foliation $\mathscr{F}_\Gamma$ on $\mathscr C(X_\Gamma)$,
which is obtained by descent from $\mathscr{F}$.
Lifted locally to $\mathscr C(\Omega)$, the left-hand side of the integrand in $(\dagger\dagger)$ can be interpreted as integrating first of all the $(q,q)$-form $-c_1(L,\widehat{g}+\widehat{h})\wedge(\mu^*\omega)^{q-1}$ over each $q$-dimensional leaf $\mathfrak N_{[\alpha]}$ of $\xi: \mathscr C(\Omega) \to \mathcal M$ and then integrating against the transverse measure which is the volume form $\xi^{2p+1}$, given $-c_1(L,\widehat{g}) = \chi^*\xi$. The vanishing of the integral on the left-hand side can be used to deduce that, writing $\widehat{g}+\widehat{h} =: e^{u}\widehat{g}$, $u$ is constant on each leaf $\mathcal L$ of $\mathscr F_{\Gamma}$ on $\mathscr C(X_\Gamma)$, and the desired conclusion $u \equiv \lambda$ for some positive constant $\lambda$ then follows from the existence of a single dense leaf $\mathcal L$ of $\mathscr F_\Gamma$, which follows from Moore's ergodicity theorem on semisimple real Lie groups, an essential tool used in the proof of Theorem \ref{isom} to be expounded in \S\ref{ergodic}. Finally $(\dagger\dagger)$ holds true without starting with a Hermitian metric $h$.  It suffices to replace $\widehat{h}$ by any smooth complex Finsler metric $s$ defined on the VMRT structure $\mu_{\Gamma}: \mathscr C_{\Gamma}(\Omega) \to X_{\Gamma}$. The case where $s$ is only assumed continuous and of nonpositive curvature in the sense of currents requires a justification given in Mok \cite{Mok2004}.

\begin{theorem} \
Let $\Omega \Subset \mathbb C^n$ be a possibly reducible bounded symmetric domain in its Harish-Chandra realization, $\Omega = \Omega_1\times\cdots \times\Omega_s$ be its decomposition into irreducible factors. Let $\Gamma\subset \Aut(\Omega)$ be a torsion-free irreducible lattice and write $X_\Gamma := \Omega/\Gamma$. Let $N$ be a complex manifold and denote by $\widetilde{N}$ its universal cover. Let $f : X_\Gamma \to N$ be a holomorphic mapping and $F : \Omega \to \widetilde{N}$ be its lifting to universal covers. Assume that, for each $k$, $1 \le k\le s$, there exists a bounded holomorphic function $h_k$ on $\widetilde N$ and a factor submanifold $S_k \subset\Omega$  such that $F^*h_k$ is nonconstant on $S_k$. Then, $F : \Omega \to \widetilde{N}$ is a holomorphic embedding.
\label{cara-embed}
\end{theorem}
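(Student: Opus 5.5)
The plan is to pull back the Carathéodory pseudometric of $\widetilde N$ by $F$ and apply Finsler metric rigidity (Theorem \ref{finsler}) to it.

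\textbf{Setting up the metric.} Let $\kappa$ be the Carathéodory–Reiffen pseudometric of $\widetilde N$. Define on $\Omega$
$$
\|\eta\|_h := \kappa(dF(\eta)) = \sup\{|d\phi(dF(\eta))| : \phi:\widetilde N\to\Delta \text{ holomorphic}\}.
$$
Every deck transformation of $\widetilde N$ over $N$ preserves $\kappa$. Since $F$ is equivariant with respect to $f_*:\Gamma\to\pi_1(N)$, the pseudometric $h$ is $\Gamma$-invariant and descends to $X_\Gamma$.

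\textbf{Regularity.} I would check that $h$ is continuous, indeed uniformly Lipschitz with respect to $g$. By the Schwarz lemma, $h\le Cg$. Local Lipschitz control comes from Cauchy estimates applied to the normal family of maps $\phi\circ F$. The function $\log\|\eta\|_h$ on the tangent bundle is a supremum of the plurisubharmonic functions $\log|d(\phi\circ F)(\eta)|$, so $h$ has nonpositive curvature in the sense of currents wherever it is nondegenerate.

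\textbf{Nondegeneracy, the main obstacle.} A priori $h$ is only a pseudometric, so Theorem \ref{finsler} does not apply directly. I would handle this by a perturbation: replace $h$ by $h_\epsilon := (h^2+\epsilon g^2)^{1/2}$, or a suitable sup-type combination that keeps nonpositive curvature, and then let $\epsilon\to 0$. I expect this to be the hardest step, including the justification in the noncompact case.

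The hypothesis supplies, for each $k$, a bounded $h_k$ with $F^*h_k$ nonconstant on a factor submanifold $S_k$. After scaling, $h_k$ maps into $\Delta$, so $h$ is not identically zero on some minimal rational tangent vector in $T_k$. By Theorem \ref{finsler} we then get $\|\eta\|_h = c_k\|\eta\|_g$ with $c_k>0$ for all minimal rational tangents in $T_k$; the deck-invariance together with the density argument (Moore ergodicity) propagates this from one vector to all of them. In particular $c_k\neq 0$.

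\textbf{Immersion.} Minimal rational tangents span each $T_k$. Since $\kappa(dF(\alpha))>0$ on all of them, $dF$ is injective on every minimal rational vector. Suppose $dF(v)=0$ for some nonzero $v$. Because $\kappa$ is a seminorm on each tangent space (it is convex, being a sup of moduli of linear functionals), the set of $\alpha$ with $\kappa(dF(\alpha))=c\|\alpha\|$ on the VMRT cone, combined with the equality of norms, forces $dF$ to be injective on the span. Concretely, $\kappa\circ dF$ is a seminorm whose kernel is a linear subspace. That kernel is $K$-invariant in the limit along the dense leaf, hence either $0$ or a union of factors $T_k$, and the latter is excluded by $c_k>0$. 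So $F$ is an immersion.

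\textbf{Injectivity.} Take $x\ne y$ in $\Omega$ with $F(x)=F(y)$. The Carathéodory distance satisfies $c_{\widetilde N}(F(x),F(y))=0$, so the integrated pulled-back distance $d_h$ between $x$ and $y$ vanishes. Since $h$ is comparable to a Kobayashi-type distance that dominates $c_1 d_g$ along minimal rational directions, and minimal-rational polygonal paths realise distances up to constants via the Polydisk theorem, this is a contradiction. Alternatively, I would argue with boundary behaviour of the functions $\phi\circ F$ restricted to maximal polydisks, which are then forced to separate points. Either route gives that $F$ is injective, hence a holomorphic embedding.
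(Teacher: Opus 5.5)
Your first step is the paper's: Finsler metric rigidity (Theorem~\ref{finsler}) applied to the pulled-back Carath\'eodory pseudometric, together with the nonconstancy of $F^*h_k$ on $S_k$, gives $\kappa_{\widetilde N}(dF(\eta))>0$ for every nonzero minimal rational tangent vector $\eta$. The nondegeneracy issue you single out as hardest is routine. The paper applies rigidity directly to $g+F^*\kappa_{\widetilde N}$, which is nondegenerate and still of nonpositive curvature, since $\log(e^u+e^v)$ is plurisubharmonic when $u,v$ are; no limit in $\epsilon$ is needed.

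The real gaps come after this. \textbf{Immersion.} Your passage from minimal rational vectors to all vectors is unjustified. It is true that $\nu_x:=\kappa_{\widetilde N}\circ dF_x$ is a seminorm whose kernel $W_x$ is a linear subspace containing no minimal rational vector. But a nonzero subspace can miss the VMRT cone, and nothing makes $W_x$ $K$-invariant ``in the limit along the dense leaf''. The metric $F^*\kappa_{\widetilde N}$ is only $\Gamma$-invariant, and the density argument inside Finsler rigidity controls the ratio of $F^*\kappa_{\widetilde N}$ to $g$ on $\mathscr C(X_\Gamma)$ only, i.e.\ in minimal rational directions. Pointwise considerations cannot close this gap. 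On $T_0(D^I_{2,2})=M(2,2;\mathbb C)$, take $\nu(X)=\frac12\inf_{t\in\mathbb C}\|X+tI_2\|_{\rm tr}$, where $\|\cdot\|_{\rm tr}$ is the trace norm. Then $\nu\le\|\cdot\|_{\rm op}$ (the Carath\'eodory norm at $0$ up to normalization), and $\nu=\frac12\|\cdot\|_{\rm op}$ on rank-one matrices, yet $\nu(I_2)=0$. The paper brings in a genuinely new global ingredient here, extremal bounded holomorphic functions on $\Omega$ (Mok \cite{Mok2004}), to get $dF(\eta)\neq 0$ for all $\eta\neq0$.

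\textbf{Injectivity.} Your argument fails at its first step. If $F(x)=F(y)$, a path $\sigma$ from $x$ to $y$ maps to a closed loop $F\circ\sigma$, whose $\kappa_{\widetilde N}$-length is in general positive. The equality $c_{\widetilde N}(F(x),F(y))=0$ only gives the trivial bound $d_h(x,y)\ge 0$; it does not make $d_h(x,y)$ vanish. Already $z\mapsto z^2$ on $\Delta$ identifies $\pm\frac12$, while the pulled-back distance between them is positive. Moreover, no lower bound $d_h\ge c\,d_g$ is available, since competing paths may run in non-minimal directions where $h$ is uncontrolled. The information that every $u\in F^*H^\infty(\widetilde N)$ satisfies $u(x)=u(y)$ is not detected by the infinitesimal metric $F^*\kappa_{\widetilde N}$. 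Following Mok \cite{Mok2004}, the paper uses an averaging technique to build from $F^*H^\infty(\widetilde N)$ a new nonpositively curved complex Finsler metric on the tautological bundle over $\mathscr C(\Omega)$, to which rigidity is then applied. Alternatively, the Extension Theorem (Theorem~\ref{extension}) gives a bounded holomorphic $R$ with $R\circ F=\id_\Omega$, from which immersivity and injectivity follow at once.
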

Equip the unit disk $\Delta$ with the Poincar\'e metric $g_\Delta$ of constant Gaussian curvature $-2$.  We write $\|\cdot\|_{\rm Poin}$ for $\|\cdot\|_{g_\Delta}$ to emphasize the metric under consideration is the Poincar\'e metric.

\begin{definition}[Carath\'eodory pseudometric and hyperbolicity] \
Let $n$ be a positive integer and $M$ be an $n$-dimensional complex manifold.  By the $($intrinsic$)$ Carath\'eodory pseudometric, we mean a pseudonorm $\kappa_M$ (or denoted by $\|\cdot \|_{\kappa(M)}$) on the holomorphic tangent $T_M$ bundle defined by
\begin{equation}
\|\eta\|_{\kappa_M} = {\rm sup}\left\{\|ds(\eta)\|_{\rm Poin}: s\in H^\infty(M) \ and \ s(M)\subset \Delta\right\}\, ,\quad \forall \eta \in T_M,
\label{cara}
\end{equation}
where $H^\infty(M)$ is the complex vector space of all bounded holomorphic functions on $M$. When $\kappa_M$ is nondegenerate on $M$ $($i.e. it is a norm for every $x\in M)$, we call $\kappa_M$ the Carath\'eodory metric and we say that $M$ is Carath\'eodory hyperbolic.
\end{definition}

Clearly $\kappa_M$ is invariant under $\Aut(M)$. For a quotient $M'$ of $M$ by a torsion-free discrete group of automorphisms, $\kappa_M$ induces on $M'$ a quotient Carath\'eodory Finsler pseudometric.
The set ${\rm Hol}(M,\Delta)$ of holomorphic mappings $h:M\rightarrow \Delta$ may consist only of constant maps, in which case $\kappa_M \equiv 0$.

When $M=D\Subset \mathbb{C}^n$ is a bounded Euclidean domain, it is well-known that $\kappa_D$  is nondegenerate, and $\kappa_D$ is a continuous complex Finsler metric. Denoting by $\Theta(D,\kappa_D)$ the curvature current of the Carath\'eodory metric on $D$, we have $\Theta(D,\kappa_D)\le 0$ in the sense of currents. In other words, writing $\widehat{\kappa}_D$ for the continuous Hermitian metric on the tautological line bundle $L$ over $\mathbb PT_D$, and defining $\widehat{\kappa}_D(e;\overline{e}) = s$ for a nowhere vanishing local holomorphic section $e$, we have $-i\partial\overline{\partial}\log s \ge 0$ in the sense of currents, i.e., $\log s$ is a plurisubharmonic function.

\vskip 0.2cm

For instance, given a holomorphic map $f: X_\Gamma \to Y_{\Gamma'} = D/{\Gamma'}$ to a quotient of a bounded domain $D$ by a torsion-free discrete group of automorphisms, denoting by $\kappa_{Y_{\Gamma'}}$ the induced Carath\'eodory metric, and by $g$ the canonical complete K\"ahler-Einstein metric on $X_\Gamma$, then $g+f^*\kappa_{Y_{\Gamma}'}$ is a continuous complex Finsler metric of nonpositive curvature in the sense of currents.  The same applies when $Y_{\Gamma'}$ is replaced by a complex manifold $N$ and $D$ is replaced by the universal covering space $\widetilde{N}$ for the lifting $F: \Omega \to \widetilde{N}$ to covering spaces, since $g+f^*\kappa_{N}$ is a nondegenerate continuous complex Finsler metric in any event even though $\kappa_N$ is only a pseudometric.

\vskip 0.2cm
Now assuming $f: X_\Gamma \to N$ to be such that $F^*H^{\infty}(\widetilde N)$ satisfies the hypothesis of Theorem \ref{cara-embed}, which then applies to show that $dF(\eta) \neq 0$ for any $\eta \in T_{\Omega}$ which is a nonzero minimal rational tangent vector of one of the irreducible factors of $\Omega$. In Mok \cite{Mok2004} the first author made use of extremal bounded holomorphic functions on $\Omega$ to prove that $dF(\eta) \neq 0$ for any nonzero tangent vector $\eta \in T_\Omega$. To prove that $F$ separates points, an averaging technique on continuous complex Finsler metrics of nonpositive curvature was introduced in \cite{Mok2004} to define from $F^*H^{\infty}(\widetilde{N})$
a new complex Finsler metric on the tautological line bundle over $\mathbb{P}T_\Omega$ restricted to $\mathscr C(\Omega)$ when $\Omega$ is irreducible (with an obvious generalization to the case where $\Omega$ is reducible). In the case where $\Gamma \subset \Aut(\Omega)$ is a nonuniform lattice, the above arguments still apply since $g+F^*\kappa_{\widetilde N}$ is uniformly Lipschitz with respect to $g$ by Cauchy estimates.

\vskip 0.2cm
Theorem \ref{cara-embed} prompted the following question which was at the source of Theorem \ref{isom}.

\begin{problem}[The Extension Problem] \
\label{ext-problem}
In the setting of Theorem \ref{cara-embed} $($the Embedding Theorem$)$,
let $F:\Omega\to \widetilde{N}$ be the given $\Gamma$-equivariant holomorphic embedding.
Does there exist a bounded holomorphic map $R:\widetilde N \to \mathbb C^n$ such that $R\circ F \equiv \id_\Omega$?
\end{problem}

\vskip 0.2cm
\subsection{Methodological comparison with previous rigidity results concerning irreducible lattices \texorpdfstring{$\Gamma \subset \Aut(\Omega)$}{Gamma subset Aut(Omega)}}
\label{method-compare}
\mbox{}

\vskip 0.5cm
\subsubsection{The method of harmonic maps into \texorpdfstring{$X_\Gamma$}{X-Gamma} for cocompact lattices}
\label{harm-maps-cocompact}
Theorem \ref{strong-rigid} of Siu \cite{Siu1980} \cite{Siu1981} on strong rigidity is a characterization of compact quotient manifolds of irreducible bounded symmetric domains of dimension $\ge 2$ among compact K\"ahler manifolds by a topological condition, viz.,
by the condition that the given compact K\"ahler manifold $X$ is homeomorphic to $X_\Gamma := \Omega/\Gamma$ for an irreducible bounded symmetric domain $\Omega$ of rank $\ge 2$ and a torsion-free cocompact lattice $\Gamma \subset \Aut(\Omega)$.  In case of complex dimension $2$ even the K\"ahler condition can be dropped since every compact complex manifold with even first Betti number admits a K\"ahler structure.  The proof started with a smooth map $f_0:X \to X_\Gamma$ which is a deformation of a homeomorphism and proceeds with the method of harmonic maps which allows $f_0$ to be deformed to a harmonic map, which can then be proven to be a holomorphic or anti-holomorphic diffeomorphism by means of the $\partial\overline{\partial}$-Bochner-Kodaira formula. In the event that $\Omega$ is of rank $\ge 2$, combining Theorem \ref{strong-rigid} with Theorem \ref{hermite-rigid} in the compact case of Mok \cite{Mok1987}, we can conclude that, given $(X,h)$ compact K\"ahler of nonpositive holomorphic bisectional curvature with $X$ homeomorphic to $X_\Gamma$, $(X,h)$ is up to complex conjugation biholomorphically isometric to $(X_{\Gamma},g)$, thus yielding a rigidity statement both on the complex structure and on the K\"ahler metric of $X_\Gamma$. The starting point of Theorem \ref{strong-rigid} applies also to any smooth map $f_0: X \to X_\Gamma$, for which Siu's $\partial\overline{\partial}$-Bochner-Kodaira formula applies to prove that the harmonic map $f: X \to X_\Gamma$ is pluriharmonic, and can be shown to be holomorphic or anti-holomorphic by a study of the curvature tensor of $(X_\Gamma,g)$ under topological conditions guaranteeing the real rank of $f$ to be sufficiently large.

\vskip 0.2cm
In place of studying maps into $X_\Gamma$ for torsion-free cocompact lattices $\Gamma \subset \Aut(\Omega)$, one may in the opposite direction study maps defined from $X_\Gamma$ into a compact K\"ahler manifold $X$.
For the existing methods concerning harmonic maps it is nonetheless necessary to have target compact K\"ahler manifolds $(N,h)$ of nonpositive sectional curvature for harmonic maps to exist, and to assume a stronger curvature condition, viz., that $(N,h)$ is of nonpositive curvature in the dual sense of Nakano, in order to be able to deduce from Siu's $\partial\overline{\partial}$-Bochner-Kodaira formula that the harmonic map $f: X_\Gamma \to N$ is pluriharmonic.
The curvature assumption is very strong, and there are very few examples $(N,h)$ for which the theory of harmonic maps $f: X_\Gamma \to N$ applies other than the case where $N$ is itself Hermitian locally symmetric of the noncompact type.  Extending the scope to the case where the target $N$ is a Riemannian symmetric space of the compact type and of nonpositive sectional curvature in the complexified sense, it is possible to prove pluriharmonicity of a harmonic map $f:X_\Gamma \to N$ and deduce thereby that $f$ is a totally geodesic isometric embedding, as done in Mok \cite{Mok1991}, which is a special case of geometric rigidity of Mok-Siu-Yeung \cite{MSY1993}. One motivation for the authors to prove Theorem \ref{isom} from \cite{MW2025} is to enlarge the scope of target manifolds $N$ for which rigidity phenomena for holomorphic maps $f: X_\Gamma \to N$ can be examined, and that is where we started to weaken the assumption of nonpositivity in the dual Nakano sense to a nonpositivity assumption on the curvature of a complex Finsler metric, and where we started to bring in bounded holomorphic functions, since the infinitesimal Carath\'eodory metric on a bounded domain is a continuous complex Finsler metric of nonpositive curvature.

\vskip 0.2cm
\subsubsection{The entry of K\"ahler geometry, harmonic analysis and ergodic theory in the proof of the Isomorphism theorem}
\label{entry-methods}
With the very weak assumption of nonpositivity of curvature on complex Finsler metrics, we do not have at our disposal the method of harmonic maps.  We are therefore obliged to impose the assumption of holomorphicity on the maps $f:X_\Gamma \to N$ being studied. On the other hand, there are two aspects in which the context goes beyond that of the study of harmonic maps into $X_\Gamma$.  First of all, our method applies to the case of nonuniform lattices $\Gamma \subset \Aut(\Omega)$. Secondly, we do not assume $N$ to be K\"ahler in Theorem \ref{cara-embed} in general.
In the special case where $N = Y_{\Gamma'} = D/\Gamma'$ is one in which $D$ is a bounded domain of holomorphy, it is known from Cheng-Yau \cite{CY1980} and Mok-Yau \cite{MY1983} that $N$ carries a canonical complete K\"ahler-Einstein metric of negative Ricci curvature.
In this case Theorem \ref{isom} applies and,
assuming $D$ to be simply connected for convenience, we proved that a holomorphic map $f: X_\Gamma\to Y_{\Gamma'}$ inducing an isomorphism $f_*:\Gamma =\pi_1(X_\Gamma) \overset{\cong}\longrightarrow \pi_1(Y_{\Gamma'}) = \Gamma'$ is automatically a biholomorphism.  Thus, invoking the existence of bounded holomorphic functions on $D$, we were able to prove a rigidity theorem with a topological assumption solely on the {\it fundamental group\/}.

\vskip 0.2cm
While our context is to study holomorphic maps $f: X_\Gamma \to Y_{\Gamma'}$
the road leading to this problem was actually from Theorem \ref{finsler}
on Finsler metric rigidity and Theorem \ref{cara-embed} yielding
the Embedding Theorem under a weak hypothesis.
The latter results had their origin in Theorem \ref{hermite-rigid}
on Hermitian metric rigidity, hence the entirety of rigidity results
explained in this section integrate with the Isomorphism Theorem (Theorem \ref{isom}) as a whole.
The proof of Theorem \ref{finsler} brings in
Moore ergodicity theorem, and that of Theorem \ref{cara-embed}
brings in bounded holomorphic functions.
As will be seen, the proof of the Isomorphism Theorem
will rely to a larger extent on Moore's ergodicity theorem and on harmonic analysis in the study of boundary values of bounded holomorphic functions on the complex unit ball $\mathbb B^n$.
The fact that in place of harmonic analysis concerning boundary values on $\partial\Omega$ we can resort to the much better known generalized Fatou's theorem for $\mathbb B^n$
relies on the identification of the fibers of the limiting maps of hyperbolic flows onto maximal faces as images of holomorphic isometric embeddings of some complex unit ball $\mathbb B^{p(\Omega)+1}$, a result that originated from the study of varieties of minimal rational tangents of Mok \cite{Mok2016}.  In what follows we will trace some basics of these essential elements in the proof of the Isomorphism Theorem, viz., on holomorphic isometries in K\"ahler geometry in \S\ref{holiso}, on the study of boundary values of bounded holomorphic functions in \S\ref{harmenter}, and on an in-depth application of Moore's ergodicity theorem on semisimple Lie groups in \S\ref{ergodic}, before concluding with an overview of the proof of the Isomorphism Theorem in the concluding \S\ref{isomproof}.  As we hope to familiarize the reader with the subject of bounded symmetric domains and to facilitate the reader's understanding of the path leading to the formulation of the Isomorphism Theorem,
the background materials may go beyond what is absolutely necessary for reading the proof of the theorem in Mok-Wong \cite{MW2025}. We will have succeeded if this article allows the reader to understand our overall strategy and better appreciate the roles played by K\"ahler geometry, harmonic analysis and ergodic theory in the proof of the Isomorphism Theorem.

\vskip 0.2cm
\section[Holomorphic isometries with respect to the Bergman metric]{Holomorphic isometries between bounded domains\\ with respect to the Bergman metric}
\label{holiso}
\vskip 0.2cm
\subsection{Analytic continuation of germs of holomorphic isometries up to scaling constants}
\label{holiso-continuation}
\mbox{}

\vskip 0.5cm
\subsubsection{Genesis of the problem}
\label{genesis}
The systematic study of holomorphic isometries from a real-analytic K\"ahler manifold $(M,g)$ into complex space forms such as the projective space $\mathbb P^N$ or $\mathbb P^{\infty}$ equipped with the Fubini-Study metric originated from the seminal work of Calabi \cite{Ca1953}, in which he introduced the notion of the {\it diastasis\/}, a normalized potential function, and proved results of analytic continuation for germs of holomorphic isometries into such space forms.

\vskip 0.2cm
In the more recent past, motivated by questions of Clozel-Ullmo \cite{CU2003} in 2003 arising from
arithmetic dynamics concerning commutants of Hecke correspondences, it was desirable to prove extension theorems for germs of holomorphic isometries up to normalizing constants $f: (\Omega,\lambda\, ds_{\Omega}^2;x_0) \to (\Omega',ds_{\Omega'}^2;y_0)$ between bounded symmetric domains.
In \cite{CU2003} it was observed that, assuming $\Omega$ to be irreducible, all such germs of maps are necessarily totally geodesic whenever ${\rm rank}(\Omega)\ge 2$ as a consequence of the proof of Hermitian metric rigidity in Mok \cite{Mok1987}.  In Mok \cite{Mok2012} the first author considered the analogous problem of analytic continuation in the more general situation of bounded domains equipped with the Bergman metric.

\vskip 0.2cm
For a bounded domain $U$, we denote by $K_U(z,w)$ the Bergman kernel of $U$ defined by the Hilbert space $H^2(U)$ of square-integrable holomorphic functions. Then $\varphi_U(z) = \log K_U(z,z)$ is globally defined on $U$, and $i\partial\overline{\partial}\varphi_U =:\omega_U$ defines the K\"ahler form of the Bergman metric $ds_U^2$. Let $f: (D,ds_D^2;x_o) \to (D',ds_{D'}^2;x'_o)$ be a germ of a holomorphic map.  Because of the existence of global potential functions $\varphi_D$ and $\varphi_{D'}$,
\cite{Ca1953} applies to give interior extension, as follows. Choosing any orthonormal basis $\left(h_i\right)$ of $H^2(D')$, we define a holomorphic map $\Phi_{D'}: D' \to \mathbb P^\infty \cong  \mathbb P(H^2(D')^\star)$ by $\Phi_{D'}(\zeta) = \left[h_0(\zeta),\cdots,h_i(\zeta),\cdots\right]$.
The mapping $\Phi_{D'}\circ f: (D,ds_D^2;x_0) \to \big(\mathbb P(H^2(D')^\star), \frac 1{\phantom{,}\lambda\phantom{,}} ds_{FS}^2;\Phi_{D'}(x_0')\big)$ is a holomorphic isometric embedding into a projective space of countably infinite dimension equipped with the Fubini-Study metric of constant holomorphic sectional curvature equal to $2$.
Let $\mathbb P(\Lambda) \subset \mathbb P(H^2(D')^\star)$ be the topological projective-linear span of the image of $\Phi_{D'}\circ f$.  Then, results in \cite{Ca1953} on analytic continuation apply to
show that $\Phi_{D'}\circ f: (D,ds_D^2;x_o) \to \mathbb{P}(\Lambda)$ extends to give a holomorphic isometry from $D$ into $\mathbb P(\Lambda)$, and that in turn implies that $f: (D,ds_D^2;x_o)\to (D',ds_{D'}^2;x_o)$ extends to a holomorphic isometry $F: (D,ds_D^2) \to (D',ds_{D'}^2)$ provided that the K\"ahler metric $ds_{D'}^2$ is complete.

\vskip 0.2cm
The difficulty of the problem of analytic continuation for bounded domains $D \Subset \mathbb C^n$ and $D'\Subset \mathbb C^n$ therefore lies in the further analytic continuation of the map beyond the boundaries. In order to study this problem one has to specify realizations of $D$ and $D'$ as bounded domains, e.g., it is natural to use the Harish-Chandra realization in the case where $D = \Omega$ and $D' = \Omega'$ are bounded symmetric domains.

\vskip 0.2cm
\subsubsection{Algebraic extension of holomorphic isometries between bounded domains with rational Bergman kernels}
\label{algebraic-ext}
The first author considered in \cite{Mok2012} the problem of analytic continuation of germs of holomorphic isometries between bounded domains $U$ having the property that the Bergman kernel $K_U(z,w)$ is a rational function in $(z,\overline{w})$. Here is the main result in this context.

\begin{theorem}[{\rm Mok \cite{Mok2012}}] \
Let $D \Subset \mathbb C^n$ resp.\,$D' \Subset \mathbb C^{n'},$ be bounded domains.
Let $x_o \in D, \lambda \in \mathbb R, \lambda > 0$,
and $f:(D,\lambda ds_D^2;x_o) \to (D',ds_{D'}^2;x'_o)$, $x_o':= f(x_o)$, be a germ of holomorphic isometry.
Suppose the Bergman kernel $K_D(z,w)$ extends as a rational function in $(z,\overline{w})$
and likewise the Bergman kernel $K_{D'}(z',w')$ extends as a rational function in $(z',\overline{w'})$. Then, the germ of $\text{\rm Graph}(f) \subset D \times D'$ at $(x_o,x'_o)$
extends to an irreducible affine-algebraic subvariety $S^{\sharp} \subset \mathbb C^n\times \mathbb C^{n'}$. If $(D',ds_{D'}^2)$ is complete as a K\"ahler manifold, then $S := S^{\sharp} \cap (D\times D')$ is the graph of a holomorphic isometric embedding $F:(D,\lambda \, ds_D^2) \to (D',ds_{D'}^2)$. If furthermore $(D,ds_D^2)$ is complete, then $F:D\to D'$ is a proper map.
\label{hol-isom}
\end{theorem}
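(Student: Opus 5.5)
The plan is to start from the functional equation of the isometry and use rationality of the kernels to produce polynomial equations satisfied by the graph. Write $\varphi_D=\log K_D(z,z)$ and $\varphi_{D'}=\log K_{D'}(z',z')$. The isometry condition $\lambda\, i\partial\overline\partial\varphi_D = f^*\, i\partial\overline\partial\varphi_{D'}$ gives, near $x_o$, $\lambda\varphi_D(z)=\varphi_{D'}(f(z)) + \mathrm{Re}\,h(z)$ for some holomorphic $h$. Normalizing by Calabi's diastasis removes the pluriharmonic ambiguity. Polarizing (complexifying $\overline z\mapsto \overline w$) and exponentiating, I expect an identity of the form
$$K_D(z,w)^{\lambda}\,K_D(x_o,x_o)^{\lambda}\,\overline{K_{D'}(f(x_o),f(x_o))} = K_{D'}(f(z),f(w))\,K_D(z,x_o)^{\lambda}K_D(x_o,w)^{\lambda}\,(\cdots)$$
valid for $(z,w)$ near $(x_o,x_o)$, with analogous normalizing factors from the target.

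Since $\lambda$ need not be rational, I will not exponentiate. Instead I will work with the logarithmic identity and differentiate in $\overline w$. Fix $w$ in a small neighborhood and vary it; the functions $\partial_{\overline w}^\beta$ of the polarized diastasis are then rational in $z$ and in $f(z)$, since logarithmic derivatives of rational functions are rational. This yields, for every multi-index $\beta$ and every base parameter $w$, a rational relation $R_{\beta,w}(z,f(z))=0$. The collection of all these relations defines an algebraic subset of $\mathbb C^n\times\mathbb C^{n'}$ containing the germ of $\mathrm{Graph}(f)$.

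The key step, and the main obstacle, is to show that these relations cut out a variety of dimension exactly $n$ at $(x_o,x'_o)$, so that the germ of the graph is an irreducible component $S^\sharp$. For this I will compare with the Calabi embedding $\Phi_{D'}\circ f$ into $\mathbb P(\Lambda)$. The relations with $w$ varying express exactly the equality of the pulled-back diastasis, and Calabi's rigidity determines $f$ from finitely many such derivatives, via a local full-rank condition on the Bergman embedding of $D'$. I would first try showing that the Jacobian in the $z'$ variables of finitely many $R_{\beta,w}$ has rank $n'$ at $(x_o,x'_o)$. This uses that $\{\partial_{\overline w}\log K_{D'}(z',w')\}$ separates first-order jets in $z'$, which holds because the Bergman metric is positive definite. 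By the implicit function theorem the germ of the variety then equals the graph, giving the irreducible affine-algebraic $S^\sharp$.

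For the remaining assertions, Calabi's interior extension (\S\ref{genesis}) gives an extension along paths in $D$ as a holomorphic isometry into $\mathbb P(\Lambda)$. Completeness of $ds^2_{D'}$ keeps the images inside $D'$, since an isometric path of finite length cannot reach $\partial D'$. Because $S^\sharp$ is algebraic, the continuation is single-valued: all branches lie in $S^\sharp$, and the implicit-function nondegeneracy holds at each point of $S^\sharp\cap(D\times D')$. Hence $S:=S^\sharp\cap(D\times D')$ is the graph of a global isometric embedding $F$; injectivity and immersivity follow from the isometry together with the algebraic single-sheetedness. Finally, if $ds_D^2$ is complete, consider a sequence $z_k\to\partial D$ whose images stay in a compact set of $D'$. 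A pulled-back geodesic ball argument then gives a contradiction, because $F$ is an isometry up to scale and balls of bounded radius about points of a compact subset of $D'$ have preimages of bounded $\lambda ds^2_D$-diameter, hence preimages that are relatively compact in $D$. Therefore $F$ is proper.
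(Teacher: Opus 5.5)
\textbf{The key step fails.} Your opening move, polarizing the diastasis identity and differentiating in $\overline w$ to get rational relations $R_{\beta,w}(z,z')=0$ whose common zero set $V$ contains the germ of $\mathrm{Graph}(f)$, matches how the paper's sketch begins. Your key step, however, is that finitely many $R_{\beta,w}$ have $z'$-Jacobian of rank $n'$ at $(x_o,x_o')$, so that $V$ is $n$-dimensional there. This is false in general, and the excess-dimension case is exactly the difficulty the paper singles out.

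The $z'$-differentials of all the $R_{\beta,w}$ at $x_o'$ lie in the linear span of the covectors $\partial_{z'}\log K_{D'}(x_o',w')$ with $w'\in f(U)$. Positive definiteness of $ds^2_{D'}$ only guarantees that this span has dimension at least $n$. A concrete counterexample is the totally geodesic embedding $f(z)=(z,0)$ of $\mathbb B^n$ into $\mathbb B^{n'}$, with $n<n'$, $x_o=0$ and $\lambda=\frac{n'+1}{n+1}$. There the polarized identity reads $\log(1-\langle z,w\rangle)=\log\bigl(1-\sum_{i\le n}z_i'\overline{w_i}\bigr)$ for all $w$. Hence $V=\{z_i'=z_i,\ 1\le i\le n\}$ has dimension $n'>n$, and $z_{n+1}',\dots,z_{n'}'$ are unconstrained. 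In Calabi's language, the relations only see $\Phi_{D'}(z')$ through its pairing with the span $\Lambda$ of $\{K_{D'}(\cdot,f(w))\}$. Calabi rigidity says nothing about directions orthogonal to $\Lambda$.

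The missing idea is Mok's cut-down. You additionally impose $h_\alpha(z')=0$ for an infinite family of extremal functions $h_\alpha\in H^2(D')$ that vanish on $f(U)$ (equivalently, are orthogonal to $\Lambda$). These are restrictions of rational functions because $K_{D'}$ is rational. One then shows that the resulting algebraic set $W$ has dimension $n$ at $(x_o,x_o')$. In the example, the $h_\alpha$ include $z_{n+1}',\dots,z_{n'}'$.

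\textbf{Later steps.} Your claims of ``implicit-function nondegeneracy at each point of $S^\sharp\cap(D\times D')$'' and ``algebraic single-sheetedness'' rest on the same false nondegeneracy. Irreducibility of $S^\sharp$ does not by itself exclude other sheets of $S^\sharp$ over $D$ meeting $D\times D'$, so you need an argument that $S$ is exactly the graph of $F$. One route: the polarized identities persist on all of $S^\sharp$, which makes every branch in $D\times D'$ a local isometric graph, since a vertical vector $(0,v')$ would satisfy $ds^2_{D'}(v',v')=0$. Combine this with injectivity of the Bergman map of $D'$ on the cut-down locus.

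Your properness argument is also invalid. An isometric immersion preserves lengths of curves, not distances, so preimages of small balls in $D'$ need not have bounded $\lambda\,ds^2_D$-diameter. Instead, take $z_k\to b\in\partial D$ with $F(z_k)\to y\in D'$; then $(b,y)\in S^\sharp$ because $S^\sharp$ is closed. Join some $(z_k,F(z_k))$ to $(b,y)$ by a rectifiable arc in $S^\sharp$ whose $D'$-component stays in a compact subset of $D'$. As long as the arc stays in $D\times D'$ it lies on $\mathrm{Graph}(F)$, so its $D$-component has finite $\lambda\,ds^2_D$-length. By completeness of $ds^2_D$ it therefore can never reach $b$, a contradiction.
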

As in Clozel-Ullmo \cite{CU2003}, in the proof of Theorem \ref{hol-isom} the first author made use of the diastasis, except that in place of the real-analytic functional equations arising from equating diastases, he made use of holomorphic identities obtained from polarizing such functional identities.
Obviously the identities are related to the Bergman kernels $K_D$ and $K_{D'}$.
In the event that the variety $V$ of common solutions of the infinite family of holomorphic identities thus obtained,
which contains the germ of ${\rm Graph}(f) \subset D\times D'$ as a subset, has exactly the same dimension $n$ as that of ${\rm Graph}(f)$, one obtains immediately the desired analytic continuation.
The difficulty therefore arises when $V$ has excessive dimension, in which case it was shown in \cite{Mok2012} that $V$ can be cut down to a variety $W \subset \mathbb C^{n}\times\mathbb C^{n'}$
by requiring further that $(z,z') \in W$ satisfy $h_{\alpha}(z') = 0$ for an infinite family of extremal functions $h_\alpha \in H^2(D')$ which are necessarily restrictions of rational functions on $\mathbb C^{n'}$ under the rationality assumption on the Bergman kernel $K_{D'}(z',w')$ in the statement of the theorem.

\vskip 0.2cm
We note that the Bergman kernels $K_\Omega(z,w)$ of bounded symmetric domains $\Omega \Subset \mathbb C^n$ in their Harish-Chandra realizations are necessarily rational in $(z,\overline{w})$.
For instance, in the case of type-I domains $D^{I}_{p,q}$, we have
$K_{\Omega}(Z,W) = \frac{C_{p,q}}{\left(\det\left(I_q-\overline{W}^TZ\right)\right)^{p+q}}$ for some positive constants $C_{p,q}$,
which reduces in the case of $\mathbb B^n \equiv D^{I}_{n,1}$ to $K_{\mathbb B^n}(z,w) = \frac{C_n}
{\left(1-\left(z_1\overline{w_1}+\cdots+ z_n\overline{w_n}\right)\right)^{n+1}}$ for some positive constant $C_n = C_{n,1}$.

\begin{remark} \
\rm
The proof of Theorem \ref{hol-isom} given in \cite{Mok2012} does not make use of analytic continuation of holomorphic isometries into complex space forms, and as such it gives a self-contained proof that a K\"ahler submanifold of constant holomorphic sectional curvature of an arbitrary bounded symmetric domain $\Omega$ is necessarily biholomorphically isometric to the complex unit ball $(\mathbb B^n,ds^2_{\mathbb B^n})$, and not just locally holomorphically isometric to it. This fact will be used in the study of the examples of holomorphic isometries in \S\ref{ball-embeddings}.
\end{remark}

\vskip 0.2cm
\subsection{Holomorphic isometric embeddings of the complex unit ball of maximal dimension into an irreducible bounded symmetric domain: cones of minimal disks}
\label{ball-embeddings}
\mbox{}

\vskip 0.5cm
\subsubsection{The examples}
\label{examples}
It was for some time unknown whether there exist nonstandard $($i.e., not totally geodesic$)$ holomorphic isometric embeddings of a higher-dimensional complex unit ball $\mathbb B^m$, $m\ge 2$, into bounded symmetric domains.  The first author raised this question in \cite{Mok2011}, and found a series of examples in Mok \cite{Mok2016} which turn out to be important for rigidity problems, as follows.

\begin{theorem} {\rm (Mok \cite{Mok2016})} \
Let $\Omega \Subset \mathbb  C^N$ be an irreducible bounded symmetric domain of rank $\ge 2$ and
denote by $\widehat{\Omega}$ the irreducible Hermitian symmetric manifold of the compact type
dual to $\Omega$.  Denoting by $\delta \in H^2(\widehat{\Omega},\mathbb Z) \cong \mathbb  Z$
the positive generator of the second integral cohomology group of $\widehat{\Omega}$,
we write $c_1(\widehat{\Omega}) = (p+2)\delta$.  Then, there exists a nonstandard proper holomorphic isometric embedding $F: \left(\mathbb B^{p+1}, ds_{\mathbb B^{p+1}}^2\right) \hra \left(\Omega,ds_\Omega^2\right)$. More precisely, letting $\Omega \subset \widehat{\Omega}$ be the Borel embedding and denoting by $\mathcal V_x$ the union of all minimal rational curves on $\widehat{\Omega}$ passing through
a point $x \in \widehat{\Omega}$, for any smooth boundary point $a \in \Reg(\partial\Omega)$, the intersection $V_a := \mathcal V_a \cap \Omega$ is the image of a holomorphic isometric embedding $F_a: \left(\mathbb B^{p+1}, ds_{\mathbb B^{p+1}}^2\right) \hra \left(\Omega,ds_\Omega^2\right)$.
\label{holoisom-ball}
\end{theorem}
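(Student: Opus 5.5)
The proof reduces to one point $a$, parametrizes $V_a$ explicitly, and shows that the Bergman potential of $\Omega$ restricted to $V_a$ is the ball potential. The exponent matching that last step relies on, $p+2 = (p+1)+1$, is exactly the hypothesis $c_1(\widehat{\Omega}) = (p+2)\delta$.

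First I would reduce to a single boundary point. $\Reg(\partial\Omega) = E_{r-1}$ is one $G_0$-orbit, and $G_0$ acts on $\widehat{\Omega}$ by biholomorphisms preserving the VMRT structure. Hence $g(\mathcal V_a) = \mathcal V_{g(a)}$ and $g(V_a) = V_{g(a)}$ for $g\in G_0$. Since $g$ is an isometry of $(\Omega,ds^2_\Omega)$, it suffices to treat one point $a$. Using the Polydisk theorem, I would take $a = (1,0,\dots,0)$ on the closure of a maximal polydisk $\Delta^r \subset \Omega$, so that $a$ lies on a maximal face.

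Second, I would describe $V_a$ concretely in Harish-Chandra coordinates. The minimal rational curves on $\widehat{\Omega}$ through $a$ that meet $\mathbb C^N$ are affine lines $a + \mathbb C\eta$ with $[\eta] \in \mathscr C_0(\Omega)$, i.e. $\eta$ a rank-one vector. For instance, for $D^I_{p,q}$ this is $\eta = uv^T$, and $\mathcal V_a\cap\mathbb C^N = a + \{\text{rank}\le 1\}$. Since $c_1(\widehat{\Omega}) = (p+2)\delta$, the VMRT has dimension $p$, so $\mathcal V_a$ is an affine cone of dimension $p+1$ with vertex $a \notin \Omega$.

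I would then check that $V_a$ is a smooth connected complex submanifold of dimension $p+1$. Each line $a + \mathbb C\eta$ either meets $\Omega$ in a disk whose closure touches $a$, or misses $\Omega$. The singular vertex is excluded, and the union of these disks is parametrized holomorphically by a map from a domain in $\mathbb C^{p+1}$. Explicitly, I would use $(t,\eta)$ with $\eta$ on a chart of the affine cone over $\mathscr C_0$, and identify the parameter domain with $\mathbb B^{p+1}$ by an explicit coordinate change $w = \psi(z)$.

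Third, I would compute the induced metric through the Bergman potential. We have $K_\Omega(z,z) = C\,h(z,\overline z)^{-(p+2)}$, where $h$ is the generic norm; for type I, $h = \det(I - \overline Z^T Z)$ and the exponent is $p+q$. The key identity to establish is that on $V_a$,
$$
h(z,\overline z) = |\varphi(z)|^2\big(1 - \|\psi(z)\|^2\big)
$$
for a nowhere-zero holomorphic function $\varphi$ and the holomorphic coordinate map $\psi: V_a \to \mathbb B^{p+1}$. It then follows that $\omega_\Omega|_{V_a} = (p+2)\, i\partial\overline\partial\big(-\log(1-\|\psi\|^2)\big)$. This is exactly $\psi^*\omega_{\mathbb B^{p+1}}$, because the Bergman kernel of $\mathbb B^{p+1}$ has exponent $(p+1)+1 = p+2$. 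Consequently $F_a := \psi^{-1}$ is a holomorphic isometric embedding, and it is nonstandard because the metric is not a rescaled totally geodesic ball. Alternatively, it would suffice to show that $V_a$ has constant holomorphic sectional curvature equal to that of $ds^2_{\mathbb B^{p+1}}$ and invoke the Remark after Theorem \ref{hol-isom}. That remark gives a global biholomorphic isometry with the ball, and properness follows from completeness via Theorem \ref{hol-isom}.

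The main obstacle is the factorization identity for $h$ restricted to the cone. I would first verify it by hand for $D^I_{p,q}$ with $Z = a + uv^T$, where the determinant is a $2\times 2$ computation in the rank-two block spanned by $e_1$, $u$, $v$. For the general case, I would exploit the stabilizer of $a$ in $G_0$ (a parabolic-type subgroup), which acts on $V_a$ transitively enough to reduce the identity to a slice. That slice lies in a totally geodesic $\Delta\times\Omega^\flat$ (or a bidisk $\Delta^2$ through $a$), where $h$ factors as a product of one-variable norms and the identity can be checked directly.
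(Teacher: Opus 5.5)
Your overall strategy (matching the Bergman potential on $V_a$ with the ball potential, using that the exponents agree at $p+2$) is viable and differs from the paper's. Your type I step is correct. Writing $D^I_{m,q}$ to avoid a clash with $p=p(\Omega)=m+q-2$, the $2\times2$ reduction for $Z=E_{11}+uv^T$ gives
\[
\det\bigl(I_q-\overline{Z}^TZ\bigr)=1-\sum_{j=1}^{q}|Z_{1j}|^2-\sum_{i=2}^{m}|Z_{i1}|^2 .
\]
So $\varphi\equiv1$, and $\psi$ is the Euclidean projection of $V_a$ onto the first row and column, i.e.\ onto the privileged coordinates $(z_1;z')$ at $\alpha=E_{11}$. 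It is injective on $V_a$, because $Z_{11}\neq1$ there and $Z_{ij}=Z_{i1}Z_{1j}/(Z_{11}-1)$ for $i,j\ge2$.

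\textbf{The gap is the general case.} Your identity is not a pointwise statement that can be checked on a slice and then transported.
\begin{itemize}
\item For $g$ in the stabilizer $S_a$ you only have $h\circ g=|J_g|^{2/(p+2)}h$. So the $S_a$-invariant object is the induced metric $\omega_\Omega|_{V_a}$, not the normal form $|\varphi|^2(1-\|\psi\|^2)$.
\item Propagating that normal form along $S_a$ would require $\psi$ to intertwine $S_a$ with a subgroup of $\Aut(\mathbb B^{p+1})$, which is essentially the conclusion.
\item Even granting that $S_a$ is transitive on $V_a$, you are reduced to the curvature of $\omega_\Omega|_{V_a}$ at one point. That curvature is governed by the second fundamental form of $V_a$ in the $p$ directions of $\mathcal H_\alpha$, which is nonzero since $V_a$ is not totally geodesic.
\end{itemize}
Your proposed slices never see these directions. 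In privileged coordinates with $a=(1;0;0)$, the vector $(z_1-1;0;w)$ has rank $1+\operatorname{rank}(w)$. Hence $V_a\cap(\Delta\times\{0\}\times\Omega^\flat)$ is only the minimal disk $\Delta\times\{0\}\times\{0\}$, and the same holds for any bidisk whose closure contains $a$. On that disk the identity is the tautology $h=1-|z_1|^2$. Equivalently, $T_0V_a=\mathbb C\alpha\oplus\mathcal H_\alpha$ meets $T_0(\Delta\times\{0\}\times\Omega^\flat)=\mathbb C\alpha\oplus\mathcal N_\alpha$ only in $\mathbb C\alpha$. If you instead place $a$ on the boundary of the $\Omega^\flat$-factor, you only get the analogous cone in $\Omega^\flat$, of smaller dimension $p(\Omega^\flat)+1$.

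To close the gap you need something that controls $V_a$ to second order in the $\mathcal H_\alpha$-directions. There are two options.
\begin{itemize}
\item \textbf{Type-by-type computation.} For the classical domains, $a+\eta$ has rank at most two, so $h$ can be evaluated as above. For $D^{IV}_n$ with $a=(1,i,0,\dots,0)/\sqrt2$ one finds $h|_{V_a}=1-|\langle z,a\rangle|^2-\sum_{k\ge3}|z_k|^2$, and the paper's type III computation at $W=0$ shows the same phenomenon. However, $D^{V}$ and $D^{VI}$ need Jordan-triple machinery: the Peirce decomposition with respect to the minimal tripotent $a$ and the spectral formula for $h$ on rank-two elements.
\item \textbf{The paper's route, which avoids any normal form for $h$.}
  \begin{itemize}
  \item $V_a$ is strictly pseudoconvex along $\partial^\flat V_a$, so $g_\Omega|_{V_a}$ is asymptotically of constant holomorphic sectional curvature $-2$ there.
  \item Every point of $V_a$ is carried toward such a boundary point by a $1$-parameter group of automorphisms of $\Omega$ preserving $V_a$, which forces the curvature to be constant everywhere.
  \item Calabi or Theorem~\ref{hol-isom} then globalizes the resulting germ of isometry.
  \end{itemize}
\end{itemize}
The paper's route is uniform in the type. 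Yours, once completed, would give an explicit $F_a$ (in the cases above, the inverse of a linear projection).
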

The Bergman metric on a bounded homogeneous domain is K\"ahler-Einstein and has Ricci curvature equal to $-1$.
In this article, we will mainly use another normalization of the K\"ahler-Einstein metric
on an irreducible bounded symmetric domain $\Omega$ in place of the Bergman metric, viz.,
we denote by $g_\Omega$ the canonical K\"ahler-Einstein metric on $\Omega$
such that minimal disks are of constant Gaussian curvature $-2$.
It turns out that Theorem \ref{holoisom-ball} remains valid when the Bergman metrics
are replaced by the normalized K\"ahler-Einstein metrics $g_{\mathbb B^{p+1}}$ and $g_\Omega$,
and the two statements (with Bergman metrics resp. normalized K\"ahler-Einstein metrics) are equivalent. For simplicity we also write $g_n$ for $g_{\mathbb B^{n}}$.

\vskip 0.2cm
The proof of Theorem \ref{holoisom-ball} is geometric.
On the one hand, $V_a$ is strictly pseudoconvex at smooth boundary points,
so that $V_a$ is asymptotically of constant holomorphic sectional curvature $-2$ at a smooth boundary point.
On the other hand, given any point $x\in V_a$ there is a
1-parameter group $H = \{\varphi_t: -\infty < t < +\infty\} \subset \Aut(\Omega)$ of automorphisms
preserving $V_a$ such that $\varphi_t(x)$ converges to some smooth point of $\partial V_a$.
These two properties of $V_a$ imply that $V_a$ is of constant holomorphic sectional curvature $-2$, hence there is a germ of holomorphic isometry $f: \left(\mathbb B^{p+1},g_{p+1};0\right)
\to \left(V_a,g_\Omega|_{V_a};x_0\right)$. By Calabi \cite{Ca1953} or by Theorem \ref{hol-isom}, $f$ admits an extension to a proper holomorphic isometric embedding $F: \left(\mathbb B^{p+1},g_{p+1}\right) \to \left(V_a,g_\Omega|_{V_a}\right)$, so that $\left(V_a,g_\Omega|_{V_a}\right)$ is globally and not just locally holomorphically isometric to $\mathbb B^{p+1}$ equipped with the normalized canonical K\"ahler-Einstein metric.

\begin{remark} \
\rm
It was also proven in \cite{Mok2016} that $p+1$, $p = p(\Omega)$,
is the maximal dimension for which there exist holomorphic isometric embeddings
from $\mathbb B^n$ into $\Omega$ with respect to the Bergman metric or with respect to the normalized K\"ahler-Einstein metric.
\end{remark}

\vskip 0.2cm
\subsubsection{Privileged Harish-Chandra coordinates}
\label{privileged-coords}
It is convenient to choose Harish-Chandra coordinates in a specific form by making some unitary transformation (which is just a permutation of the coordinates if we start with Harish-Chandra coordinates such that the basis vectors at $0$ are unit root vectors). Let $\Omega \Subset \mathbb C^n$ be the Harish-Chandra realization of $\Omega$. Let $\ell$ be a minimal rational curve passing through $0\in\Omega$ and modify the Harish-Chandra coordinates $(z_1,\cdots,z_n)$ by a unitary transformation so that $T_0(\ell) =\mathbb C\frac{\partial}{\partial z_1}$.  We will also write $\alpha := \frac{\partial}{\partial z_1}$. Writing $p = p(\Omega)$ for the complex dimension of $\mathscr C_0(\widehat{\Omega})$,
choose the coordinates $(z_2,\cdots,z_{p+1})$ so that, with respect to the Grothendieck decomposition $T_{\widehat{\Omega}}|_\ell = \mathcal O(2)\oplus\mathcal O(1)^p\oplus \mathcal O^q$, writing $P_\ell$ for $\mathcal O(2)\oplus\mathcal O(1)^p$, we have $P_0 = \mathbb C\frac{\partial}{\partial z_1}\oplus\left(\mathbb C\frac{\partial}{\partial z_2}\oplus\cdots\oplus \mathbb C\frac{\partial}{\partial z_{p+1}}\right)$. Then, we have Harish-Chandra coordinates $z = (z_1;z';z'')$ where $z' = (z_2,\cdots,z_{p+1})$ and $z'' = (z_{p+2},\cdots,z_n)$.  In terms of the curvature tensor $R_{i\overline{j}k\overline{\ell}}(0)$ of $(\Omega,g_0)$ normalized so that minimal disks are
of constant Gaussian curvature $-2$, $\left(R_{1\overline{1}k\overline{\ell}}\right)$
defines a Hermitian bilinear form $H_\alpha$ on $T_{\Omega,0}$ with eigenvalues $-2$, $-1$ and $0$ and corresponding eigenspaces $\mathbb C\frac{\partial}{\partial z_1}$
resp.~$\mathcal H_\alpha = {\rm Span}_{\mathbb C}\left\{\frac{\partial}{\partial z_2}\cdots\frac{\partial}{\partial z_{p+1}}\right\}$
 resp.~$\mathcal N_\alpha = {\rm Span}_{\mathbb C}\left\{\frac{\partial}{\partial z_{p+2}}\cdots\frac{\partial}{\partial z_{n}}\right\}$.
 Note that $\mathcal N_\alpha \subset T_{\Omega,0}$
is the null space of the Hermitian form $H_\alpha$. In the ``{\it
privileged coordinates\/}'' $z = (z_1,z',z'')$, there is a totally geodesic complex submanifold $\Delta\times\{0\}\times \Omega^{\flat} \subset \Omega$, where $\Omega^{\flat}$ is an irreducible bounded symmetric domain of rank $r-1$ biholomorphic to a maximal face $\Phi \subset \Reg(\partial\Omega)$. We have $\mathcal N_\alpha = T_0(\{(0;0)\}\times\Omega^{\flat})$.

\vskip 0.2cm
\subsubsection{Cones of minimal disks as fibers of limits of hyperbolic flows}
\label{conevmrt}
There is a well-known geometric interpretation of the inverse Cayley transformation $\alpha: \Delta \to \mathcal H$ by a 1-parameter hyperbolic flow along the geodesic $\Lambda=(-1,1)\subset\Delta$.  Let $\psi_t(z)=\frac{z+t}{1+tz}$, $t\in(-1,1)$, be the hyperbolic flow fixing the end points $\pm 1$.  As $t\to 1$, $\psi_t(z)\to 1$ for every $z\in\Delta$.  Composing on the left with affine maps $A_t$ so that $A_t\circ\psi_t$ is given by $z\mapsto i-\frac{2iz}{1+tz}$, one obtains as $t\to 1$
\[
\alpha(z)
=
i\frac{1-z}{1+z},
\]
which maps $\Delta$ onto $\mathcal H$ and sends $z=1,0,-1$ to $0,i,\infty$ respectively.\footnote{Composing further with the automorphism $w\mapsto -\frac{1}{w}$ of $\mathcal H$ $($equivalently, replacing $z$ by $-z$ on $\Delta$$)$ yields the other inverse Cayley transformation $i\frac{1+z}{1-z}$, which sends $z=-1,0,1$ to $0,i,\infty$.}

In \cite{MW2025} Mok-Wong related $\mathcal V_a$ for $a\in \Reg(\partial\Omega)$ with fibers of the limiting maps of hyperbolic flows onto maximal faces.  For the unit disk $\Delta$ there is a group homomorphism $\Theta: {\rm SU}(1,1) \to \Aut_0(\Omega) =: G_0$ defined using the root system such that, writing $L_0 = \Theta({\rm SU}(1,1))$, the $L_0$-orbit of $0\in\Omega$ is a minimal disk $D$, and embedding $L_0$ into its complexification $L$ the $L$-orbit of $0$ is a minimal rational curve $\ell \subset \widehat{\Omega}$, $\ell\cap\Omega = D$.  We write $\theta_t:=\Theta(\psi_t)$ and $\{\theta_t: -1 < t < 1\} \subset G_0$ for the resulting 1-parameter subgroup of transvections, which will also be referred to as a hyperbolic flow.

\begin{proposition}[{Mok-Wong \cite[{Proposition 3.4}]{MW2025} }] \
Let $\psi_t\in {\rm SU}(1,1)$ be given by $\psi_t(z):=\frac{z+t}{1+tz},t\in (-1,1)$, and write $\theta_{t,\Sigma}:=\Theta(\psi_t)$. Then, $\rho_{\Sigma}:=\lim\limits_{t\rightarrow 1} \theta_{t,\Sigma}$ exists, and $\rho_{\Sigma}: \Omega\rightarrow \Sigma$
is a holomorphic submersion onto the maximal face $\Sigma = \{(1;0)\} \times \Omega^\flat \subset \partial \Omega$. If $b\in \Sigma$, then $\rho_{\Sigma}(x)=b$ for any $x\in V_{b'}=\mathcal{V}_{b'}\cap \Omega$, where $b'$ is the opposite boundary point on $\Sigma'$ of $b \in \Sigma$
with respect to $0$, $\Sigma'$ being the opposite boundary of $\Sigma$ with respect to $0$.
Moreover, for each $x \in \Omega$, there exists a unique $b \in \Sigma$ such that $x \in V_{b'}$, so that $\rho_\Sigma^{-1}(b) = V_{b'}$ and the level sets $V_{b'} = \rho_{\Sigma}^{-1}(b), b \in \Sigma$, give a decomposition of $\Omega$ into a disjoint union $\Omega = \coprod\{V_{b'}: b' \in \Sigma'\} =  \coprod\{V_{b'}: b \in \Sigma\}$.
\label{cayproj}
\end{proposition}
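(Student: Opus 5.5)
The plan is to reduce everything to a computation along a maximal polydisk through $0$ containing the minimal disk $D=\ell\cap\Omega$, and then spread it out by the isotropy group of the flow. First, I would choose, via the Polydisk theorem and the root-theoretic construction of $\Theta$, a maximal polydisk $\Pi\cong\Delta^r\subset\Omega$ in privileged Harish-Chandra coordinates. In it, $D$ is the first factor and $\{0\}\times\Delta^{r-1}$ is a maximal polydisk of $\Omega^\flat$, realized as $\{(0;0)\}\times\Omega^\flat$. The flow $\theta_t=\Theta(\psi_t)$ commutes with the complexification of the subgroup $H\subset G_0$ centralizing $\Theta({\rm SU}(1,1))$. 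This subgroup contains $\Aut_0(\Omega^\flat)$ acting on the $\mathcal N_\alpha$-directions, and it also contains the compact part of the isotropy of $D$ acting on $\mathcal H_\alpha$.

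The first step is existence of the limit. In Harish-Chandra coordinates, $\theta_t$ extends to $\widehat\Omega$ as an element of $G$ (a linear fractional map), and I would write it explicitly through the $\mathfrak{sl}_2$-triple attached to $\alpha$. On $\mathcal O(2)\oplus\mathcal O(1)^p\oplus\mathcal O^q$ the action of the diagonal element of $\mathfrak{sl}_2$ has weights $2,1,0$. This gives, in the form $z=(z_1;z';z'')$,
\[
\theta_t(z)=\Bigl(\psi_t\text{-like in }z_1;\ \tfrac{\sqrt{1-t^2}\,z'}{1+tz_1};\ z''-\tfrac{t\,Q(z',z')}{1+tz_1}\Bigr)
\]
for a quadratic vector-valued form $Q$ coming from the Jordan triple product. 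Letting $t\to1$ then gives explicitly
\[
\rho_\Sigma(z)=\Bigl(1;0;z''-\tfrac{Q(z',z')}{1+z_1}\Bigr).
\]
Since $|z_1|<1$ on $\Omega$, this is holomorphic on $\Omega$. It is a submersion because its $z''$-component has identity differential along $\mathcal N_\alpha$. It lands in $\overline\Omega$, being a limit of points of $\Omega$, and lies in the affine slice $\{(1;0)\}\times\mathbb C^q$. A check on $\Pi$ shows the image lies in $\{1\}\times\Omega^\flat$. Equivariance under $H$, whose orbit of $\Pi\cap\Omega$ covers $\Omega$ by the Polydisk theorem applied to the pair $(\Delta\times\Omega^\flat)$ together with $K$-translates fixing $\alpha$, then gives $\rho_\Sigma(\Omega)=\Sigma$ exactly, with surjectivity onto $\Sigma$ coming from the $\{0\}\times\Omega^\flat$ slice.

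The second step identifies the fibers. Every fiber of $\rho_\Sigma$ is a closed complex submanifold of dimension $n-q=p+1$. Take $b=(1;0;w)\in\Sigma$ and set $b'=(-1;0;w)$, its opposite point with respect to $0$ (reflection $z_1\mapsto -z_1$ inside $\Delta\times\Omega^\flat$). I would show $V_{b'}\subset\rho_\Sigma^{-1}(b)$, as follows. Minimal rational curves through $b'$ are images under $M^-$-type unipotents at $b'$ of lines in the VMRT. Along each such line, the restriction of the formula for $\rho_\Sigma$ is constant: the quadratic correction $Q(z',z')/(1+z_1)$ exactly compensates the variation of $z''$. This is a Jordan-triple identity I would verify first at $b'=(-1;0;0)$ on $\ell$ and on lines in the $\mathcal H_\alpha$-directions, then transport by $H$. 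Both $V_{b'}$ (irreducible of dimension $p+1$ by Theorem \ref{holoisom-ball}, being a biholomorphic image of $\mathbb B^{p+1}$) and the connected components of $\rho_\Sigma^{-1}(b)$ are closed $(p+1)$-dimensional submanifolds of $\Omega$. Hence $V_{b'}$ is a union of components, and connectedness of the fiber follows from its $H_b$-homogeneity together with the explicit formula, since the fiber is a graph over $(z_1,z')$ in a connected region. This gives $\rho_\Sigma^{-1}(b)=V_{b'}$. Uniqueness of $b$ and the disjoint decomposition $\Omega=\coprod V_{b'}$ are then just the statement that fibers of a map are disjoint and cover $\Omega$.

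The main obstacle is the constancy of $\rho_\Sigma$ on $\mathcal V_{b'}\cap\Omega$, i.e.\ matching the cone of minimal rational curves at a boundary point with the level sets of the limit map. I would attack it first in the type-I model $D^I_{p,q}$, where $Q(z',z')$ is an explicit matrix product and minimal rational curves are rank-one affine lines, and then argue uniformly through the Jordan triple system identities.
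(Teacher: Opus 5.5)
Your flow formula is correct. In the type-III model of \S\ref{typeIII-cayley} it is exactly \eqref{eq:phi-t}, with $Q(z',z')$ corresponding to $bb^T$ and your limit corresponding to $C_\infty(a;b;C)$. Existence of the limit, holomorphicity and the submersion property then follow as you say.

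The gap is in the two places where you use the centralizer $H$ of $\Theta({\rm SU}(1,1))$ to spread information from $\Pi$ to all of $\Omega$. The element $\Theta(-I)\in K$ is central in $\Theta({\rm SU}(1,1))$. In Harish-Chandra coordinates it acts linearly, by $+1$ on $\mathbb C\alpha\oplus\mathcal N_\alpha$ and by $-1$ on $\mathcal H_\alpha$. Its fixed-point set is therefore $\Omega\cap\{z'=0\}=P_0=\Delta\times\{0\}\times\Omega^\flat$. Both $H$ (which contains the elements of $K$ fixing $\alpha$) and the flow commute with $\Theta(-I)$, so they preserve $P_0$, and hence $H\cdot\Pi\subset P_0\subsetneq\Omega$.

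On $P_0$ the assertion $\rho_\Sigma(z)\in\Sigma$ is trivial, because $z'=0$. The real content of $\rho_\Sigma(\Omega)\subset\Sigma$, namely $z''-Q(z',z')/(1+z_1)\in\Omega^\flat$ for $z'\neq0$, is therefore left unproved. This is exactly what the paper checks in the type-III model via the positivity identity $v^*(I_n-\overline ZZ)v=\xi^*\bigl(I_{n-1}-\overline{C_\infty}\,C_\infty\bigr)\xi$.

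The same objection defeats your connectedness argument for $\rho_\Sigma^{-1}(b)$. The stabilizer $H_b$ preserves $P_0\cap\rho_\Sigma^{-1}(b)=\Delta\times\{0\}\times\{w\}$, so it cannot act transitively on the $(p+1)$-dimensional fiber. The phrase ``a graph over a connected region'' assumes what is to be proved.

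Both steps can be closed without group actions.

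For the image: $\rho_\Sigma(\Omega)\subset\overline\Omega\cap\{z_1=1,\,z'=0\}$. By convexity of $\Omega$ together with $\Omega\cap\{z'=0\}=P_0$, this set equals $\{(1;0)\}\times\overline{\Omega^\flat}$. The map $z\mapsto z''-Q(z',z')/(1+z_1)$ is a submersion, hence open. Its image is thus an open subset of $\overline{\Omega^\flat}$, and so lies in $\Omega^\flat$.

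For the fibers: $M^+$ acts on $\mathbb C^n\subset\widehat\Omega$ by translations. Hence the minimal rational curves through $b'=(-1;0;w)$ are the lines $b'+\mathbb C\zeta$ with $\zeta$ in the affine cone $\widehat{\mathscr C}_0$ over $\mathscr C_0(\widehat\Omega)$. Those meeting $\Omega$ are all the lines with $\zeta_1\neq0$, not only $\ell$ and a few special lines. The identity to establish is the quadric description $\widehat{\mathscr C}_0\cap\{\zeta_1\neq0\}=\{\zeta:\ \zeta_1\neq0,\ \zeta''=Q(\zeta',\zeta')/\zeta_1\}$, with the same $Q$ as in the flow.
\begin{itemize}
\item The inclusion $\subset$ gives your constancy $\rho_\Sigma(b'+s\zeta)=b$, so $V_{b'}\subset\rho_\Sigma^{-1}(b)$.
\item The inclusion $\supset$ gives the converse. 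Take any $x$ with $\rho_\Sigma(x)=b$, so that $x''-w=Q(x',x')/(1+x_1)$, and apply it to $\zeta=x-b'$. This shows $x\in\mathcal V_{b'}\cap\Omega=V_{b'}$.
\end{itemize}
Hence $\rho_\Sigma^{-1}(b)=V_{b'}$ directly, with no dimension count or connectedness argument needed.
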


\vskip 0.2cm
\begin{definition} \
In the notation of Proposition \ref{cayproj} and written in terms of the privileged Harish-Chandra coordinates, the pair
\[
\Sigma = \{(1;0)\}\times\Omega^\flat\, ,\qquad
\Sigma' = \{(-1;0)\}\times\Omega^\flat
\]
is called the \emph{standard (reference) pair} of maximal faces of $\Omega$ $($equivalently, of maximal boundary components of $\Omega)$.
\label{stdpair}
\end{definition}

\vskip 0.2cm
Let $\Phi \subset \Reg(\partial\Omega)$ be a maximal face. Then, there exists $k \in K$ such that $\Phi = k\Sigma$. If we define $\theta_{t,\Phi} := k\theta_{t,\Sigma}k^{-1}$, then $\rho_\Phi := \lim_{t \to 1} \theta_{t,\Phi} = k\rho_\Sigma k^{-1}$ is also a holomorphic submersion $\rho_\Phi : \Omega \to \Phi$. Proposition \ref{cayproj} remains valid with $\Sigma$ being replaced by $\Phi$, $\rho_\Sigma$ being replaced by $\rho_\Phi$ and $b' \in \Phi'$ meaning the opposite point of $b \in \Phi$ on the opposite face with respect to $0$.

\begin{definition}[{cf. Mok-Wong \cite[{Definition 3.9}]{MW2025} }] \
The holomorphic submersion $\rho = \rho_\Sigma : \Omega \to \Sigma$ in Proposition \ref{cayproj} onto $\Sigma$, or more generally $\rho = \rho_\Phi = k\rho_\Sigma k^{-1}$, $\rho : \Omega \to \Phi$ as in the last paragraph, is called a standard Cayley projection.
\label{standcay}
\end{definition}

More general Cayley projections $\rho_{\Phi,\Psi}$ exist for certain pairs $(\Phi,\Psi)$ in $\Reg(\partial\Omega)\times \Reg(\partial\Omega)$, as limits of 1-parameter subgroups $\{\theta_{t,\Phi,\Psi}\}_{-1<t<1}\subset \Aut(\Omega)$ constructed similarly to the standard pair $(\Sigma, \Sigma')$ of Definition \ref{stdpair},
cf. Mok-Wong \cite[Lemma 3.11, Proposition 3.12]{MW2025}.
The standard Cayley projections $\rho_\Phi: \Omega\rightarrow \Phi$ are exactly $\rho_{\Phi,\Phi'}: \Omega\rightarrow \Phi$,
where $\Phi'$ is the opposite face of $\Phi$ with respect to $0$.

\vskip 0.2cm
\subsubsection{The first partial inverse Cayley transformation in the type-III model}
\label{typeIII-cayley}
For an irreducible bounded symmetric domain of rank $r$ there is a tower of $r$ partial inverse Cayley transformations.  In the rank-one case of the ball $\mathbb B^n$, the $($full$)$ inverse Cayley transformation realizes $\mathbb B^n$ as the Siegel domain of the second kind
\[
\mathcal S_n
:=
\bigl\{(w;z_1,\cdots,z_{n-1})\in\mathbb C\times\mathbb C^{n-1}:
\im(w)>|z_1|^2+\cdots+|z_{n-1}|^2\bigr\}.
\]
Let $M_s(n;\mathbb C)$ denote the space of complex symmetric $n$-by-$n$ matrices and set
\[
D^{III}_n
:=
\bigl\{Z\in M_s(n;\mathbb C): I_n-Z\overline{Z}>0\bigr\},
\]
so that $r(D^{III}_n)=n$ and $\dim_{\mathbb C}D^{III}_n=\frac{n(n+1)}{2}$.  $($The \emph{full} inverse Cayley transformation $Z\mapsto i(I_n+Z)(I_n-Z)^{-1}$ realizes $D^{III}_n$ as the Siegel upper half-plane $\{\,Z\in M_s(n;\mathbb C):\im(Z)>0\,\}$; the first partial inverse Cayley transformation retains a bounded base factor.$)$\footnote{In dimension $1$ this is $i\frac{1+z}{1-z}$. Composing further with the automorphism $\tau\mapsto -\tau^{-1}$ of the Siegel upper half-plane $($equivalently, replacing $Z$ by $-Z$$)$ yields $i(I_n-Z)(I_n+Z)^{-1}$, which in dimension $1$ is the inverse Cayley transformation $i\frac{1-z}{1+z}$ of \S\ref{conevmrt}.}

\vskip 0.2cm
Decompose $Z\in D^{III}_n$ relative to $\mathbb C\oplus\mathbb C^{n-1}$ as
\[
Z
=
\begin{pmatrix}
a & b^T \\
b & C
\end{pmatrix},
\qquad
a\in\mathbb C,\quad
b\in\mathbb C^{n-1},\quad
C\in M_s(n-1;\mathbb C).
\]
We use these matrix blocks $(a;b;C)$ throughout the present subsection.  The corresponding privileged Harish-Chandra coordinates for the minimal rational direction $\alpha=\frac{\partial}{\partial a}$ are $\bigl(a;\sqrt{2}\,b;C\bigr)$ $($the factor $\sqrt{2}$ arising from root-length normalizations$)$.  The special product subspace $\Delta\times D^{III}_{n-1}\hra D^{III}_n$ is the locus $b=0$, and the standard opposite maximal faces of Definition~\ref{stdpair} are $\Sigma=\{(1;0)\}\times D^{III}_{n-1}$ and $\Sigma'=\{(-1;0)\}\times D^{III}_{n-1}$.

\vskip 0.2cm
Let $\psi_t(z)=\frac{z+t}{1+tz}$, $t\in(-1,1)$, be as in \S\ref{conevmrt}.  The embedding of $\psi_t$ into $\Aut(D^{III}_n)$ along the first long root is given by the linear fractional transformation $Z\mapsto(PZ+Q)(RZ+S)^{-1}$ on symmetric matrices, where $P^TS-R^TQ=I_n$, $P^TR=R^TP$ and $Q^TS=S^TQ$.  The element $\bigl(\begin{smallmatrix}\alpha&\beta\\ \beta&\alpha\end{smallmatrix}\bigr)\in{\rm SU}(1,1)$ with $\alpha=\frac{1}{\sqrt{1-t^2}}$ and $\beta=\frac{t}{\sqrt{1-t^2}}$ $($so $\alpha^2-\beta^2=1$$)$ embeds as $P=S=\operatorname{diag}(\alpha,I_{n-1})$ and $Q=R=\operatorname{diag}(\beta,0)$.  These blocks satisfy the three relations, and one obtains the automorphism
\begin{equation}
\label{eq:phi-t}
\varphi_t
\begin{pmatrix} a & b^T \\ b & C \end{pmatrix}
=
\begin{pmatrix}
\dfrac{a+t}{1+ta}
&
\dfrac{\sqrt{1-t^2}\,b^T}{1+ta}
\\[0.7em]
\dfrac{\sqrt{1-t^2}\,b}{1+ta}
&
C-\dfrac{t}{1+ta}\,bb^T
\end{pmatrix}.
\end{equation}
This is the transvection $\theta_{t,\Sigma}$ of Proposition~\ref{cayproj}.  As $t\to 1$ one has $a\mapsto 1$, $b\mapsto 0$, and $C\mapsto C_{\infty}(a;b;C):=C-\frac{bb^T}{1+a}$.  The denominator is nonzero because the $(1,1)$-entry of $I_n-Z\overline{Z}$ forces $|a|<1$.  The limit $\varphi_t(Z)\to\bigl(1;0;C_{\infty}(a;b;C)\bigr)$ lies on the face $\Sigma$ if and only if $C_{\infty}(a;b;C)\in D^{III}_{n-1}$.  To see the latter, take any $\xi\in\mathbb C^{n-1}\setminus\{0\}$ and set $x=-\frac{b^T\xi}{1+a}$ together with
\[
v
=
\begin{pmatrix} x \\ \xi \end{pmatrix},
\qquad
Zv
=
\begin{pmatrix} -x \\ C_{\infty}(a;b;C)\,\xi \end{pmatrix}.
\]
Then $v^*\bigl(I_n-\overline{Z}Z\bigr)v=\xi^*\bigl(I_{n-1}-\overline{C_{\infty}(a;b;C)}\,C_{\infty}(a;b;C)\bigr)\xi$.  The left-hand side is positive, since $I_n-Z\overline{Z}>0$ if and only if $I_n-\overline{Z}Z>0$, so $I_{n-1}-C_{\infty}(a;b;C)\,\overline{C_{\infty}(a;b;C)}>0$.  Thus $C_{\infty}(a;b;C)$ is a coordinate on $\Sigma$, and $\lim_{t\to 1}\varphi_t$ is the standard Cayley projection $\rho_\Sigma$ of Definition~\ref{standcay}.

\vskip 0.2cm
Composing $\varphi_t$ on the left with affine maps which act on the first coordinate as in the case of the unit disk in \S\ref{conevmrt} and which dilate the $b$-block by $\frac{1}{\sqrt{1-t^2}}$, one obtains
\[
\Psi_t(a;b;C)
=
\Bigl(
i-\frac{2ia}{1+ta};\,
\frac{b}{1+ta};\,
C-\frac{t}{1+ta}\,bb^T
\Bigr).
\]
As $t\to 1$,
\begin{equation}
\label{eq:alpha1-typeIII}
\alpha_1(a;b;C)
:=
\lim_{t\to 1}\Psi_t(a;b;C)
=
\Bigl(i\frac{1-a}{1+a};\,
\frac{b}{1+a};\,
C_{\infty}(a;b;C)\Bigr).
\end{equation}
Write $w:=i\frac{1-a}{1+a}$ and $u:=\frac{b}{1+a}$, so $\alpha_1=(w;u;C_{\infty}(a;b;C))$.  The first coordinate maps $\Delta$ onto the upper half-plane $\mathcal H$: for $|a|<1$ one has $\im w=\frac{1-|a|^2}{|1+a|^2}>0$, and it sends $a=1,0,-1$ to $w=0,i,\infty$.  On $b=0$ one has $C_{\infty}(a;0;C)=C$, hence $\alpha_1(a;0;C)=\bigl(i\frac{1-a}{1+a};0;C\bigr)$.

\vskip 0.2cm
Let $\mu_1\bigl(w;u;C_{\infty}(a;b;C)\bigr)=C_{\infty}(a;b;C)$ be the Euclidean projection of $\alpha_1(D^{III}_n)$ onto the third factor $\Omega^\flat=D^{III}_{n-1}$.  Writing $\iota:\Omega^\flat\to\Sigma$ for the identification $W\mapsto(1;0;W)$, one has
\[
\rho_\Sigma
=
\iota\circ\mu_1\circ\alpha_1.
\]
For each $W\in D^{III}_{n-1}$,
\begin{align*}
(\mu_1\circ\alpha_1)^{-1}(W)
&=
\rho_\Sigma^{-1}\bigl(1;0;W\bigr)
=
\bigl\{
Z\in D^{III}_n:
C_{\infty}(a;b;C)=W
\bigr\}
\\
&=
\bigl\{
Z\in D^{III}_n:
C=W+\frac{bb^T}{1+a}
\bigr\}.
\end{align*}
On $b=0$ this graph reduces to $Z=\operatorname{diag}(a,W)$, which lies in $D^{III}_n$ if and only if $|a|<1$, and $\alpha_1$ maps it onto $\bigl\{(w;0;W):\im w>0\bigr\}\cong\Delta$.

\vskip 0.2cm
Inverting the first two coordinates of $\alpha_1$ gives $a=\frac{i-w}{i+w}$ and $b=(1+a)u=\frac{2iu}{i+w}$.  Write $G=(I_{n-1}-W\overline{W})^{-1}$, which is positive definite Hermitian.  Substituting the graph into $I_n-Z\overline{Z}>0$, the image of $\rho_\Sigma^{-1}(1;0;W)$ under $\alpha_1$ is the fiber
\[
\mu_1^{-1}(W)
=
\bigl\{
(w;u):
\im w
>
2\,u^*Gu
+
2\operatorname{Re}\bigl(u^T\overline{W}\,Gu\bigr)
\bigr\}
\]
as a domain in the $(w;u)$-space.  $($For $n=2$, after returning to $(a;b)$, the same inequality is $\det\bigl(I_2-Z\overline{Z}\bigr)=(1-|a|^2)(1-|W|^2)-2|b|^2-2\operatorname{Re}\bigl(\frac{\overline{W}b^2(1+\overline{a})}{1+a}\bigr)>0$.$)$  On the fiber $W$ is constant.  Setting
\[
w'
=
w-2i\,u^T\overline{W}\,Gu,
\qquad
\eta
=
\sqrt{2}\,G^{1/2}u
\]
one has $\im w'=\im w-2\operatorname{Re}(u^T\overline{W}\,Gu)$ and $2u^*Gu=\|\eta\|^2$, hence the fiber is the standard Siegel domain $\{\im w'>\|\eta\|^2\}$.  The inverse Cayley transformation $z_1=\frac{i-w'}{i+w'}$, $z'=(1+z_1)\eta$ maps this domain onto the Euclidean ball $\bigl\{|z_1|^2+\|z'\|^2<1\bigr\}\cong\mathbb B^n$.  When $W=0$ the inequality reduces to $\im w>2\|u\|^2$, or equivalently $|a|^2+2\|b\|^2<1$, which is the Euclidean ball in the privileged Harish-Chandra coordinates $\bigl(a;\sqrt{2}\,b;C\bigr)$.  In all cases $\dim\mu_1^{-1}(W)=n=\dim D^{III}_n-\dim D^{III}_{n-1}$, in agreement with $p(D^{III}_n)=n-1$ and Theorem~\ref{holoisom-ball}.

\vskip 0.2cm
Thus each fiber of the standard Cayley projection is a ball.  The same pattern holds for a general irreducible bounded symmetric domain of rank $\ge 2$: the first partial inverse Cayley transformation realizes $\Omega$ as an unbounded domain $\mathscr D_1$ fibered over a rank-$(r-1)$ factor $\Omega^\flat$ by Siegel domains of the second kind, each biholomorphic to $\mathbb B^{p(\Omega)+1}$.  The geometric construction of Theorem~\ref{holoisom-ball} in \cite{Mok2016} establishes this identification uniformly for all types without relying on matrix formulae.

\vskip 0.2cm
\subsubsection{Boundary behavior of the biholomorphism \texorpdfstring{$V_a\cong \mathbb B^{p+1}$}{Va cong ball B(p+1)}}
\label{bdy-bihol}
For the $m$-dimensional complex unit ball $\mathbb B^m$, recall that $g_m$ stands for the normalized K\"ahler-Einstein metric $g_{\mathbb B^m}$. For a point $a \in \Reg(\partial\Omega)$, we know from Theorem \ref{holoisom-ball} that $\left(V_a,g_{\Omega}\right)$ is biholomorphically isometric to $\left(\mathbb B^{p+1},g_{p+1}\right)$.
We now examine the boundary behavior of the biholomorphic isometries $\theta: V_a \overset{\cong}\longrightarrow \mathbb B^{p+1}$ for $a \in \Reg(\partial\Omega)$.
Write $\mathcal K$ for the moduli space of minimal rational curves $\ell$ on the Borel dual $\widehat{\Omega}$ of $\Omega$ and denote by $\mathscr D \subset\mathcal K$ the open subset defined by $\ell\cap\Omega\neq\emptyset$. For $a\in\Reg(\partial\Omega)$, write $\mathscr D_a:=\{[\ell]\in\mathscr D: a\in\ell\}$. For $[\ell] \in \mathscr D$, $D_\ell := \ell \cap \Omega \subset \Omega$ is a totally geodesic holomorphic isometric image of the Poincar\'e disk.  Moreover, $\left(D_\ell,g_\Omega|_{D_\ell}\right) \hra (V_a, g_\Omega|_{V_a})$ is also totally geodesic.
Hence, writing $\Delta_\ell := \theta(D_\ell)$, we have $\left(\Delta_\ell,g_{p+1}|_{\Delta_\ell}\right) \hra \left(\mathbb B^{p+1},g_{p+1}\right)$. Writing $\theta_\ell := \theta|_{D_\ell}$, we have $\theta_\ell: D_\ell \overset{\cong}\longrightarrow \Delta_\ell$, and $\Delta_\ell \subset \mathbb B^{p+1}$ is also totally geodesic. Regarding the boundary behavior of the holomorphic isometry $\theta: \left(V_a, g_\Omega|_{V_a}\right) \overset{\cong}\longrightarrow \left(\mathbb B^{p+1},g_{p+1}\right)$ near the vertex $a$ of $\mathcal V_a$ we have,

\begin{proposition}[{Mok-Wong \cite[{Proposition 3.7}]{MW2025} }] \
For every $[\ell] \in \mathscr D_a$, $\theta_\ell$ extends to a biholomorphism $\theta_\ell^\sharp: \ell \overset{\cong}\longrightarrow \Lambda$, where $\Lambda \subset \mathbb P^{p+1}$
is a projective line and $\Delta_\ell \subset \Lambda$ is the Borel embedding. There exists $u \in \partial \mathbb B^{p+1}$ such that $\theta_\ell^\sharp(a) = u$ for every $[\ell] \in \mathscr D_a$.
Moreover, defining $\partial^\flat V_a = \coprod\left\{\partial D_\ell - \{a\}: [\ell] \in \mathscr D_a\right\}$, $\theta: V_a \overset{\cong} \longrightarrow \mathbb B^{p+1}$ extends holomorphically to a biholomorphism
$\theta^\sharp: W \overset{\cong}\longrightarrow U$ where $W$ is a
neighborhood of $V_a \coprod \partial^\flat V_a$ in $\widehat{\Omega}$ and $U$ is a neighborhood of $\mathbb B^{p+1} -\{u\}$ in $\mathbb P^{p+1}$. Furthermore, $\theta^\sharp\big|_{V_a \coprod \partial^\flat V_a}$ extends continuously to $\theta^\dagger: \overline{V_a} \to \overline{\mathbb B^{p+1}}$
$($such that $\theta^\dagger|_{V_a\coprod \partial^\flat V_a}: V_a\coprod \partial^\flat V_a \overset{\cong}\longrightarrow \overline{\mathbb B^{p+1}} - \{u\}$
is a diffeomorphism between smooth manifolds with boundary$)$.
\label{ballext}
\end{proposition}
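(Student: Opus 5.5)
Since $\theta$ is a biholomorphic isometry and $D_\ell\subset V_a$ is totally geodesic, I would start from the disks. The image $\Delta_\ell=\theta(D_\ell)$ is a totally geodesic holomorphic disk in $(\mathbb B^{p+1},g_{p+1})$ carrying curvature $-2$. It is therefore the intersection $\Lambda\cap\mathbb B^{p+1}$ of $\mathbb B^{p+1}$ with an affine complex line, and $\Lambda$ denotes its projective closure in $\mathbb P^{p+1}$. The map $\theta_\ell\colon D_\ell\to\Delta_\ell$ is a holomorphic isometry between two Poincar\'e disks of the same curvature. Each sits in its Borel embedding ($D_\ell=\ell\cap\Omega\subset\ell\cong\mathbb P^1$, and $\Delta_\ell\subset\Lambda\cong\mathbb P^1$), so $\theta_\ell$ is a M\"obius transformation and extends to a biholomorphism $\theta_\ell^\sharp\colon\ell\to\Lambda$. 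This gives the first assertion.

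Next I would show that the point $\theta_\ell^\sharp(a)$ does not depend on $\ell$. I would use the $1$-parameter group $H=\{\varphi_t\}\subset\Aut(\Omega)$ preserving $V_a$ from the proof of Theorem \ref{holoisom-ball}, concretely the hyperbolic flow $\theta_{t,\Phi}$ of Proposition \ref{cayproj}. Here $a$ is taken as the point opposite to $b$ on the face, so that $V_a$ is a fiber of $\rho_\Phi$ and is preserved by the flow. This flow fixes $a$ and pushes every point of $V_a$ toward $a$ as $t\to -1$, the reverse direction. Conjugating by $\theta$ gives a hyperbolic $1$-parameter subgroup of $\Aut(\mathbb B^{p+1})$. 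Such a subgroup has exactly two fixed points $u,u'$ on $\partial\mathbb B^{p+1}$, and its orbits converge to $u$. Each disk $D_\ell$ with $[\ell]\in\mathscr D_a$ passes through $a$ in its closure, and the flow carries $D_\ell$ to another disk $D_{\ell'}$ through $a$. Since $\theta_\ell^\sharp$ is continuous up to $\partial D_\ell$, the point $\theta_\ell^\sharp(a)$ is the limit of the $\theta$-images of orbits starting on $D_\ell$, and this limit is $u$. Equivalently, all $\Delta_\ell$ pass through a common boundary point. The cleanest way to see this is to identify $\theta(V_a)$ with the Siegel-domain picture of \S\ref{typeIII-cayley}, where $a$ corresponds to the point at infinity $w=\infty$ of the fiber $\mu_1^{-1}(W)$. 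Cones of lines through $a$ then correspond to complex lines through $\infty$, that is, vertical lines $u=\mathrm{const}$ in Siegel coordinates.

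For the holomorphic extension near $V_a\coprod\partial^\flat V_a$, I would use the fact that $\mathcal V_a$ is the cone over the VMRT with vertex $a$. Blowing up at $a$ exhibits $\mathcal V_a-\{a\}$ as a ruled family parametrized by $\mathscr D_a$. I would define $\theta^\sharp$ line by line, $\theta^\sharp|_{\ell}=\theta_\ell^\sharp$. The Harish-Chandra realization of the fiber matches the Siegel domain by the explicit rational formulas of \S\ref{typeIII-cayley}, namely the maps $\alpha_1$ and the changes of variables $w'$, $\eta$ followed by the inverse Cayley transformation. Because of this, the resulting map is a rational map of $\mathcal V_a$, regular away from $a$, and it is biholomorphic near every point of $\ell-\{a\}$ for $\ell$ meeting $\Omega$. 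That yields $W$ and $U$. For general type I would avoid matrix formulas and instead appeal to homogeneity under the parabolic stabilizer of $a$ in $\Aut(\mathcal V_a)$. Alternatively, I would invoke Theorem \ref{hol-isom}: the graph of $\theta$ is contained in an affine-algebraic variety, and this variety is a graph over a neighborhood of $\partial^\flat V_a$ because it restricts to the graphs of $\theta_\ell^\sharp$ on each line.

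Finally, $\theta^\dagger$ is obtained by continuity. Away from $a$ the map is already given by $\theta^\sharp$. At $a$ I would set $\theta^\dagger(a)=u$, and I need $\theta(x)\to u$ as $x\to a$ within $\overline{V_a}$. This uniform convergence is the main obstacle: the lines $\ell\in\mathscr D_a$ degenerate as $\ell$ approaches lines tangent to $\partial\Omega$. I would handle it through the Siegel model. In that model, $x\to a$ means $|w|\to\infty$ within the parabolic region $\im w\ge\|\eta\|^2$, and the inverse Cayley map sends this region continuously to $u$. A diffeomorphism of manifolds with boundary then follows from the biholomorphic extension on $W$.
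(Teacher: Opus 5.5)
The paper only quotes this result from Mok--Wong and gives no proof, so I am judging your outline on its own. Your first step is fine: each $\theta_\ell$ is a M\"obius map and extends to $\theta_\ell^\sharp:\ell\to\Lambda$. Defining $\theta^\sharp$ line by line is also the right idea.

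\textbf{Step 2 is not proved.} The crux of the proposition is that all $\Lambda_\ell$ pass through one point $u=\theta^\sharp_\ell(a)$. This is what makes the line-by-line map injective, and what gives $\theta^\sharp(\partial^\flat V_a)=\partial\mathbb B^{p+1}-\{u\}$. Your flow argument does not establish it, for two reasons.
\begin{itemize}
\item The flow $H$ preserving $V_a$ does not push points of $V_a$ toward $a$. Take the type-III model of \S\ref{typeIII-cayley} with vertex $(-1;0;W)$ and blocks $(a;b;C)$. There $\varphi_t$ acts on the Siegel coordinates of the fiber as $w\mapsto\lambda^2w$ and $\frac{b}{1+a}\mapsto\lambda\frac{b}{1+a}$, with $\lambda=\sqrt{(1-t)/(1+t)}$. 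As $t\to-1$ one gets $\varphi_t(Z)\to(-1;0;W+\tfrac{2bb^T}{1-a^2})$, which is not the vertex unless $b=0$.
\item More fundamentally, the orbit of $x\in D_\ell$ runs through the moving disks $D_{h_t\ell}$; only the axis $\ell_0$ is preserved. Continuity of $\theta^\sharp_\ell$ on the single closed disk $\overline{D_\ell}$ therefore says nothing about $\lim\theta(h_tx)$. Identifying that limit with $\theta^\sharp_\ell(a)$ presupposes the joint boundary continuity you are trying to prove.
\end{itemize}

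\textbf{A repair via equivariance.} Put $\hat h_t:=\theta h_t\theta^{-1}$ and $\sigma(\ell):=\theta^\sharp_\ell(a)$.
\begin{itemize}
\item Uniqueness of M\"obius extensions gives $\sigma(h_t\ell)=\hat h_t(\sigma(\ell))$.
\item $\sigma$ is continuous, indeed holomorphic, on $\mathscr D_a$. This is because $\theta^\sharp_\ell$ is determined by the $\theta$-images of three points of $D_\ell$, which can be chosen to vary holomorphically with $\ell$.
\item $\sigma(\ell_0)$ is the repelling fixed point $u_-$ of $\hat h_t$ as $t\to1$.
\item $h_t\ell\to\ell_0$ in $\mathscr D_a$ as $t\to1$. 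In type III, $\ell_\tau\mapsto\ell_{\lambda\tau}$; in general one checks this from the eigenvalues of $dh_t(a)$.
\end{itemize}
Letting $t\to1$, the left side tends to $u_-$. The right side tends to the attracting point $u_+\neq u_-$ unless $\sigma(\ell)=u_-$, so $\sigma(\ell)=u_-$ for every $\ell$.

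\textbf{The holomorphic extension in general type.} The same three-point argument makes your line-by-line map holomorphic on the open set $\bigcup_{[\ell]\in\mathscr D_a}(\ell-\{a\})$ for every type. You do not otherwise establish this beyond type III. Appealing to homogeneity is not an argument. Theorem \ref{hol-isom} also does not suffice as you use it: containing the graphs of the $\theta^\sharp_\ell$ does not make the algebraic extension a smooth graph near $\partial^\flat V_a$.

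\textbf{The boundary set $E$ is not $\{a\}$.} You assume $\overline{V_a}-(V_a\coprod\partial^\flat V_a)=\{a\}$. The Remark after the proposition says $E=\partial V_a-\partial^\flat V_a$ is not a point. Concretely, in type III with $n=2$ and $W=0$:
\begin{itemize}
\item the cone with vertex $(-1;0;0)$ is the quadric $C(1+a)=b^2$;
\item on its line $\ell_\tau=\{(-1+s;\,s\tau;\,s\tau^2)\}$ the disk $D_{\ell_\tau}$ is $|s-r|<r$, with $r=1/(1+2|\tau|^2)$;
\item as $|\tau|\to\infty$ with $\tau^2/|\tau|^2=e^{i\phi}$, these disks converge to $\{(-1;0;C):|C-e^{i\phi}/2|\le\tfrac12\}$;
\item the union of these limit disks over $\phi$ is the closed face $\overline{\Sigma'}$.
\end{itemize}
So $E=\overline{\Sigma'}$. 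Moreover, the Siegel coordinates followed by the Cayley map collapse the whole line $\{a=-1,\,b=0\}$ onto $u$.

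Hence your claim that away from $a$ the map is already given by $\theta^\sharp$ is false. $\theta^\sharp$ cannot be biholomorphic near any point of $E$, and $\theta^\dagger$ must be set equal to $u$ on all of $E$.

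Once that is done, the continuity you call the main obstacle is soft. $\theta^\sharp$ restricts to a homeomorphism $V_a\coprod\partial^\flat V_a\to\overline{\mathbb B^{p+1}}-\{u\}$. Suppose $x_k\to e\in E$ and $\theta^\sharp(x_{k_j})\to y\neq u$ along a subsequence. Then $x_{k_j}\to(\theta^\sharp)^{-1}(y)\in V_a\coprod\partial^\flat V_a$, contradicting $e\notin V_a\coprod\partial^\flat V_a$.
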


\begin{remark} \
\rm
For $\Omega$ irreducible and of rank $\ge 2$, the closed set $E :=\partial V_a - \partial^\flat V_a \subset\partial\Omega$ is not a point. The proposition says in particular that the map $\theta^\dagger$, already defined up to $\partial^\flat V_a$ by the earlier part of the proposition, extends continuously to $\overline{V_a}$ when we define $\theta^\dagger(e) = u$ for any point $e\in E$.
\label{cont-ext}
\end{remark}

\vskip 0.2cm
\section{Harmonic analysis enters: admissible boundary limits of bounded holomorphic functions}
\label{harmenter}
\vskip 0.2cm
\subsection{Generalized Fatou's theorem on the complex unit ball}
\label{fatou-ball}
\mbox{}

\vskip 0.5cm
\subsubsection{Boundary values on the complex unit ball}
\label{bdy-values-ball}
\begin{definition} {\rm (admissible boundary values)} \
For $\alpha>1$ and $\xi\in \partial \mathbb{B}^n$, define $D_\alpha(\xi) := \left\{z\in \mathbb{B}^n: \frac{|1-\langle z,\xi\rangle|}{1-|z|^2}<\frac{\alpha}{2}\right\}$, $\langle \cdot,\cdot\rangle$ being the Euclidean Hermitian inner product.
Let $f$ be a complex-valued function on $\mathbb{B}^n$. We say that $f$ converges admissibly to $\lambda\in \mathbb{C}\cup \{\infty\}$ at $\xi$ if for any $\alpha>1$ and any sequence $\{p_n\}\subset D_\alpha(\xi)$ such that $p_n\rightarrow \xi$ as $n\rightarrow \infty$, we have $\lim_{p_n\rightarrow \xi} f(p_n)=\lambda$.
\label{admissible}
\end{definition}

The above definition is that of Kor\'anyi \cite{Kor1972}. From the above and Furstenberg \cite{Fur1963} we have

\begin{theorem} \
{\rm (generalized Fatou's theorem)}
Let $h \in H^\infty(\mathbb{B}^n)$. Then, $h$ converges admissibly to some $g\in L^\infty(\partial \mathbb{B}^n)$ almost everywhere on $\partial \mathbb{B}^n$.
\label{fatou}
\end{theorem}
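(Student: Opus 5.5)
The plan follows the classical route of Korányi: first obtain nontangential limits from harmonic analysis in one complex variable along radii, then upgrade to admissible (Korányi region) convergence by means of a maximal function and Lindelöf-type arguments. The cleanest self-contained path goes through the Poisson–Szegő integral, whose kernel $P(z,\xi)=\frac{(1-|z|^2)^n}{|1-\langle z,\xi\rangle|^{2n}}$ is $\Aut(\mathbb B^n)$-equivariant and is an approximate identity adapted to the nonisotropic geometry of $\partial\mathbb B^n$.

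The first step is the representation. Since $h$ is bounded, the dilates $h_r(\xi):=h(r\xi)$, for $0<r<1$, are uniformly bounded in $L^\infty(\partial\mathbb B^n)$. By weak-$*$ compactness there is a subsequence $h_{r_j}\to g$ weak-$*$ for some $g\in L^\infty(\partial\mathbb B^n)$. Each $h_r$ is holomorphic on a neighborhood of $\overline{\mathbb B^n}$, so it is reproduced by the Poisson–Szegő kernel, and passing to the limit gives $h(z)=\int_{\partial\mathbb B^n}P(z,\xi)g(\xi)\,d\sigma(\xi)$.

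The second step is the maximal estimate, which I expect to be the main obstacle. Equip $\partial\mathbb B^n$ with the nonisotropic quasi-metric $d(\xi,\eta)=|1-\langle\xi,\eta\rangle|^{1/2}$. Its balls $Q(\xi,\delta)$ have measure $\sigma(Q(\xi,\delta))\asymp\delta^{2n}$, so $(\partial\mathbb B^n,d,\sigma)$ is a space of homogeneous type. A Vitali covering argument then shows that the associated Hardy–Littlewood maximal operator $M$ is of weak type $(1,1)$. The key estimate is
$$\sup_{z\in D_\alpha(\xi)}|P[g](z)|\le C_\alpha\, Mg(\xi).$$
To prove it, fix $z\in D_\alpha(\xi)$ and set $\delta^2=1-|z|^2$. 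Decompose $\partial\mathbb B^n$ into the dyadic shells $Q(\xi,2^{k}\delta)\setminus Q(\xi,2^{k-1}\delta)$. The admissibility condition $|1-\langle z,\xi\rangle|<\frac{\alpha}{2}(1-|z|^2)$, combined with the quasi-triangle inequality for $d$, gives $|1-\langle z,\eta\rangle|\gtrsim_\alpha 4^k\delta^2$ on the $k$-th shell. Hence $P(z,\cdot)\lesssim_\alpha 4^{-2nk}\delta^{-2n}$ there, and summing the geometric series yields the bound.

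The third step is density. Convergence $P[\varphi](z)\to\varphi(\xi)$ as $z\to\xi$ within $D_\alpha(\xi)$ holds for every continuous $\varphi$ and every $\xi$, because $P$ is an approximate identity concentrating on the sets $Q(\xi,C\delta)$. The weak $(1,1)$ bound on $M$, together with the maximal estimate of the second step, then gives a.e. admissible convergence of $P[g]$ to $g$ for all $g\in L^1$, and in particular for $g\in L^\infty$.

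The last step is to handle all apertures at once. For each integer $\alpha=2,3,\dots$ the previous step produces a full-measure set $E_\alpha\subset\partial\mathbb B^n$. On the full-measure set $\bigcap_\alpha E_\alpha$, at which moreover $\xi$ is a Lebesgue point of $g$, $h=P[g]$ converges admissibly to $g(\xi)$ in the sense of Definition \ref{admissible}, since the regions $D_\alpha(\xi)$ increase with $\alpha$. This proves the theorem; alternatively, Furstenberg's boundary theory applied to the rank-one group ${\rm SU}(1,n)$ gives the same conclusion.
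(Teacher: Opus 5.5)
Your proof is correct. Note, however, that the paper gives no argument for this statement. It takes the regions $D_\alpha(\xi)$ from Kor\'anyi and attributes the theorem to Kor\'anyi together with Furstenberg's boundary theory, that is, to the Poisson representation of bounded harmonic functions on ${\rm SU}(1,n)/K\cong\mathbb B^n$ by $L^\infty$ data on $\partial\mathbb B^n$.

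What you have written is a self-contained reconstruction of Kor\'anyi's classical proof. Your first step does by hand what the paper delegates to Furstenberg's Poisson formula: weak-$*$ compactness of the dilates, plus the fact that the Poisson--Szeg\H{o} kernel reproduces functions holomorphic across $\partial\mathbb B^n$. Your remaining steps are the nonisotropic maximal-function argument.

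The citation route is shorter and matches the paper's Lie-theoretic use of the result, where trajectories of hyperbolic flows lie in a fixed $D_\beta(e_1)$. Your route is elementary. Through the estimate $\sup_{D_\alpha(\xi)}|P[g]|\le C_\alpha\, Mg(\xi)$, taken with respect to the balls $Q(\xi,\delta)$ of measure comparable to $\delta^{2n}$, it delivers convergence in the full Kor\'anyi regions. This includes approach of parabolic order in complex-tangential directions, which one-variable Fatou on complex lines alone does not reach.

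Three small points:
\begin{itemize}
\item In the dyadic estimate, the triangle inequality for $d$ only helps once $2^{k-1}\delta$ exceeds a fixed multiple of $\sqrt{\alpha}\,\delta$. For the finitely many inner shells, and for $Q(\xi,\delta)$ itself, you need the trivial bound $|1-\langle z,\eta\rangle|\ge 1-|z|\ge\frac12(1-|z|^2)$. With that, your claimed lower bound holds for every $k$ with a constant depending on $\alpha$.
\item The Lebesgue-point condition in your last step is superfluous. The density argument already identifies the admissible limit with $g(\xi)$ almost everywhere.
\item Your opening sentence announces a radial-limit-plus-Lindel\"of strategy that you never use, so it can be dropped.
\end{itemize}
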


\vskip 0.2cm
\subsubsection{The problem of finding limits of bounded holomorphic functions on \texorpdfstring{$\Omega$}{Omega} under Cayley projections}
\label{limits-cayley}
Suppose $\{\eta_t\}\subset \Aut(\mathbb{B}^n)$ is a 1-parameter subgroup of transvections, i.e., a hyperbolic flow. By Cartan's theorem there is a subsequence of $\{\eta_t\}$ converging uniformly on compact subsets to a constant function $\eta:\mathbb{B}^n\rightarrow \{\xi\}\subset \partial \mathbb{B}^n$. Let $p\in \mathbb{B}^n$. We observe that the trajectory $\{\eta_t(p)\}$ lies inside an admissible domain.  Noting that $D_\alpha(\xi)$ is invariant under unitary transformations, we only need to consider the boundary point $\xi=e_1:=(1,0,\dots, 0)\in \partial \mathbb{B}^n$ and the flow
\[
\eta_t(z)
:=
\Bigl(
\frac{z_1+t}{1+tz_1},\,
\frac{\sqrt{1-t^2}}{1+tz_1}z_2,\,
\dots,\,
\frac{\sqrt{1-t^2}}{1+tz_1}z_n
\Bigr),
\qquad
0\le t<1,
\]
for $z=(z_1,\dots,z_n)\in \mathbb{B}^n$. Write $w=(w_1,\dots,w_n)=\eta_t(z)$.
We have
\[
\frac{|1-\langle w,e_1\rangle|}{1-|w|^2}=\frac{|1-w_1|}{1-|w|^2}
=\frac{|1+tz_1|}{1+t}\frac{|1-z_1|}{1-|z|^2}
:=\beta(t,z_1)\frac{|1-z_1|}{1-|z|^2},
\]
where $\beta$ is a positive continuous function on $[0,1]\times \Delta$.
For each fixed $p=(p_1,\dots,p_n)\in \mathbb{B}^n$, there exists $M_{p_1}>0$ such that $\beta(t,p_1)<M_{p_1}<\infty$.
So
\[
\{\eta_t(p) \mid 0\leq t<1\}\subset D_{\beta_p}(e_1),
\quad \text{where} \,\, \beta_p=2M_{p_1}\frac{|1-p_1|}{1-|p|^2},
\]
as desired.  Now for each $\xi \in \partial \mathbb{B}^n$, one obtains a corresponding 1-parameter subgroup $\{\eta_{t,\xi}\}\subset \Aut(\mathbb{B}^n)$ a subsequence of which converges to the constant map $\eta_\xi:\mathbb{B}^n\rightarrow \{\xi\}$. By the generalized Fatou theorem on $\mathbb B^n$, $\lim\limits_{t\rightarrow 1} \eta_{t,\xi}^*h (z)$ exists for almost all $\xi\in \partial \mathbb{B}^n$.

\vskip 0.2cm
Recall the standard reference pair $(\Sigma,\Sigma')$ of maximal faces in Definition \ref{stdpair}.
From Proposition \ref{cayproj} there is a Cayley projection $\rho_{\Sigma}:\Omega \to \Sigma \cong \Omega^\flat$ whose fibers $V_{b'} = \rho_{\Sigma}^{-1}(b)$ for $b\in\Sigma$ (with $b'$ opposite to $b$) are biholomorphic to the complex unit ball $\mathbb B^{p+1}$, $p = p(\Omega)$ being the fiber dimension of the VMRT structure $\pi:\mathscr C(\widehat{\Omega}) \to \widehat{\Omega}$. Let now $h \in H^\infty(\Omega)$ be a given nonconstant bounded holomorphic function.
Suppose for every point $b\in\Sigma$, the function $h|_{V_{b'}}$ has an admissible boundary value $s_\Sigma(b)$ at $b\in\Sigma$, then, for the 1-parameter group of transvections $H = \{\eta_t: -1 < t < 1\}$ satisfying $\eta_t(z;0;w) = \left(\frac{z+t}{1+tz};0,w\right)$ in privileged Harish-Chandra coordinates,
the limit $\lim_{t\to 1} h(\eta_t(z;z';w))$ is well-defined, and we will have obtained a bounded holomorphic function $\hat{h} = \rho_\Sigma^*s_\Sigma$ on $\Omega$ which depends on fewer variables. By the proof of Montel's theorem on normal families, it is sufficient to assume that admissible boundary values at $b\in \Sigma$ of $h|_{V_{b'}}$ exist for almost every point $b\in\Sigma$.

\vskip 0.2cm
To answer Problem \ref{ext-problem} (the Extension Problem) in the affirmative, i.e., finding a bounded holomorphic map $R: \widetilde{N} \to \mathbb C^n$ such that $R\circ F \equiv \id_{\Omega}$, it is equivalent to find $\mu_1,\cdots, \mu_n\in H^\infty(\widetilde{N})$ such that $F^*\mu_i = z_i$ on $\Omega$ for $1\le i\le n$. We define $\mathscr F$ to be the algebra of bounded holomorphic functions on $\Omega$ obtained by pulling back bounded holomorphic functions on $\widetilde{N}$ by $F: \Omega \to \widetilde{N}$ and examine its properties. The first problem is to show that, composing by some automorphism of $\Omega$ if necessary, there exists $h \in \mathscr F$ such that $h|_{V_{b'}}$ has admissible boundary values almost everywhere on $\Sigma$.

\vskip 0.2cm
\subsection{The algebra \texorpdfstring{$\mathscr F = F^*H^{\infty}(\widetilde{N})$}{script-F = pullback of bounded H-infinity} of bounded holomorphic functions}
\label{algF}
We gather first of all elementary properties on the algebra $\mathscr F = F^*H^{\infty}(\widetilde{N})$ of bounded holomorphic functions which follow from its very definition and from Montel's theorem on normal families of holomorphic functions.

\begin{lemma} \
For the algebra $\mathscr F = F^*H^\infty(\widetilde{N})$ the following holds:
\begin{enumerate}
\item[{\rm (a)}]
For any $h \in \mathscr F$ and for any group element $\gamma \in \Gamma$, we have $\gamma^*h \in \mathscr F$.
\item[{\rm (b)}]
Suppose $h_i = F^*u_i\in \mathscr F$ for $0 \le i < \infty$ such that $\{u_i\}$ is uniformly bounded in $H^\infty(\widetilde{N})$ and such that $h_i$ converges uniformly on compact subsets to $h \in H^\infty(\Omega)$. Then, $h \in \mathscr F$.
\item[{\rm (c)}]
Let $Q$ be a manifold equipped with a smooth volume form $d\mu$ such that ${\rm Volume}(Q,d\mu) < \infty.$  Suppose $H: \Omega \times Q \to \mathbb C$ is a bounded measurable function on $\Omega\times Q$ such that for a.e. parameter $t \in Q$, the function $h_t: \Omega \to \mathbb C$ is a bounded holomorphic function belonging to $\mathscr F$. Then, $f: \Omega \to \mathbb C$ defined by $f(z) = \int_Q H(z,t) d\mu(t)$ also belongs to $\mathscr F$.
\end{enumerate}
\label{invariant-alg}
\end{lemma}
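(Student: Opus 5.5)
Each of the three items reduces to building the appropriate function on $\widetilde N$ and pulling it back by $F$. The tools are the equivariance of $F$, Montel's theorem on normal families on $\widetilde N$, and linearity of integration.

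For (a), write $h = F^*u$ with $u \in H^\infty(\widetilde N)$. Since $F(\gamma z) = \Phi(\gamma)(F(z))$ and $\Phi(\gamma)$ is a biholomorphism of $\widetilde N$, we get
\[
\gamma^*h = u\circ F\circ\gamma = \bigl(u\circ\Phi(\gamma)\bigr)\circ F = F^*\bigl(\Phi(\gamma)^*u\bigr),
\]
and $\Phi(\gamma)^*u$ is again bounded holomorphic on $\widetilde N$ with the same sup norm. Hence $\gamma^*h \in \mathscr F$.

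For (b), the family $\{u_i\}$ is uniformly bounded, so by Montel's theorem on the complex manifold $\widetilde N$ (a second countable manifold, exhausted by compacts) a subsequence $u_{i_k}$ converges uniformly on compact subsets to some $u \in H^\infty(\widetilde N)$ with $\|u\|_\infty \le \sup_i\|u_i\|_\infty$. For each $z\in\Omega$ we have $h_{i_k}(z) = u_{i_k}(F(z)) \to u(F(z))$. Since also $h_{i_k}(z)\to h(z)$, it follows that $h = F^*u \in \mathscr F$. The only point to watch is that the limit in the statement is taken on $\Omega$, while the Montel limit is taken on $\widetilde N$; pointwise agreement on the image $F(\Omega)$ suffices here.

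For (c), which I expect to be the main obstacle, the difficulty is that $h_t \in \mathscr F$ only means $h_t = F^*u_t$ for some $u_t$, and neither measurability in $t$ nor a uniform bound is given for the $u_t$. The plan is to avoid integrating the $u_t$ directly and to approximate the integral by Riemann-type sums instead.
\begin{enumerate}
\item[(1)] First fix the choice of $u_t$. Replacing $u_t$ by a Carath\'eodory-type extremal choice is delicate, so instead I would try to show that one can always choose $u_t$ with $\|u_t\|_\infty$ controlled by $C := \sup|H|$. If that fails, work with the truncated sets $Q_m = \{t : \text{some } u_t \text{ with } \|u_t\|\le m \text{ exists}\}$, which exhaust $Q$ up to a null set. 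The set $\{\sum_j c_j u_{t_j}\}$ with $\sum|c_j|$ bounded is then uniformly bounded on $\widetilde N$.
\item[(2)] Approximate $f$ by finite sums. By finite volume and boundedness of $H$, the Riemann-type (or Monte Carlo, via the strong law of large numbers applied to i.i.d. samples $t_j$ from the normalized measure) averages
\[
f_N(z) = \frac{\mathrm{Vol}(Q)}{N}\sum_{j=1}^N h_{t_j}(z)
\]
converge to $f(z)$ for every $z$ in a countable dense subset of $\Omega$, for a.e. choice of samples. Because the $f_N$ are uniformly bounded holomorphic functions, convergence on a dense set upgrades to uniform convergence on compacta by Montel together with the identity theorem.
\item[(3)] Each $f_N$ lies in $\mathscr F$, since $f_N = F^*\bigl(\frac{\mathrm{Vol}(Q)}{N}\sum_j u_{t_j}\bigr)$. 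Part (b) then gives $f \in \mathscr F$, provided the bound in step (1) is uniform.
\end{enumerate}

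The crux is therefore step (1): securing a uniform bound on the preimages $u_t$. Through part (b), the rest of the argument reduces to that bound.
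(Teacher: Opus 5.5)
Your arguments for (a) and (b) are what the paper has in mind, and they are correct; the paper only says these properties follow from the definition of $\mathscr F$ and Montel's theorem. The gap is in (c). Step (1) is never carried out: you say you ``would try to show'' that the preimages $u_t$ can be chosen with $\|u_t\|_\infty$ bounded independently of $t$, and the fallback does not rescue the argument. Even granting that the sets $Q_m$ are measurable, you only get $\int_{Q_m}H(\cdot,t)\,d\mu(t)\in\mathscr F$ with preimages of norm up to $m\,{\rm Volume}(Q,d\mu)$. These bounds blow up as $m\to\infty$, so (b) cannot be used to pass to $f$.

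The bound also cannot come from soft arguments alone (definition plus Montel). Take the non-equivariant map $F(z)=z/2$ from $\Delta$ to $\Delta$. Then $\mathscr F$ consists of the functions extending boundedly to $\Delta(2)$. Each $z^m$ lies in $\mathscr F$ with sup norm $1$ on $\Delta$, but its only preimage is $(2w)^m$. The average $\sum_{m\ge 1}2^{-m}z^m=\frac{z}{2-z}$, realized by $H(z,t)=z^m$ for $t\in[2^{-m},2^{1-m})\subset Q=(0,1)$, is bounded on $\Delta$ but not in $\mathscr F$. So with the hypothesis read literally, (c) needs either a genuinely new input from the $\Gamma$-equivariance or an extra hypothesis.

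The paper's one-line justification implicitly assumes that the $h_t$ admit preimages $u_t$ with $\sup_t\|u_t\|_\infty<\infty$. This does hold in the only place (c) is used, the $K$-average $\widetilde\Lambda$ in the proof of Theorem \ref{extension}. There, with $\hat h=F^*v$, each $g^*\hat h$ is produced in Proposition \ref{group-invariance} as a locally uniform limit of $\gamma_j^*\hat h=F^*\bigl(\Phi(\gamma_j)^*v\bigr)$, and $\|\Phi(\gamma_j)^*v\|_\infty=\|v\|_\infty$. Montel on $\widetilde N$ then gives a preimage of norm at most $\|v\|_\infty$, uniformly in $g$; the unitary entries of $k$ keep this bound for the components of $k\Lambda(k^{-1}z)$.

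Once that hypothesis is in place, your steps (2) and (3) do finish the proof. The sampling device has a real advantage: it avoids any measurable selection $t\mapsto u_t$, which integrating the $u_t$ directly on $\widetilde N$ would require. One small correction to step (2): to identify the locally uniform subsequential limits of $f_N$ with $f$ off the countable dense set, you need $f$ to be continuous (indeed holomorphic). That follows from dominated convergence, not from the identity theorem.
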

In relation to the question of finding $h\in \mathscr F = F^*H^\infty(\widetilde{N})$, we need to study the space of all Cayley projections on $\Omega$ onto maximal faces.  For the standard reference pair $(\Sigma,\Sigma')$ of Definition \ref{stdpair}, the hyperbolic flow $H$ fixes both $\Sigma$ and $\Sigma'$. For $w\in \Omega^\flat$, write $b(w) := (1;0;w)$, $b'(w) = (-1;0;w)$ and define $\Lambda(w) \subset V_{b'(w)}$ to be $(-1,1)\times\{0\}\times\{w\}$, which is a real geodesic curve on $V_{b'(w)}$.
Denote by $\mathscr L$ the union of $\Lambda(w)$ as $w$ runs over $\Omega^\flat$.
The hyperbolic flow $H$ fixes each $\Lambda(w)$ and $\theta_{t,\Sigma}(x) \to b = (1;0;w)$ for any $w\in\Omega^\flat$ and for any point $x\in V_{b'(w)}$.  In particular, this holds true for $x\in \Lambda(w)$.
  Let $g \in G_0$, then we have also a hyperbolic flow $H^g$ defined by $H^{g} = gHg^{-1}$, so that for $t \in (-1,1)$, and writing $H^g = \{\eta_t^g: t\in (-1,1)\}$, $\eta^g_t(x) = g(\eta_t(x))$. Thus, $H^g$ is a 1-parameter family of transvections preserving $g\mathscr L$ and flowing from $gb'(w)$ to $gb(w)$.  Hence $H^g$ fixes $\Phi = g\Sigma$ and $\Psi = g\Sigma'$, and it flows from $\Psi$ to $\Phi$. Equivalently, we are looking for $g\in G_0$ such that for the function $g^*h$ defined by $g^*h(x) = h(gx)$, and for the standard Cayley projection $\rho$, $g^*h$ has admissible boundary values almost everywhere both on $\Sigma$ and on $\Sigma'$.

\vskip 0.2cm

Given a single nonconstant bounded holomorphic function $h \in \mathscr F$, we need to look for $g\in G_0$ such that for the Cayley projection $\rho$ associated to the one-parameter family of transvections $H^g$ flowing from $\Psi$ to $\Phi$, admissible boundary values of $h$ exist both on $\Phi$ and on $\Psi$. We will need to require $g\in G_0$ to satisfy stronger properties in order to be able to construct new functions in $\mathscr F$ so as to recover the linear functions $z_1,\cdots,z_n$ as members of $\mathscr F$.  This is what we have proven in \cite{MW2025}. Nonetheless, to render the arguments more intuitive, we have chosen to give more geometric rather than just abstract arguments. This has the advantage that certain intermediate results are interesting in their own right.

\vskip 0.2cm
We have referred to $\Sigma'$ as the maximal face opposite to $\Sigma$ with respect to $0$
as in Proposition \ref{cayproj}.  More properly, we should refer to $\Sigma'$ as the maximal face opposite to $\Sigma$ with respect to the section $\{(0;0)\}\times\Omega^\flat$ of the totally geodesic complex submanifold $\Delta\times\{0\}\times\Omega^\flat \subset \Omega$ fibered over $\Omega^\flat$, the former being the image of $\Delta\times\Omega^{\flat}$ under a standard embedding into $\Omega$ defined by root systems.  By \cite[Lemma 3.11, Lemma 4.4]{MW2025}, for any pair of maximal faces $(\Phi,\Psi)$ on $\Reg(\partial\Omega)$ such that $\overline{\Phi} \cap \overline{\Psi} = \emptyset$, there exists $g\in G_0$ such that $g\Sigma = \Phi$ and $g\Sigma' = \Psi$.  For this reason it is more proper to denote the standard reference Cayley projection as $\rho_{\Sigma,\Sigma'}$ and the general one as $\rho_{\Phi,\Psi}$ as these maps are completely determined by the choice of the two maximal faces on $\Reg(\partial\Omega)$ and the orientation of the flow $H^g$.  In fact, in our application of Moore's ergodicity theorem to be explained in \S\ref{ergodic}, it will be necessary to consider in general both Cayley projections $\rho_{\Phi,\Psi}:\Omega \to \Phi$ and $\rho_{\Psi,\Phi}:\Omega \to\Psi$.

\vskip 0.2cm
\subsubsection{Cayley limits}
\label{cayley-limits}
Let $\Omega$ be a bounded symmetric domain and $\Gamma\subset \Aut_0(\Omega) = G_0$ be a torsion-free irreducible lattice, and write $X_\Gamma:=\Omega/\Gamma$. Let $P\subset \Omega$ be a maximal polydisk of $\Omega$. Using root space decompositions, we have a standard homomorphism $\varpi: {\rm SU}(1,1)\to \Aut_0(\Omega)$. Write $\eta_t(z)=\frac{z+t}{1+tz}$ so that, writing $\theta_t := \varpi(\eta_t,e,\cdots,e)$, where $e\in{\rm SU}(1,1)$ stands for the identity element, $H :=\{\theta_t: -1 < t < 1\}\hra \Aut_0(\Omega) = G_0$ it is a noncompact 1-parameter closed subgroup of transvections.

\vskip 0.2cm
In the case of $\Omega = \Delta$, the 1-parameter group $H$ gives the hyperbolic flow which preserves the interval $(-1,1) = \Delta\cap\mathbb R$ which is a geodesic on $(\Delta,ds^2_\Delta)$. We may reparametrize $H$ as $\{\varphi_s: -\infty < s < +\infty\}$ so that,
parametrizing $(-1,1)$ by $\gamma: (-\infty,+\infty) \to (-1,1)$ by geodesic distances such that $\varphi(0) = 0$, $H|_{(-1,1)}$ gives the geodesic flow along $(-1,1) = \gamma(\mathbb R)$ given by $\varphi_s(\gamma(s_0)) = \gamma(s_0+s)$. For any geodesic $\Lambda \subset \Delta$ with respect to $(\Delta,ds_{\Delta}^2)$, $\overline{\Lambda} \cap \partial\Delta = \{b, b'\}$ the 1-parameter flow $\{\psi_s\}_{s\in\mathbb R}$ preserving $\Lambda$ and restricting to the geodesic flow on $\Lambda$ will be referred to as a hyperbolic flow. Given $\Lambda$, such a flow is unique up to orientation.  In the general case of a bounded symmetric domain $\Omega$, in this article by a 1-parameter group of transvections or a hyperbolic flow we will mean the flow defined by $\{\theta_t\}_{t\in (-1,1)}$ as in the last paragraph, or $\{\psi_s\}_{s\in\mathbb R}$, where $\psi_s := \varpi(\varphi_s,e,\cdots,e)$. The two descriptions of $H$ only differ by reparametrizing $(-1,1)$ in the former by $(-\infty,+\infty)$ in the latter, each with its own advantages.

\vskip 0.2cm
Let $\Omega$ be a bounded symmetric domain of rank $r\geq 2$,
and $(\Phi,\Psi)$ be a pair of maximal faces,
by Proposition \ref{cayproj} and the paragraphs following it, we obtain the Cayley projection
$\lim\limits_{t\rightarrow 1} \theta_{t,\Phi,\Psi}
= \rho_{\Phi,\Psi}:\Omega\rightarrow \Phi$,
where $\{\theta_{t,\Phi,\Psi}\}\subset \Aut(\Omega)$
is a 1-parameter subgroup of transvections.
In analogy to the notion of admissible limits on $\mathbb{B}^n$, we have

\begin{definition} \
Let $h$ be a function defined on a bounded symmetric domain $\Omega$ of rank $r\geq 2$.
Let $\{\theta_{t,\Phi,\Psi}\}_{-1<t<1}\subset \Aut(\Omega)$ be the 1-parameter subgroup of transvections as defined in \S\ref{conevmrt} so that $\lim\limits_{t\rightarrow 1}\theta_{t,\Phi,\Psi}=\rho_{\Phi,\Psi}:\Omega \rightarrow \Phi$ is a Cayley projection. Then,
for each  point $x\in \Omega$, we write $\hat h_{\Phi,\Psi}(x):=\lim\limits_{t\rightarrow 1}\theta_{t,\Phi,\Psi}^*h(x) := \lim\limits_{t\rightarrow 1}h(\theta_{t,\Phi,\Psi}(x))$ if the limit in the sense of uniform convergence on compact subsets of $\Omega$ exists, and call it the Cayley limit of $h$ at $x$ with respect to $\rho_{\Phi,\Psi}$.
\end{definition}

The moduli space $\mathfrak M$ of faces $\Phi\subset \Reg(\partial\Omega)$ is of the form $G_0/N$ for some maximal parabolic subgroup $N\subset G_0$ \cite[p. 298, Corollary 1]{Wo1972}, thus in particular $\mathfrak M$ is a compact homogeneous manifold.
In fact, $\mathfrak M$ is homogeneous under the isotropy subgroup $K\subset \Aut(\Omega)$ at $0$, which is compact (cf. the Boundary Flag Theorem in \cite[p. 299]{Wo1972}).

\vspace{0.2cm}
Let $(\Phi,\Psi)$ be a pair of maximal faces for which $\rho_{\Phi,\Psi}$ is defined, in which case $(\Phi,\Psi) = (g\Sigma, g\Sigma')$ for some $g \in G_0$. As in the description of Definition \ref{standcay} and the paragraphs following it, we write $\xi_t = g\theta_t g^{-1} = \theta_{t,\Phi,\Psi}$ for $t \in (-1,1)$ and for the 1-parameter subgroup $\{\theta_t\}_{-1<t<1} = \Theta \subset G_0$ of transvections.  We have

\begin{lemma}[{\cite[{Lemma 3.16}]{MW2025}}]\ \
Fix $h \in H^\infty(\Omega)$.  Suppose $(\Phi,\Psi)$ is a pair of maximal faces of $\Omega$ for which $\rho_{\Phi,\Psi}$ is defined.
Writing $\partial^\flat V_c \cap \Phi =: \{a(c)\}$, assume that the admissible
boundary values $h^\sharp_{a(c),c}$ and $h^\sharp_{c,a(c)}$ both exist for almost all $c \in \Psi$. Then, for {\bf every} point $c \in \Psi$ the admissible boundary values $h^\sharp_{a(c),c}$ and $h^\sharp_{c,a(c)}$ both exist.
Moreover, $\hat h_{\Phi,\Psi} := \lim\limits_{t\to 1} h\circ\xi_t$ exists in the sense of uniform convergence on compact subsets of $\Omega$, given by $\hat h_{\Phi,\Psi} = \rho_{\Phi,\Psi}^*s_{\Phi,\Psi}$, where $s_{\Phi,\Psi} \in H^{\infty}(\Phi)$ is given by $s_{\Phi,\Psi}(a) := h^\sharp_{a,c}$,
for the unique point $c\in\Psi$ satisfying $\partial^\flat V_c \cap \Phi = \{a\}$.
\label{aebdyvalue}
\end{lemma}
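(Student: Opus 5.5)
Reduce to the standard reference pair: pick $g\in G_0$ with $g\Sigma=\Phi$, $g\Sigma'=\Psi$, so $\xi_t=g\theta_{t,\Sigma}g^{-1}$, and replace $h$ by $g^*h\in H^\infty(\Omega)$ (the composition $h\circ g$). Then it suffices to treat $(\Phi,\Psi)=(\Sigma,\Sigma')$, with fibers $V_{b'}=\rho_\Sigma^{-1}(b)$, $b\in\Sigma$, and $c=b'\in\Sigma'$ opposite to $a(c)=b$. By Theorem \ref{holoisom-ball} and Proposition \ref{ballext}, each fiber $V_c$ is identified by $\theta_c:V_c\cong\mathbb B^{p+1}$, extending continuously to $\overline{V_c}$. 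The two distinguished boundary points $a(c)\in\partial^\flat V_c\cap\Sigma$ and $c$ (or rather the point $u$ corresponding to the vertex side) both correspond to points of $\partial\mathbb B^{p+1}$. The flow $\theta_{t,\Sigma}$ preserves $V_c$ and restricts to a hyperbolic flow on $\mathbb B^{p+1}$ as in \S\ref{limits-cayley}. The quantities $h^\sharp_{a(c),c}$ and $h^\sharp_{c,a(c)}$ are the admissible limits of $h|_{V_c}$ at these two endpoints of the geodesic $\Lambda$.

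The key step is to upgrade ``a.e.\ $c$'' to ``every $c$'' using the extra homogeneity in the base direction. The centralizer of $H$ in $G_0$ contains a group $L^\flat\cong\Aut_0(\Omega^\flat)$ acting transitively on $\Sigma\cong\Omega^\flat$ and on $\Sigma'$, commuting with $\theta_t$, and permuting the fibers $V_c$. Given $c_0\in\Sigma'$, the plan is to consider the family $h_c:=(h|_{V_c})\circ\theta_c^{-1}\in H^\infty(\mathbb B^{p+1})$, normalized so that all flows become the same flow $\eta_t$ from $-e_1$ to $e_1$. By hypothesis, $\lim_{t\to1}h_c(\eta_t(z))$ exists for a.e.\ $c$. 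For a.e.\ $c$ this limit is then the constant $s(a(c))$ (admissible convergence along a trajectory inside $D_\beta(e_1)$, as shown in \S\ref{limits-cayley}).

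Next, show that $x\mapsto\lim_t h(\theta_{t,\Sigma}(x))$, defined for $x$ in a full-measure union of fibers, agrees a.e.\ with $\rho_\Sigma^*s$ for a function $s$ on $\Sigma$ that is holomorphic. By Montel, the family $\{h\circ\theta_{t,\Sigma}\}$ is normal. Any subsequential limit $\hat h$ is bounded holomorphic and equals $\rho_\Sigma^*s$ on a set of full measure, hence everywhere. Since the subsequential limit is therefore unique, the full limit exists uniformly on compacta, and $s\in H^\infty(\Sigma)$ as $\rho_\Sigma$ is a holomorphic submersion with local holomorphic sections. Then, for an arbitrary $c$, $h|_{V_c}$ along $\eta_t$ converges to $s(a(c))$ by uniform convergence on compacta. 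To pass from radial/trajectory convergence to full admissible convergence at $a(c)$, I would use Lindel\"of-type arguments on the ball (\v{C}irka's theorem): a bounded holomorphic function on $\mathbb B^{p+1}$ having a limit along one special curve ending nontangentially has that admissible limit. More precisely, uniform convergence of $h\circ\theta_{t,\Sigma}$ on compact subsets of $V_c$ means that $h|_{V_c}$ converges along each compact set of $\eta_t$-translates, which fills out admissible regions $D_\alpha(e_1)$ since these are exhausted by $\eta_t$-images of compacta. Reversing the flow ($t\to-1$, i.e.\ using $\rho_{\Psi,\Phi}$) gives the same for $h^\sharp_{c,a(c)}$ at the other endpoint.

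The main obstacle I expect is this last point: checking that the admissible region $D_\alpha(e_1)$ is covered by $\bigcup_t\eta_t(K_\alpha)$ for a compact $K_\alpha\subset\mathbb B^{p+1}$. This is essentially the converse of the computation in \S\ref{limits-cayley}. I would attempt it via the invariance of $\frac{|1-z_1|}{1-|z|^2}$ up to the bounded factor $\beta$ and the Heisenberg-type coordinates of the Siegel model. If needed, I would also enlarge the flow by the parabolic stabilizer of $e_1$ in $\Aut(\mathbb B^{p+1})$, using that the boundary value is independent of $c$-translation within the fiber directions.
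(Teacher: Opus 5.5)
Your proposal is correct and follows essentially the route the paper indicates via normal families. Montel's theorem turns a.e.\ existence of the Cayley limits into a unique holomorphic limit $\rho_{\Phi,\Psi}^*s$: constancy on a dense family of fibers $V_c$ propagates to all fibers, and any two subsequential limits agree on a dense set. Uniform convergence on compacta then yields admissible limits at every $a(c)$, and your ``main obstacle'' is routine: in suitable Siegel coordinates on $V_c\cong\mathbb B^{p+1}$ the flow is $(w,\eta)\mapsto(\lambda^2w,\lambda\eta)$ with $\lambda=\sqrt{(1-t)/(1+t)}$, while near the endpoint the Kor\'anyi region is comparable to the dilation-invariant set $\{|w|<C(\im w-\|\eta\|^2)\}$, whose slice $\{1/4\le|w|\le 1\}$ is relatively compact.

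One correction: $h^\sharp_{c,a(c)}$ is an admissible limit of $h|_{V_{a(c)}}$ at $c\in\partial^\flat V_{a(c)}$, not of $h|_{V_c}$, whose vertex side is collapsed by $\theta^\dagger$ to the single point $u$. So that half must come from rerunning the argument for the swapped pair $(\Psi,\Phi)$ with $t\to-1$; the a.e.\ hypothesis transfers because $c\mapsto a(c)$ is a diffeomorphism $\Psi\to\Phi$. This is how your final sentence should be read.
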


Given a holomorphic function $h$ on $\Omega$, a pair $(\Phi,\Psi)$ of maximal faces for which the Cayley projection $\rho_{\Phi,\Psi}$ is defined, and given $a \in \partial^\flat V_c, c\in \Psi$, in principle the admissible boundary value $h_{a,c}^\sharp$ of $h|_{V_c}$ at $a$ depends on $\Psi$. From estimates on intrinsic metrics we have nonetheless the following result.

\begin{proposition}[{\cite[{Proposition 3.17}]{MW2025}}] \
Let $[\Phi] \in \mathfrak M = G_0/N$, and $[\Psi_1], [\Psi_2] \in \mathfrak M - \{[\Phi]\}$ for which Cayley projections $\rho_i := \rho_{\Phi,\Psi_i}$, $i = 1, 2$, are defined.  Suppose $a \in \Phi$ and $c_i \in \Psi_i$ for $i = 1,2$.  Assume that, for $i = 1, 2$, $\lim h(x_k)$ exists for all sequences $\{x_k\}_{0\le k < \infty}$ on $V_{c_i}$ converging admissibly to $a$,
and denote the common limit $($for $i$ fixed$)$ by $h^\sharp_{a,c_i}$.  Then, $h^\sharp_{a,c_1} = h^\sharp_{a,c_2}$.  As a consequence, in the conclusion of Lemma \ref{aebdyvalue} we have $\lim\limits_{t\to 1} \theta_t^*h= \rho_{\Phi,\Psi}^*s_{\Phi}$ for $s_{\Phi} \in H^\infty(\Phi)$ independent of the choice of $\Psi$.
\label{aebdyfcn}
\end{proposition}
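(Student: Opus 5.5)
Throughout, $h\in H^\infty(\Omega)$ is fixed, $a\in\Phi$, and $c_i\in\Psi_i$ with $a\in\partial^\flat V_{c_i}$. By hypothesis, for $i=1,2$ the restriction $h|_{V_{c_i}}$ converges admissibly to $h^\sharp_{a,c_i}$ at $a$. The plan is to produce two sequences, $x_k\in V_{c_1}$ and $y_k\in V_{c_2}$, both converging admissibly to $a$ inside their respective balls, such that the distance $d_\Omega(x_k,y_k)$ in the Kobayashi (equivalently Carath\'eodory) metric of $\Omega$ stays bounded. Since $h$ is bounded, the Schwarz--Pick lemma makes $h$ Lipschitz from $(\Omega,d_\Omega)$ to $(\Delta,d_\Delta)$. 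Bounded $d_\Omega$-distance alone does not force $h(x_k)-h(y_k)\to 0$, so I will ask for more: $d_\Omega(x_k,y_k)\to 0$, or else bounded distance combined with a normal-families argument.

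\textbf{Construction of the sequences.} By Proposition \ref{ballext}, each $\theta^\dagger$ is smooth up to $\partial^\flat V_{c_i}$ near $a$. The minimal disks $D_\ell$ with $[\ell]\in\mathscr D_{c_i}$ and $a\in\partial D_\ell$ map to totally geodesic disks $\Delta_\ell\subset\mathbb B^{p+1}$ through the boundary point $\theta^\dagger(a)$. A radial approach inside such a disk is admissible in $\mathbb B^{p+1}$. I would therefore take $x_k$ on the geodesic of $D_{\ell_1}$ ending at $a$, where $\ell_1$ is the line through $c_1$ and $a$, and similarly $y_k$ on $D_{\ell_2}$ with $\ell_2$ the line through $c_2$ and $a$.

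\textbf{Bounding the distance.} Both geodesics end at $a\in\Reg(\partial\Omega)$. The key tool is the Cayley projection $\rho_{\Phi,\Psi_1}=\lim\xi_t$: the flow $\xi_t$ moves a fixed point $x_0\in V_{c_1}$ toward $a$ admissibly, as in the ball computation of \S\ref{limits-cayley}. I would choose $x_k=\xi_{t_k}(x_0)$ and $y_k=\xi_{t_k}(y_0)$ with $y_0\in V_{c_2}$. Because $\xi_t$ is an isometry, $d_\Omega(x_k,y_k)=d_\Omega(x_0,y_0)$ is bounded. Since $V_{c_2}$ need not be $\xi_t$-invariant, I must check separately that $y_k\in V_{c_2}$ up to bounded distance and that it approaches $a$ admissibly within $V_{c_2}$. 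For this I would use the opposite-point description of Proposition \ref{cayproj} together with the estimate for $\beta(t,z_1)$ from \S\ref{limits-cayley}.

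\textbf{Passing to the limit.} To upgrade bounded distance to equality of the limits, I would apply Montel's theorem to $h\circ\xi_{t_k}$, which by Lemma \ref{aebdyvalue} converges to $\rho_{\Phi,\Psi_1}^*s_{\Phi,\Psi_1}$. This limit is constant along the fiber $V_{c_1}$ and equals $h^\sharp_{a,c_1}$ there. Because $\rho_{\Phi,\Psi_1}(y_0)$ need not be $a$, I would refine the choice of $y_0$. Alternatively, I would use estimates on intrinsic metrics near a smooth boundary point: the Kobayashi distance between points at height $\epsilon$ over $a$ on two transversal admissible approach regions tends to $0$ in the complex-tangential directions scaled by $\sqrt\epsilon$. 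This is the step I expect to be the main obstacle, namely showing that the two admissible approach regions, each taken in its own ball $V_{c_i}$, meet within vanishing $d_\Omega$-distance. I would try first to compare both with a common minimal disk through $a$, using that $\partial^\flat V_{c_1}$ and $\partial^\flat V_{c_2}$ are both tangent to the complex tangent cone of $\Reg(\partial\Omega)$ at $a$.

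\textbf{Consequence.} Once this is established, $h^\sharp_{a,c_1}=h^\sharp_{a,c_2}$. Hence $s_{\Phi,\Psi}$ in Lemma \ref{aebdyvalue} does not depend on $\Psi$, which gives $s_\Phi$ as stated.
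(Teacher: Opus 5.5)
There is a genuine gap, and it is the step you yourself flag as the main obstacle. The proposal never produces points $x_k\in V_{c_1}$ and $y_k\in V_{c_2}$, each tending admissibly to $a$ inside its own ball, with $d_\Omega(x_k,y_k)\to 0$. In your Montel formulation, the equivalent requirement is that $\xi_{t_k}^{-1}(y_k)$ converges to a point of $V_{c_1}$. The paper attributes the proposition to estimates on intrinsic metrics, i.e.\ to exactly this statement, so leaving it open leaves the whole content open.

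Your concrete construction cannot supply it. The flow $\xi_t=\theta_{t,\Phi,\Psi_1}$ fixes $\Phi$ and $\Psi_1$ pointwise, so it preserves $V_{c_1}$ but not $V_{c_2}$. By Proposition~\ref{cayproj}, $\xi_t(y_0)\to\rho_{\Phi,\Psi_1}(y_0)$, and this equals $a$ only when $y_0\in V_{c_1}$. So the only possible ``refinement'' of $y_0$ puts $\xi_{t_k}(y_0)$ in $V_{c_1}$, and the hypothesis on $h^\sharp_{a,c_2}$ is never used.

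The Montel step fails for the same reason. A subsequential limit of $h\circ\xi_{t_k}$ is only known to equal $h^\sharp_{a,c_1}$ on $V_{c_1}$. Note also that Lemma~\ref{aebdyvalue} is not available here, since its almost-everywhere hypothesis is not assumed.

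The tangency heuristic yields no estimate as stated. Already for $D^{III}_2$, the complex tangent lines of $\partial^\flat V_{c_1}$ and $\partial^\flat V_{c_2}$ at $a$ are different; they differ by a vector tangent to $\Phi$.

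One way to close the gap is group-theoretic.
\begin{itemize}
\item Let $U_\Phi$ be the unipotent radical of the stabilizer $N_\Phi\subset G_0$ of $\Phi$. This is the subgroup contracted by the flow: $\xi_t^{-1}u\,\xi_t\to\id$ as $t\to 1$ for $u\in U_\Phi$.
\item $u$ fixes $\Phi$ pointwise. Letting $t\to 1$ in $u(\xi_t x)=\xi_t\bigl(\xi_t^{-1}u\,\xi_t x\bigr)$ gives $u\circ\rho_{\Phi,\Psi_1}=\rho_{\Phi,\Psi_1}$.
\item $U_\Phi$ is transitive on faces opposite to $\Phi$. This holds because $G_0$ is transitive on regular pairs (Lemma~\ref{regular-pair}) and $N_\Phi\cap N_{\Psi_1}$ is the Levi factor, the centralizer of the flow. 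So choose $u\in U_\Phi$ with $u\Psi_1=\Psi_2$.
\item Since $ua=a$ and the opposite point is unique, $uc_1=c_2$. Hence $u$ maps $V_{c_1}$ biholomorphically and isometrically onto $V_{c_2}$. It induces a ball automorphism carrying admissible approach to $a$ in $V_{c_1}$ to admissible approach to $a$ in $V_{c_2}$.
\item Take $x_k=\xi_{t_k}(x_0)$ with $x_0\in V_{c_1}$ and $t_k\to 1$, and set $y_k:=u(x_k)\in V_{c_2}$. Then $y_k=\xi_{t_k}(\delta_k x_0)$ with $\delta_k:=\xi_{t_k}^{-1}u\,\xi_{t_k}\to\id$.
\item The Cauchy estimate from the proof of Proposition~\ref{pulled-back} gives $|h(x_k)-h(y_k)|\le C\|h\|_{H^\infty(\Omega)}\|\delta_k x_0-x_0\|\to 0$. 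Hence $h^\sharp_{a,c_1}=h^\sharp_{a,c_2}$.
\end{itemize}
Equivalently, $d_\Omega(x_k,y_k)=d_\Omega(x_0,\delta_k x_0)\to 0$, which is the vanishing-distance statement your last paragraph looks for. With it in hand, your final deduction that $s_\Phi$ is independent of $\Psi$ goes through.
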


\vskip 0.2cm
\subsection{Finding regular pairs for which Cayley limits of \texorpdfstring{$h\in\mathscr F$}{h in script-F} exist}
\label{regular-pairs-find}
\mbox{}

\vskip 0.5cm
\subsubsection{The universal space}
\label{universal-space}
The algebra $\mathscr F = F^*H^\infty(\widetilde{N})$ of holomorphic functions on $\Omega$, as discussed in \S\ref{algF}, is our main object of study. In order to answer Problem \ref{ext-problem} (the Extension Problem) in the affirmative, our aim is to prove that $\id_\Omega \in \mathscr F^n$, i.e., to show that there exist bounded holomorphic functions $\mu_1,\cdots,\mu_n$ on $\widetilde{N}$ such that $F^*\mu_i = z_i$ for $1\leq i\leq n$ in terms of the Harish-Chandra coordinates on $\Omega \Subset \mathbb C^n$. Towards that end we need to be able to apply
 Lemma \ref{aebdyvalue} and Proposition \ref{aebdyfcn}
 to obtain nonconstant bounded holomorphic functions of the form $\hat{h}_{\Phi,\Psi}= \rho_{\Phi,\Psi}^*s_\Phi$, for some functions $s_\Phi\in H^{\infty}(\Phi)$ which depends on fewer variables, viz., variables on the maximal face $\Phi \cong \Omega^{\flat}$, where $\dim_{\mathbb C}\Omega^\flat = q$ and ${\rm rank}(\Omega^\flat) = r-1 < {\rm rank}(\Omega) = r$.  If we denote by $H \subset G_0$ the underlying hyperbolic flow, then these functions $\hat{h}_{\Phi,\Psi}$ have the remarkable property that they are invariant under the action of the 1-parameter group $H$, and they constitute the starting point of our answer to Problem \ref{ext-problem} (the Extension Problem).

\vskip 0.2cm
Fix a nonconstant bounded holomorphic function $\mu$ on $\widetilde {N}$ and consider $h = F^*\mu \in \mathscr F$. We may apply generalized Fatou's theorem on the complex unit ball to study admissible limits of such a function. In order to do so, we have to look for a regular pair of faces
$(\Phi,\Psi)$ such that admissible limits exist almost everywhere for the hyperbolic flow $\{\theta_{t,\Phi,\Psi}\}$ as $t \to 1$ and as $t\to -1$.  We will parametrize all Cayley projections and identify among them those for which Cayley limits exist for $h \in H^\infty(\Omega)$.  For this purpose we introduce the universal space $\mathscr S \subset \Reg(\partial\Omega)\times \Reg(\partial\Omega)$, as follows. For the ensuing discussion we assume that $\Omega$ is irreducible and of rank $\ge 2$.

\vskip 0.2cm
We consider first of all the set of all varieties $V_c \cong \mathbb B^{p+1}$ swept out by minimal disks with vertices at $c \in \Reg(\partial\Omega)$. Denoting by ${\mathfrak D}(A) := \{(a,a): a \in A\}$ the diagonal of $A\times A$ for any set $A$, we introduce
\[
\mathscr{S}:=\{(a,c) \mid c\in \partial^\flat V_a \}  \subset \Reg(\partial\Omega) \times \Reg(\partial\Omega)-{\mathfrak D}(\Reg(\partial\Omega)).
\]
Recall that the moduli space $\mathfrak M$ of maximal faces $\Phi \subset \Reg(\partial\Omega)$ is a compact $G_0$-homogeneous manifold, hence $\mathfrak M = G_0/N$ for some parabolic subgroup $N\subset G_0$.  We have the fibration
\begin{equation}
\epsilon: \Reg(\partial \Omega)\rightarrow \mathfrak M, \quad \epsilon^{-1}(gN)\cong \Omega^\flat.
\label{pi}
\end{equation}
Using the projection of the second factor in $\mathscr{S}$, we get the following fibration:
\begin{equation}
\pi_2: \mathscr{S}\rightarrow \Reg(\partial \Omega), \quad \pi_2^{-1}(c)=\partial^\flat V_c\cong \partial \mathbb{B}^{p+1}-\{u\}, u \in \partial \mathbb B^{p+1},
\label{pi2}
\end{equation}
Note that $a \in \partial^\flat V_c$ if and only if $a$ and $c$ are distinct and they lie on the boundary of a minimal disk $D_\ell=\Omega\cap \ell$, hence $a\in \partial^\flat V_c$ if and only if $c\in \partial^\flat V_a$.

Fix an ordered pair $(a,c) \in \mathscr{S}$. Let $\Phi$ resp.$\,\Psi$ be the unique maximal face containing $a$ resp.\,$c$.
From Proposition \ref{ballext},
we have $\theta^\sharp: V_c\coprod \partial^\flat V_c \overset{\cong}\longrightarrow \overline{\mathbb B^{p+1}} - \{u\}$ for some point $u \in \partial \mathbb B^{p+1}$. We consider the restriction of $h$ to $V_c$, and we write $h^\sharp_{a,c} = \lambda \in \mathbb C$ to mean that
$\big(\theta^{-1}\big)^*h$
has an admissible boundary value equal to $\lambda$ at the point $\theta^{\sharp}(a) \in \partial \mathbb B^{p+1} - \{u\}$.  Given $(a,c) \in \mathscr{S}$, for any point $x \in V_c$, $\rho_{\Phi,\Psi}(x)$ converges to $a$ admissibly, hence $\lim\limits_{t\to 1}h(\theta_{t,\Phi,\Psi}(x)) = h^\sharp_{a,c}$.

We now consider the locus of points on which admissible boundary values do not exist.
\begin{definition} \
Let $M$ be a smooth manifold and $E \subset M$.  We say that $E$ is a measurable subset of $M$ if and only if it is measurable with respect to some smooth $($hence any$)$ smooth volume form $d\mu$ on $M$.  $E \subset M$ is said to be a null set on $M$ if and only if $E \subset M$ is measurable and ${\rm Volume}(E,dV_h) = 0$ for some $($hence any$)$ Riemannian metric $h$ on $M$. A statement is said to hold true almost everywhere on $M$ if and only if it holds true for all points $x \in M$ lying outside some null set on $M$.
\end{definition}

We have the following lemma concerning null sets on the total space of a double fibration which follows readily from Fubini's theorem.

\vskip 0.2cm

Define now $E \subset \mathscr{S}$ to be the subset consisting of points $(a,c)$ such that $h^\sharp_{a,c}$ does not exist. By generalized Fatou's theorem for the complex unit ball (Theorem \ref{fatou} here),
we know that $E\cap \partial^\flat V_c$ is a null set, i.e., it is of zero measure with respect to any smooth volume form on $\partial^\flat V_c$.
It is elementary to verify that $E$ is a measurable set (cf. \cite[\S 4.3]{MW2025}), hence one can apply Fubini's theorem to
$\pi_2: \mathscr{S} \to \Reg(\partial\Omega)$ to deduce that $E \subset \mathscr{S}$ is a null set.
Consider $\mathscr{S}$ as the total space of a double fibration
$\pi_2: \mathscr{S}\rightarrow \Reg(\partial \Omega)$
and $\sigma: \mathscr{S}\rightarrow \mathfrak{M},\, \sigma(a,c):=[\Phi]$,
where $[\Phi]\in \mathfrak{M}$ is the unique maximal face $\Phi$ containing $a$, i.e. $\sigma=\epsilon\circ \pi_1$ for the natural projection
$\epsilon: \Reg(\partial \Omega)\rightarrow \mathfrak{M}$.
Applying the latter statement to the fibration $\sigma: \mathscr{S} \to \mathfrak M$, we conclude that $E\cap\sigma^{-1}([\Phi])$ is a null set
on $\mathfrak{A}:=\{(a,c(a)): a\in \Phi\}\subset \Phi\times \Psi$ for almost every $[\Phi] \in \mathfrak M$.
Note that $(a,c(a))\in \mathfrak{A}$ if and only if $h^\sharp_{a,c(a)}$ exists. This means that $h^\sharp_{a,c(a)}$ exists except for a null set on $\Phi$.
The general principle of this argument is summarized in the following lemma, which is a consequence of Fubini's theorem.

\begin{lemma} \
Let $\mathscr P$ be a smooth manifold which is the total space of a double locally trivial smooth fibration $\eta_i: \mathscr P \to B_i$, $i = 1,2$.  For $b_i \in B_i$ write $F^i_{b_i} := \eta_i^{-1}(b_i)$.  Suppose $E \subset \mathscr P$ is a measurable subset. Assume that for almost every base point $b_1 \in B_1$, $E\cap F^1_{b_1}$ is a null set. Then, for almost
every base point $b_2 \in B_2$, $E\cap F^2_{b_2} \subset F^2_{b_2}$ is a null set.
\label{nullfib}
\end{lemma}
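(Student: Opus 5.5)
The plan is to reduce Lemma \ref{nullfib} to two applications of Fubini--Tonelli on a single finite measure on $\mathscr P$. Null sets are defined by smooth volume forms, and all smooth volume forms on a manifold have the same null sets. So we may fix convenient smooth volume forms $dV$ on $\mathscr P$ and $d\beta_i$ on $B_i$. We will show that both the hypothesis and the conclusion are equivalent to $\mathrm{Vol}(E,dV)=0$. The lemma then follows at once: the hypothesis gives $\mathrm{Vol}(E)=0$, and this gives the conclusion.

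The key step is the following claim for a single locally trivial smooth fibration $\eta:\mathscr P\to B$ with fibers $F_b$ and a measurable $E\subset\mathscr P$:
\[
\mathrm{Vol}(E)=0 \iff E\cap F_b \text{ is a null set of } F_b \text{ for a.e. } b\in B.
\]
Both sides are local on $\mathscr P$, since $\mathscr P$ is second countable, so countably many trivializing charts suffice. It is therefore enough to prove the claim on an open set $U\times W\cong\eta^{-1}(U)\cap\Omega'$, with $U\subset B$ a coordinate ball and $W\subset\mathbb R^m$ a coordinate ball of the fiber. On such a chart the volume form is $\phi(b,w)\,db\,dw$ with $\phi$ smooth and positive. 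Tonelli then gives
\[
\int_{E\cap (U\times W)}\phi\,db\,dw=\int_U\Bigl(\int_{E_b\cap W}\phi(b,w)\,dw\Bigr)db .
\]
Here the slices $E_b$ are measurable for a.e.\ $b$, and the inner integral is a measurable function of $b$. The left side vanishes if and only if the inner integral vanishes for a.e.\ $b\in U$. Because $\phi$ is positive, this holds if and only if $E_b\cap W$ is Lebesgue-null for a.e.\ $b$. Finally, countably many fiber charts cover each fiber; the union of countably many exceptional null sets of $b$'s is null, and a countable union of null subsets of a fiber is null. This proves the claim globally.

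Applying the claim to $\eta_1$ turns the hypothesis into $\mathrm{Vol}(E)=0$. Applying it to $\eta_2$ in the reverse direction gives the conclusion.

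The main obstacle is measurability bookkeeping, so we will take $E$ Borel, or replace a Lebesgue-measurable $E$ by a Borel set differing from it by a null set. The measurability needed in the fibration claim is what Tonelli supplies: a.e.\ slice is measurable and the slice integral is a measurable function of the base point. One also needs that the fiber measure induced by local coordinates defines the same null sets on $F_b$ as any smooth volume form on $F_b$. This holds because the chart map restricted to a fiber is a diffeomorphism onto an open subset of $\mathbb R^m$, and diffeomorphisms preserve Lebesgue null sets.
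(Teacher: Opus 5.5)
Your proposal is correct and is essentially the paper's argument. The paper gives no separate proof beyond invoking Fubini's theorem, and in the text just before the lemma it runs exactly your two steps: first it uses the fibration $\pi_2$ to conclude that $E\subset\mathscr S$ is a null set, then it deduces a.e.\ null slices along the fibers of $\sigma$. Your local-trivialization and Tonelli argument, with the countable cover of each fiber and the remark that all smooth volume forms have the same null sets, supplies the details the paper leaves implicit.
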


\vskip 0.2cm
\subsubsection{Regular pairs of maximal faces}
\label{regular-pairs}
To study Cayley projections we consider pairs $(\Phi,\Psi)$ which are linked by a hyperbolic flow. (Here and henceforth to simplify notation we write $(\Phi,\Psi)$ in place of $([\Phi],[\Psi])$ as an element in $\mathfrak M\times\mathfrak M$). Let $(\Sigma,\Sigma')$ be the standard reference pair of maximal faces of Definition \ref{stdpair}, regarded as an element of $\mathfrak M\times\mathfrak M$. We say that a pair $(\Phi,\Psi) \in \mathfrak M \times \mathfrak M$ is a regular pair whenever there exists $c_0 \in \Psi$ such that $\partial^\flat V_{c_0} \cap \Phi \neq \emptyset$, equivalently, there exists $g\in G_0$ such that $g\Sigma = \Phi$ and $g\Sigma' = \Psi$, hence $(\Phi,\Psi)$ is a regular pair if and only if $(\Psi,\Phi)$ is a regular pair (since there exists an automorphism of $\Omega$ switching the two faces $\Sigma$ and $\Sigma'$).  Moreover, $(\Phi,\Psi)$ is a regular pair if and only if for every point $c\in \Psi$, $\partial^\flat V_c \cap \Phi = \{a(c)\}$ for a unique point $a(c) \in \Phi$.  Denote by $\mathscr C \subset \mathfrak M\times \mathfrak M$ the set of regular pairs $(\Phi,\Psi)$. Here $V_c$ decomposes into minimal disks $\Delta_b$, $b \neq c$, such that $c \in \partial\Delta_b$. Writing $\mathscr E := (\mathfrak M\times \mathfrak M) - \mathscr C$, recall the following description of the space $\mathscr C$ of all regular pairs.

\begin{lemma}[{Mok-Wong \cite[{Lemma 4.4}]{MW2025}}] \
$G_0 := \Aut_0(\Omega)$ acts transitively on $\mathscr C$ by \, $\alpha(g,(\Phi,\Psi)) = (g\Phi,g\Psi)$. Moreover, a pair $(\Phi,\Psi)$ of maximal faces on $\partial\Omega$ is a regular pair if and only if $\overline{\Phi} \cap \overline{\Psi} = \emptyset$. Hence, $\mathscr E \subsetneq \mathfrak M\times \mathfrak M$ is a proper $($real$)$ algebraic subset. In particular, $\mathscr C$ is an open $($and dense$)$ subset in $\mathfrak M \times \mathfrak M$ of full measure.
\label{regular-pair}
\end{lemma}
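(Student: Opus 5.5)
The plan is to prove the second assertion (regular pairs are exactly the pairs with disjoint closures) first, and then read off transitivity and the genericity of $\mathscr C$ from it.

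\emph{Regular implies disjoint closures.} If $(\Phi,\Psi)$ is regular, then by definition $(\Phi,\Psi)=(g\Sigma,g\Sigma')$ for some $g\in G_0$. Since $G_0$ acts by homeomorphisms of $\overline\Omega$ preserving the boundary strata, it suffices to treat the standard reference pair of Definition \ref{stdpair}. In privileged Harish-Chandra coordinates,
\[
\overline\Sigma\subset\{z_1=1\},\qquad \overline{\Sigma'}\subset\{z_1=-1\},
\]
so $\overline\Sigma\cap\overline{\Sigma'}=\emptyset$.

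\emph{Disjoint closures implies regular.} Suppose $\overline\Phi\cap\overline\Psi=\emptyset$. Using $K$-homogeneity of $\mathfrak M$, normalize so that $\Phi=\Sigma$. The stabilizer of $\Sigma$ in $G_0$ is the maximal parabolic $N$, with Langlands decomposition $N=M_NA_NU_N$. Here $A_N=H$ is the hyperbolic flow, and $U_N$ is the unipotent radical. The key input is the Bruhat decomposition of $G_0$ relative to $N$: the $N$-orbits on $\mathfrak M=G_0/N$ are finitely many, indexed by double cosets $N\backslash G_0/N$. The unique open orbit is $U_N\cdot[\Sigma']$, which is the big cell $\cong U_N$. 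The complement of this orbit is a proper real-algebraic subset, namely a union of lower-dimensional Schubert cells.

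The remaining step is to check that every face $\Psi$ in a lower cell satisfies $\overline\Psi\cap\overline\Sigma\neq\emptyset$. This is the step I expect to be the main obstacle. My first approach uses a representative: each lower double coset has a representative $w$ in the restricted Weyl group normalizing the maximal polydisk $\Delta^r$. Then $w\Sigma'$ is, in polydisk coordinates, the face where some coordinate is fixed at $\pm1$ and the others are free. For every $w$ other than the one sending $\Sigma'$ to $\{z_1=-1\}$, the face $w\Sigma'$ either is $\Sigma$ itself or is of the form $\{z_j=\pm1\}$ with $j\ne1$. Such a face meets $\overline\Sigma=\{z_1=1\}$ in the point where $z_1=1$ and $z_j=\pm1$, which lies in the Shilov boundary. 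Applying an element of $N$, which preserves $\overline\Sigma$, transports this intersection to every face in the lower cell.

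An alternative for this step, if the Weyl-group bookkeeping proves awkward, is a closedness argument. The condition $\overline\Phi\cap\overline\Psi\neq\emptyset$ is closed on $\mathfrak M\times\mathfrak M$ and $G_0$-invariant. Its complement contains $(\Sigma,\Sigma')$. So one would need to show that the open $G_0$-orbit exhausts the complement, again via the cell decomposition.

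\emph{Transitivity and genericity of $\mathscr C$.} Given the equivalence, transitivity is immediate. Every regular pair equals $g(\Sigma,\Sigma')$ by definition, so $\mathscr C=G_0\cdot(\Sigma,\Sigma')$. Moreover $\mathscr C$ fibers over $\mathfrak M$ with fiber the big cell. Its complement $\mathscr E$ therefore fibers over $\mathfrak M$ with fiber the union of lower Schubert cells. This complement is a proper real-algebraic subset, since Schubert varieties are algebraic and $\mathfrak M$ is a real flag variety. Being of positive codimension, $\mathscr E$ has measure zero, and so $\mathscr C$ is open, dense and of full measure.
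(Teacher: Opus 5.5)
Your proof is correct. It takes a different route from the one the paper indicates.

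\textbf{The paper's framing.} The paper gives no proof of this lemma; it defers to Mok--Wong [Lemma 4.4]. Its framework phrases regularity through minimal disks: $(\Phi,\Psi)$ is regular when some $c_0\in\Psi$ has $\partial^\flat V_{c_0}\cap\Phi\neq\emptyset$. This means $\Phi$ and $\Psi$ contain the two endpoints of a minimal disk. That is the form that feeds directly into the points $a(c)$ and the Cayley projections $\rho_{\Phi,\Psi}$ used later.

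\textbf{What your route does.} You take the group-theoretic form of the definition, which makes transitivity tautological. All the content then goes into two inputs: the Bruhat decomposition of $G_0$ relative to $N={\rm Stab}(\Sigma)$, and the action of the restricted Weyl group by signed permutations on the maximal polydisk. This gives more than the lemma asks. Since $W_N$ is the group of signed permutations of the coordinates $2,\dots,r$, your representatives show that $G_0$ has exactly three orbits on $\mathfrak M\times\mathfrak M$:
\begin{itemize}
\item the diagonal;
\item the orbit of the pair consisting of $\Sigma$ and the face meeting $\overline{\Delta^r}$ in $\{z_2=1\}$;
\item the open orbit of $(\Sigma,\Sigma')$.
\end{itemize}
The disjoint-closure criterion then reduces to a finite check on $\overline{\Delta^r}$, and the algebraicity and null measure of $\mathscr E$ come essentially for free.

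\textbf{Bridging to the minimal-disk definition.} If one starts from the minimal-disk form of the definition, you need one extra line. $G_0$ acts transitively on minimal disks, and the stabilizer of one acts on it through ${\rm SU}(1,1)$, which is transitive on ordered pairs of distinct boundary points. Hence $G_0$ is transitive on $\mathscr S$, and every such $(\Phi,\Psi)$ is a translate of $(\Sigma,\Sigma')$.

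\textbf{Points to tighten.}
\begin{itemize}
\item Your assertion that $N\cdot[\Sigma']$ is the open cell uses that ${\rm Stab}(\Sigma')$ is the parabolic opposite to $N$. This holds because the symmetry $z\mapsto -z$ at $0$ lies in the center of $K$, maps $\Sigma$ to $\Sigma'$ (as $\Omega^\flat$ is circular), and conjugates $\theta_t$ to $\theta_{-t}$. Hence it sends the parabolic attached to the generator $X_1$ of $H$ to the one attached to $-X_1$.
\item The Weyl elements fixing $\Sigma'$ form the whole subgroup $W_N$, not a single $w$.
\item The common point $(1,\pm1,0,\dots,0)$ lies in the stratum $E_{r-2}$, which is the Shilov boundary only when $r=2$. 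This is harmless, since only its membership in both closures is used.
\end{itemize}
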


For a given holomorphic function $h\in H^\infty(\Omega)$ we define $\mathscr C_{\rm adm}(h) \subset \mathscr C$ to consist of all regular pairs $(\Phi,\Psi)$ such that the admissible boundary values $h^\sharp_{a,c}$ exist for almost every point $a\in\Phi$, where $a = a(c)\in\Phi$ is the opposite point of $c\in\Psi$ defined by $a(c) = \partial^\flat V_c\cap\Phi$. We define $\mathscr C_{\rm adm}'(h) := \left\{(\Phi,\Psi) \in \mathscr{C}: (\Phi,\Psi) \in \mathscr C_{\rm adm}(h) \ {\rm and} \ (\Psi,\Phi)\in \mathscr C_{\rm adm}(h)\right\}$. In other words, $\mathscr C_{\rm adm}'(h) = \mathscr{C}_{\rm adm}(h) \cap\mathfrak S(\mathscr C_{\rm adm}(h))$, where $\mathfrak S$ is the switching map on
$\mathfrak M\times\mathfrak M$ defined by $\mathfrak S(\Phi,\Psi) = (\Psi,\Phi)$.

From Lemma \ref{regular-pair} and Lemma \ref{nullfib} one obtains the following.

\begin{proposition}
\label{bdry-functions}
For almost all $g\in G_0$ one has $(g\Sigma,g\Sigma') \in \mathscr C_{\rm adm}'(h)$.
Hence, for almost every regular pair $(\Phi,\Psi) \in \mathscr C$ of maximal faces,
the Cayley limits $\hat{h}_{\Phi,\Psi} = \rho_{\Phi,\Psi}^*s_{\Phi}$ and
$\hat{h}_{\Psi,\Phi} = \rho_{\Psi,\Phi}^*s_{\Psi}$ exist,
where $s_{\Phi}\in H^\infty(\Phi)$ and $s_{\Psi} \in H^\infty(\Psi)$.
\end{proposition}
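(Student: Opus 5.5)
The proof is a Fubini-type double fibration argument on the universal space $\mathscr S$, carried over to $G_0$ through the transitive action on regular pairs, followed by an application of Lemma \ref{aebdyvalue} and Proposition \ref{aebdyfcn}.

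First I would record what has already been established in the discussion preceding Lemma \ref{nullfib}. The bad set $E\subset\mathscr S$, consisting of the pairs $(a,c)$ for which $h^\sharp_{a,c}$ fails to exist, is measurable. Each slice $E\cap\partial^\flat V_c$ is null, by the generalized Fatou theorem (Theorem \ref{fatou}) transported through the biholomorphism $\theta^\sharp$ of Proposition \ref{ballext}. Hence $E$ is null in $\mathscr S$.

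Next, consider the space of triples. Let $\mathscr P:=\{((\Phi,\Psi),c): (\Phi,\Psi)\in\mathscr C,\ c\in\Psi\}$, and map it into $\mathscr S$ by $((\Phi,\Psi),c)\mapsto (a(c),c)$, where $a(c)$ is the point with $\partial^\flat V_c\cap\Phi=\{a(c)\}$. This map is a smooth fibration: its fiber over $(a,c)$ consists of the pairs $(\Phi,\Psi)$ with $\Psi\ni c$ and $\Phi\ni a$, which are exactly the faces of $a$ and $c$, with regularity holding by Lemma \ref{regular-pair}. The map is in fact a local diffeomorphism onto an open subset of full measure in $\mathscr S$, and is $G_0$-equivariant. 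The preimage $E_{\mathscr P}$ of $E$ is therefore null in $\mathscr P$.

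Now view $\mathscr P$ as the total space of the second fibration $\mathscr P\to\mathscr C$, whose fiber over $(\Phi,\Psi)$ is $\Psi\cong\Omega^\flat$. Lemma \ref{nullfib} applies: the first fibration is the one just constructed, whose fibers meet $E_{\mathscr P}$ trivially off a null set of base points, and this trivial case is handled by Fubini directly. The conclusion is that for almost every $(\Phi,\Psi)\in\mathscr C$ the set $E_{\mathscr P}\cap\Psi$ is null, i.e.\ $(\Phi,\Psi)\in\mathscr C_{\rm adm}(h)$. The complement of $\mathscr C_{\rm adm}(h)$ is thus null in $\mathscr C$.

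The switching map $\mathfrak S$ is a diffeomorphism of $\mathfrak M\times\mathfrak M$ preserving $\mathscr C$, so $\mathfrak S(\mathscr C_{\rm adm}(h))$ also has null complement, and so does the intersection $\mathscr C'_{\rm adm}(h)$. By Lemma \ref{regular-pair}, the orbit map $G_0\to\mathscr C$, $g\mapsto(g\Sigma,g\Sigma')$, is a smooth submersion (a homogeneous fibration with fiber the stabilizer), and preimages of null sets under submersions are null. This gives the first assertion for almost every $g\in G_0$.

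For the second assertion, take $(\Phi,\Psi)\in\mathscr C'_{\rm adm}(h)$. Since $(\Phi,\Psi)\in\mathscr C_{\rm adm}(h)$, the values $h^\sharp_{a(c),c}$ exist for almost all $c$. The existence of $h^\sharp_{c,a(c)}$ for almost all $c$ comes from $(\Psi,\Phi)\in\mathscr C_{\rm adm}(h)$, using the bijection $c\leftrightarrow a(c)$ between $\Psi$ and $\Phi$, which is a diffeomorphism and therefore carries null sets to null sets. Lemma \ref{aebdyvalue} then yields $\hat h_{\Phi,\Psi}=\rho_{\Phi,\Psi}^*s_\Phi$, and by symmetry $\hat h_{\Psi,\Phi}=\rho_{\Psi,\Phi}^*s_\Psi$. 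Proposition \ref{aebdyfcn} shows that $s_\Phi$ is independent of $\Psi$.

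I expect the main obstacle to be the measure-theoretic bookkeeping. One must check that the maps $\mathscr P\to\mathscr S$ and $G_0\to\mathscr C$ are genuine smooth locally trivial fibrations, or submersions, so that null sets pull back to null sets. One must also check that $(\Phi,\Psi,c)\mapsto a(c)$ is smooth, which I would deduce from $G_0$-equivariance via $a(gc)=g\,a(c)$ relative to the standard pair.
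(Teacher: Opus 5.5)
Your argument is correct and is essentially the paper's. The generalized Fatou theorem on the ball fibers $\partial^\flat V_c$ makes the bad set $E\subset\mathscr S$ null, and a Fubini/double-fibration argument (Lemma \ref{nullfib}) passes this to almost every regular pair. One then symmetrizes with $\mathfrak S$, pulls back to $G_0$ via the transitive action of Lemma \ref{regular-pair}, and finishes with Lemma \ref{aebdyvalue} and Proposition \ref{aebdyfcn}.

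Your map $\mathscr P\to\mathscr S$ is in fact a $G_0$-equivariant bijection of homogeneous spaces, hence a diffeomorphism onto all of $\mathscr S$, not just onto an open set of full measure. The resulting fibration over $\mathscr C$ is slightly tidier bookkeeping than the paper's $\sigma:\mathscr S\to\mathfrak M$, because its fibers are exactly the slices $\{(a(c),c): c\in\Psi\}$ used in the definition of $\mathscr C_{\rm adm}(h)$.
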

Proposition \ref{bdry-functions} is not yet good enough for us to solve Problem \ref{ext-problem} (the Extension Problem). For one thing we have to make sure that we can choose the regular pair $(\Phi,\Psi)\in \mathscr C_{\rm adm}'(h)$ such that both $s_\Phi$ and $s_\Psi$ are nonconstant.  It is necessary to have both Cayley limits since in our application of Moore's ergodicity theorem, we will only be able to conclude that {\it either\/} $\rho_{\Phi,\Psi}^*s_{\Phi}$ {\it or\/} $\hat{h}_{\Psi,\Phi}$ belongs to $\mathscr{F}$ (cf.\,Proposition \ref{pulled-back} in \S\ref{ergodic}).

\vskip 0.2cm
\section{Moore's ergodicity theorem intermingles with Cayley projections}
\label{ergodic}

\vskip 0.2cm
\subsection{Moore's ergodicity theorem of semisimple real Lie groups and the density lemma}
\label{moore-density}
An important component of the proof of Theorem \ref{isom} is ergodic theory, specifically Moore's ergodicity theorem on semisimple real Lie groups and its consequences. We refer the reader to Zimmer \cite{Zim1984} for a systematic discussion of the topic. First of all, we have
\begin{theorem}[{Moore's ergodicity theorem, cf. \cite[{Theorem 2.2.6}]{Zim1984}}] \
Let $G = \prod G_i$ be a connected semisimple real Lie group, where each $G_i$ is a connected non-compact simple Lie group with finite center. Let $\Gamma\subset G$ be an irreducible lattice and $H\subset G$ be a noncompact closed subgroup.  Then, $H$ acts ergodically on $\Gamma\backslash G$.
\label{moore}
\end{theorem}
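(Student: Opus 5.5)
The plan is the classical route through the Howe--Moore vanishing of matrix coefficients, applied to the unitary representation of $G$ on $L^2_0(\Gamma\backslash G)$. We reduce to simple factors and use the irreducibility of $\Gamma$ to rule out invariant vectors.

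Let $\pi$ be the right-regular representation of $G$ on $\mathcal H := L^2(\Gamma\backslash G)$, with respect to the finite $G$-invariant measure. Ergodicity of $H$ means that every $H$-invariant function in $\mathcal H$ is constant. So we restrict to the orthogonal complement $\mathcal H_0$ of the constants, which is $G$-invariant, and show that $\mathcal H_0$ has no nonzero $H$-invariant vectors.

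The key tool is the Howe--Moore theorem. For a connected simple Lie group $G_i$ with finite center and a unitary representation without nonzero $G_i$-invariant vectors, all matrix coefficients $\langle\pi(g)v,w\rangle$ tend to $0$ as $g\to\infty$ in $G_i$. We prove it through the Cartan decomposition $G_i = KA^+K$, which reduces the claim to $a_n\to\infty$ in $A^+$. Pass to a weak limit $\pi(a_n)v\rightharpoonup v_\infty$. Using the Mautner phenomenon, $v_\infty$ is invariant under the expanding or contracting horospherical subgroups $N^\pm$, because $a_n^{-1}ua_n\to e$ for $u$ in the appropriate horospherical subgroup. These horospherical subgroups generate $G_i$, since it is simple, so $v_\infty=0$. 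For semisimple $G=\prod G_i$ we apply the same decomposition factorwise. A weak limit along a sequence escaping to infinity in some factor is then invariant under that noncompact simple factor.

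Now let $f\in\mathcal H_0$ be $H$-invariant. Since $H$ is closed and noncompact, it contains a sequence $h_n\to\infty$ in $G$, and passing to a subsequence we may assume $h_n$ escapes to infinity in some factor $G_{i_0}$. Then $\langle\pi(h_n)f,f\rangle=\|f\|^2$ for all $n$. The Mautner argument above shows that any weak limit of $\pi(h_n)f$ is $G_{i_0}$-invariant; here that weak limit is $f$ itself, so $f$ is $G_{i_0}$-invariant. Irreducibility of $\Gamma$ means that $\Gamma G_{i_0}'$ is dense in $G$, where $G_{i_0}'$ is the product of the other factors, or equivalently that every factor acts ergodically. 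From this we deduce that a $G_{i_0}$-invariant $L^2$ function on $\Gamma\backslash G$ is constant. Concretely, such a function descends to a function on $\Gamma\backslash G/G_{i_0}$, and the projection of $\Gamma$ to $G/G_{i_0}$ is dense, so continuity of translation in $L^2$ forces the function to be constant. Hence $f$ is constant, and being in $\mathcal H_0$, $f=0$.

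The main obstacle is the Howe--Moore/Mautner step: establishing decay of matrix coefficients uniformly over the Cartan decomposition. The most delicate part is verifying that the horospherical subgroups attached to a divergent sequence in $A^+$ generate the whole simple factor. For this we would use the root-space decomposition and the simplicity of $\mathfrak g_i$.
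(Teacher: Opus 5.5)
The paper gives no proof of this statement; it simply quotes Zimmer. Your overall architecture is the standard one behind that theorem: pass to $L^2_0(\Gamma\backslash G)$, show that an $H$-invariant vector is fixed by a whole simple factor, then use irreducibility. The gap is in your sketch of the Howe--Moore step.

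The computation $\pi(u)\pi(a_n)v-\pi(a_n)v=\pi(a_n)\bigl(\pi(a_n^{-1}ua_n)v-v\bigr)\to 0$ shows only that the weak limit $v_\infty$ is fixed by the single horospherical subgroup $N^+=\{u: a_n^{-1}ua_n\to e\}$. It gives nothing for the opposite group $N^-$, whose elements are blown up by $a_n^{-1}(\cdot)a_n$. A vector fixed by $N^+$ is not a priori fixed by anything else, so the sentence ``these horospherical subgroups generate $G_i$, so $v_\infty=0$'' does not follow.

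What is missing is the actual Mautner phenomenon.
\begin{enumerate}
\item For a root $\alpha$ occurring in $\mathfrak n^+$, take an $\mathfrak{sl}_2$-triple $(X,Y,H_\alpha)$ with $X\in\mathfrak g_\alpha$. On the corresponding copy of ${\rm SL}(2,\mathbb R)$, the coefficient $\phi(g)=\langle\pi(g)v_\infty,v_\infty\rangle$ is bi-invariant under the upper unipotent elements $u_s=\bigl(\begin{smallmatrix}1&s\\ 0&1\end{smallmatrix}\bigr)$.
\item The identity $u_s\bigl(\begin{smallmatrix}1&0\\ \epsilon&1\end{smallmatrix}\bigr)u_t=\bigl(\begin{smallmatrix}\lambda&0\\ \epsilon&\lambda^{-1}\end{smallmatrix}\bigr)$, with $s=(\lambda-1)/\epsilon$ and $t=(\lambda^{-1}-1)/\epsilon$, then gives $\phi\bigl(\bigl(\begin{smallmatrix}1&0\\ \epsilon&1\end{smallmatrix}\bigr)\bigr)=\phi\bigl(\bigl(\begin{smallmatrix}\lambda&0\\ \epsilon&\lambda^{-1}\end{smallmatrix}\bigr)\bigr)$.
\item Letting $\epsilon\to 0$ yields $\phi(\operatorname{diag}(\lambda,\lambda^{-1}))=\|v_\infty\|^2$. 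By the equality case of Cauchy--Schwarz, $v_\infty$ is fixed by $\exp(\mathbb R H_\alpha)$.
\item Mautner's lemma (a vector fixed by $a$ is fixed by every $u$ with $a^kua^{-k}\to e$) then gives invariance under $\exp(\mathfrak g_{-\alpha})$, hence under $N^-$.
\end{enumerate}
Only at this point does generation by $N^+$ and $N^-$ give $G_i$-invariance. So the step you single out as delicate, generation, is the routine one: $\langle N^+,N^-\rangle$ is a nontrivial connected normal subgroup. The genuinely nontrivial step is absent. The same omission affects your application, where you need a weak limit of $\pi(h_n)f$ to be fixed by all of $G_{i_0}$, not merely by a conjugate of $N^+$.

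There is also a slip in your last step. Density of $\Gamma G'_{i_0}$ (equivalently, of the projection of $\Gamma$ to $G_{i_0}$) is what makes $G'_{i_0}$ ergodic, not $G_{i_0}$. Your descent argument actually uses density of $\Gamma G_{i_0}$, i.e.\ of the projection of $\Gamma$ to $G/G_{i_0}\cong G'_{i_0}$, as your ``Concretely'' sentence correctly says. Irreducibility should therefore be taken to mean that $\Gamma N$ is dense for every noncompact closed normal subgroup $N$. Dense projections to the simple factors alone would not suffice: $\Gamma_{12}\times\Gamma_{34}\subset(G_1\times G_2)\times(G_3\times G_4)$, with $\Gamma_{12},\Gamma_{34}$ irreducible, has dense projections to each $G_i$, yet $G_1$ is not ergodic.

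With that reading and the Mautner step supplied, the rest of your argument goes through. This includes the factorwise reduction, which works because the $G'_{i_0}$-components of $h_n$ commute with $a_n^{-1}ua_n$, and the descent to $L^2_{\mathrm{loc}}(G'_{i_0})$ with continuity of translation.
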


For our proof of Theorem \ref{extension}, we consider bounded symmetric domains $\Omega =\Omega_1\times\cdots\times\Omega_s$ of rank $\ge 2$ in their decomposition into Cartesian products of irreducible factors and torsion-free irreducible lattices $\Gamma \subset \Aut(\Omega)$.
We apply Moore's ergodicity theorem to the situation where $G = \Aut_0(\Omega)$, $G_i = \Aut_0(\Omega_i)$ for $1\le i \le s$, and $H$ is a hyperbolic flow, i.e., a one-parameter family of transvections, in one of the irreducible factors.

\vskip 0.2cm
Every semisimple Lie group $G$ is unimodular.  In what follows we equip a semisimple Lie group $G$ with a bi-invariant measure so that for a lattice $\Gamma \subset G$, $\Gamma\backslash G$ is equipped with an induced measure $d\mu_{\Gamma\backslash G}$ (i.e., volume form).  The following is a consequence of Moore's ergodicity theorem on semisimple real Lie groups.

\begin{lemma}[{Density lemma, cf. \cite[{Proposition 2.1.7}]{Zim1984}}]
Let $G$ be a semisimple real Lie group, $\Gamma \subset G$ be a lattice, and $H \subset G$ a noncompact Lie subgroup.  Then, there exists a subset $E \subset \Gamma\backslash G$ of zero measure $($with respect to $d\mu_{\Gamma\backslash G})$ such that the $H$-orbit $xH \subset \Gamma\backslash G$ is dense in $\Gamma\backslash G$ whenever $x \in (\Gamma\backslash G) - E.$
\end{lemma}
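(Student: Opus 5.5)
The plan is to deduce the density lemma from Moore's ergodicity theorem (Theorem \ref{moore}) together with the fact that $\Gamma\backslash G$ is a second countable space whose open sets all have positive measure. I would work in the setting where Theorem \ref{moore} applies, i.e. $G$ connected semisimple without compact factors, $\Gamma$ irreducible, and $H$ a noncompact closed subgroup; in our application $H$ is a hyperbolic flow. Theorem \ref{moore} then gives that $H$ acts ergodically on $(\Gamma\backslash G, d\mu_{\Gamma\backslash G})$, acting on the right.

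First, I would fix a countable basis $\{U_k\}_{k\ge 1}$ of nonempty open subsets of $\Gamma\backslash G$. Such a basis exists because $\Gamma\backslash G$ is a manifold, being the quotient of a Lie group by a discrete subgroup. Since $d\mu_{\Gamma\backslash G}$ is induced from a Haar measure, which is given by a smooth positive volume form, each $U_k$ has positive measure.

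For each $k$, I would consider the saturation $U_kH=\bigcup_{h\in H}U_kh$. This set is open, hence measurable, and it is $H$-invariant. It contains $U_k$, so it has positive measure. By ergodicity its complement $E_k:=(\Gamma\backslash G)-U_kH$ is therefore a null set. I would then set $E:=\bigcup_k E_k$, which is null as a countable union of null sets.

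Now take $x\notin E$. For every $k$ we have $x\in U_kH$, so $xh^{-1}\in U_k$ for some $h\in H$. Thus the orbit $xH$ meets every $U_k$, and since the $U_k$ form a basis, $xH$ is dense.

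The only delicate point is matching the hypotheses of Theorem \ref{moore}: irreducibility of $\Gamma$, absence of compact factors, and closedness and noncompactness of $H$. If $H$ is a noncompact Lie subgroup that is not closed, I would replace it by its closure $\overline H$, which is still noncompact. The $\overline H$-orbit closures coincide with the $H$-orbit closures, so density of $x\overline H$ gives density of $xH$. The measurability of the invariant sets is automatic here, since they are open.
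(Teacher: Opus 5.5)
Your argument is correct and follows essentially the paper's route. The paper gives no proof beyond citing Moore's ergodicity theorem together with Zimmer's Proposition 2.1.7, whose proof is exactly your countable-basis argument. You are also right that the lemma as literally stated needs the hypotheses of Theorem \ref{moore} (irreducible $\Gamma$, no compact factors); these hold in the paper's application to hyperbolic flows in $G_0=\Aut_0(\Omega)$.

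One correction to your last paragraph: the closure of a noncompact non-closed subgroup need not be noncompact. For example, an irrational line in a $2$-torus inside $K$ has compact closure, and its orbits are then not dense. So the right hypothesis is that $\overline{H}$ be noncompact, which is automatic for the closed one-parameter subgroups $H$ used in the paper.
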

The density lemma as stated was already applied in the study of Finsler metric rigidity in Theorem \ref{finsler}, where the existence of a single dense leaf was already sufficient to deduce the desired constancy from the Gauss--Bonnet integral formula on $\mathscr C(X_\Gamma)$ when $\Omega$ is irreducible.  The proof for the case where $\Omega$ is reducible and $\Gamma \subset \Aut(\Omega)$ is irreducible is similar.

\vskip 0.2cm
We will make use of Moore's ergodicity theorem to a greater extent as captured by the discussion in the next subsection \S\ref{produce-invariant}.

\vskip 0.2cm
\subsection{Producing functions \texorpdfstring{$h\in\mathscr F$}{h in script-F} invariant under Cayley projections}
\label{produce-invariant}
\mbox{}

\vskip 0.5cm
\subsubsection{Starting with the density lemma for the hyperbolic flow}
\label{density-lemma-start}
We start with recalling two results from \cite{MW2025}.
\begin{lemma}[{cf. \cite[{Lemma 6.3}]{MW2025}}] \
Suppose $g \in G_0$ and the coset $\Gamma gH$ is dense in $\Gamma\backslash G_0$. Then, there exists a discrete sequence $\{\gamma_k\}\subset \Gamma$ such that $\gamma_k=\left(g\theta_{t_k}g^{-1}\right)\delta_k$ for some $\delta_k \in \Aut_0(\Omega)$ and $t_k \in (-1,1)$ satisfying $\delta_k\to \id_{\Omega}$ and either $t_k\to 1$ or $t_k \to -1$.
\label{embdensity}
\end{lemma}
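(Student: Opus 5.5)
The plan is to translate density of the orbit $\Gamma g H$ in $\Gamma\backslash G_0$ into a statement about approximating elements of $G_0$ by products $\gamma\, g\theta_t g^{-1}$. Write $H=\{\theta_t\}_{-1<t<1}$. Density of $\Gamma gH$ means that for every $x\in G_0$ and every neighbourhood $U$ of $x$ there exist $\gamma\in\Gamma$ and $t$ with $\gamma g\theta_t\in U$.

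We take $x=g$ itself. Choose a decreasing sequence of neighbourhoods $U_k$ of $e$ shrinking to $\{e\}$. We want $\gamma_k\in\Gamma$ and $t_k$ with
\[
\gamma_k^{-1}g\theta_{t_k}\in gU_k .
\]
The inverse $\gamma_k^{-1}$ appears naturally here, since $\Gamma\backslash G_0$ consists of right cosets and density gives elements of $\Gamma g H$ close to $g$. Rearranging, this says
\[
\gamma_k = g\theta_{t_k}u_k^{-1}g^{-1} = (g\theta_{t_k}g^{-1})\,\delta_k,\qquad \delta_k := g u_k^{-1}g^{-1}\to\id_\Omega ,
\]
where $u_k\in U_k$ and conjugation by the fixed $g$ is continuous. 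This already has the required form. What remains is to ensure that $t_k\to\pm1$ and that $\{\gamma_k\}$ is discrete, i.e.\ leaves every compact set.

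The main obstacle is guaranteeing that the $t_k$ escape to the ends of $(-1,1)$ rather than staying bounded. For this we use that $H$ is noncompact and that the orbit is dense in an infinite-measure-free but non-discrete space. First, $\Gamma gH$ being dense implies that $\Gamma g\{\theta_t: |t|\ge 1-\epsilon\}$ is still dense for every $\epsilon>0$. To see this, note that $\Gamma g\{\theta_t:|t|\le 1-\epsilon\}$ is the image of a compact set, hence closed. It has empty interior, because $\Gamma\backslash G_0$ has dimension greater than $1$ when $\mathrm{rank}\ge2$ (indeed $\dim G_0>1$). A dense set minus a closed nowhere dense set remains dense in the complement, and the complement of a closed nowhere dense set is dense, so the claim follows.

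Hence at stage $k$ we may choose $t_k$ with $|t_k|>1-1/k$, and after passing to a subsequence either $t_k\to1$ or $t_k\to-1$. Then $g\theta_{t_k}g^{-1}$ leaves every compact subset of $G_0$, because $\{\theta_t\}$ is a closed noncompact one-parameter subgroup, properly embedded via the standard homomorphism $\varpi$. Since $\delta_k\to\id$, the elements $\gamma_k$ also leave every compact set. Consequently they are pairwise distinct after passing to a further subsequence, and they form a discrete sequence in $\Gamma$, which completes the argument.
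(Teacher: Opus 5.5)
Your proof is correct. This paper gives no argument of its own for the lemma; it only cites Lemma 6.3 of Mok--Wong, so there is no in-text route to compare against. What you give is a complete, elementary proof. It uses only three ingredients: the density hypothesis, the openness of the projection $G_0\to\Gamma\backslash G_0$, and the divergence of $\theta_t$ in $G_0$ as $t\to\pm1$.

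Two points of justification should be tightened.

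First, the compact arc $C_\epsilon=\{\Gamma g\theta_t:|t|\le 1-\epsilon\}$ does not have empty interior merely because $\dim(\Gamma\backslash G_0)>1$, since a merely continuous arc can be space-filling. The correct reason is that $t\mapsto\Gamma g\theta_t$ is smooth, hence Lipschitz on $[-1+\epsilon,1-\epsilon]$, so $C_\epsilon$ has measure zero. With that in place, your Baire-type step (a dense set minus a closed nowhere dense set is still dense) correctly shows that $\Gamma g$ is approached by points $\Gamma g\theta_t$ with $|t|>1-\epsilon$.

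Second, the divergence of $\theta_{t_k}$ can be checked concretely. In privileged coordinates $\theta_{t_k}(0)=(t_k;0;0)\to(\pm1;0;0)\in\partial\Omega$, whereas $C\cdot 0\Subset\Omega$ for every compact $C\subset G_0$. Divergence of $\gamma_k=(g\theta_{t_k}g^{-1})\delta_k$ then follows because $\delta_k\to\id$.

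Finally, the phrase about an ``infinite-measure-free but non-discrete space'' is meaningless and plays no role in the argument, so drop it.
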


\begin{proposition}[{cf. \cite[{Proposition 6.8}]{MW2025}}] \
Fix $h \in \mathscr F$.  Suppose both $(\Phi,\Psi)$ and $(\Psi,\Phi)$ belong to $\mathscr C_{\rm adm}(h)$, i.e., admissible limits of $h$ exist on both $\Phi, \Psi\in\mathfrak{M}$ under the 1-parameter hyperbolic flow between them. Then, either $\rho_{\Phi,\Psi}^*s_\Phi$ or $\rho_{\Psi,\Phi}^*s_\Psi$ belongs to $\mathscr F$.
\label{pulled-back}
\end{proposition}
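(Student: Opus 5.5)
We can assume $(\Phi,\Psi)=(g\Sigma,g\Sigma')$ for some $g\in G_0$, since both pairs are regular (Lemma \ref{regular-pair}). We write $H=\{\theta_t\}$ for the standard hyperbolic flow and $\xi_t=g\theta_tg^{-1}$. The plan is to realise the Cayley limit as a limit of $\Gamma$-translates of $h$, and then invoke Lemma \ref{invariant-alg}(a),(b).

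First, the Density lemma applied to the noncompact subgroup $H$ gives a full-measure set of $g\in G_0$ with $\Gamma gH$ dense in $\Gamma\backslash G_0$. Combined with Proposition \ref{bdry-functions}, after moving $g$ within a null-set complement we may assume both that $\Gamma gH$ is dense and that $(g\Sigma,g\Sigma')\in\mathscr C'_{\rm adm}(h)$. Some care is needed here, because the hypothesis concerns one fixed pair $(\Phi,\Psi)$, whereas density is a generic statement in $g$. The way around this is that the stabilizer of the pair $(\Phi,\Psi)$ in $G_0$ is positive-dimensional (it contains $gHg^{-1}$ and a Levi part), so the fibre $\{g: (g\Sigma,g\Sigma')=(\Phi,\Psi)\}$ is a coset $gL$. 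I would then argue, via Lemma \ref{nullfib} applied to $G_0\to\mathscr C$, that a density point can be found in that coset. Failing this, the statement should be read in the generic sense used in \cite{MW2025}, with the hypothesis imposed on a full-measure set of pairs.

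Next, Lemma \ref{embdensity} gives $\gamma_k\in\Gamma$ with $\gamma_k=\xi_{t_k}\delta_k$, where $\delta_k\to\id$ and $t_k\to 1$ or $t_k\to-1$. By Lemma \ref{invariant-alg}(a), $\gamma_k^*h=h\circ\xi_{t_k}\circ\delta_k\in\mathscr F$, and these functions come from the uniformly bounded family $\Phi(\gamma_k)^*\mu$ on $\widetilde N$, where $h=F^*\mu$. If $t_k\to1$, then Lemma \ref{aebdyvalue} together with Proposition \ref{aebdyfcn} gives $h\circ\xi_{t}\to\rho_{\Phi,\Psi}^*s_\Phi$ uniformly on compacta. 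Since $\delta_k\to\id$ uniformly on compacta, $\gamma_k^*h\to\rho_{\Phi,\Psi}^*s_\Phi$ as well. Lemma \ref{invariant-alg}(b) then places this limit in $\mathscr F$. If instead $t_k\to-1$, we use $\xi_{-t}$, which is the flow from $\Phi$ to $\Psi$. The hypothesis $(\Psi,\Phi)\in\mathscr C_{\rm adm}(h)$ then gives convergence to $\rho_{\Psi,\Phi}^*s_\Psi$, and this function lies in $\mathscr F$.

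The main obstacle is the first step: reconciling the density requirement on $g$ with the fact that the pair $(\Phi,\Psi)$ is prescribed. Ergodicity only controls almost every $g$, so I would exploit the freedom in the coset $gL$ to reach a density point. The remaining steps are direct combinations of the lemmas already stated.
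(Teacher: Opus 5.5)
Your second and third steps are the paper's proof. With $(g\Sigma,g\Sigma')=(\Phi,\Psi)$ and $\xi_t=g\theta_tg^{-1}$:
\begin{enumerate}
\item Lemma \ref{embdensity} gives $\gamma_k=\xi_{t_k}\delta_k$.
\item Lemma \ref{invariant-alg}(a) gives $\gamma_k^*h\in\mathscr F$.
\item Lemma \ref{aebdyvalue} gives $\xi_{t_k}^*h\to\rho_{\Phi,\Psi}^*s_\Phi$.
\item Lemma \ref{invariant-alg}(b) closes the argument, and the case $t_k\to-1$ is symmetric.
\end{enumerate}
One point needs spelling out: why $\delta_k\to\id$ lets you replace $h\circ\xi_{t_k}\circ\delta_k$ by $h\circ\xi_{t_k}$. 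The family $\{h\circ\xi_{t_k}\}$ is bounded by $\|h\|_{H^\infty(\Omega)}$, hence equicontinuous on compacta by the Cauchy estimate. That is exactly the displayed estimate in the paper.

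Your first step, however, does not work as proposed. You are right that Lemma \ref{embdensity} needs $\Gamma gH$ to be dense, and that the paper invokes it without checking this. But moving $g$ inside the fibre $gL$ cannot produce density.
\begin{itemize}
\item The stabilizer $L$ of the ordered pair $(\Sigma,\Sigma')$ is the common Levi factor of the two opposite maximal parabolic subgroups stabilizing $\Sigma$ and $\Sigma'$. In other words, it is the centralizer of $H$ in $G_0$.
\item Hence for $l\in L$ one has $\Gamma glH=\Gamma gHl$, a right translate of $\Gamma gH$, and also $(gl)\theta_t(gl)^{-1}=\xi_t$.
\item So density of $\Gamma gH$ is an invariant of the pair $(\Phi,\Psi)$, not of the representative $g$. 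In any case, Lemma \ref{nullfib} only gives statements about almost every fibre, never about a prescribed one.
\end{itemize}

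Density genuinely fails for some pairs. Suppose $\Gamma$ has finite index in ${\rm Sp}(2n,\mathbb Z)$ and $gHg^{-1}$ is a $\mathbb Q$-split one-parameter group, so that $\Phi,\Psi$ are rational boundary components. Then $\Gamma gH=\Gamma(gHg^{-1})g$ is a divergent orbit by Mahler's criterion. Consequently no sequence $\gamma_k=\xi_{t_k}\delta_k$ with $\delta_k\to\id$ exists, since such a sequence would force $\Gamma g\theta_{t_k}\to\Gamma g$.

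So the argument proves the proposition only under the extra hypothesis that $\Gamma gH$ is dense in $\Gamma\backslash G_0$. That is your fallback reading, and it is exactly how the paper applies the result. Immediately afterwards it restricts to almost every $g$ for which both $(g\Sigma,g\Sigma')\in\mathscr C'_{\rm adm}(h)$ and $\Gamma gH$ is dense. Drop the coset argument and state the density assumption explicitly.
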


\begin{proof}
Since $G_0$ acts transitively on $\mathscr C$ there exists $g \in G_0$ such that $g\Sigma = \Phi$ and $g\Sigma' = \Psi$.  By Lemma \ref{embdensity}, there exists $t_k \to 1$ or $t_k \to -1$ such that, writing $\xi_t = g\theta_t g^{-1}$, there exists $\gamma_k \in \Gamma$ such that $\gamma_k = \xi_{t_k}\delta_k$ for $k \in \mathbb N$ such that $\delta_k \to \id_{\Omega}$.  We assume $t_k \to 1$ and proceed to prove that  $\rho_{\Phi,\Psi}^*s_\Phi \in \mathscr F$.  The case where $t_k \to -1$ will lead to $\rho_{\Psi,\Phi}^*s_\Psi \in \mathscr F$ by exactly the same argument. We have
\begin{eqnarray*}
\big|\xi_{t_k}^*h(z)-\gamma_k^*h(z)\big|
&=&\big|(h\circ\xi_{t_k})(z)-(h\circ\gamma_k)(z)\big|\\
&=&\big|(h\circ\xi_{t_k})(z)-(h\circ\xi_{t_k})(\delta_k(z))\big|\\
&=&\left|\int_{z}^{\delta_k(z)}(h\circ\xi_{t_k})'(\xi)d\xi\right| \\
&\leq& C \left\|h\circ\xi_{t_k}\right\|_{H^{\infty}(\Omega)}\big\|\delta_k(z)-z\big\| \\
&\leq& C \left\|h\right\|_{H^{\infty}(\Omega)}\big\|\delta_k(z)-z\big\|
\rightarrow 0,
\end{eqnarray*}
for some constant $C > 0$, since $\delta_k\rightarrow \id_{\Omega}$ as $k\rightarrow \infty$. Here, the second last inequality follows from Cauchy's estimate for first derivatives of holomorphic functions.
By Lemma \ref{invariant-alg}(a), $\gamma_k^*h\in\mathscr F$. By the $h$-admissibility of $(\Phi,\Psi)$ and Lemma \ref{aebdyvalue}, $\xi_{t_k}^*h\to\rho_{\Phi,\Psi}^*s_\Phi$ uniformly on compact subsets of $\Omega$. The estimate above therefore yields $\gamma_k^*h\to\rho_{\Phi,\Psi}^*s_\Phi$ uniformly on compact subsets. Writing $\gamma_k^*h=F^*u_k$ with $\{u_k\}$ uniformly bounded, Lemma \ref{invariant-alg}(b) gives $\rho_{\Phi,\Psi}^*s_\Phi\in\mathscr F$. If $t_k\to -1$, the same argument yields $\rho_{\Psi,\Phi}^*s_\Psi\in\mathscr F$.
\end{proof}

We proceed with some oversimplification for the ensuing discussion aiming towards producing elements in $\mathscr F$ invariant under a hyperbolic flow.  Recall the standard reference pair $(\Sigma,\Sigma')$ of maximal faces of Definition \ref{stdpair}. Let $h\in\mathscr F$ be a nonconstant bounded holomorphic function.  By an application of Lemma \ref{nullfib}, for almost every group element $g\in G_0$, $(g\Sigma,g\Sigma') \in \mathscr C_{\rm adm}'(h)$ and $\Gamma gH$ is dense in $G_0$. The hyperbolic flow $H$ is replaced by $H^g = gHg^{-1}$.  Changing coordinates on $\Omega$ by an automorphism the function $g^*h$ has admissible boundary limits on both $\Sigma$ and $\Sigma'$. To proceed we may therefore without loss of generality assume that $(\Sigma,\Sigma')\in \mathscr C_{\rm adm}'(g^*h)$.  In this new choice of privileged Harish-Chandra coordinates, the discrete subgroup $\Gamma \subset \Aut(\Omega)$ is changed to $\Gamma' = g^{-1}\Gamma g$, while $\mathscr F$ is changed to $g^*\mathscr F = g^*F^*H^\infty(\widetilde{N}) = (F\circ g)^*H^\infty(\widetilde{N})$. Thus, we are considering
$F\circ g: \Omega\to\widetilde{N}$ which is $\Gamma'$-equivariant for $\Gamma' = g^{-1}\Gamma g$, where
$(F\circ g)\big((g^{-1}\gamma g)(g^{-1}z)\big) = F(\gamma z) =\Phi(\gamma)F(z)$.
Thus, writing $\Phi'(g^{-1}\gamma g) := \Phi(\gamma)$, we have
$(F\circ g)(\gamma'w) = \Phi'(\gamma')\bigl((F\circ g)(w)\bigr)$ for $\gamma' = g^{-1}\gamma g$ and for $w \in \Omega$.
Note that
the density of $\Gamma gH$ in $G_0$ gives also $\overline{\Gamma'H}= \overline{g^{-1}\Gamma gH} = g^{-1}(\overline{\Gamma gH}) = G_0$, i.e., $\Gamma'H$ is dense in $G_0$.

\vskip 0.2cm
We now think of $h$ just as a generic symbol for an element of $\mathscr F$ and $\Gamma$ as a generic symbol for a torsion-free discrete lattice in $\Aut(\Omega)$, etc. Thus, we relabel $g^*h$ as $h$, $\Gamma' = g^{-1}\Gamma g$ as $\Gamma$, $\Phi'$ as $\Phi$, $F\circ g$ as $F$ and $g^*\mathscr F = (F\circ g)^*H^\infty(\widetilde{N})$ as $\mathscr F$.  Then, we have a $\Phi$-equivariant holomorphic map $F: \Omega \to \widetilde{N}$ and a nonconstant holomorphic function $h\in \mathscr F$ such that $(\Sigma,\Sigma')\in \mathscr C_{\rm adm}'(h)$.
\vskip 0.2cm
By Proposition \ref{cayproj}, we have $\hat{h}_{\Sigma,\Sigma'}:=\rho_{\Sigma,\Sigma'}^*s_{\Sigma}$ and $\hat{h}_{\Sigma',\Sigma} :=\rho_{\Sigma',\Sigma}^*s_{\Sigma'}$.  For $w\in\Omega^\flat$ write $s_\Sigma(w) = h_{b(w),b'(w)}^\sharp$ and $s_{\Sigma'}(w) = h_{b'(w),b(w)}^\sharp$, where $b(w) = (1;0;w)$ and $b'(w) = (-1;0;w)$ are opposite points with respect to the zero section $\{(0;0)\}\times\Omega^\flat$,
to signify that these are admissible boundary values at $b(w) \in \partial^\flat V_{b'(w)}$ resp.\,$b'(w) \in \partial^\flat V_{b(w)}$ of $h|_{V_{b'(w)}}$ resp.\,$h|_{V_{b(w)}}$, recalling that $V_{b'(w)}, V_{b(w)} \cong \mathbb B^{p+1}$ with biholomorphisms extending to diffeomorphisms of $V_{b'(w)}\coprod\partial^\flat V_{b'(w)}$ resp.\,$V_{b(w)}\coprod\partial^\flat V_{b(w)}$ with $\overline{\mathbb B^{p+1}}-\{u\}$ as smooth manifolds with boundary, where $u \in\partial\mathbb B^{p+1}$.

\vskip 0.2cm
In what follows, we write $\hat{h}$ for one of the two functions $\hat{h}_{\Sigma,\Sigma'}=\rho_{\Sigma,\Sigma'}^*s_{\Sigma}$ and $\hat{h}_{\Sigma',\Sigma} =\rho_{\Sigma',\Sigma}^*s_{\Sigma'}$, more precisely the one that satisfies $\hat{h}\in\mathscr{F}$ by Proposition \ref{pulled-back}.

\begin{proposition}[{cf. \cite[{Proposition 6.9}]{MW2025}}]
The bounded holomorphic functions $s_{\Sigma}\in H^\infty(\Sigma)$ and $s_{\Sigma'}\in H^\infty(\Sigma')$ may be chosen to be both nonconstant holomorphic functions.  Furthermore, for almost every group element $g\in G_0$ $($and after replacing $\Gamma$ by $g^{-1}\Gamma g)$ we have $\partial \hat{h}(0)\neq 0$.
\label{nontrivial-pullback}
\end{proposition}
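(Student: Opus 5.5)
We prove both assertions by showing that the exceptional sets of $g\in G_0$ are null. After replacing $h$ by $g^*h$, the Cayley limits $\hat h_{g\Sigma,g\Sigma'}$ and $\hat h_{g\Sigma',g\Sigma}$ are defined for almost every $g$ by Proposition~\ref{bdry-functions}. So we only have to show that the set of $g$ for which $s_{g\Sigma}$ (or $s_{g\Sigma'}$) is constant, or for which $\partial\hat h(g\cdot 0)=0$, is a null set in $G_0$. Since intersections of conull sets are conull, we can then choose $g$ to satisfy simultaneously the density of $\Gamma gH$, the membership in $\mathscr C'_{\rm adm}(h)$, and the two nondegeneracy requirements.

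For nonconstancy we argue by contradiction. Suppose that for a set $A\subset G_0$ of positive measure $s_{g\Sigma}$ is constant. By Lemma~\ref{aebdyvalue} and Proposition~\ref{aebdyfcn}, $s_{\Phi}(a)=h^\sharp_{a,c}$ is independent of the choice of opposite face $\Psi$. Hence constancy of $s_\Phi$ says that the admissible boundary values of $h|_{V_c}$ at the points $a(c)\in\Phi$ are all equal. Using the double fibration $\mathscr S\to\Reg(\partial\Omega)$, $\mathscr S\to\mathfrak M$ and Lemma~\ref{nullfib}, positivity of measure of the bad set of faces transfers to the fibres $\partial^\flat V_c\cong\partial\mathbb B^{p+1}-\{u\}$: for a positive-measure set of $c$, the admissible boundary function $h^\sharp_{\cdot,c}$ on $\partial^\flat V_c$ is constant on the slices $\partial^\flat V_c\cap\Phi$ as $\Phi$ ranges over a positive-measure family of faces. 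These slices sweep out a set of positive measure in $\partial\mathbb B^{p+1}$. To conclude that the boundary function is locally constant, we exploit the $K$-homogeneity of $\mathfrak M$: the constant values vary continuously with $\Phi$ in an analytic fashion, so we get a single constant on a positive-measure subset of $\partial\mathbb B^{p+1}$. By the uniqueness theorem for bounded holomorphic functions on the ball (Poisson representation of $H^\infty(\mathbb B^{p+1})$ together with a Privalov-type theorem), $h|_{V_c}$ is then constant for a positive-measure set of $c$. Since the $V_c$ foliate $\Omega$ (Proposition~\ref{cayproj}) and this holds on an open family of foliations by real-analyticity in $g$, $h$ would be constant, a contradiction. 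The same argument applies to $s_{g\Sigma'}$.

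For $\partial\hat h(0)\neq0$ we note that $\hat h=\rho^*s$ with $\rho$ a holomorphic submersion. Hence $\partial\hat h(0)=0$ if and only if $\partial s$ vanishes at $\rho(0)$. For nonconstant $s$, the critical set of $s$ is a proper analytic subvariety of the face. Varying $g$ moves $\rho_{g\Sigma}(g\cdot 0)$ across the face, so by Fubini the set of $g$ landing in the critical locus is null. Here we also need $s_{g\Sigma}$ to depend measurably and $g$-equivariantly on $g$, so that Lemma~\ref{nullfib} applies.

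The main obstacle is the first step: upgrading "constant boundary values on slices for positive measure" to "constant $h$". I expect the cleanest route is to avoid slices altogether. Observe that if $s_\Phi$ is constant for almost every $\Phi$, then, via Proposition~\ref{aebdyfcn}, the admissible boundary function of $h$ on $\partial^\flat V_c$ is constant almost everywhere for almost every $c$. Then $h|_{V_c}$ is constant by the Poisson representation, and $h$ is constant. Combined with a $0$–$1$ law coming from ergodicity of $K$ on $\mathfrak M$ (constancy of $s_\Phi$ is a $G_0$-invariant analytic condition), this would exclude the positive-but-not-full measure case.
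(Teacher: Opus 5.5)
There is a genuine gap, at exactly the step you flag as the main obstacle. You need to pass from constancy of the face boundary functions to constancy of $h$, and to get $s_\Sigma$ and $s_{\Sigma'}$ nonconstant for the \emph{same} regular pair. None of your three devices achieves this.

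\begin{enumerate}
\item For a regular pair $(\Phi,\Psi)$ and $c\in\Psi$, the ``slice'' $\partial^\flat V_c\cap\Phi$ is the single point $a(c)$, and it is empty for non-regular pairs. So ``constant on slices'' is vacuous. Constancy of $s_\Phi$ relates boundary values of $h$ on \emph{different} balls $V_c$, $c\in\Psi$, not within one ball.
\item Suppose even that $s_\Phi\equiv C(\Phi)$ for every $\Phi$. Then the admissible boundary function of $h|_{V_c}$ is $a\mapsto C(\epsilon(a))$. Since $\partial^\flat V_c$ meets each face at most once and $\dim_{\mathbb R}\partial^\flat V_c=2p+1=\dim_{\mathbb R}\mathfrak M$ (recall $n=1+p+q$), this is an essentially arbitrary function on $\partial^\flat V_c$, not a constant. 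The Poisson representation therefore yields nothing. Your claim that the $C(\Phi)$ ``vary analytically'' and collapse to one constant is unfounded: they are only defined for almost every $\Phi$, and nothing forces them to agree.
\item There is no $0$--$1$ law from ergodicity of $K$ on $\mathfrak M$. $K$ acts transitively on $\mathfrak M$, and the set of faces with $s_\Phi$ constant is invariant under no group, since $h$ is not.
\end{enumerate}
So your contradiction hypothesis only ever yields ``$s_{g\Sigma}$ is not a.e.\ constant''. That is too weak to intersect with the other conull sets of $g$.

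The missing idea is the one in Lemma~\ref{positive-measure}. Restrict to the special product subspace $P_0=\Delta\times\{0\}\times\Omega^\flat$. There the opposite faces through $b_\theta(w)=(e^{i\theta};0;w)$ and $b'_\theta(w)=(-e^{i\theta};0;w)$ are parametrized by the circle, and the radial limits in $z_1$ are exactly $h^\sharp_{b_\theta(w),b'_\theta(w)}$. Apply the parametrized Cauchy integral formula to $u_j(z_1;0;w)=\partial_jh(z_1;0;w)\,\partial_jh(-z_1;0;w)$. Its boundary value at $e^{i\theta}$ is the product of the $w_j$-derivatives of the two \emph{opposite} boundary functions. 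If for almost every $\theta$ one of them were constant, then $u_j\equiv 0$, hence $\partial_jh\equiv 0$ on $P_0$ and $h$ is constant on $\Omega^\flat(0)$. By Lemma~\ref{nullfib} and continuity the same holds on $g\Omega^\flat(0)$ for all $g\in G_0$, hence on all minimal disks, which forces $h$ to be constant. The product encodes simultaneity, and one-variable uniqueness on the disk replaces your $0$--$1$ law. This gives a positive-measure set of $\theta$ with both boundary functions nonconstant. Such a pair is then transported to $(\Sigma,\Sigma')$ (e.g.\ by the rotation $e^{i\theta}$ in the center of $K$), compatibly with the conull conditions on $g$.

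Your argument for $\partial\hat h(0)\neq 0$ is sound in spirit, but the paper's route is simpler. Once a nonconstant $H$-invariant $\hat h\in\mathscr F$ exists, Proposition~\ref{group-invariance} gives $g^*\hat h\in\mathscr F$ for every $g$. Moreover $\partial(g^*\hat h)(0)=0$ only if $g\cdot 0$ lies in the critical set of $\hat h$, a proper analytic subset. So no measurability or equivariance of $g\mapsto s_{g\Sigma}$ is needed.
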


\vskip 0.2cm
Clearly, $\hat{h}_{\Sigma,\Sigma'}$ and $\hat{h}_{\Sigma',\Sigma}$ are $H$-invariant.  In the following result, which follows from the proof of Mok-Wong \cite[Proposition 6.10]{MW2025}, we use the notation $h$ instead for an $H$-invariant function belonging to $\mathscr F$,
such that $h=\rho_{\Sigma,\Sigma'}^*s_{\Sigma}$ for some $s_{\Sigma}\in H^\infty(\Sigma)$.

\begin{proposition}
Let $h \in\mathscr F$ be an $H$-invariant bounded holomorphic function. Suppose $\Gamma H$ is dense in $G_0$. Then, for every group element $g\in G_0$, we have $g^*h\in\mathscr F$.
Moreover, $g^*h=\rho_{g\Sigma, g\Sigma'}^*s_{g\Sigma}$ for some $s_{g\Sigma}\in H^\infty(g\Sigma)$.
\label{group-invariance}
 \end{proposition}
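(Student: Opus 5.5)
The plan is to exploit the density of $\Gamma H$ in $G_0$ together with the closedness of $\mathscr F$ under limits of uniformly bounded sequences, Lemma \ref{invariant-alg}(b). The point is that, since $h$ is $H$-invariant, the function $g^*h$ depends only on the coset $gH$. So the set
$$
\mathcal G := \{g\in G_0 : g^*h\in\mathscr F\}
$$
is right $H$-invariant: for $\theta\in H$ we have $(g\theta)^*h = \theta^*(g^*h)$, and pointwise $h(g\theta x)$ is not $h(gx)$ in general. The correct invariance is therefore the other one, namely $h\circ\theta = h$ for $\theta\in H$, which gives $(\theta g)^*h$\,... and so one must check the order carefully. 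Since $(g^*h)(x)=h(gx)$ and $h(\theta y)=h(y)$, we get $(\theta g)^*h = g^*h$, so $\mathcal G$ is left $H$-invariant. In addition, $\gamma^*$ preserves $\mathscr F$ by Lemma \ref{invariant-alg}(a), and $(g\gamma)^*h=\gamma^*(g^*h)$, so $\mathcal G$ is right $\Gamma$-invariant. Hence $\mathcal G\supset H\Gamma$, and $H\Gamma=(\Gamma H)^{-1}$ is dense in $G_0$ because inversion is a homeomorphism.

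The first step is to close up. Given arbitrary $g\in G_0$, choose $g_k=\theta_k\gamma_k\in H\Gamma$ with $g_k\to g$. Then $g_k^*h=\gamma_k^*\theta_k^*h=\gamma_k^*h\in\mathscr F$ by Lemma \ref{invariant-alg}(a), where $\gamma_k^*h = F^*(\Phi(\gamma_k)^*u)$ if $h=F^*u$, so that $\|\Phi(\gamma_k)^*u\|_\infty=\|u\|_\infty$ is uniformly bounded. Since $g_k\to g$ in $G_0$, the maps $g_k\to g$ uniformly on compact subsets of $\Omega$. The estimate in the proof of Proposition \ref{pulled-back} (a Cauchy estimate on a slightly larger compact set) then gives $h\circ g_k\to h\circ g$ uniformly on compacta. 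Lemma \ref{invariant-alg}(b) now yields $g^*h\in\mathscr F$.

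The second step identifies the form of $g^*h$. Since $h=\rho_{\Sigma,\Sigma'}^*s_\Sigma$, we have $g^*h = s_\Sigma\circ\rho_{\Sigma,\Sigma'}\circ g$. I would use the equivariance $\rho_{g^{-1}\Sigma,g^{-1}\Sigma'} = g^{-1}\circ\rho_{\Sigma,\Sigma'}\circ g$, which holds because $\rho_{\Sigma,\Sigma'}=\lim_{t\to1}\theta_t$ and conjugation by $g^{-1}$ turns $\{\theta_t\}$ into the hyperbolic flow from $g^{-1}\Sigma'$ to $g^{-1}\Sigma$. This gives $g^*h=\rho_{g^{-1}\Sigma,g^{-1}\Sigma'}^*\big(s_\Sigma\circ g|_{g^{-1}\Sigma}\big)$, with $g$ extended continuously (indeed biholomorphically) to the boundary faces. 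Replacing $g$ by $g^{-1}$ matches the paper's labeling $g\Sigma$ (the convention for $g^*$ versus $g$ acting on faces must be reconciled). In any case $g^*h$ is a Cayley pullback from the face $g^{\pm1}\Sigma$, with $s_{g\Sigma}\in H^\infty(g\Sigma)$.

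The main point requiring care is the left/right bookkeeping: ensuring that the density hypothesis on $\Gamma H$ gives density of exactly the set $H\Gamma$ produced by the invariances. Inversion handles this, but one must also check that the biholomorphic extension of automorphisms to $\overline\Omega$ carries maximal faces to maximal faces compatibly with the limit defining $\rho$.
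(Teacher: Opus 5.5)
Your first step is the paper's own argument: density of $H\Gamma=(\Gamma H)^{-1}$, the identity $(\eta_k\gamma_k)^*h=\gamma_k^*h$ from $H$-invariance, Cauchy estimates, and closure via Lemma \ref{invariant-alg}(b). You also check the uniform bound $\|u\circ\Phi(\gamma_k)\|_\infty=\|u\|_\infty$, which Lemma \ref{invariant-alg}(b) requires and the paper leaves implicit. Delete the false start about right $H$-invariance in your first paragraph; the conclusion you reach there ($\mathcal G$ is left $H$-invariant and right $\Gamma$-invariant, hence $H\Gamma\subset\mathcal G$) is correct.

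For the second assertion you take a different and more complete route. The paper justifies it in one line by $(g\Sigma,g\Sigma')\in\mathscr C_{\rm adm}(h)$ for almost every $g$. That only concerns existence of Cayley limits, and only for a.e.\ $g$. Your identity $\rho_{g^{-1}\Sigma,g^{-1}\Sigma'}=g^{-1}\circ\rho_{\Sigma,\Sigma'}\circ g$ rests on two facts: $G_0$ acts on $\widehat{\Omega}$, hence continuously on $\overline{\Omega}$ permuting maximal faces; and $\rho_{\Phi,\Psi}$ depends only on the ordered pair of faces. It gives the Cayley-pullback form for every $g$, with explicit $s=s_\Sigma\circ g|_{g^{-1}\Sigma}$ and no admissibility input.

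Commit to what this computation shows rather than writing $g^{\pm1}\Sigma$. With the paper's convention $g^*h(x)=h(gx)$, the function $g^*h$ is invariant under $g^{-1}Hg$, not $gHg^{-1}$. So it is pulled back from $g^{-1}\Sigma$. The labeling $g\Sigma$ in the statement is correct for $(g^{-1})^*h=h\circ g^{-1}$, which, as $g$ ranges over $G_0$, is the same family of functions.
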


\vskip 0.2cm
Proposition \ref{nontrivial-pullback}, together with Proposition \ref{group-invariance}, will pave the way for an affirmative answer to Problem \ref{ext-problem} (the Extension Problem). We will start with the proof of Proposition \ref{group-invariance}.

\vskip 0.2cm
\subsubsection{From \texorpdfstring{$H$}{H}-invariance to generating a \texorpdfstring{$G_0$}{G0}-invariant subspace of \texorpdfstring{$\mathscr F$}{script-F}}
\label{H-to-G0}
We start with a proof of Proposition \ref{group-invariance}.

\begin{proof}
By assumption $h\in\mathscr F$ is $H$-invariant, i.e., $h(\eta z) = h(z)$ for any group element $\eta\in H$. By Lemma \ref{invariant-alg}, for each element $\gamma\in\Gamma$, we have $\gamma^*h \in \mathscr F$. By assumption, $\Gamma H$ hence $(\Gamma H)^{-1} = H\Gamma$ is dense in $G_0$. Given any $g\in G_0$, there exist sequences $\{\eta_k\}$ in $H$ and $\{\gamma_k\}$ in $\Gamma$ such that $g_k :=\eta_k\gamma_k$ converges to $g$.
In other words, $g = \eta_k\gamma_k\delta_k$ for a sequence of elements $\{\delta_k\}$ in $G_0$
converging to the identity element $\id\in G_0$.
By assumption $h$ is $H$-invariant, hence $\eta_k^*h = h$.
Moreover, by Lemma \ref{invariant-alg}, $h_k := g_k^*h = \gamma_k^*(\eta_k^*h) = \gamma_k^*h \in \mathscr F$. Since $g_k \to g$, we have $h_k = g_k^*h \to g^*h$.  On the other hand, since $\delta_k\to \id$ we have by the Cauchy estimate for first derivatives $|h_k(\delta_kz) - h_k(z)|\to 0$ uniformly on compact sets, hence $g^*h = \lim\limits_{k\to\infty} g_k^*h = \lim\limits_{k\to\infty} h_k$ uniformly on compact sets. Since $h_k\in\mathscr F$, by Lemma \ref{invariant-alg} it follows $g^*h \in \mathscr F$.
The last statement holds true because $(g\Sigma,g\Sigma')\in \mathscr{C}_{\rm adm}(h)$ for almost all group elements $g\in G_0$.
\end{proof}

\vskip 0.2cm
\subsubsection{Existence of nonconstant \texorpdfstring{$H$}{H}-invariant functions in \texorpdfstring{$\mathscr F$}{script-F} satisfying a nondegeneracy assumption}
\label{nondeg-H-invariant}
In the statement of Proposition \ref{nontrivial-pullback} it is asserted that there exists a pair of $H$-invariant bounded holomorphic functions $h_{\Sigma}, h_{\Sigma'} \in \mathscr F$ such that $dh_\Sigma(0), dh_{\Sigma'}(0) \neq 0$.  It is necessary to have information about both functions, since by Proposition \ref{pulled-back} we only know that either $\rho_{\Sigma,\Sigma'}^*s_{\Sigma}$ or $\rho_{\Sigma',\Sigma}^*s_{\Sigma'}$ belongs to $\mathscr F$. By an application of Lemma \ref{nullfib}, and Moore's ergodicity theorem, for almost all group elements $g\in G_0$, either $\rho_{g\Sigma,g\Sigma'}^*s_{g\Sigma}$ or $\rho_{g\Sigma',g\Sigma}^*s_{g\Sigma'}$ exists to give a function invariant under the hyperbolic flow $H^g = gHg^{-1}$ from $g\Sigma'$ to $g\Sigma$ or vice versa.  It is tempting to believe that the function $h$ can be recovered from some integral formula involving all the boundary functions $s_{g\Sigma}$, but the space of these boundary functions does not have a simple structure, and the fact that only one of the two opposite faces gives rise to an $H^g$-invariant function in $\mathscr F$ complicates the matter.  Our solution to the problem in fact goes along this line, but we limit ourselves to a geometric situation where the integral formula is the simplest possible, viz., the Cauchy integral formula for a bounded holomorphic function on the unit disk $\Delta$ in terms of admissible limits on the boundary circle $\mathbb S^1\cong \partial\Delta$ expressing the function as a Cauchy integral of nontangential limits on the boundary circle. More precisely, we consider a family of such integral formulas parametrized by $w\in \Omega^\flat$. The argument is given in \cite[Proposition 6.10]{MW2025}.

\vskip 0.2cm
The geometric object we consider is the image of $\Delta\times\Omega^\flat$ in $\Omega$ by a holomorphic totally geodesic embedding defined by root systems.
We call this a special product subspace. As an example, in the case of a type-I domain $\Omega = D^I_{p,q}$ of rank  ${\rm min}(p,q) \ge 2$,
we have $\Omega^\flat \cong D^{I}_{p-1,q-1}$,
and a prototype of the special product subspace is the linear section of $\Omega$
in Harish-Chandra coordinates defined by $z_{12} = \cdots = z_{1q}$ and $z_{21} = \cdots = z_{2p}$ in matrix notation.
In general in privileged Harish-Chandra coordinates we consider $\Delta\times\{0\}\times\Omega^\flat$, which is a prototype of a special product subspace.  Let $f$ be a holomorphic function on the special product subspace $P_0 = \Delta\times\{0\}\times \Omega^\flat$ in privileged Harish-Chandra coordinates which is
furthermore bounded on $\Delta\times\{0\}\times V$ for any relatively compact open subset $V \Subset\Omega^\flat$.  Then, nontangential limits $f^\sharp(\zeta;0;w)$ exist almost everywhere on $\mathbb S^1\times\{0\}\times V$ and we have the Cauchy integral formula
$$
f(z_1;0;w) = \frac{1}{2\pi i}\int_{\partial\Delta}\frac{f^\sharp(\zeta;0;w)}{\zeta-z_1}d\zeta\, ,
$$
We consider now the special case where we have admissible boundary values $h^\sharp(e^{i\theta};0,w) = h^\sharp_{b_\theta(w),b_\theta'(w)}$
where $b_\theta(w) = (e^{i\theta};0;w)$ and $b_\theta'(w) = (-e^{i\theta};0;w)$. Fix an index $j$, $1\le j\le q = \dim_{\mathbb C}(\Omega^\flat)$. We consider the above parametrized Cauchy integral formula on the unit disk to the holomorphic function $f = u_j$ on $P_0$ defined by
$$
u_j(z_1;0;w) := \partial_jh(z_1;0;w)\cdot \partial_jh(-z_1;0;w)
$$
It follows from Cauchy estimates that $\partial_jh(z_1;0;w)$, and hence $u_j(z_1;0;w)$ is uniformly bounded on $\Delta\times\{0\}\times V$ for any relatively compact open subset $V \Subset \Omega^\flat$.

\vskip 0.2cm
For $z_0\in\Delta$, we write $\Omega^\flat(z_0):=\{(z_0;0)\}\times\Omega^\flat$.
Recall that $P\subset \Omega$ is a special product subspace of $\Omega$ whenever $P$ is the image of a totally geodesic holomorphic embedding of $\Delta\times \Omega^\flat$ into $\Omega$.
We also know that $G_0$ acts transitively on the set of special product subspaces.
The special product subspace $P_0=\Delta\times\{0\}\times \Omega^\flat$ in privileged Harish-Chandra coordinates is a reference special product subspace.
We have a projection $P_0\rightarrow \Omega^\flat$ defined by $(z_0;z';z'')\mapsto z''$.
Moreover, $\Omega^\flat(0)=\{(0;0)\}\times \Omega^\flat$ is the image of a holomorphic section $z''\mapsto (0;0;z'')$ of $P_0\rightarrow \Omega^\flat$.
We will consider the pair $(P_0,\Omega^\flat(0))$ and the parametrized Cauchy integral formula
which expresses every $f\in \mathscr{C}(\overline{P_0})\cap \mathcal{O}(P_0)$.
(Here $\mathscr{C}(\overline{P_0})$ stands for the space of complex-valued continuous functions on $\overline{P_0}$ and $\mathcal{O}(P_0)$ stands for the space of holomorphic functions on $P_0$.)
In general, for $g\in G_0$ and $P:=gP_0$, we consider the pair $(P,\Omega')$ where $\Omega'=g\Omega^\flat(0)$.
Our interest is to consider the restriction of $h\in H^\infty(\Omega)$ to $P$ and the parametrized Cauchy integral formula reproducing $h|_{\Omega'}$ from boundary values on $g(\partial \Delta\times\{0\}\times \Omega^\flat)$ under the assumption of the existence of admissible boundary values in some precise form.

\vskip 0.2cm
We will say that $(P_0,\Omega^\flat(0))$ is $h$-admissible if and only if both $h^\sharp_{b_\theta(w),b'_\theta(w)}$
and $h^\sharp_{b'_\theta(w),b_\theta(w)}$ exist for almost all $w\in\Omega^\flat$ and almost all $\theta\in\mathbb{R}$.

\vskip 0.2cm
For a special product subspace $P:=gP_0$ and $\Omega'=g\Omega^\flat(0)$, we say that $(P, \Omega')$ is $h$-admissible if and only if $(P_0,\Omega^\flat(0))$ is $g^*h$-admissible.

\vskip 0.2cm
By an application of Lemma \ref{nullfib}, given any bounded holomorphic function $h$ on $\Omega$,
$(gP_0, g\Omega^\flat(0))$ is $h$-admissible for almost all group elements $g\in G_0$.
Thus replacing $h$ by $g^*h$ for almost every group element $g\in G_0$ if necessary, we may assume that we are dealing with the situation where $(P_0, \Omega^\flat(0))$ is $h$-admissible.
In this situation we have the following lemma, which is $(\dagger\dagger)$ in the proof of
 \cite[Proposition 6.10]{MW2025}.

\begin{lemma}
Replacing $h$ by $g^*h$ for almost every $g\in G_0$ if necessary, there exists a subset $A\subset [0,2\pi)$ of positive Lebesgue measure such that for any $\theta\in A$, both $h^\sharp_{b_\theta(w),b_\theta'(w)}$ and $h^\sharp_{b_\theta'(w),b_\theta(w)}$ exist as bounded holomorphic functions on $\Omega^\flat$ and both functions are nonconstant on $\Omega^\flat$.
\label{positive-measure}
\end{lemma}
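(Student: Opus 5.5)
Fix a nonconstant $h\in\mathscr F$. The plan has three steps: first secure, for almost every $\theta$, that both boundary functions exist and are holomorphic on $\Omega^\flat$; then rotate each direction $\theta$ back to the standard pair $(\Sigma,\Sigma')$ by an element of $K$; and finally use a measure-counting argument over $g\in G_0$ to rule out constancy on a set of full measure in $\theta$.

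\textbf{Step 1 (existence and holomorphy).} For each $\theta$, let $k_\theta\in K$ act by the circle rotation $z_1\mapsto e^{i\theta}z_1$ on the disk factor of $P_0$ and trivially on $\Omega^\flat$. Then $k_\theta$ carries $(\Sigma,\Sigma')$ to the regular pair $(\Sigma_\theta,\Sigma'_\theta)$, where $\Sigma_\theta=\{(e^{i\theta};0)\}\times\Omega^\flat$ and $\Sigma'_\theta=\{(-e^{i\theta};0)\}\times\Omega^\flat$.

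Apply Lemma \ref{nullfib}, via Fubini on $G_0\times[0,2\pi)$, to the null sets furnished by Proposition \ref{bdry-functions} for the function $g^*h$. This shows that for almost every $g\in G_0$ the following holds for almost every $\theta$: both $(\Sigma_\theta,\Sigma'_\theta)$ and $(\Sigma'_\theta,\Sigma_\theta)$ lie in $\mathscr C_{\rm adm}(g^*h)$.

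For such $\theta$, Lemma \ref{aebdyvalue} upgrades almost-everywhere existence to existence at \emph{every} point of the face. It also identifies $w\mapsto h^\sharp_{b_\theta(w),b'_\theta(w)}$ with the bounded holomorphic function $s_{\Sigma_\theta}$ on $\Sigma_\theta\cong\Omega^\flat$, and likewise for $w\mapsto h^\sharp_{b'_\theta(w),b_\theta(w)}$. By Proposition \ref{aebdyfcn}, these functions depend only on the face and not on the choice of opposite face.

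\textbf{Step 2 (nonconstancy for a single $\theta$).} Proposition \ref{nontrivial-pullback}, applied to $g^*h$ and to the conjugated pair, shows that for almost every $g$ both $s_\Sigma$ and $s_{\Sigma'}$ are nonconstant.

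Replace $g$ by $gk_\theta$. Since $k_\theta$ ranges over a compact group and right translation preserves Haar measure, for almost every $g$ the element $gk_\theta$ is again "good" for almost every $\theta$. Unwinding the definitions, the boundary functions attached to $(\Sigma_\theta,\Sigma'_\theta)$ for $g^*h$ are exactly $s_\Sigma,s_{\Sigma'}$ for $(gk_\theta)^*h$, after identifying $\Sigma_\theta$ with $\Sigma$ through $k_\theta$.

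\textbf{Step 3 (positive measure).} Consider the set
\[
B=\{(g,\theta)\in G_0\times[0,2\pi): \text{one of the two boundary functions for }(gk_\theta)^*h\text{ is constant or fails to exist}\}.
\]
It is the preimage of a null set in $G_0$ under the submersion $(g,\theta)\mapsto gk_\theta$, so $B$ is itself null. By Fubini, for almost every $g$ the $\theta$-slice of $B$ has measure zero. Its complement $A$ therefore has full measure, in particular positive measure, which gives the lemma.

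\textbf{Main obstacle.} The delicate point is the exact content of Proposition \ref{nontrivial-pullback}: it must assert nonconstancy for almost every $g$, not merely for some $g$. If it only yields one good $g$, I would instead argue by contradiction. Suppose that for almost all $\theta$ one of the two functions is constant. Insert the constant-valued admissible limits into the parametrized Cauchy integral formula on $P_0$; this forces $\partial_w h$ along $\Omega^\flat(0)$ to vanish for almost every $g$. Then $h$ would be constant along all the translates $g\Omega^\flat(0)$, hence constant on $\Omega$, which contradicts the choice of $h$.
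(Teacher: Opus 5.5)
Your Steps~2--3 do not work. They rest on Proposition \ref{nontrivial-pullback}, but the paper deduces that proposition \emph{from} the present lemma; it calls the lemma the main technical tool for its proof. Invoking it here is therefore circular. Even read at face value, it does not give what you need. Its first assertion is only that $s_\Sigma,s_{\Sigma'}$ \emph{may be chosen} nonconstant, which is an existence statement. Its almost-every-$g$ clause concerns $\partial\hat h(0)\neq 0$ for the single function $\hat h\in\mathscr F$, not nonconstancy of both boundary functions. So everything falls on your fallback argument, and there the essential idea is missing.

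Fix $w$ and let $\phi(\zeta,w)$ be the nontangential boundary function of $z_1\mapsto h(z_1;0;w)$ on $\partial\Delta$. Then $\phi(e^{i\theta},w)=h^\sharp_{b_\theta(w),b'_\theta(w)}$ and $\phi(-e^{i\theta},w)=h^\sharp_{b'_\theta(w),b_\theta(w)}$. In other words, the two functions in the lemma are one boundary function evaluated at antipodal points. The negation of the lemma only says that for almost every $\theta$ \emph{one} of $\phi(\pm e^{i\theta},\cdot)$ is constant in $w$, and which one may depend on $\theta$. Hence $\partial_j\phi(\zeta,w)$ is known to vanish only on a set $T\subset\partial\Delta$ with $T\cup(-T)$ of full measure; a priori $T$ could be a half-circle. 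Inserting this into the Cauchy integral for $\partial_jh(z_1;0;w)$ gives nothing.

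The paper handles this by applying the parametrized Cauchy formula to the product $u_j(z_1;0;w)=\partial_jh(z_1;0;w)\cdot\partial_jh(-z_1;0;w)$. By Cauchy estimates this is bounded on $\Delta\times\{0\}\times V$ for $V\Subset\Omega^\flat$. Its boundary values $\partial_j\phi(\zeta,w)\,\partial_j\phi(-\zeta,w)$ vanish almost everywhere, so $u_j\equiv 0$. Holomorphic functions of $z_1$ form an integral domain, and the two factors differ only by $z_1\mapsto -z_1$. Hence $\partial_jh\equiv 0$ on $P_0$ for every $j$, and $h$ is constant on each $\Omega^\flat(z_0)$.

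Alternatively, you could apply the Lusin--Privalov boundary uniqueness theorem to $\partial_jh(\cdot\,;0;w)$, using that $T$ has measure at least $\pi$. Either way, the Cauchy integral formula applied directly to $\partial_j h$ does not suffice.

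With this step supplied, the rest of your fallback agrees with the paper: pass to $g\Omega^\flat(0)$ for almost every, hence every, $g$, then link points by chains of minimal disks to conclude that $h$ is constant.
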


\begin{proof}
Suppose otherwise. Then, for almost all $\theta \in \mathbb R$, either $h^{\sharp}_{b_\theta(w),b'_\theta(w)}$ or $h^{\sharp}_{b'_\theta(w),b_\theta(w)}$ is constant as a function in $w \in \Omega^\flat$.
Then, $\partial_j h^{\sharp}_{b_\theta(w),b'_\theta(w)} \equiv 0$ for $1 \le j \le q$, or $\partial_j h^{\sharp}_{b'_\theta(w),b_\theta(w)} \equiv 0$  for $1 \le j \le q$, as functions in $w \in \Omega^\flat$, so that for almost all $\theta \in \mathbb R$, $u_j^\sharp(e^{i\theta};0;w) = 0$ as a function in $w \in \Omega^\flat$.
Then the above parametrized Cauchy integral formula implies that $u_j(z_1;0;w) = 0$ for $(z_1;w) \in \Delta\times\Omega^\flat$.
Hence, for $1 \le j \le q$ we have $\partial_j h(z_1;0,w) = 0$ for at least a set of positive measure on the disk $\Delta$ and for all $w\in \Omega^\flat$, so that the equality holds for all $(z_1,w) \in \Delta\times \Omega^\flat$.
Thus, $h(z_1;0,w) = v(z)$ is a bounded holomorphic function on $\Delta$.
It follows that $h$ is constant on each $\Omega^\flat(z_0)$ for $z_0\in \Delta$.
In particular, $h$ is constant on $\Omega^\flat(0)$.
Using again Lemma \ref{nullfib} the same applies to $g\Omega^\flat(0)$ for almost all $g\in G_0$ hence for all $g\in G_0$.  We have proven that the failure of the lemma implies constancy of $h$ on $\Omega^\flat(0)$ and in particular on minimal disks on $\Omega^\flat(0)$ since any two points on $\Omega$ can be linked by a finite number of minimal disks.
This forces $h$ to be a constant function, a plain contradiction.
\end{proof}

\vskip 0.2cm
Lemma \ref{positive-measure} is the main technical tool to give a proof of Proposition \ref{nontrivial-pullback} and we refer the reader to \cite{MW2025} for details.

\begin{remark} \
{\rm
In place of assuring that $\partial \hat{h}(0)\neq 0$ as is stated in Proposition \ref{nontrivial-pullback},
it is sufficient to know that both $s_\Sigma$ and $s_{\Sigma'}$ are nonconstant functions. Given this, we have produced some nonconstant function $h\in\mathscr{F}$. By Proposition \ref{group-invariance}, $g^*h\in \mathscr{F}$ for every group element $g\in G_0$. Since $h$ is nonconstant, for almost every $g\in G_0$, $g^*h\in\mathscr{F}$ satisfies $\partial(g^*h)(0)\neq 0$.
We can then replace $h$ by $g^*h$ for any such $g\in G_0$ to continue with the argument.
}
\end{remark}

\vskip 0.2cm
\section{The proof of the Isomorphism Theorem}
\label{isomproof}
\vskip 0.2cm
\subsection{The Extension Theorem}
\label{extension-thm-sec}
\mbox{}

\vskip 0.5cm
\subsubsection{A positive answer to Problem \ref{ext-problem}}
\label{extension-answer}
Let $\Omega \Subset \mathbb C^n$ be an irreducible bounded symmetric domain of rank $\ge 2$ and $\Gamma \subset \Aut(\Omega)$ be an irreducible torsion-free lattice and write $X_\Gamma = \Omega/\Gamma$.  Let $N$ be a complex manifold and denote by $\widetilde{N}$ its universal covering space.  Recall that, the Embedding Theorem (Theorem \ref{cara-embed}) was established as an application of Finsler metric rigidity (Theorem \ref{finsler}) and the study of extremal bounded holomorphic functions. Let $f:X_{\Gamma} \to N$ be a holomorphic map, and $F:\Omega \to \widetilde{N}$ be its lifting to universal covering spaces. Writing $\Omega = \Omega_1\times\cdots\times\Omega_s$ the decomposition of $\Omega$ into irreducible factors, we assumed in Theorem \ref{cara-embed} that for $1\le k\le s$, there exists $h_k\in H^\infty(\widetilde{N})$ such that $F^*h_k$ is nonconstant on some $k$-th factor domain $S_k \subset \Omega$.  We proved in Theorem \ref{cara-embed} that the map $F:\Omega \to \widetilde{N}$ is a holomorphic embedding.  Thus, $F:\Omega\to\widetilde{N}$ is a holomorphic immersion and it separates points.  Note that we do not know {\it a priori\/} that $F(\Omega) \subset \widetilde{N}$ is closed. Theorem \ref{cara-embed} prompted Problem \ref{ext-problem} (the Extension Problem), and we formulate here an affirmative answer to the question, as follows.

\begin{theorem} {\rm (the Extension Theorem)} \
Under the hypothesis as in the above paragraph, there exists a bounded holomorphic map $R: \widetilde{N} \to \mathbb C^n$ such that $R\circ F = \id_{\Omega}$.
\label{extension}
\end{theorem}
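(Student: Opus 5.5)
The plan is to show that the coordinate functions $z_1,\dots,z_n$ in Harish-Chandra coordinates all lie in $\mathscr F = F^*H^\infty(\widetilde N)$. Then $R := (\mu_1,\dots,\mu_n)$, with $F^*\mu_i = z_i$ and each $\mu_i$ bounded, gives the required bounded holomorphic map with $R\circ F = \id_\Omega$.

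\textbf{Step 1: a nondegenerate $H$-invariant function in $\mathscr F$.} By hypothesis there is a nonconstant $h_0\in\mathscr F$. By Proposition \ref{bdry-functions} and Lemma \ref{nullfib}, after conjugating by a generic $g\in G_0$ we may assume three things: $(\Sigma,\Sigma')\in\mathscr C'_{\rm adm}(h_0)$, $\Gamma H$ is dense in $G_0$, and $(P_0,\Omega^\flat(0))$ is $h_0$-admissible. Proposition \ref{nontrivial-pullback}, which rests on Lemma \ref{positive-measure}, together with Proposition \ref{pulled-back}, then produces a nonconstant $H$-invariant $h = \rho_{\Sigma,\Sigma'}^*s_\Sigma\in\mathscr F$ (or the analogous function with $\Sigma'$) with $\partial h(0)\neq 0$.

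\textbf{Step 2: a $G_0$-invariant subspace.} By Proposition \ref{group-invariance}, $g^*h\in\mathscr F$ for every $g\in G_0$, with $\|g^*h\|_\infty\le\|h\|_\infty$. Let $\mathscr F_0\subset\mathscr F$ be the closure, under uniformly bounded locally uniform limits, of the linear span of $\{g^*h: g\in G_0\}$. By Lemma \ref{invariant-alg}(b),(c), $\mathscr F_0$ is contained in $\mathscr F$ and is stable under averaging against finite measures on $G_0$. In particular we may average over the compact isotropy group $K$ with a character, forming
\[
h_\chi := \int_K \chi(k)\,k^*h\,dk .
\]
Since $K$ acts linearly, indeed unitarily, on $\mathbb C^n$, this projects $h$ onto its $K$-isotypic components in the Taylor expansion at $0$. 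Taking first the circle $Z\subset K$, which acts by $e^{i\theta}$, and the weight $\chi(e^{i\theta}) = e^{-i\theta}$, we extract the homogeneous linear part $\ell(z) = \partial h(0)\cdot z$. This $\ell$ lies in $\mathscr F$ and is nonzero by Step 1. Here we use Lemma \ref{invariant-alg}(c) with $Q=Z$, and the fact that each $k^*h$ is in $\mathscr F$ and bounded uniformly.

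\textbf{Step 3: recovering all coordinates.} The span of $\{k^*\ell : k\in K\}$ is a nonzero $K$-invariant subspace of the dual space $(\mathbb C^n)^*$. Since $\Omega$ is irreducible, $K$ acts irreducibly on $T_0\Omega\cong\mathbb C^n$, so this span is all of $(\mathbb C^n)^*$. Hence every $z_i$ is a finite linear combination of elements $k^*\ell\in\mathscr F$, so $z_i\in\mathscr F$. Boundedness of $R$ is automatic, because elements of $\mathscr F$ are pullbacks of bounded functions.

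I expect the main obstacle to be Step 1: guaranteeing that the Cayley limit which Moore's ergodicity places in $\mathscr F$ is nonconstant, given that Proposition \ref{pulled-back} only delivers one of the two faces. This is why Lemma \ref{positive-measure} is needed, to make both $s_\Sigma$ and $s_{\Sigma'}$ nonconstant. After that, the remark following it (translate by a generic $g$ so that $\partial(g^*h)(0)\ne0$) makes Step 2 nontrivial.
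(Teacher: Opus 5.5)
Your proposal is correct. Steps~1 and~2 follow the paper's route. You conjugate generically so that $(\Sigma,\Sigma')\in\mathscr C'_{\rm adm}(h)$, $\Gamma H$ is dense and $(P_0,\Omega^\flat(0))$ is $h$-admissible. You then obtain a nonconstant $H$-invariant Cayley limit in $\mathscr F$, and deduce $g^*h\in\mathscr F$ for all $g\in G_0$ from Proposition~\ref{group-invariance}. One small correction: the density of $\Gamma gH$ for generic $g$ comes from the Density Lemma, i.e.\ Moore's theorem, not from Lemma~\ref{nullfib}.

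The only real difference is how you extract the coordinate functions at the end.

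\emph{The paper's ending.} It forms $\Lambda_0=[\hat h,0,\dots,0]^T$ and permutes rows so that ${\rm Tr}(d\Lambda(0))\neq 0$. It then takes the twisted average $\widetilde\Lambda(z)=\int_K k\Lambda(k^{-1}z)\,d\mu(k)$ over all of $K$. This map is $K$-equivariant, so it is linear by equivariance under the central circle. By Schur's lemma and the trace condition it is a nonzero multiple of $\id_\Omega$, so $R$ comes out of a single integral.

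\emph{Your ending.} You average the scalar function only over the centre $Z\cong\mathbb S^1$ against the character $e^{-i\theta}$. This isolates the linear Taylor term $\ell(z)=\partial h(0)\cdot z$ directly. You then use irreducibility of the contragredient action of $K$ on $(\mathbb C^n)^*$ to write each $z_i$ as a finite linear combination of translates $\ell\circ k$. These translates lie in $\mathscr F$, since $k^*\ell=\int_Z\chi(\zeta)\,(\zeta k)^*h\,d\zeta$ and Lemma~\ref{invariant-alg}(c) applies.

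Your version avoids the row-switching/trace device and needs only a circle integral plus finite sums. The paper's version yields $\tfrac1c\id_\Omega$ in one stroke. Both rest on the same ingredients: Proposition~\ref{group-invariance}, Lemma~\ref{invariant-alg}(c), the central circle acting by scalars, and irreducibility of $K$ on $T_0(\Omega)$.
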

The existence of $R: \widetilde{N} \to \mathbb C^n$ satisfying $R\circ F = \id_{\Omega}$ implies readily that $F: \Omega\to\widetilde{N}$ is an embedding.  The proof of Theorem \ref{extension} in Mok-Wong \cite{MW2025} is independent of Finsler metric rigidity, and as such it gives a new proof of the Embedding Theorem. Nonetheless, the first author would not have formulated Problem \ref{ext-problem} (the Extension Problem) without having the first proof of the Embedding Theorem in Mok \cite{Mok2004} using Finsler metric rigidity.

\vskip 0.2cm
\subsubsection{Solution to Problem \ref{ext-problem} via harmonic analysis and ergodic theory}
\label{extension-proof}
In view of its importance, we reproduce here a condensed version of the end of proof of Theorem \ref{extension} in the case where $\Omega$ is irreducible (and of rank $\ge 2$).

\begin{proof}
For the proof of Theorem \ref{extension} in the case where $\Omega$ is irreducible, we may assume that $\Gamma H$ is dense in $G_0$, $(P_0,\Omega^\flat(0))$ is $h$-admissible, $(\Sigma,\Sigma')$ belongs to $\mathscr C'_{\rm adm}(h)$, $\hat{h}(z) = \lim\limits_{t \to 1}h(\theta_{t,\Sigma,\Sigma'}(z)) = s_\Sigma(\rho_{\Sigma,\Sigma'}(z))$ for some nonconstant $s_\Sigma \in H^\infty(\Sigma)$.
Define now $\Lambda_0: \Omega \to \mathbb C^n$ by $\Lambda_0 = [\hat{h}, 0,\cdots, 0]^T$ as a column $n$-vector of bounded holomorphic functions, hence $\Lambda_0 \in \mathscr F^n$. Since $\hat{h}$ and hence $\Lambda_0:\Omega \to \mathbb C^n$ is $H$-invariant, by Proposition \ref{group-invariance} we have $g^*\Lambda_0 \in \mathscr F^n$ for every $g \in G_0$.  By our choice of the function $\hat{h}$, for some $i \in \{1,\cdots,n\}$ we have $\frac{\partial \hat{h}}{\partial z_i}(0) \neq 0$.  There exists an elementary transformation $A \in {\rm End}(\mathbb C^n)$ having the effect of switching rows such that, writing $\Lambda = A\Lambda_0 \in \mathscr F^n$, we have ${\rm Tr}(d\Lambda(0)) \neq 0$.  Since $A$ is constant linear and $\Lambda_0$ is $H$-invariant, $\Lambda$ is likewise $H$-invariant. Now for $z \in \Omega$ define
\[
\widetilde{\Lambda}(z):=\int_{K} k\Lambda(k^{-1}z)d\mu(k)\, .
\]
Since by Proposition \ref{group-invariance} $g^*\Lambda \in \mathscr F^n$ for any element $g\in G_0$, the vector-valued function $\Lambda(k^{-1}z)$ belongs to $\mathscr F^n$ whenever $k \in K$, so that $\widetilde{\Lambda} \in \mathscr F^n$ by Lemma \ref{invariant-alg}.  Now $0\in\Omega$ is fixed by $K$.  For $k \in K$, writing $\Lambda_k(z) = k\Lambda(k^{-1}z)$ we have $d\Lambda_k(0) = kd\Lambda(0)k^{-1}$ so that ${\rm Tr}(d\Lambda_k(0)) = {\rm Tr}(d\Lambda(0)) \neq 0$, hence ${\rm Tr}(d\widetilde\Lambda(0)) \neq 0$. Consider the circle group $\mathbb S^1$ acting on $\mathbb C^n$ by $(e^{i\theta};z_1,\cdots,z_n) \mapsto (e^{i\theta}z_1,\cdots,e^{i\theta}z_n)$.  Then, $\mathbb S^1 \subset K$ is the center of $K$.  Hence $\widetilde{\Lambda}$ is invariant under conjugation by the circle group, and must therefore be a linear function by Cartan's theorem.
Since $K$ acts irreducibly as a group of unitary transformations on $\mathbb C^n$, it follows from ${\rm Tr}(d\widetilde\Lambda(0)) \neq 0$ and the Schur lemma that  $\widetilde{\Lambda} =\frac{1}{c} \id_\Omega$ for some constant $c\neq 0$. Hence, $\frac{1}{c} \id_\Omega=F^*\mu = F^*[\mu_1,\dots,\mu_n]^T$, where $\mu_i\in H^\infty(\widetilde{N})$ for $1 \le i \le n$. Finally, defining $R:=c\mu:\widetilde{N}\rightarrow \mathbb{C}^n$ we have $R\circ F \equiv \id_{\Omega}$, completing the proof of Theorem \ref{extension} in the case where $\Omega$ is irreducible.  The general case is similar.
\end{proof}

\vskip 0.2cm
\subsection{Proof of the Isomorphism Theorem}
\label{isom-proof-sec}
\mbox{}

\vskip 0.5cm
\subsubsection{Comparing canonical metrics and canonical volume forms on a complete K\"ahler-Einstein manifold}
\label{compare-metrics}
In the proof of Theorem \ref{isom} and a variation replacing the target manifold of $F: \Omega \to M$ by a target domain $D \Subset \mathbb C^n$, which was covered in the statement of the Isomorphism Theorem in Mok-Wong \cite{MW2025}, we will need some well-known intrinsic estimates.
Apart from the (infinitesimal) Carath\'eodory pseudometric \eqref{cara},
for the reader's reference, we give here the Kobayashi-Eisenmann pseudo-volume form on a complex manifold $M$, as follows.

\begin{definition}$(${\rm Kobayashi-Eisenmann volume form}$)$
Let $n$ be a positive integer and $M$ be an $n$-dimensional complex manifold. The Kobayashi-Eisenmann pseudo-volume form $d\mu_M$ is defined by
$$
\|\xi\|_{d\mu_M} := {\rm inf}\big\{\|\alpha\|_{{\rm det}(g_{\mathbb B^n})}: f: \mathbb B^n\overset{hol}\longrightarrow M \ {\rm and} \ \det(df)(\alpha) = \xi \big\}
$$
for $\xi\in \Lambda^n T_M$.
\label{Eisenmann}
\end{definition}
We have
\begin{lemma}{\rm (the comparison lemma)} \
Let $n$ be a positive integer and $(M,g_M)$ be an $n$-dimensional complete K\"ahler-Einstein manifold of
constant Ricci curvature $-(n+1)$, and denote by
$\det(g_M)$ its volume form. Then, for the infinitesimal Carath\'eodory
metric  $\kappa_M$ and the Kobayashi-Eisenmann volume form
$\mu_M$ on $M$, we have
$$
g_M\ge \frac{2\kappa_M}{n+1}\ ,\quad {\rm det}(g_M) \le \mu_M\! .
$$
\label{comp}
\end{lemma}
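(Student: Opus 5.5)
Both inequalities are Schwarz lemmas: one for holomorphic maps into the disk, one for holomorphic maps from the ball. In each case we compare the complete Kähler–Einstein metric of constant Ricci curvature $-(n+1)$ on $M$ with the model metric on the other side. We normalize, as in the paper, so that the Poincaré metric $g_\Delta$ has Gaussian curvature $-2$ and the ball metric $g_{\mathbb B^n}=g_n$ has Ricci curvature $-(n+1)$ and holomorphic sectional curvature $-2$.

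For the Carathéodory estimate, fix $\eta\in T_xM$ and a bounded holomorphic $s:M\to\Delta$. We apply Yau's Schwarz lemma for holomorphic maps $s:(M,g_M)\to(\Delta,g_\Delta)$ from a complete Kähler manifold with Ricci curvature bounded below by $-K_1$ into a Hermitian manifold with holomorphic sectional curvature bounded above by $-K_2<0$. This gives $s^*g_\Delta\le \frac{K_1}{K_2}g_M$. Here $K_1=n+1$ and $K_2=2$, so $\|ds(\eta)\|^2_{\rm Poin}\le\frac{n+1}{2}\|\eta\|^2_{g_M}$. Taking the supremum over $s$ yields $\kappa_M\le\sqrt{(n+1)/2}\,\|\cdot\|_{g_M}$. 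This is the asserted inequality $g_M\ge\frac{2\kappa_M}{n+1}$, once $\kappa_M$ is read as the squared Finsler norm, in the same sense as the comparison $h\le Cg$ for Finsler metrics earlier in the paper. The only delicate point is the normalization of the constants, which we would check on $M=\mathbb B^n$ or $\Delta$.

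For the volume estimate, take any holomorphic $f:\mathbb B^n\to M$ and $\alpha$ with $\det(df)(\alpha)=\xi$. It suffices to show that $f^*\det(g_M)\le\det(g_{\mathbb B^n})$ pointwise. Taking the infimum over all such $f$ then gives $\|\xi\|_{\det(g_M)}\le\|\xi\|_{d\mu_M}$, that is, $\det(g_M)\le\mu_M$. The pointwise inequality is the Schwarz lemma for volume forms, in the Mok–Yau / Chen–Cheng–Lu form. The source $(\mathbb B^n,g_n)$ is complete with Ricci curvature $-(n+1)$, and the target $(M,g_M)$ has Ricci curvature $-(n+1)$. Equal Ricci constants give the constant $1$.

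To carry this out concretely, let $u=\log\bigl(f^*\det g_M/\det g_n\bigr)$ on the open set where $f$ is nondegenerate. We compute $\Delta_{g_n}u\ge (n+1)\,n\,(e^{u/n}-1)$, using the Einstein conditions together with the arithmetic–geometric mean inequality on the eigenvalues of $f^*g_M$ relative to $g_n$. We then apply the Omori–Yau maximum principle on the complete manifold $\mathbb B^n$, or simply an exhaustion by balls $\mathbb B^n_r$ with the barrier $\log(\det g_{\mathbb B^n_r}/\det g_n)\to+\infty$ at $\partial\mathbb B^n_r$. This yields $u\le0$, and letting $r\to1$ finishes the argument.

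The main obstacle is this maximum-principle step, in particular the degenerate locus where $u=-\infty$. We would handle that locus by working on the ball $\mathbb B^n_r$ with the complete metric $g_{\mathbb B^n_r}$. There $u_r$ tends to $-\infty$ at the boundary, since $g_M$ is bounded on the compact image $f(\overline{\mathbb B^n_r})$ while $\det g_{\mathbb B^n_r}$ blows up. Hence $u_r$ attains an interior maximum at a nondegenerate point, where the differential inequality forces $u_r\le0$.
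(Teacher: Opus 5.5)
Your proposal is correct and follows the standard argument behind this lemma, which the paper states without proof as a well-known intrinsic estimate (alluding elsewhere to Yau's Schwarz lemma and the Schwarz lemma for volume forms). Yau's Schwarz lemma applied to $s:(M,g_M)\to(\Delta,g_\Delta)$ gives $s^*g_\Delta\le\frac{n+1}{2}g_M$, with $\kappa_M$ read as a squared norm, consistent with the paper's later use $\kappa_M\le\frac{m+1}{2}g_M$; the volume-form Schwarz lemma for $f:(\mathbb B^n,g_n)\to(M,g_M)$, proved by your exhaustion by $\mathbb B^n_r$ with the complete metrics $g_{\mathbb B^n_r}$, gives $f^*\det(g_M)\le\det(g_n)$ and hence $\det(g_M)\le\mu_M$, and your normalizations check out.
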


\vskip 0.2cm
\subsubsection{Inducing a holomorphic map \texorpdfstring{$\rho: Y_{\Gamma'}\to X_\Gamma$}{rho: Y to X} by descent}
\label{induce-rho}
In the notation of Theorem \ref{extension}, we have $R: \widetilde{N} \to \mathbb C^n$ such that $R\circ F\equiv \id_{\Omega}$. Now we specialize to the situation of Theorem \ref{isom} where $\widetilde N = M$ is equipped with a complete K\"ahler-Einstein metric of negative Ricci curvature.
Writing $g_M := h_M$ for the metric in the statement of Theorem \ref{isom} and $g_{Y_{\Gamma'}} := h_{Y_{\Gamma'}}$ for its quotient metric on $Y_{\Gamma'} = M/\Gamma'$, we have $N = Y_{\Gamma'}$ for some torsion-free discrete subgroup $\Gamma' \subset \Aut(M)$.  Since the complete K\"ahler-Einstein metric $g_M$ is invariant under $\Aut(M)$, it descends to the complete K\"ahler-Einstein metric $g_{Y_{\Gamma'}}$.  In the hypothesis of Theorem \ref{isom}, it is further assumed that $M$ is Carath\'eodory hyperbolic and ${\rm Volume}\left(Y_{\Gamma'},g_{Y_{\Gamma'}}\right) < \infty$, and that
$f_*: \Gamma = \pi_1(X_\Gamma) \overset{\cong}\longrightarrow \pi_1(Y_{\Gamma'})\cong \Gamma' $
is a group isomorphism on fundamental groups. From now on we identify $\Gamma$ with $\Gamma'$ via $f_*$. To proceed further we will resort to the following lemma of Mok-Wong \cite{MW2025} about bounded plurisubharmonic functions on a complete K\"ahler manifold of finite volume, for which we reproduce a more condensed version of the proof.

\begin{lemma}[{Mok-Wong \cite[{Lemma 8.1}]{MW2025}}]
Let $(Z,g)$ be a complete K\"ahler manifold of finite volume, and $u$ be a nonnegative uniformly Lipschitz bounded plurisubharmonic function on $Z$.  Then, $u$ is a constant function.
\label{lipz}
\end{lemma}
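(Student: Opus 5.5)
We will prove that $u$ is constant by showing that $\partial\overline{\partial}u \equiv 0$ and then that $u$ is harmonic with $L^2$ gradient, hence constant. The main tool is a Yau-type integration-by-parts argument with cut-off functions, which is possible because $(Z,g)$ is complete.

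\textbf{Step 1: cut-offs.} Since $(Z,g)$ is complete, fix a base point $x_0$ and choose cut-off functions $\chi_R$ with $\chi_R\equiv 1$ on the geodesic ball $B(x_0,R)$, $\chi_R\equiv 0$ outside $B(x_0,2R)$, and $|d\chi_R|\le C/R$. Write $\omega$ for the K\"ahler form and $m=\dim_{\mathbb C}Z$. Since $u$ is uniformly Lipschitz, $|du|\le L$ almost everywhere. If $u$ is only Lipschitz and not smooth, we interpret $i\partial\overline{\partial}u$ as a closed positive $(1,1)$-current with $L^\infty_{\rm loc}$ first derivatives of $u$. The computations below make sense in this setting, or we regularize locally by Richberg/convolution; we treat $u$ as smooth for the plan.

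\textbf{Step 2: the positive measure $i\partial\overline{\partial}u\wedge\omega^{m-1}$ has zero total mass.} By Stokes,
\[
\int_Z \chi_R\, i\partial\overline{\partial}u\wedge\omega^{m-1} = -\int_Z i\,\partial\chi_R\wedge\overline{\partial}u\wedge\omega^{m-1}.
\]
The right-hand side is bounded by $\frac{C L}{R}\,{\rm Volume}(Z,g)$, which tends to $0$ as $R\to\infty$ because the volume is finite. The integrand on the left is nonnegative by plurisubharmonicity. Letting $R\to\infty$ gives $\Delta u\equiv 0$. Since the trace of a nonnegative Hermitian form vanishes only when the form does, this yields $i\partial\overline{\partial}u\equiv 0$. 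So $u$ is pluriharmonic, in particular harmonic.

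\textbf{Step 3: $du=0$.} Next we use $u$ itself as a multiplier, which is where boundedness and nonnegativity enter. We have
\[
\int_Z \chi_R^2\, i\partial u\wedge\overline{\partial}u\wedge\omega^{m-1}
= -\int_Z \chi_R^2\, u\, i\partial\overline{\partial}u\wedge\omega^{m-1} - 2\int_Z \chi_R\, u\, i\partial\chi_R\wedge\overline{\partial}u\wedge\omega^{m-1}.
\]
The first term on the right vanishes by Step 2. The second is bounded by $2\sup|u|\cdot\frac{CL}{R}\,{\rm Volume}(Z)\to 0$. Alternatively, apply Cauchy--Schwarz to get $\int|du|^2\chi_R^2\le C'\bigl(\int_{B_{2R}-B_R}|du|^2\bigr)^{1/2}\sup|u|/R\cdot{\rm Volume}^{1/2}$. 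Either way $\int_Z|\partial u|^2=0$, so $u$ is constant, $Z$ being connected.

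Nonnegativity is not actually needed beyond boundedness, although it is harmless. The only delicate point I expect is Step 1's justification when $u$ is merely Lipschitz plurisubharmonic. For this I would first try working with currents directly: $\partial u$ is $L^\infty$, so the Stokes identities above hold in the distributional sense against the compactly supported Lipschitz test forms $\chi_R\omega^{m-1}$.
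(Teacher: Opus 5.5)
Your proof is correct. It uses the same mechanism as the paper: cut-off functions on the complete manifold, Stokes' theorem, $u$ as a multiplier, and finite volume to kill the terms supported where $d\chi_R\neq 0$. The logic is organized differently, though.

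The paper performs a single integration by parts of $\sqrt{-1}\,d(\rho_R u)\wedge\overline{\partial}u\wedge\omega^{s-1}$. It disposes of the term $-\int_Z\rho_R\sqrt{-1}\,u\,\partial\overline{\partial}u\wedge\omega^{s-1}$ by a sign argument: that term is $\le 0$ because $u\ge 0$ and $u$ is plurisubharmonic.

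You instead first pair the positive current $\sqrt{-1}\partial\overline{\partial}u$ against $\chi_R\,\omega^{m-1}$. This shows that its trace measure has zero total mass, so $u$ is pluriharmonic. You then run the energy estimate, in which that term vanishes identically.

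What your route buys:
\begin{enumerate}
\item Your Step 2 uses only completeness, the Lipschitz bound and finite volume. It therefore gives the intermediate fact that a plurisubharmonic function of bounded gradient on a complete K\"ahler manifold of finite volume is pluriharmonic, with no sign or boundedness hypothesis.
\item Once $u$ is pluriharmonic it is smooth. Your Step 3 thus never manipulates the product of the continuous function $u$ with the measure $\sqrt{-1}\partial\overline{\partial}u$. The paper's one-step identity implicitly requires justifying Stokes' formula for that product, e.g.\ by local regularization; your Step 2 needs only the definition of distributional derivatives against a smooth compactly supported test form.
\end{enumerate}
Dropping nonnegativity is only a cosmetic gain, since one can always add a constant to a bounded $u$. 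The paper's version is shorter, being a single identity.
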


\begin{proof} Fix a base point $z_0 \in Z$.  For $R > 0$
write $B_R$ for the geodesic ball $B(z_0;R)$ on $(Z,g)$.
There exists a smooth nonnegative function $\rho_{_R}$
on $Z$, $0 \leq \rho_{_R} \leq 1$, such that $\rho_{_R} \equiv 1$ on $B_R$,
$\rho_{_R} \equiv 0$ outside $B_{R+1}$, so that ${\rm Supp}(d\rho_{_R}) \subset B_{R+1} - \overline{B}_R$,
and such that $\|d\rho_{_R}\| \leq 2$. Writing $s:=\dim_\mathbb{C}Z$ we have
\begin{equation}
0 = \int_Z \sqrt{-1}d(\rho_{_R} u) \wedge \overline{\partial}u \wedge
\omega^{s-1} + \int_Z \rho_{_R} \sqrt{-1}
u\partial\overline{\partial}u \wedge \omega^{s-1}\ .
\end{equation}
Here $\sqrt{-1} \partial\overline{\partial}u \ge 0$ in the sense of currents, hence it has coefficients which are complex measures when expressed in local coordinates, and $\sqrt{-1}
u\partial\overline{\partial}u$ is well-defined since $u$ is a bounded function. We have
\begin{equation}
\gathered
\int_{B_R} \sqrt{-1}\partial u\wedge \overline{\partial}u \wedge \omega^{s-1}
\leq \int_Z \rho_{_R} \sqrt{-1}\partial u\wedge \overline{\partial}u \wedge \omega^{s-1}\\
\qquad
= \ - \int_Z \sqrt{-1}u\partial\rho_{_R}\wedge \overline{\partial}u \wedge \omega^{s-1}
- \int_Z \rho_{_R} \sqrt{-1} u\partial\overline{\partial}u \wedge \omega^{s-1}\ .
\endgathered
\label{lipest2}
\end{equation}
Here and in what follows, $\|\cdot\|$ will denote norms on $(Z,g)$.  By assumption $\|du\|$ is uniformly bounded.  Furthermore, $\|d\rho_{_R}\| \leq 2$, and its support is contained in $\overline{B}_{R+1} - B_R$,
so that the second last term of \eqref{lipest2} is bounded by a constant multiple of ${\rm Volume}\left(\overline{B}_{R+1} - B_R,g\right)$ $\to 0$ as $R\to\infty$, since Volume$(Z,g)<\infty$. On the other hand the last integral is nonnegative since $u \geq 0$ and $u$ is plurisubharmonic. Hence, for some constants $C_1, C_2 > 0$ we have
\begin{equation}
\gathered
\int_{B_{R}} \|\partial u\|^2
\le C_1 \left |\int_Z u\partial\rho_{_R}\wedge \overline{\partial}u \wedge \omega^{s-1}\right |
\le C_2 {\rm Volume}\Big(B_{R+1}-\overline{B}_R,\omega \Big)
 \to 0
\endgathered
\end{equation}
as $R \to \infty$.  Hence, $\partial u \equiv 0$ and also $du \equiv 0$, since $u$ is real-valued. In other words, $u$ is a constant function, as desired.
\end{proof}

\vskip 0.2cm
Denote by $\tau: M \to Y_{\Gamma'}$ the universal covering map.  For any bounded holomorphic function $\theta$ on $M$, consider the function $\psi_{\theta}: Y_{\Gamma'} \to \mathbb R$ defined by $\psi_{\theta}(q) = {\rm sup}\{|\theta(p)|^2: \tau(p) = q\}$.  Defining $\widetilde{\psi}_\theta: M \to \mathbb R$ by $\widetilde{\psi}_\theta(p) = \psi_\theta(\tau(p))$, we have
\begin{equation}
\widetilde{\psi}_\theta(p) = {\rm sup}\left\{|\theta(\gamma(p))|^2: \gamma \in \Gamma'\right\} =
{\rm sup}\left\{|(\theta\circ\gamma)(p)|^2: \gamma \in \Gamma'\right\}\/ .
\label{tphi}
\end{equation}
The ensuing results can be found in Mok-Wong \cite{MW2025} but the setting has changed as we are considering target spaces $Y_{\Gamma'}$ of $f:X_{\Gamma} \to Y_{\Gamma'}$ which are complete K\"ahler-Einstein manifolds of finite volume (instead of $Y_{\Gamma'}$ being covered by a bounded domain $D$).  For this reason and in view of the importance of these results we adapt here the arguments to the current situation with the necessary changes.  Using Lemma \ref{lipz} we will deduce

\begin{lemma} \
For any bounded holomorphic function $\theta$ on $M$ the plurisubharmonic function $\psi_\theta$ is a constant function on $Y_{\Gamma'}$
\label{psh}
\end{lemma}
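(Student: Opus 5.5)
The plan is to apply Lemma \ref{lipz} on $(Z,g) = (Y_{\Gamma'}, g_{Y_{\Gamma'}})$. This is a complete K\"ahler manifold of finite volume by hypothesis, and $\psi_\theta$ will play the role of $u$. It is clearly nonnegative and bounded, since $\psi_\theta \le \|\theta\|_{H^\infty(M)}^2$. So two things remain to check:
\begin{enumerate}
\item[(i)] $\psi_\theta$ is plurisubharmonic;
\item[(ii)] $\psi_\theta$ is uniformly Lipschitz with respect to $g_{Y_{\Gamma'}}$.
\end{enumerate}
Both are local statements on $Y_{\Gamma'}$, so I would verify them for $\widetilde\psi_\theta$ on $M$, which is $\Gamma'$-invariant by \eqref{tphi}, using the $\Aut(M)$-invariance of $g_M$.

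For (ii), the key input is the comparison lemma (Lemma \ref{comp}). Write $C := \|\theta\|_{H^\infty(M)}$. For every $\gamma\in\Gamma'$, the function $(\theta\circ\gamma)/(C+1)$ maps $M$ into $\Delta$. Hence, for any tangent vector $\eta$,
\[
\|d(\theta\circ\gamma)(\eta)\|_{\rm Poin} \le (C+1)\,\|\eta\|_{\kappa_M}.
\]
Near the origin the Poincar\'e norm dominates a fixed multiple of the Euclidean norm. Combining this with $\kappa_M \le \frac{n+1}{2}\,g_M$ gives a bound $|d(\theta\circ\gamma)(\eta)| \le C'\,\|\eta\|_{g_M}$, with $C'$ independent of $\gamma$ and of the base point. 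Consequently the family of functions $|\theta\circ\gamma|^2$, $\gamma\in\Gamma'$, is uniformly Lipschitz with respect to the geodesic distance of $g_M$, with constant $2CC'$. A supremum of a family of functions sharing a common Lipschitz constant is Lipschitz with the same constant, so $\widetilde\psi_\theta$ is uniformly Lipschitz on $M$. Because $\tau$ is a local isometry and $\widetilde\psi_\theta$ is $\Gamma'$-invariant, $\psi_\theta$ is uniformly Lipschitz on $Y_{\Gamma'}$; at the infinitesimal level, $\|d\psi_\theta\|$ is bounded almost everywhere.

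For (i), each $|\theta\circ\gamma|^2$ is plurisubharmonic. The supremum of a locally uniformly bounded family of plurisubharmonic functions has the property that its upper semicontinuous regularization is plurisubharmonic. Here the supremum is already continuous, being Lipschitz by step (ii), so it is itself plurisubharmonic. This property descends to $\psi_\theta$ through the local biholomorphism $\tau$.

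Lemma \ref{lipz} then yields that $\psi_\theta$ is constant. The main point needing care is the uniformity in step (ii): the Lipschitz constant must not depend on $\gamma$. This is exactly what the invariance of $\kappa_M$ together with the comparison lemma provide, and it is why the Carath\'eodory/K\"ahler--Einstein comparison is invoked rather than Cauchy estimates on a bounded domain.
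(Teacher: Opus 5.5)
Your proposal is correct and follows the paper's proof essentially step for step. You bound $d(\theta\circ\gamma)$ uniformly in $\gamma$ via the definition of $\kappa_M$ and the comparison lemma, pass to the supremum of a uniformly bounded, uniformly Lipschitz family, and apply Lemma \ref{lipz} on the complete finite-volume K\"ahler manifold $Y_{\Gamma'}$. The differences are cosmetic: you rescale $\theta$ by $1/(C+1)$ and use that the Poincar\'e norm dominates the Euclidean one, whereas the paper normalizes $\theta$ into $\Delta(\frac12)$ and recenters by a M\"obius map; you also justify plurisubharmonicity of the supremum via its continuity, which the paper takes for granted.
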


\begin{proof}
The plurisubharmonic function $\widetilde{\psi}_\theta$ is the supremum of $|\theta_\gamma|^2$, $\gamma \in \Gamma'$,
on a uniformly bounded family of holomorphic functions $\theta_\gamma: M \to \Delta$, $\theta_\gamma := \theta\circ\gamma$.
We claim that the family of differential 1-forms $\left\{d\theta_\gamma\right\}$ are uniformly bounded on $M$ with respect to the K\"ahler-Einstein metric $h_M$. Without loss of generality we may assume that $\theta: M \to \Delta\big(\frac{1}{\phantom{,}2\phantom{,}}\big)$.
Now for $\gamma \in \Gamma'\cong\Gamma$, $\theta_\gamma \in H^\infty(M)$ and $\theta_\gamma(M) \subset \Delta(\frac{1}{2})$.
For any tangent vector $\eta \in T_M$ we have $(\dagger)$ $\|d\theta_\gamma(\eta)\|_{\rm Poin} \le \|\eta\|_{\kappa_M} \le c_1\|\eta\|_{h_M}$ for some universal positive constant $c_1$,
where the first inequality follows from the definition of the infinitesimal Carath\'eodory metric \eqref{cara}, and the second inequality follows from the comparison lemma (Lemma \ref{comp}) after normalizing the Ricci curvature of $h_M$ to be $-(n+1)$ if necessary.
Let $\mu \in \Aut(\Delta)$ such that $\mu(\theta_\gamma(x)) = 0$.
For any $(1,0)$-tangent vector $\xi:=\alpha\frac{\partial}{\partial z}$ at $0\in\Delta$ we write $|\xi| := |\alpha|$. Note that there is a universal positive constant $c_2$ such
that $\|\xi\|_{\rm Poin}= c_2|\xi|$.
Hence by the $\Aut(\Delta)$-invariance of the Poincar\'e metric, we have $\|d\theta_\gamma(\eta)\|_{\rm Poin} = c_2\bigl|d((\mu\circ\theta_\gamma)(\eta))\bigr|$.
Note that whenever $w\in M$, we have $|\theta_\gamma(w)| < \frac{1}{\phantom{,}2\phantom{,}}$.  Hence, while $\mu \in \Aut(\Delta)$ depends on $y = \theta_\gamma(w)$, the point $y$ stays on a relatively compact subset of $\Delta$, so that the choices of $\mu$ stay within a relatively compact subset of $\Aut(\Delta)$.
The upshot is that, in order to show that the family of differential 1-forms $\left\{d\theta_\gamma\right\}$ are uniformly bounded
with bounds independent of $\gamma\in\Gamma'$ on $M$ with respect to the K\"ahler-Einstein metric $h_M$, it suffices to show that $\|d\theta_\gamma(\eta)\|_{\rm Poin}$ for $\|\eta\|_{h_M} = 1$ are uniformly bounded independent of $\gamma \in \Gamma'$, which has been established in $(\dagger)$.
(The rewriting via $\mu$ above shows equivalently that the Euclidean sizes of the derivatives of the recentered maps $\mu\circ\theta_\gamma$ are likewise uniformly controlled.)
Equivalently, we have proven that $\{\theta_\gamma: \gamma\in \Gamma'\}$ are uniformly bounded and uniformly Lipschitz.

\vskip 0.2cm
Finally, for $\gamma\in\Gamma'$,
$d|\theta_\gamma|^2 = d(\theta_\gamma\overline{\theta_\gamma}) = \theta_\gamma\overline{\partial\theta_\gamma} + \overline{\theta_\gamma}\partial\theta_\gamma$. Since $|\theta_\gamma|\le \frac12$ and the differentials $d\theta_\gamma$ are uniformly bounded with respect to $h_M$ as above, and since the Poincar\'e and Euclidean metrics are comparable on $\Delta(\frac12)$, there exists a constant $C > 0$
such that for any $\gamma\in\Gamma'$, $|\theta_\gamma(w)|^2 \le C$ for all $w\in M$ and
$\|d|\theta_{\gamma}|^2(w)\|_{h_M} \le C$ for all $w\in M$.  As is well known, the supremum function $\widetilde{\psi}_\theta$ of such a family of functions is uniformly bounded and uniformly Lipschitz with the same function bound and Lipschitz constant.  Since by assumption the complete K\"ahler-Einstein $\left(Y_{\Gamma'},g_{Y_{\Gamma'}}\right)$ is of finite volume, by Lemma \ref{lipz}, $\psi_\theta$ is a constant function on $Y_{\Gamma'}$, as desired.
\end{proof}
\begin{proposition} \
In the notation of Theorem \ref{isom} $($the Isomorphism Theorem$)$, the image of the bounded holomorphic mapping $R: M\to \mathbb C^n$ lies on $\Omega$ and it is furthermore invariant under $\Gamma' \equiv \Gamma$, i.e., $R(\gamma w) = \gamma(R(w))$ so that $R$ descends to $\rho: Y_{\Gamma'} \to X_\Gamma$ satisfying $\rho\circ f\equiv \id_{X_\Gamma}$.
\label{descend}
\end{proposition}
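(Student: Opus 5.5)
We prove the two assertions about $R$, namely that $R(M)\subset\Omega$ and that $R$ is $\Gamma$-equivariant, and then descend. Throughout, $M$ plays the role of $\widetilde N$, $F:\Omega\to M$ is the lifting of $f$, and $R:M\to\mathbb C^n$ is the bounded holomorphic map of Theorem \ref{extension} with $R\circ F=\id_\Omega$.

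\emph{Equivariance.} Fix $\gamma\in\Gamma$ and write $\gamma'=\Phi(\gamma)\in\Gamma'$. Consider the bounded holomorphic map $\Xi:=R\circ\gamma'-\gamma\circ R$ on $M$. This is meaningful once we know $R(M)\subset\Omega$, or more simply because $\gamma$ acts on $\mathbb C^n$ by a linear fractional map that is holomorphic near $\overline\Omega$; to avoid this subtlety we prove the range statement first. On $F(\Omega)$ we have
\[
R(\gamma'F(z))=R(F(\gamma z))=\gamma z=\gamma R(F(z)),
\]
so $\Xi\equiv 0$ on $F(\Omega)$. To propagate this identity from $F(\Omega)$ to all of $M$, use Lemma \ref{psh}. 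Take any coordinate function $\theta=\Xi_i$, or more cleverly an auxiliary bounded function built from it. The difficulty is that $\Xi$ is not of the form $\theta\circ\gamma$ with supremum taken over $\Gamma'$, so I would instead argue as follows.

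\emph{Range.} For a linear functional $\ell$ on $\mathbb C^n$ with $\sup_\Omega{\rm Re}\,\ell=1$, i.e. a supporting functional of the convex domain $\Omega$, compose and exponentiate to get a bounded holomorphic function $\theta=e^{\ell\circ R}$ on $M$. Then $\psi_\theta$ is constant by Lemma \ref{psh}. On $F(\Omega)$ the $\Gamma'$-orbit of any point is $F(\Gamma z)$, and $|\theta(F(\gamma z))|^2=e^{2{\rm Re}\,\ell(\gamma z)}$. Its supremum over $\gamma\in\Gamma$ equals $e^2$, since lattice orbits accumulate on all of $\partial\Omega$ in the directions needed, so the constant is $e^2$. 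Hence ${\rm Re}\,\ell\circ R\le1$ everywhere on $M$, and by the maximum principle $<1$ on $M$. Intersecting over all supporting functionals and using the convexity of the Harish-Chandra realization gives $R(M)\subset\Omega$.

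With the range established, $\gamma\circ R$ is a bounded holomorphic map into $\Omega$. For equivariance I would show that $\Xi$ vanishes identically. One option is to apply the same sup-over-orbit device to a function measuring the discrepancy, for instance $u=\sup_{\gamma}\,\delta\bigl(R(\gamma'w),\gamma R(w)\bigr)$ with $\delta$ a suitable bounded plurisubharmonic distance-type function on $\Omega\times\Omega$ vanishing on the diagonal, such as $|\cdot|^2$ of a normalized Möbius-type map. Such a $u$ is $\Gamma'$-invariant, bounded, nonnegative, plurisubharmonic, and uniformly Lipschitz by the Carathéodory estimate in the proof of Lemma \ref{psh}. By Lemma \ref{lipz} it is constant, and it vanishes on $F(\Omega)$, hence everywhere. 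Alternatively, equivariance would follow from an identity theorem if $F(\Omega)$ were known to be a uniqueness set, for example if $F$ were onto an open set; this holds when $\dim M=n$, but not in general. I expect this equivariance step, and specifically the choice of the plurisubharmonic discrepancy function, to be the main obstacle.

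\emph{Descent.} Given equivariance, $R$ descends to a map $\rho:Y_{\Gamma'}\to X_\Gamma$. The identity $R\circ F=\id_\Omega$ then descends to $\rho\circ f=\id_{X_\Gamma}$.
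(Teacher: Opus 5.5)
\textbf{Range.} Your argument for $R(M)\subset\Omega$ is correct and takes a different route from the paper's.

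The paper proceeds in two steps. It applies Lemma \ref{psh} to $L_\alpha\circ R$, where $L_\alpha$ is the orthogonal projection onto a minimal disk through $0$, and gets $R(M)\subset\overline\Omega$ from the Polydisk theorem. It then excludes $\partial\Omega$ by the maximum principle for the plurisubharmonic function $\|R(w)\|_{\kappa_\Omega}$.

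You use $e^{\ell\circ R}$ for all supporting functionals $\ell$, together with the maximum principle for each pluriharmonic ${\rm Re}\,\ell\circ R$, which is $<1$ on $F(\Omega)$. This does both steps at once and needs only convexity of the Harish-Chandra realization. One correction: you only need $\psi_\theta\le e^2$, which is immediate because $\gamma z\in\Omega$. Your claim that lattice orbits accumulate so that the supremum equals $e^2$ is unjustified and unnecessary, so drop it.

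\textbf{Equivariance.} This step, as written, has a gap. Your function $u(w)=\sup_\gamma\delta\bigl(R(\gamma'w),\gamma R(w)\bigr)$ is not $\Gamma'$-invariant. Replace $w$ by $\eta'w$ and reindex by $\beta=\gamma\eta$. The terms become $\delta\bigl(R(\beta'w),\beta\eta^{-1}R(\eta'w)\bigr)$. These coincide with the terms of $u(w)$ only when $\eta^{-1}R(\eta'w)=R(w)$, that is, only when the equivariance you want already holds. So $u$ need not descend to $Y_{\Gamma'}$, and Lemma \ref{lipz} cannot be invoked.

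The paper's $\widetilde\lambda$ is the same device with $\delta$ the squared Euclidean distance. Its invariance, asserted ``from the definition'', has the same defect. For your M\"obius-type $\delta$, plurisubharmonicity on $\Omega\times\Omega$ would also need proof, since such a $\delta$ is not holomorphic in its first argument.

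\textbf{The fix.} Use the route you dismissed. Lemma \ref{psh} holds for \emph{every} $\theta\in H^\infty(M)$, because $\psi_\theta$ is a supremum over the $\Gamma'$-orbit of the point; no special form of $\theta$ is required.
\begin{itemize}
\item Fix $\gamma$. Once $R(M)\subset\Omega$, each coordinate $\theta$ of $\Xi_\gamma=R\circ\gamma'-\gamma\circ R$ lies in $H^\infty(M)$.
\item $F(\Omega)$ is $\Gamma'$-stable, and $\Xi_\gamma(F(\beta z))=\gamma\beta z-\gamma\beta z=0$ for all $\beta\in\Gamma$.
\item Hence $\psi_\theta$ vanishes on $\tau(F(\Omega))$, so it vanishes everywhere by Lemma \ref{psh}.
\item Therefore $|\theta|^2\le\psi_\theta\circ\tau\equiv 0$.
\end{itemize}
Equivalently, the correctly invariant discrepancy is $\sup_{\gamma,\eta}\|\Xi_\gamma(\eta'w)\|^2$. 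With equivariance established this way, your descent step goes through as stated.
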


\begin{proof} \
In what follows we assume for simplicity that $\Omega$ is irreducible (and of rank $\ge 2$), the general case being similar. We claim first of all that $R(M) \subset \Omega$.

\vskip 0.2cm
Let $\alpha \in T_0(\Omega)$ be a minimal rational tangent vector of unit length at the origin $0$, $\Delta_{\alpha} \subset \Omega$ be the minimal disk passing through $0$ such that $\alpha \in T_0(\Delta_{\alpha})$. Let $L_{\alpha}: \mathbb C^n \to \mathbb C\alpha$ be the Euclidean orthogonal projection, which projects $\Omega$ onto $\Delta_{\alpha}$.  We identify $\mathbb C\alpha$ isometrically with $\mathbb C$ and hence $\Delta_{\alpha}$ with $\Delta$.  Applying the same argument now to the function $\theta_\alpha := L_{\alpha}\circ R$ we conclude that  $\psi_{\theta_\alpha}$ must be identically equal to 1, since $\psi_{\theta_{\alpha}}(p) = 1$ for any $p \in f(X_\Gamma)$.  Taking all possible minimal rational tangent vectors $\alpha$ of unit length at
$0$ one concludes readily that $R(M) \subset \overline{\Omega}$, as can be seen for instance from the
Polydisk theorem.
It remains to show that $R(M) \cap\partial\Omega=\emptyset$. Identifying $\Omega$ as an open subset of $T_0(\Omega)$ by the Harish-Chandra embedding, we have $\overline{\Omega}=\{\eta\in T_0(\Omega):\|\eta\|_{\kappa_\Omega}\le 1\}$ for the Carath\'eodory metric $\kappa_\Omega$ on $\Omega$.  If $R(M)\cap\partial\Omega\ne\emptyset$, then the plurisubharmonic function $\varphi(w) := \|R(w)\|_{\kappa_\Omega}$ on $M$ attains its maximum value $1$,
and must therefore be identically equal to $1$ by the maximum principle,
so that $R(M)\subset \partial\Omega$. But this contradicts the fact that $R(w)\in\Omega$ whenever $w \in F(\Omega)$, proving
$R(M)\subset \Omega$ by contradiction, as desired.

\vskip 0.2cm
It remains to check the $\Gamma'$-invariance, $\Gamma'\cong\Gamma$, of the holomorphic mapping $R:M\to \Omega$, i.e., $R(\gamma w) = \gamma(R(w))$ for every point $w \in M$.  Define $\widetilde{\lambda}(w) := {\rm sup}\left\{\|R(\gamma w) - \gamma(R(w))\|^2: \gamma\in \Gamma'\right \}$.  From the definition $\widetilde{\lambda}$ is $\Gamma'$-invariant and it descends to a function $\lambda$ on $Y_{\Gamma'}$.  Each function $\|R(\gamma w) - \gamma(R(w))\|^2$ is plurisubharmonic, bounded and uniformly Lipschitz with respect to the K\"ahler-Einstein $h_M$ with both function bounds and the Lipschitz constants independent of $\gamma\in\Gamma'$.   By the same application of Lemma \ref{lipz}, we conclude that $\lambda \equiv c$ for some nonnegative constant $c$.  On the other hand, $\|R(\gamma w) - \gamma(R(w))\|^2 = 0$ for any $w \in F(\Omega)$, hence $c = 0$, as desired, proving the proposition.
\end{proof}

\vskip 0.2cm
\subsubsection{Discreteness of the general fiber of the retraction map \texorpdfstring{$\rho: Y_{\Gamma'}\to X_\Gamma$}{rho: Y to X-Gamma} and completion of the proof of the Isomorphism Theorem}
\label{discrete-fiber}
To prove Theorem \ref{isom}, the essential thing is to prove that $\rho:Y_{\Gamma'}\to X_{\Gamma}$ is a local biholomorphism at a general point, i.e., the general fiber of the holomorphic retraction map $\rho$ is discrete. For a detailed proof of the latter fact using coordinates we refer the reader to Mok-Wong \cite{MW2025}. In the following we give a sketch of the completion of the proof of Theorem \ref{isom}.

\begin{proof}
We proceed with
\begin{quote}
\underline{Claim}: The general fiber of $\rho: Y_{\Gamma'} \to X_\Gamma$ is discrete.
\end{quote}
\textit{Proof of Claim}: Suppose to the contrary that a general fiber of $\rho: Y_{\Gamma'} \to X_\Gamma$ is positive-dimensional.
By the Carath\'eodory hyperbolicity of $M$ in the hypothesis of Theorem \ref{isom}, $Y_{\Gamma'}$ admits  nontrivial bounded holomorphic functions descending from $M$.
From the definition of the Carath\'eodory metric it follows that $R^*\kappa_\Omega \le \kappa_M$.  Write $m := \dim_{\mathbb C}M$.
By Lemma \ref{comp} we have $g_{M} \ge \frac{2\kappa_M}{m+1}$, hence $\kappa_M\le \frac{m+1}{2}g_M$, yielding $R^*\kappa_\Omega \le \kappa_M \le \frac{m+1}{2}g_M$. Since $\Omega$ is homogeneous, any two invariant metrics on $\Omega$ are equivalent.  Hence, $g_\Omega \le C_1\kappa_\Omega$ for some positive constant $C_1$.  As a consequence, for $C:=\frac{C_1(m+1)}{2}$ we have $R^*g_\Omega \le C_1 R^*\kappa_\Omega \le C_1\kappa_M \le Cg_M$, hence $R^*\omega_\Omega \le C\omega_M$, which descends to $\rho^*\omega_{X_\Gamma} \le C\omega_{Y_{\Gamma'}}$.
We have
\begin{align*}
\infty &> {\rm Volume}\left(Y_{\Gamma'},g_{Y_{\Gamma'}}\right)
= \frac{1}{m!}\int_{Y_{\Gamma'}} \left(\omega_{Y_{\Gamma'}}\right)^m  \\
&\ge \frac{1}{m!\cdot C^{n}}\int_{Y_{\Gamma'}}
\left(\omega_{Y_{\Gamma'}}\right)^{m-n}\wedge\left(\rho^*\omega_{X_\Gamma}\right)^{n} \\
& = \frac{1}{m!\cdot C^{n}} \int_{X_\Gamma} \left(\int_{\rho^{-1}(x)} \left(\omega_{Y_{\Gamma'}}\right)^{m-n}\right) \left(\omega_{X_\Gamma}\right)^n\, .
\end{align*}

where the last equality holds by Fubini's theorem in the form of
the coarea formula for the holomorphic map $\rho$, after discarding critical values as below.
By Sard's Theorem, outside a set $E\subset X_{\Gamma}$ of Lebesgue measure 0, the fibers $\rho^{-1}(x)$ are nonsingular and of dimension $m-n$ for all $x \in X_\Gamma - E$, and the integral inside brackets on the right-hand side of the last line is understood as being performed for all $x \in X_\Gamma-E$ while those fibers over $E$ are ignored.  It follows that for almost all $x \in X_\Gamma-E$, the fibers $\rho^{-1}(x)$ are nonsingular and of finite volume with respect to $g_{Y_{\Gamma'}}|_{\rho^{-1}(x)}$
(up to the usual factorial normalization identifying $\frac{1}{(m-n)!}(\omega_{Y_{\Gamma'}})^{m-n}\big|_{\rho^{-1}(x)}$ with the volume form of the induced metric).
\vskip 0.2cm
Pick such a fiber $\rho^{-1}(x)\subset Y_{\Gamma'}$ and let $L_x \subset \rho^{-1}(x)$ be a connected component. Obviously ${\rm Volume}\left( L_x,(g_{Y_{\Gamma'}})\vert_{L_{x}}\right) < \infty$. With respect to the embedding $L_x \hra Y_{\Gamma'}$ given by inclusion, the image of $\pi_1(L_x)$ in $\pi_1(Y_{\Gamma'})$ must be trivial.
In fact, from $\rho\circ f = \id_{X_\Gamma}$ and the hypothesis $f_*: \Gamma=\pi_1(X_\Gamma) \overset{\cong}\longrightarrow \pi_1(Y_{\Gamma'})\cong\Gamma'$ is a group isomorphism, it follows that $\rho: Y_{\Gamma'} \to X_\Gamma$ induces an isomorphism $\rho_*: \Gamma'\overset{\cong}\longrightarrow\Gamma$, hence $\rho_*(\pi_1(L_x))$ is trivial since $\rho(L_x)$ is collapsed to a point.
Recall that $\pi: \Omega \to X_\Gamma$
and $\tau: M\rightarrow Y_{\Gamma'}$ are
the universal covering maps.
Let $\widetilde{x} \in \Omega$ be such that $\pi(\widetilde{x}) = x$. Then, there exists an irreducible component $\widetilde{L}_x$ of $R^{-1}(\widetilde x)\subset M$ such that $\tau|_{\widetilde{L}_x}: {\widetilde{L}_x}\overset{\cong}\longrightarrow L_x$ is a biholomorphism.

\vskip 0.3cm
Now by assumption the complex manifold $M$ is Carath\'eodory hyperbolic, hence there exists $\theta\in H^\infty(M)$ such that $\theta|_{\widetilde{L}_x}$ is nonconstant.
We may regard $\theta$ as a holomorphic function on $L_x$ via the identification $\tau|_{\widetilde{L}_x}: {\widetilde{L}_x}\overset{\cong}\longrightarrow L_x$,
which contradicts Lemma \ref{lipz}, proving by contradiction the claim that a general fiber of $\rho: Y_{\Gamma'} \to X_\Gamma$ is discrete.
\vskip 0.3cm
From $\rho\circ f\equiv \id_{X_\Gamma}$, we know that $f: X_\Gamma \to Y_{\Gamma'}$ is an open embedding onto a connected open subset $\mathcal O = f(X_\Gamma) \subset Y_{\Gamma'}$.
For any $y=f(x)\in \mathcal O$,
note that $f\circ \rho(y)=f((\rho\circ f)(x))=f(x)=y $. Therefore $f\circ \rho$ is the identity map on the connected open subset $\mathcal O \subset Y_{\Gamma'}$. It follows from the identity theorem for holomorphic functions that $f\circ \rho$ is the identity map on all of $Y_{\Gamma'}$.
Hence $f: X_\Gamma \overset{\cong}\longrightarrow Y_{\Gamma'}$ is a biholomorphism, completing the proof of Theorem \ref{isom}.

\end{proof}

\vskip 0.2cm
\subsection{The Isomorphism Theorem on \texorpdfstring{$\Gamma$}{Gamma}-equivariant holomorphic maps into arbitrary bounded domains \texorpdfstring{$D\Subset\mathbb C^n$}{D subset complex n-space}}
\label{isom-domain-sec}

We state here the Isomorphism Theorem of Mok-Wong \cite{MW2025} except that
we restrict it to the case of bounded domains $D\Subset \mathbb{C}^n$ instead of the more general case of bounded domains on Stein manifolds.

\vskip 0.2cm
\subsubsection{Formulation of the Isomorphism Theorem for arbitrary covering bounded domains as target manifolds}
\label{isom-domain-form}
\begin{theorem} \
Let $\Omega$ be a bounded symmetric domain of rank $\geq 2$ and $\Gamma$ be a torsion-free irreducible lattice on $\Omega$; $X_\Gamma := \Omega/\Gamma$.  Let $D$ be a bounded domain, $\Gamma' \subset$ $\Aut(D)$ be a torsion-free discrete subgroup such that $Y_{\Gamma'} := D/\Gamma'$ is of finite volume with respect to the Kobayashi-Eisenmann volume form $d\mu_{Y_{\Gamma'}}$.
Let $f: X_\Gamma \rightarrow Y_{\Gamma'}$ be a holomorphic map inducing an isomorphism
$f_*: \Gamma \overset\cong\longrightarrow \Gamma'$. Then, $f: X_\Gamma \overset\cong\longrightarrow Y_{\Gamma'}$ is a biholomorphism.
\label{isom-domain}
\end{theorem}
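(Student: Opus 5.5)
The plan is to follow the same three-stage scheme as in the proof of Theorem \ref{isom}, replacing the complete K\"ahler-Einstein metric on the target by intrinsic data on the bounded domain.

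\textbf{Stage 1: reduce to a domain of holomorphy.} Let $\widehat D$ be the hull of holomorphy of $D$. It is a priori a Riemann domain over $\mathbb C^N$, and $\Gamma'$ acts on it, since automorphisms of $D$ extend to $\widehat D$. Using finiteness of the Kobayashi-Eisenmann volume of $Y_{\Gamma'}$ and Cauchy estimates (as indicated in the Remark after Theorem \ref{isom}), I will show that $\widehat D - D$ has Lebesgue measure zero, so that $\widehat D$ is a schlicht bounded domain of holomorphy. By Cheng-Yau and Mok-Yau, $\widehat D$ then carries a unique complete K\"ahler-Einstein metric $g_{\widehat D}$ of Ricci curvature $-(N+1)$, and this metric is $\Aut(\widehat D)$-invariant, hence $\Gamma'$-invariant.

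\textbf{Stage 2: build the retraction.} $D$ is Carath\'eodory hyperbolic and supports the coordinate functions, so the hypothesis of Theorem \ref{cara-embed} holds. Theorem \ref{extension} then gives a bounded holomorphic $R:D\to\mathbb C^n$ with $R\circ F=\id_\Omega$. Since $R$ is bounded, it extends to $\widehat D$.

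I then repeat Lemma \ref{psh} and Proposition \ref{descend} on $Z:=\widehat D/\Gamma'$ to show that $R(\widehat D)\subset\Omega$ and that $R$ is $\Gamma'$-equivariant. The uniform Lipschitz bounds come from the comparison Lemma \ref{comp} for $g_{\widehat D}$. Lemma \ref{lipz} also needs $(Z,g)$ to have finite volume, and this follows from $\det(g_{\widehat D})\le\mu_{\widehat D}\le\mu_D$ together with the finiteness of $d\mu_{Y_{\Gamma'}}$. Here I use that the Eisenmann form decreases under the inclusion $D\subset\widehat D$, and that $\widehat D - D$ is null. Hence $R$ descends to a retraction $\rho:Z\to X_\Gamma$ with $\rho\circ f=\id$.

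\textbf{Stage 3: discreteness of fibres.} This is the step of the proof of Theorem \ref{isom} that shows $\rho$ has discrete general fibre, and I will rerun it. The coarea computation bounds $\rho^*\omega_{X_\Gamma}\le C\omega_Z$ and gives, for almost every $x$, fibres of finite $g$-volume. Such a fibre component lifts biholomorphically into $\widehat D$, because $\rho_*$ is an isomorphism. On it, some coordinate function is bounded and nonconstant, and its modulus squared is a bounded, uniformly Lipschitz plurisubharmonic function on a complete K\"ahler manifold of finite volume. This contradicts Lemma \ref{lipz}, provided the fibre, with its induced metric, is complete. Completeness holds because the fibre is closed in the complete manifold $Z$.

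Thus $f$ is an open embedding of $X_\Gamma$ into $Z$, and $f\circ\rho=\id$ on $Z$ by the identity theorem. So $f:X_\Gamma\to Z$ is a biholomorphism. Its image lies in $Y_{\Gamma'}\subset Z$, which forces $Y_{\Gamma'}=Z$, i.e.\ $D=\widehat D$.

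\textbf{Main obstacle.} I expect Stage 1 to be the hardest step: proving that $\widehat D$ is schlicht and that $\widehat D-D$ is null, using only finite Eisenmann volume. My first attempt will control the Jacobians of bounded holomorphic maps $\widehat D\to\mathbb C^N$ by Cauchy estimates, and use $\mu_D$ to dominate Lebesgue measure on $D$.
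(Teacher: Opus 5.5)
Apart from Stage 1, your outline follows the paper's proof almost step for step. You pass to the hull $\widehat D$. You get finite K\"ahler--Einstein volume of $\widehat D/\Gamma'$ from Lemma \ref{comp} and $\mu_{\widehat D}\le\mu_D$. You build $R$ by Theorem \ref{extension} and run Lemma \ref{psh} and Proposition \ref{descend} on the quotient. You finish with the coarea argument, Lemma \ref{lipz} and the identity theorem. Your remark that the fibres $L_x$ are complete because they are closed in $Z$ supplies a point the paper leaves tacit.

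The gap is in Stage 1. The paper imports this step from \cite{MW2025}, but it records the operative input as Proposition \ref{Kest}, and the mechanism you propose cannot produce that input. Since $D$ is bounded, its Lebesgue measure is finite anyway, so a comparison $\mu_D\ge c\,dV$ says nothing about $\widehat D-D$. Likewise, Jacobian bounds for bounded maps \emph{out of} $\widehat D$ do not see where $D$ ends inside $\widehat D$.

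What is needed is the boundary blow-up $\mu_D\ge c\,\delta^{-1}\,dV$ of Proposition \ref{Kest}. It comes from Cauchy estimates on holomorphic maps $g:\mathbb B^N\to D$, which force $|\det dg(0)|^2\lesssim\delta(g(0))$. The exponent $1$ is exactly what the argument requires, as follows.

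Suppose $\widehat D-D$ were not null. Connectedness of $\widehat D$ then yields a point $p$ every neighbourhood of which meets both $D$ and $\widehat D-D$ in positive measure. In a small coordinate ball $B\ni p$ one has $\delta(z)\le\mathrm{dist}(z,B\setminus D)$ for $z\in B\cap D$. This holds because a first exit point from $D$ inside $B$ projects into $\partial D$, by Hausdorffness of $\widehat D$. The coarea formula, together with the relative isoperimetric inequality for the sublevel sets of $\mathrm{dist}(\cdot,B\setminus D)$, then gives $\int_{B\cap D}\delta^{-1}dV=\infty$.

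On the other hand, $\Gamma'$ acts properly and freely on $\widehat D$. You need this anyway for $Z$ to be a manifold, and it follows from Kobayashi hyperbolicity of $\widehat D$, which lies over a bounded set. So $B$ can be taken to inject into $\widehat D/\Gamma'$, whence $\int_{B\cap D}\mu_D\le{\rm Volume}(Y_{\Gamma'},d\mu_{Y_{\Gamma'}})<\infty$, a contradiction.

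Schlichtness then follows. Two points of $\widehat D$ over the same point of $\mathbb C^N$ have neighbourhoods over a common open set $W$. Each point of $W$ has at most one preimage in $D$, so one of the two neighbourhoods meets $\widehat D-D$ in positive measure, which has just been excluded.
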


\begin{remark} \
{\rm
We note that a first instance of Theorem \ref{isom-domain} was stated with a complete proof in the special case where $\Omega=\Delta^n, n\geq 2$, in Mok \cite{Mok2007}.
}
\end{remark}

\vskip 0.2cm
\subsubsection{The roles of Oka's Theorem and Kobayashi-Eisenmann volume forms from the function theory of several complex variables}
\label{oka-roles}
On a bounded plane domain $D \Subset \mathbb C$ consider the Poincar\'e metric $g_D$ inherited from the Poincar\'e metric on the universal covering disk $\Delta$.  Then, by the Schwarz lemma one can get a lower bound of the Poincar\'e metric $g_D$ by comparing it to punctured disks containing $D$.  In dimension $n \ge 2$ the analogue is the canonical K\"ahler-Einstein metric on $D$.  Cheng-Yau proved in \cite{CY1980} the existence and uniqueness of the canonical K\"ahler-Einstein metric on a strictly pseudoconvex domain by studying the complex Monge-Amp\`ere equation.
By exhausting a bounded domain of holomorphy by an increasing sequence of strictly pseudoconvex domains, they established the existence and uniqueness of an almost complete K\"ahler-Einstein metric of constant Ricci curvature $-(n+1)$.

\vskip 0.2cm
In general, a bounded domain in dimension $n \ge 2$ does not necessarily admit a complete K\"ahler-Einstein metric of negative Ricci curvature.  It was in fact proven in Mok-Yau \cite{MY1983} that the existence of such a K\"ahler metric forces $D$ to satisfy the Kontinuit\"atssatz for a sequence of holomorphic disks
continuous up to the boundary,
viz., given a sequence of continuous maps $f_n: \overline{\Delta} \to D$ holomorphic on $\Delta$ converging uniformly to a continuous map $f: \overline{\Delta} \to \mathbb C^n$ such that $f(\partial\Delta) \subset D$,
we must have $f(\overline{\Delta}) \subset D$. Equivalently this means that $D \subset \mathbb C^n$ is a domain of holomorphy, by a theorem of Oka.
Another equivalent characterization of domains of holomorphy is Oka's theorem (cf.\, Oka \cite{Oka1942, Oka1953}) that a domain $D \subset \mathbb C^n$, not necessarily bounded, is a domain of holomorphy if and only if $-\log\delta$ is a plurisubharmonic function on $D$.  Here $\delta(x)$ is the Euclidean distance to $\partial D$, $-\log\delta$ is a continuous function, and it is plurisubharmonic if and only if it satisfies locally the mean value inequality in terms of a local holomorphic coordinate when restricted to any local complex 1-dimensional submanifold.

\vskip 0.2cm
Given any bounded domain of holomorphy, using Oka's theorem that $\sqrt{-1}\partial\overline{\partial}(-\log\delta) > 0$ in the sense of currents and Yau's Schwarz lemma for volume forms, it was established in \cite{MY1983} that the volume form of the almost K\"ahler-Einstein metric $g_D$ on $D$ has a lower bound that resembles that of a punctured disk in the case of bounded
plane domains.  Expressing $g_D(x)$ as a Hermitian matrix $\left(g_{\alpha\overline{\beta}}(x)\right)$ in terms of Euclidean coordinates and assuming $D \subset \mathbb B^n\big(0;\frac{1}{\phantom{.}2\phantom{.}}\big)$ for convenience, we have a lower bound $(\dagger)$ $V_D := {\rm det}\left(g_{\alpha\overline{\beta}}\right) \ge \frac{c_n}{\delta^2(-\log\delta)^2}$ for some universal positive constant $c_n$ depending only on the dimension $n$.  The rest of the arguments follow Cheng-Yau \cite{CY1980}. As in \cite{CY1980} the gradient estimate of Yau \cite{Yau1975} for eigensections of the Laplacian on a complete Riemannian manifold of Ricci curvature bounded from below applies to $\log V_D$, since $\Delta\log V_D = n(n+1)$ for the Laplacian $\Delta$ of the complete K\"ahler-Einstein manifold $(D,g_D)$, to give $\|d\log{\rm det}\left(g_{\alpha\overline{\beta}}\right)\|_{g_D} \le C_n$ for some universal positive constant $C_n$ depending only on $n$, in the case of $D\subset\mathbb B^n\big(0;\frac{1}{\phantom{.}2\phantom{.}}\big)$ strictly pseudoconvex, in which case $g_D$ is known to be complete. Joining a point $x_0\in D$ to a point $x$ by a geodesic $\gamma$ and integrating along $\gamma$ to get $\log V_D(x) - \log V_D(x_0) \le C_n d(x_0,x)$, one gets a universal bound $d(x_0,x) \ge \frac{1}{C_n}\left(\log V_D(x) - \log V_D(x_0)\right)$.  One gets the same estimate for a bounded domain of holomorphy $D \subset \mathbb B^n\big(0;\frac{1}{\phantom{.}2\phantom{.}}\big)$ by exhausting $D$ by an increasing sequence of strictly pseudoconvex domains $\{D_n\}_{n\ge 1}$, $D_n \Subset D_{n+1}$ and $\bigcup_{n\ge 1} D_n = D$, which gives the completeness of $(D,g_D)$.  Thus, we can consider a bounded domain of holomorphy $D$ equipped with the canonical complete K\"ahler-Einstein metric $g_D$ of constant Ricci curvature $-(n+1)$.  Then, $(D,g_D)$ can take the place of $(M,g_M)$ in Theorem \ref{isom}, except that we do not assume that $D$ is simply connected. We remark that Theorem \ref{isom} is equally valid for $\Gamma' \subset \Aut(M)$, $Y_{\Gamma'} := M/\Gamma'$ without assuming that $M$ is simply connected.

\vskip 0.2cm
We will need the following estimate for the Kobayashi-Eisenmann volume form $\mu_U$ on a bounded domain
$U \Subset \mathbb C^n$ in terms of distances to the boundary.
\begin{proposition} \
Let $U \Subset \mathbb C^n$ be a bounded domain, and denote by $\mu_U$ the
Kobayashi-Eisenmann volume form on $U$.  For $z \in U$, denote by $\delta(z)$
the Euclidean distance of $z$ from the boundary $\partial U$.  Write
$dV$ for the Euclidean volume form on $\mathbb C^n$.  Then, there exists
a positive constant $c$ depending only on $n$ and the diameter of
$U$ such that
\[
\mu_U(z) \ge \frac{c}{\phantom{.}\delta(z)\phantom{.}}\,dV\,\! .
\]
\label{Kest}
\end{proposition}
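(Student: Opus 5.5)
Since $\mu_U$ is defined as an infimum over holomorphic maps from $\mathbb B^n$, a lower bound follows once we bound the Jacobian of every such map. Fix $z\in U$ and let $f:\mathbb B^n\to U$ be holomorphic with $f(0)=z$. (By the $\Aut(\mathbb B^n)$-invariance of $\det(g_{\mathbb B^n})$ we may always normalize the preimage point to be $0$.) The claimed estimate is then equivalent to
\[
|\det df(0)|^2\le \frac{\delta(z)}{c'}
\]
for all such $f$, with $c'$ depending only on $n$ and ${\rm diam}(U)$.

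The plan is to reduce to a one-variable estimate. Let $p\in\partial U$ be a point with $|p-z|=\delta(z)$. Rotate and translate coordinates so that $z=0$ and $p=(\delta,0,\dots,0)$ with $\delta=\delta(z)$. Write $f=(f_1,\dots,f_n)$.

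1. \textbf{Transverse components.} Every component satisfies $|f_j|\le d:={\rm diam}(U)$. By the Cauchy estimates at the origin of $\mathbb B^n$, each $|\partial f_j/\partial w_k(0)|\le C d$.

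2. \textbf{The distinguished component.} The function $f_1$ maps $\mathbb B^n$ into the disk $\{|\zeta|\le d\}$ and never takes the value $\delta$, because $p\notin U$. Along any complex line $L$ through $0$ in $\mathbb B^n$, the map $f_1|_L:\Delta\to \{|\zeta|<d\}\setminus\{\delta\}$ is a holomorphic map into a punctured disk, with $f_1(0)=0$ at distance $\delta$ from the puncture. The Schwarz--Pick lemma for the hyperbolic metric of the punctured disk bounds $|(f_1|_L)'(0)|$ by the reciprocal of the hyperbolic density at $0$. That density behaves like $1/(\delta\log(d/\delta))$ when $\delta$ is small. Hence $|\partial f_1(0)|\lesssim \delta\log(d/\delta)$. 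In particular $|\partial f_1(0)|\le C\delta^{1/2}$ for a constant depending only on $d$, since $\delta^{1/2}\log(d/\delta)$ is bounded for $\delta\le d$.

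3. \textbf{The Jacobian.} Expanding $\det df(0)$ along the first row, every term contains a factor from $\partial f_1(0)$ times a product of $n-1$ entries bounded by $Cd$. Therefore
\[
|\det df(0)|\le C_n d^{\,n-1}\,\delta\log(d/\delta),
\]
and so $|\det df(0)|^2\le C\,\delta$, since $\delta\log^2(d/\delta)$ is bounded for $\delta\le d$. Taking the infimum over $f$ gives $\mu_U(z)\ge (c/\delta(z))\,dV$.

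The main obstacle is the punctured-disk estimate in step 2: we need explicit control, uniform in $d$, of the hyperbolic density of $\{|\zeta|<d\}\setminus\{\delta\}$ near the puncture. The cleanest route is to compare with the model $\Delta^*$ via the inclusion $\{|\zeta|<d\}\setminus\{\delta\}\subset \{|\zeta-\delta|<2d\}\setminus\{\delta\}$. Scaled, this is $\Delta^*$ with the point $0$ sitting at distance $\delta/2d$ from the puncture, and the density of $\Delta^*$ is explicitly $1/(|\zeta|\log(1/|\zeta|))$. This comparison gives exactly the bound used in step 2, and the statement only needs the weaker power $\delta^{1/2}$ there, which leaves ample room.
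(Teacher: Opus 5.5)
\textbf{There is a genuine gap in step 2.} From $p\notin U$ you only know that $f$ omits the \emph{point} $p$. This does not make the single component $f_1$ omit the \emph{value} $\delta$: the equation $f_1(w)=\delta$ only places $f(w)$ on the complex hyperplane $\{\zeta_1=\delta\}$ through $p$, and in general that hyperplane meets $U$. (It does not when, e.g., $U$ is convex, since then $U\subset\{{\rm Re}\,\zeta_1<\delta\}$.)

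Here is a concrete counterexample to the step. Let $n=2$, $0<\delta<1$, $p=(\delta,0)$ and $U=B(0,3)\setminus\{p\}$, so that $\delta(0)=\delta$. Take $f(w)=(w_1,\,w_1+\delta w_2)$.
\begin{itemize}
\item $f(\mathbb B^2)\subset B(0,3)$, and $f(w)=p$ forces $w=(\delta,-1)\notin\mathbb B^2$.
\item So $f\colon\mathbb B^2\to U$ with $f(0)=0$.
\item Yet $f_1=w_1$ takes the value $\delta$, and $|\partial f_1/\partial w_1(0)|=1$.
\end{itemize}
This contradicts your bound $|\partial f_1(0)|\lesssim\delta\log(d/\delta)$ for small $\delta$. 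Consequently the row expansion in step 3 has nothing to stand on. In this example $\det df(0)=\delta$ is small only through the interplay of the two rows, while the first row has size $1$.

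\textbf{How to repair it.} The smallness of the Jacobian has to be extracted from the whole map. The paper does not reproduce its proof; it only says it is an elementary argument based on Cauchy estimates. One way to carry this out, consistent with the exponent $1/\delta$ in the statement, is the following.
\begin{itemize}
\item Since $|f|<d$, Cauchy estimates give $\|df(0)\|\le C_1d$ and $|f(w)-df(0)w|\le C_2d|w|^2$ for $|w|\le\frac12$.
\item Let $s$ be the smallest singular value of $df(0)$ and set $\rho=\min\{\frac12,\,s/(2C_2d)\}$. Then $|df(0)w-y|>|f(w)-df(0)w|$ on $|w|=\rho$ whenever $|y|<s\rho/2$.
\item A degree (Rouch\'e) argument gives $f(B(0,\rho))\supset B(0,s\rho/2)$.
\item Since $p\notin f(\mathbb B^n)$, this forces $\delta\ge s\rho/2$, i.e.\ $s\le C\sqrt{d\delta}$.
\item Hence $|\det df(0)|^2\le s^2\|df(0)\|^{2n-2}\le C'\delta$, which is the claim.
\end{itemize}

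Alternatively, your punctured-disk idea can be rescued. Restrict $f_1$ not to complex lines through $0$ but to the holomorphic curve $\{f_2=\cdots=f_n=0\}$ through $0$. Along it, $f$ takes values in $\mathbb C\times\{0\}$, so there $f_1$ genuinely omits $\delta$.
\begin{itemize}
\item Let $\sigma$ be the smallest singular value of $d(f_2,\dots,f_n)(0)$. By the implicit function theorem with Cauchy bounds, the curve is a graph over a disk of radius $\gtrsim\sigma/d$ in its tangent direction $v$.
\item The punctured-disk Schwarz--Pick estimate then costs you a factor $d/\sigma$ in the bound for $\partial_vf_1(0)$.
\item This loss is cancelled exactly by $|\det df(0)|=|\partial_vf_1(0)|\,\sigma_1\cdots\sigma_{n-1}\le|\partial_vf_1(0)|\,\sigma\,(C_1d)^{n-2}$, where $\sigma_1,\dots,\sigma_{n-1}$ are the singular values of $d(f_2,\dots,f_n)(0)$.
\end{itemize}
This yields the stronger estimate $|\det df(0)|\lesssim d^{n-1}\delta\log(2d/\delta)$ that you were aiming for.
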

The proof of the proposition as given in \cite{MW2025} is elementary and relies on the Cauchy estimate for first derivatives of holomorphic functions.

\vskip 0.2cm
\subsubsection{Proof of Theorem \ref{isom-domain}}
\label{isom-domain-proof}
In what follows we give a sketch of the extra steps for the proof of Theorem \ref{isom-domain}.

\begin{proof}
Suppose for the time being that $D \Subset \mathbb C^m$ is a bounded domain of holomorphy.  By Mok-Yau \cite{MY1983}, there exists on $D$ a unique canonical complete K\"ahler-Einstein metric $g_D$ of Ricci curvature $-(m+1)$. By hypothesis, for the Kobayashi-Eisenmann volume form $d\mu_{Y_\Gamma'}$ on $Y_{\Gamma'} = D/\Gamma'$, ${\rm Volume}\left(Y_{\Gamma'},d\mu_{Y_\Gamma'}\right) < \infty$. By Lemma \ref{comp}, we have $\det{g_{Y_\Gamma'}} \le d\mu_{Y_\Gamma'}$, so that ${\rm Volume}\left(Y_{\Gamma'},g_{Y_\Gamma'}\right) < \infty$.  Hence the proof of Theorem \ref{isom} applies to $(D,g_D)$ to prove that $f: X_\Gamma \overset{\cong}\longrightarrow Y_{\Gamma'}$ is a biholomorphism. (Here we have $Y_{\Gamma'} = D/\Gamma'$ for $\Gamma' \subset \Aut(D)$ a torsion-free lattice such that  ${\rm Volume}\left(Y_{\Gamma'},d\mu_{Y_\Gamma'}\right) < \infty$,
a setting which fits into Theorem \ref{isom}, except that we do not assume $D$ to be simply connected.)
Nonetheless, the proof works {\it verbatim\/} without assuming that $D$ is simply connected.
\vskip 0.2cm
In general, we consider the hull of holomorphy $\varphi: \widehat D \to \mathbb C^N$, which is {\it a priori\/}
a Riemann domain. The hypothesis in Theorem \ref{isom-domain}
says that ${\rm Volume}\left(Y_{\Gamma'},d\mu_{Y_\Gamma'}\right) < \infty$ for the Kobayashi-Eisenmann volume form $d\mu_{Y_\Gamma'}$.
The action of the torsion-free discrete subgroup $\Gamma' \subset \Aut(D)$ extends to an action of $\Gamma'$ on $\widehat{D}$,
defining a torsion-free discrete subgroup $\widehat{\Gamma}' \subset \Aut(\widehat{D})$, $\widehat{\Gamma}'$ being naturally isomorphic to $\Gamma'$, such that $\widehat{Y}_{\widehat{\Gamma}'}:= \widehat{D}/\widehat{\Gamma}'$ contains $Y_{\Gamma'}$ as a
connected open subset and $\widehat{Y}_{\widehat{\Gamma}'}- Y_{\Gamma'} \subset \widehat{Y}_{\widehat{\Gamma}'}$ is of zero Lebesgue measure, hence ${\rm Volume}\big(\widehat{Y}_{\widehat{\Gamma}'},d\mu_{\widehat{Y}_{\widehat{\Gamma}'}}\big) < \infty$, so that by Lemma \ref{comp} we have ${\rm Volume}\big(\widehat{Y}_{\widehat{\Gamma}'},g_{\widehat{Y}_{\widehat{\Gamma}'}}\big) < \infty$.
The arguments of Theorem \ref{isom} then produce a holomorphic retraction map $\widehat{\rho}: \widehat{Y}_{{\widehat{\Gamma}'}}\to X_\Gamma$
such that $\widehat{\rho}\circ f \equiv \id_{X_{\Gamma}}$, which implies that $f: X_\Gamma \overset{\cong}\longrightarrow\widehat{Y}_{{\widehat{\Gamma}'}}$
is a biholomorphism.  But the latter map is nothing other than the composition of $f:X_\Gamma \to Y_{\Gamma'}$ with the natural inclusion $Y_{\Gamma'} \subset \widehat{Y}_{{\widehat{\Gamma}'}}$, and it follows that in fact
$Y_{\Gamma'} = \widehat{Y}_{{\widehat{\Gamma}'}}$ and $f: X_\Gamma \overset{\cong}\longrightarrow Y_{\Gamma'}$ is a biholomorphism.

\vskip 0.2cm
The existence of $\rho: Y_{\Gamma'} \to X_\Gamma$ satisfying $\rho\circ f \equiv \id_{X_\Gamma}$ implies a holomorphic extension to $\widehat{\rho}: \widehat{Y}_{\widehat{\Gamma}'} \to X_\Gamma$ satisfying $\widehat{\rho}\circ f \equiv \id_{X_\Gamma}$.
Using the canonical K\"ahler-Einstein metric on $\widehat{Y}_{\widehat{\Gamma}'}$ we conclude that $f$ is open.
Hence $f\circ \widehat{\rho}\equiv \id_\mathcal{O}$ for $\mathcal{O}=f(X_\Gamma)\subset Y_{\Gamma'}$.
By the identity theorem for holomorphic functions, we have $f: X_\Gamma\overset{\cong}\rightarrow \widehat{Y}_{\widehat{\Gamma}'}$.
Since however $f(X_\Gamma)=\mathcal{O}\subset Y_{\Gamma'}$, we must have $Y_{\Gamma'}=\widehat{Y}_{\widehat{\Gamma}'}$,
proving Theorem \ref{isom-domain}.
\end{proof}

\vskip 0.2cm
\section*{Acknowledgement}
The research of the first author is partially supported by a General Research Grant 17304321 of the HKRGC.

\end{document}